\renewcommand{\PrintDOI}[1]{\href{http://dx.doi.org/\detokenize{#1}}{doi: \detokenize{#1}}}
\setlist[enumerate,1]{label=\textup{(\arabic*)}}
\tikzset{node distance=2cm, auto}
\tikzset{cd/.style=matrix of math nodes,row sep=2em,column sep=2em, text height=1.5ex, text depth=0.5ex}
\tikzset{cdar/.style=->,auto}
\tikzset{mid/.style={anchor=mid}} 
\tikzset{narrowfill/.style={inner sep=1pt, fill=white}}
\numberwithin{equation}{section}
\theoremstyle{plain}
\newtheorem{theorem}[equation]{Theorem}
\newtheorem{lemma}[equation]{Lemma}
\newtheorem{proposition}[equation]{Proposition}
\newtheorem{corollary}[equation]{Corollary}
\theoremstyle{definition}
\newtheorem{definition}[equation]{Definition}
\theoremstyle{remark}
\newtheorem{remark}[equation]{Remark}
\newtheorem{example}[equation]{Example}
\DeclarePairedDelimiter{\abs}{\lvert}{\rvert}
\DeclarePairedDelimiterX{\braket}[2]{\langle}{\rangle}{#1\,\delimsize\vert\,\mathopen{}#2}
\DeclarePairedDelimiterX{\braketop}[3]{\langle}{\rangle}{#1\,\delimsize\vert #2\delimsize\vert\,\mathopen{}#3}
\DeclarePairedDelimiterX{\BRAKET}[2]{\langle}{\rangle}{\!\delimsize\langle#1\,\delimsize\vert\,\mathopen{}#2\delimsize\rangle\!}
\DeclarePairedDelimiterX{\setgiven}[2]{\{}{\}}{#1\,{:}\,\mathopen{}#2}
\DeclareMathOperator{\Ad}{Ad}
\DeclareMathOperator{\Aut}{Aut}
\DeclareMathOperator{\cone}{cone}
\newcommand{\ima}{\mathrm i}
\newcommand{\diff}{\mathrm d}
\newcommand{\C}{\mathbb C}
\newcommand{\Z}{\mathbb Z}
\newcommand{\R}{\mathbb R}
\newcommand{\Q}{\mathbb Q}
\newcommand{\N}{\mathbb N}
\newcommand{\T}{\mathbb T}
\newcommand{\Comp}{\mathbb K}
\newcommand{\Bound}{\mathbb B}
\newcommand{\Hils}{\mathcal H}
\DeclareMathOperator{\Hom}{Hom}
\DeclareMathOperator{\Ext}{Ext}
\newcommand{\nb}{\nobreakdash}
\newcommand{\Star}{\(^*\)\nobreakdash-}
\newcommand{\blank}{{\ldots}}
\newcommand{\Cst}{\mathrm C^*}
\newcommand{\Mult}{\mathcal M}
\newcommand{\K}{\mathrm K}
\newcommand{\KK}{\mathrm{KK}}
\newcommand{\Cont}{\mathrm C}
\newcommand{\Contb}{\mathrm C_\mathrm b}
\newcommand{\Contc}{\Cont_\mathrm c} 
\newcommand{\id}{\mathrm{id}}
\newcommand{\op}{{\mathrm{op}}}
\newcommand{\pt}{\mathrm{pt}}
\newcommand{\BS}{\mathrm{BS}}
\newcommand{\defeq}{\mathrel{\vcentcolon=}}
\newcommand{\congto}{\xrightarrow\sim}
\newcommand{\into}{\rightarrowtail}
\newcommand{\prto}{\twoheadrightarrow}
\DeclareMathOperator{\im}{im}
\DeclareMathOperator{\coker}{coker}
\newcommand{\Ga}[2]{\alpha_{#1#2}}
\newcommand{\Gd}[2]{\dot\alpha_{#1#2}}
\newcommand{\Gt}[1]{t_{#1}}
\newcommand{\Gs}[1]{s_{#1}}
\newcommand{\Gu}[1]{1_{#1}}
\newcommand{\littlering}{\mathfrak{T}}
\newcommand{\Kring}{\mathfrak{K}}
\newcommand{\Sring}{\mathfrak{S}}
\begin{document}

\title[Classification of group actions on C*-algebras]{On the classification of group actions on C*-algebras up to equivariant KK-equivalence}
\author{Ralf Meyer}
\email{rmeyer2@uni-goettingen.de}
\address{Mathematisches Institut\\
  Georg-August Universität Göttingen\\
  Bunsenstraße 3--5\\
  37073 Göttingen\\
  Germany}

\keywords{Universal Coefficient Theorem; C*-algebra classification; Kirchberg algebra}

\begin{abstract}
  We study the classification of group actions on \(\Cst\)\nb-algebras
  up to equivariant KK-equivalence.  We show that any group action is
  equivariantly KK-equivalent to an action on a simple, purely
  infinite C*-algebra.  We show that a conjecture of Izumi is
  equivalent to an equivalence between cocycle conjugacy and
  equivariant KK-equivalence for actions of torsion-free amenable
  groups on Kirchberg algebras.  Let~\(G\)
  be a cyclic group of prime order.  We describe its actions up to
  equivariant KK-equivalence, based on previous work by Manuel
  K\"ohler.  In particular, we classify actions of~\(G\)
  on stabilised Cuntz algebras in the equivariant bootstrap class up
  to equivariant KK-equivalence.
\end{abstract}
\maketitle

\section{Introduction}
\label{sec:introduction}

The Kirchberg--Phillips Classification Theorem says that two purely
infinite, simple, separable, nuclear \(\Cst\)\nb-algebras
-- briefly called Kirchberg algebras -- are isomorphic if and only if
they are KK-equivalent (see~\cite{Phillips:Classification}).
Kirchberg~\cite{Kirchberg:Michael} has generalised this theorem to
non-simple \(\Cst\)\nb-algebras
that are separable, nuclear and ``strongly'' purely infinite.  To be
isomorphic, they must have homeomorphic primitive ideal spaces.
Assume this and fix such a homeomorphism.  This lifts to an
isomorphism of \(\Cst\)\nb-algebras
if and only if the \(\Cst\)\nb-algebras
are equivalent in a suitable equivariant KK-theory, which takes the
underlying primitive ideal space into account.  To apply these
theorems, we still need a criterion when two \(\Cst\)\nb-algebras
(with the same primitive ideal space) are equivariantly KK-equivalent.

The best criterion in the simple case follows from the Universal
Coefficient Theorem: if both \(\Cst\)\nb-algebras belong to the
bootstrap class -- the class of \(\Cst\)\nb-algebras for which the
Universal Coefficient Theorem holds -- then any isomorphism of
\(\Z/2\)\nb-graded Abelian groups between their \(\K\)\nb-theories
lifts to a \(\KK\)-equivalence.  It can also be shown that any
\(\Z/2\)\nb-graded countable Abelian groups may be realised by some
Kirchberg algebra.  So isomorphism classes of stable Kirchberg
algebras in the bootstrap class are in bijection with isomorphism
classes of \(\Z/2\)\nb-graded countable Abelian groups.  There are
generalisations of this result to the non-simple case as well (see
\cites{Meyer-Nest:Filtrated_K, Bentmann:Thesis,
  Bentmann-Koehler:UCT, Bentmann-Meyer:More_general,
  Meyer:Compute_obstruction}).  But the results are not as complete,
and there is no evidence for a general classification result for
non-simple Kirchberg algebras.

There has been a lot of work aiming at a classification of suitable
group actions on \(\Cst\)\nb-algebras.  This theory differs from the
corresponding classification of group actions on von Neumann factors
because of several \(\K\)\nb-theoretic obstructions.  These are
particularly complex for finite groups.  In this article, we shall
focus on these \(\K\)\nb-theoretic issues by aiming only at a
classification up to equivariant \(\KK\)-equivalence.  Already in
the classification of non-simple Kirchberg algebras, the problem may
be decomposed into two aspects.  On the one hand, there is the
problem of lifting an equivalence in a suitable KK-theory to an
isomorphism.  On the other hand, there is the problem of detecting a
KK-equivalence by simpler invariants.  Whereas the first problem is
analytic in nature and depends on assumptions such as pure
infiniteness and nuclearity, the second problem is algebraic
topology and may be addressed using homological algebra in
triangulated categories.  Separating these two aspects of the
classification problem seems particularly helpful in the more
complicated case of group actions.

Let~\(G\) be a locally compact group.  The analytic question here is
under which conditions a \(\KK^G\)-equivalence between two
\(G\)\nb-actions \((A,\alpha)\) and~\((B,\beta)\) on separable
\(\Cst\)\nb-algebras lifts to an equivariant Morita--Rieffel
equivalence; this becomes a cocycle conjugacy between the group
actions if \(A\) and~\(B\) are stable.  Such a result is only
plausible if \(A\) and~\(B\) are Kirchberg algebras and the actions
are pointwise outer.  It also seems necessary to assume the
group~\(G\) to be amenable.  A recent result by
Szabó~\cite{Szabo:Equivariant_Absorption} shows that two pointwise
outer actions of an amenable group on unital Kirchberg algebras are
cocycle conjugate once they are \(G\)\nb-equivariantly homotopy
equivalent.  Equivariant homotopy is a much more flexible notion
than cocycle conjugacy.  So it seems conceivable that
\(\KK^G\)-equivalent outer actions on Kirchberg algebras are
cocycle conjugate if~\(G\) is amenable.

The existing classification theorems for group actions on Kirchberg
algebras use direct methods and mix the analytic and topological
aspects of the problem.  The starting point here is Nakamura's result
for single automorphisms in~\cite{Nakamura:Aperiodic}.  Similar
results have been shown for certain actions of \(\Z^2\)
and~\(\Z^N\)
(see \cites{Izumi-Matui:Z2-actions, Matui:Outer_ZN_O2}).
Izumi~\cite{Izumi:Group_actions} has conjectured a classification
result for actions of torsion-free groups, and recent preprints by
Izumi and Matui prove this conjecture for poly-\(\Z\)-groups (see
\cites{Izumi-Matui:Poly-Z,Izumi-Matui:Poly-Z_II}).
Several results in this direction are
still limited by assumptions that reduce the complexity of the
\(\K\)\nb-theoretic
invariants.  One of our results here shows that Izumi's conjecture
in~\cite{Izumi:Group_actions} is equivalent to an analogue of
Kirchberg's Classification Theorem for non-simple
\(\Cst\)\nb-algebras.
More precisely, let~\(A\)
be a Kirchberg algebra and~\(G\)
a torsion-free, amenable, second countable, locally compact group.  We
prove a bijection between \(\KK^G\)-equivalence
classes of actions of~\(G\)
on~\(A\)
and isomorphism classes of principal \(\Aut(A)\)-bundles
over the classifying space~\(B G\).
If~\(G\)
is discrete, Izumi conjectures that the latter classify outer
\(G\)\nb-actions
on~\(A\)
up to cocycle conjugacy, not just up to \(\KK^G\)-equivalence.
So Izumi's conjecture is equivalent to the conjecture that two
\(\KK^G\)-equivalent
outer \(G\)\nb-actions on~\(A\) are already cocycle conjugate.

For finite groups or, more generally, groups with torsion, the
existing classification results need extra assumptions on the group
actions such as the Rokhlin property (see
\cites{Izumi:Finite_group_I, Izumi:Finite_group}).  The Rokhlin
property is a severe restriction that drastically simplifies the
\(\K\)\nb-theory groups related to the group action.  Here we study
actions without the Rokhlin property, but only for the group
\(G=\Z/p\) for a
prime number~\(p\).  Manuel Köhler~\cite{Koehler:Thesis} has proven
a Universal Coefficient Theorem for~\(\KK^G\) for these
groups~\(G\).  It implies a classification of objects in the
equivariant bootstrap class in~\(\KK^G\) up to
\(\KK^G\)-equivalence, as in the classical Kirchberg--Phillips
Theorem.

The equivariant bootstrap class~\(\mathfrak{B}^G\)
in~\(\KK^G\)
is defined as the localising subcategory generated by \(\Cont(G)\)
with the free translation action and by~\(\C\).
That is, it is the smallest class of separable
\(G\)\nb-\(\Cst\)-algebras
that contains \(\C\)
and \(\Cont(G)\)
and is closed under \(\KK^G\)-equivalence,
direct sum and suspensions and has the two-out-of-three property for
extensions of \(G\)\nb-\(\Cst\)-algebras
with a \(G\)\nb-equivariant
completely positive contractive section.  The equivariant bootstrap
class~\(\mathfrak{B}^G\)
is equal to the class of all \(G\)\nb-actions
that are \(\KK^G\)-equivalent
to an action on a Type~I \(\Cst\)\nb-algebra
(see \cite{dellAmbrogio-Emerson-Meyer:Equivariant_Lefschetz}*{§3.1}).
It is unclear whether we can replace ``Type~I'' by ``commutative''
here.  For the group \(\T=\R/\Z\),
however, Emerson has found an action in~\(\mathfrak{B}^\T\)
that cannot be \(\KK^\T\)-equivalent
to an action on a commutative \(\Cst\)\nb-algebra.
If \(A\in\mathfrak{B}^G\),
then~\(A\)
and \(A\rtimes G\)
belong to the usual bootstrap class in~\(\KK\).
We do not know whether the converse of this is true.

For an action of \(G=\Z/p\)
on a general separable \(\Cst\)\nb-algebra~\(A\),
Köhler's invariant is the direct sum
\[
F(A)\defeq \K_*(A\rtimes G) \oplus \K_*(A) \oplus \KK^G_*(D,A),
\]
where~\(D\) is the mapping cone of the unit map \(\C \hookrightarrow
\Cont(G)\).
The natural transformations \(F\Rightarrow F\) form a
\(\Z/2\)\nb-graded ring~\(\Kring\), and \(F(A)\) is a countable
\(\Z/2\)\nb-graded \(\Kring\)\nb-module.  This module structure is
part of Köhler's invariant.  Let \(A\) and~\(B\) be objects
of~\(\mathfrak{B}^G\).  Köhler shows that \(A\) and~\(B\) are
\(\KK^G\)-equivalent if and only if \(F(A) \cong F(B)\) as
\(\Z/2\)\nb-graded \(\Kring\)\nb-modules.  His proof also shows
which countable \(\Z/2\)\nb-graded \(\Kring\)\nb-modules belong to
the range of~\(F\).  Namely, the condition is that two chain
complexes must be exact.  One is the Puppe six-term exact sequence
for the unit map \(\C \hookrightarrow \Cont(G)\).  The other is the
Puppe sequence for the induced map
\(\Cont(G) \cong \C\rtimes G \hookrightarrow \Cont(G)\rtimes G \cong
\Comp(\ell^2(G))\).  An important point here is that the latter cone
is \(\KK^G\)-equivalent to the suspension of~\(D\).  A
\(\Z/2\)\nb-graded \(\Kring\)\nb-module is called exact if these two
Puppe sequences are exact.

So isomorphism classes of objects in the bootstrap class in~\(\KK^G\)
are in bijection with isomorphism classes of countable, exact
\(\Z/2\)\nb-graded
\(\Kring\)\nb-modules.
These are pretty complicated objects.  Let~\(M\)
be a \(\Z/2\)\nb-graded
\(\Kring\)\nb-module.
Then \(M = M_0 \oplus M_1 \oplus M_2\),
where \(M_0\), \(M_1\)
and~\(M_2\)
are the images of the idempotent natural transformations that
project~\(F(A)\)
to the three summands
\(\K_*(A\rtimes G)\),
\(\K_*(A)\),
and \(\KK^G_*(D,A)\).
The summands \(M_0\)
and~\(M_1\)
are modules over
\(\Z[G] \cong \operatorname{Rep}(G) \cong \Z[x]/(x^p-1)\),
where the actions are induced by the given action of~\(G\)
on~\(A\)
and the dual action of \(G\cong \widehat{G}\)
on \(A\rtimes G\).
The summand~\(M_2\)
is a module over the more complicated ring
\[
\Sring_2 \defeq \Z[s,t] \bigm/ \bigl((1-s)\cdot(1-t),N(s)+N(t)-p\bigr).
\]
Then there are further natural transformations that link
\(M_0\), \(M_1\) and~\(M_2\).

Since Köhler's invariant is complicated, we are going to consider
several classes of exact \(\Kring\)\nb-modules that may be
understood through simpler objects.  My goal has been to classify
those exact \(\Kring\)\nb-modules where~\(M_1\) is a cyclic group.
Given Köhler's classification theorem, these exact
\(\Kring\)\nb-modules correspond to \(G\)\nb-actions in the
bootstrap class \(A\in\mathfrak{B}^G\) where \(\K_*(A)\) is cyclic.
In fact, any such~\(A\) is \(\KK^G\)-equivalent to an action on a
stabilised Cuntz algebra.  The main issue here is to make the
underlying \(\Cst\)\nb-algebra of the action simple.  That this is
possible is proven in complete generality for actions of locally
compact groups and is also used in the torsion-free case to
reformulate Izumi's conjecture.  Since \(A\in\mathfrak{B}^G\) is
assumed, the results in this article only cover actions
in~\(\mathfrak{B}^G\), so that \(A\rtimes G\) must be in the
non-equivariant bootstrap class.  Thus we have nothing to say about
the important question whether \(A\rtimes G\) belongs to the
bootstrap class for any \(G\)\nb-action, say, on~\(\mathcal{O}_2\)
(see, for instance, \cite{Barlak-Li:Cartan_UCT}).

The end result of our classification of the actions of~\(G\) on
Cuntz algebras is remarkably complicated.  In
Section~\ref{sec:representative_examples}, we list nine examples of
such exact modules; some of these involve parameters, so that they
rather form a family of examples.  There is a tenth family of
examples only for the case \(p=2\), which is about a general method
to modify a given example to produce another one.  Our main theorem
says that any exact module where~\(M_1\) is cyclic contains one of
these ten examples as an exact submodule~\(M'\), such that the
quotient \(M'' \defeq M/M'\) has \(M''_1=0\).  We also show that any
such~\(M''\) is uniquely \(p\)\nb-divisible and may be described
through a \(\Z/2\)\nb-graded module over \(\Z[\vartheta_p,1/p]\).
The examples in Section~\ref{sec:representative_examples} are all
quite different, and some of them are not obvious.  I have found
these examples by a case-by-case study of the possible exact modules
over Köhler's ring with cyclic~\(M_1\).  The results of Köhler also
imply that the module extension \(M' \into M \prto M''\) in our main
theorem lifts to an exact triangle in the bootstrap class
in~\(\KK^G\).  This explains why the piece~\(M''\) has to be there.
It corresponds to adding~\(\mathcal{O}_2\) with a suitable action to
a given \(G\)\nb-action and then passing to a \(\KK^G\)-equivalent
action on a Kirchberg algebra.

In order to classify the exact \(\Kring\)\nb-modules with
cyclic~\(M_1\), we prove several more general results.  First, we
describe Köhler's ring through generators and relations.  This
simplifies writing down a \(\Kring\)\nb-module.  As it turns out, a
module over Köhler's ring consists of three \(\Z/2\)\nb-graded
Abelian groups~\(M_j\), \(j=0,1,2\), with
homomorphisms~\(\Ga{j}{k}^M\colon M_k \to M_j\) for \(j\neq k\) such
that the two triangle-shaped diagrams
\[
\begin{tikzcd}[row sep=large, column sep=large]
  &M_1 \ar[dr, "\Ga{2}{1}^M"]&\\
  M_0 \ar[ur, "\Ga{1}{0}^M"]&&
  M_2 \ar[ll, "\Ga{0}{2}^M"]
\end{tikzcd}\qquad
\begin{tikzcd}[row sep=large, column sep=large]
  &M_1 \ar[dl, "\Ga{0}{1}^M"']&\\
  M_0 \ar[rr, "\Ga{2}{0}^M"']&&
  M_2 \ar[ul, "\Ga{1}{2}^M"']
\end{tikzcd}
\]
are chain complexes.  Exactness means that these two chain complexes
are exact.  There are also some additional relations involving the
products \(\Ga{j}{k}^M \circ \Ga{k}{j}^M\) (see
Theorem~\ref{the:generators_and_relations}).  Köhler's invariant for
a \(G\)\nb-\(\Cst\)-algebra~\(A\) has \(M_0 = \K_*(A\rtimes G)\) and
\(M_1 = \K_*(A)\), and the maps \(\Ga{0}{1}^M\) and \(\Ga{1}{0}^M\)
are induced by the inclusion maps \(A\subseteq A\rtimes G\) and
\(A\rtimes G \subseteq A\otimes \Comp(\ell^2 G)\).  We show by an
example that \(M_0\) and~\(M_1\) alone with their natural extra
structure do not yet classify up to \(\KK^G\)-equivalence, by
exhibiting two non-isomorphic exact modules over K\"ohler's ring for
which the \(M_0\)- and \(M_1\)-parts are isomorphic (see
Example~\ref{exa:actions_on_Cuntz_4}).

We describe several classes of exact modules over Köhler's ring using
simpler ingredients.  First, we describe modules where one of the
groups~\(M_j\)
is uniquely \(p\)\nb-divisible
(see Example~\ref{exa:divisible}); in particular, this covers the case
when one of them vanishes, say, \(M_1=0\).
In the latter case, \(M_0 \cong M_2\)
is a \(\Z/2\)\nb-graded
module over \(\Z[\zeta_p,1/p]\)
with the primitive \(p\)th
root of unity \(\zeta_p \defeq \exp(2\pi\ima /p)\),
and any such \(\Z/2\)\nb-graded
module corresponds to a unique exact \(\Kring\)\nb-module
with \(M_1=0\).
Then we describe modules where one of the maps~\(\Ga{j}{k}^M\)
is zero.  By exactness, this covers all cases where \(\Ga{0}{1}^M\)
or \(\Ga{1}{0}^M\)
is zero, injective, or surjective.  Third, we describe modules with
\(M_0 = \im \Ga{0}{1}^M \oplus \im \Ga{0}{2}^M\)
or similar.  As it turns out, these different cases suffice to
describe all exact \(\Kring\)\nb-modules
where~\(M_1\)
is a cyclic group.

The starting point for the study of exact modules over Köhler's ring
here is the Bachelor's thesis of Vincent
Grande~\cite{Grande:Bachelor}, which contains several examples of
such modules and partial results on the different cases of
submodules considered here.

It is a great pleasure to thank the referee for a very careful
reading of the manuscript and a number of helpful suggestions.

\section{Realising actions on Kirchberg algebras}
\label{sec:actions_Kirchberg}

The goal of this section is the following theorem:

\begin{theorem}
  \label{the:make_actions_simple}
  Let~\(G\)
  be a second countable, locally compact group.  Let~\(A\)
  be a separable \(\Cst\)\nb-algebra
  and let~\(\alpha\)
  be a continuous action of~\(G\)
  on~\(A\).
  Then there is a non-zero, simple, purely infinite, stable
  \(\Cst\)\nb-algebra~\(B\)
  with a pointwise outer, continuous action~\(\beta\)
  of~\(G\)
  such that \((A,\alpha)\)
  is \(\KK^G\)-equivalent
  to \((B,\beta)\).  If~\(A\) is nuclear or exact, then so is~\(B\).
\end{theorem}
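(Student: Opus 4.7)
The plan is to construct $(B,\beta)$ by successively modifying $(A,\alpha)$, each time preserving the $\KK^G$-class, so as to acquire in turn: stability, pure infiniteness (via $\mathcal{O}_\infty$-absorption), pointwise outerness of the $G$-action, and finally simplicity.

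The first two properties come from tensoring. Form
\[
A_1 \defeq A \otimes \Comp(L^2 G \otimes \ell^2 \N) \otimes \mathcal{O}_\infty
\]
with $\alpha$ tensored by the adjoint of the regular representation of $G$ on $L^2 G\otimes \ell^2\N$ and the trivial action on $\mathcal{O}_\infty$. The middle factor is equivariantly Morita equivalent to $\C$, and the unital $G$-equivariant inclusion $\C \hookrightarrow \mathcal{O}_\infty$ of $G$-fixed algebras is a $\KK^G$-equivalence (both algebras represent the same object under the $R(G)$-action on $\KK^G(\C,\blank)$). Hence $A_1$ is $\KK^G$-equivalent to $(A,\alpha)$, is stable and $\mathcal{O}_\infty$-absorbing (hence purely infinite if non-zero), and inherits nuclearity or exactness from $A$.

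Next, introduce pointwise outerness by tensoring with a model $(D,\delta)$: a separable, simple, nuclear, $\mathcal{O}_\infty$-absorbing $\Cst$-algebra $D$ with a continuous pointwise outer action $\delta$ of $G$ such that $(D,\delta)$ is $\KK^G$-equivalent to $\C$ with trivial action. Such a $(D,\delta)$ is assembled as an inductive limit (or infinite tensor product) of $\KK^G$-trivial Kirchberg-algebra building blocks carrying shift-type actions of $G$; simplicity and pointwise outerness then follow automatically from the limiting process. Set $A_2 \defeq A_1 \otimes D$ with the diagonal action; it has pointwise outer $G$-action, is $\mathcal{O}_\infty$-absorbing and stable, and remains $\KK^G$-equivalent to $A$.

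Finally, to replace $A_2$ by a simple $\Cst$-algebra $B$ that is $\KK^G$-equivalent to $A_2$, I would apply an equivariant version of Kirchberg's realization theorem: every separable $\mathcal{O}_\infty$-absorbing $G$-$\Cst$-algebra embeds $\KK^G$-invertibly, via a full hereditary subalgebra, into a stable, simple, purely infinite $\Cst$-algebra carrying a continuous pointwise outer $G$-action. Nuclearity and exactness propagate since they are stable under the tensor products and hereditary subalgebra operations used. The main obstacle is precisely this last step --- implementing the equivariant Kirchberg absorption while simultaneously retaining pointwise outerness of $\beta$ and fixing the $\KK^G$-class --- and, to a lesser extent, the construction of the model $(D,\delta)$ in the second step for arbitrary second countable locally compact $G$.
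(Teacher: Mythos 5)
Your strategy is genuinely different from the paper's, but it has three gaps that you yourself partly flag, and at least two of them are serious enough that the argument does not close.

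First, the construction of the model $(D,\delta)$ in step two. For a general second countable locally compact group~$G$ there is no reference providing a simple, pointwise outer $G$-$\Cst$-algebra $\KK^G$-equivalent to~$\C$; the constructions you allude to (infinite tensor products of shift-type building blocks) are done in the literature only for specific classes such as discrete amenable or poly-$\Z$ groups (see the references the paper quotes for model $\mathcal{O}_\infty$-actions). You would have to build $(D,\delta)$ for arbitrary $G$, and this is essentially as hard as the statement you are trying to prove.

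Second, and independently, it is not true in general that $A_1\otimes D$ with the diagonal action is pointwise outer just because $\delta$ is. Take $A_1 = D$ and $\alpha$ the inverse action; the diagonal action is trivial. Even when no such cancellation occurs, deducing outerness of $\alpha_g\otimes\delta_g$ from outerness of~$\delta_g$ requires some form of strong outerness on~$D$ (or an argument using the primitive ideal structure), which you would need to establish and which is extra work, especially since $A_1$ is not yet simple at that point.

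Third, the final step invokes an ``equivariant Kirchberg realization theorem'' that embeds an $\mathcal{O}_\infty$-absorbing $G$-$\Cst$-algebra as a full hereditary subalgebra of a simple one, preserving both the $\KK^G$-class and outerness. No such theorem is available, and you correctly note this is the main obstacle; but it is precisely this step that does all the work of making the algebra simple, so the argument has a real hole there.

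The paper avoids all three issues by a single construction: form the equivariant $\Cst$-correspondence $E = L^2(G,\mathcal{H})\otimes A$ over~$A$ (with the regular covariant representation in the first leg), take the Pimsner Toeplitz algebra $\mathcal{T}_E$, and stabilise. Since no non-zero element of~$A$ acts compactly on~$E$, Kumjian's result makes $\mathcal{T}_E = \mathcal{O}_E$ simple and purely infinite; Pimsner's theorem gives the $\KK$-equivalence $A\sim\mathcal{T}_E$, and the Kasparov cycles used there are natural, hence $G$-equivariant, so the equivalence holds in $\KK^G$. Pointwise outerness of the induced action is then checked directly using the gauge action and the Fock representation. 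This route produces simplicity, pure infiniteness, pointwise outerness and the $\KK^G$-equivalence simultaneously, and works for every second countable locally compact~$G$ without any model action. You may find it instructive to try to prove outerness in the Toeplitz picture; that is where the nontrivial analysis lives.
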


\begin{proof}
  The
  proof of~\cite{Benson-Kumjian-Phillips:Symmetries_Kirchberg}*{Proposition~4.2}
  shows this, but the theorem makes only a weaker statement.  So we go
  through the main construction again.  The main tool is the Toeplitz
  algebra for \(\Cst\)\nb-correspondences
  defined by Pimsner~\cite{Pimsner:Generalizing_Cuntz-Krieger}, in the
  special case studied by Kumjian~\cite{Kumjian:Cuntz-Pimsner}.  A
  similar construction appeared recently
  in~\cite{Deaconu:Pimsner_representation}.

  Before we really start, we need an extra step for the trivial case
  \(A=0\).  Then we replace~\(A\) by~\(\mathcal{O}_2\) with the
  trivial action of~\(G\).  This is non-zero, nuclear and
  \(\KK^G\)\nb-equivalent to~\(0\).  So we may assume without loss
  of generality that \(A\neq0\).
  
  First we build a regular covariant representation of
  \((A,\alpha,G)\)
  as in \cite{Pedersen:Cstar_automorphisms}*{§7.7.1}.  There is a
  faithful representation \(\pi\colon A\to\Bound(\Hils)\)
  of~\(A\)
  on a separable Hilbert space~\(\Hils\).
  Taking an infinite direct sum of copies of~\(\pi\),
  we arrange that \(\pi(A)\)
  contains no compact operators.  Now represent~\(G\)
  and the \(\Cst\)\nb-algebra~\(A\)
  on \(L^2(G,\Hils)\)
  by the left regular representation
  \((\lambda_g f)(x) \defeq f(g^{-1} x)\)
  and by \((\tilde\pi(a) f)(x) \defeq \pi(\alpha_{x^{-1}}(a)) f(x)\)
  for all \(g,x\in G\),
  \(a\in A\),
  \(f\in L^2(G,\Hils)\).
  This is a covariant representation, that is,
  \(\lambda_g \tilde\pi(a) \lambda_{g^{-1}} = \tilde\pi(\alpha_g(a))\)
  for all \(g\in G\),
  \(a\in A\).

  Let~\(E\)
  be the Hilbert \(A\)\nb-module
  \(L^2(G,\Hils)\otimes A\),
  equipped with the left action \(\varphi\colon A\to \Bound(E)\),
  \(a\mapsto \tilde\pi(a)\otimes1\),
  and the diagonal \(G\)\nb-action
  \(\gamma_g (f\otimes a) \defeq \lambda_g(f) \otimes \alpha_g(a)\).
  This makes~\(E\)
  a \(G\)\nb-equivariant
  \(\Cst\)\nb-correspondence
  from~\(A\)
  to itself, that is, the left action is by adjointable operators and
  the \(G\)\nb-action satisfies
  \[
  \gamma_g(a\cdot \xi) = \alpha_g(a) \cdot \gamma_g(\xi),\qquad
  \gamma_g(\xi\cdot a) = \gamma_g(\xi) \cdot \alpha_g(a),\qquad
  \alpha_g(\braket{\xi}{\eta}) = \braket{\gamma_g(\xi)}{\gamma_g(\eta)}
  \]
  for all \(g\in G\),
  \(a\in A\),
  \(\xi,\eta\in E\).
  If \(\varphi(a)\)
  is compact, then~\(\pi(a)\)
  must be compact and then \(a=0\)
  by construction.  In particular, the left action of~\(A\)
  on~\(E\) is injective.

  Let~\(\mathcal{T}_E\)
  be the Toeplitz \(\Cst\)\nb-algebra
  of~\(E\)
  as in~\cite{Pimsner:Generalizing_Cuntz-Krieger}.  The
  \(G\)\nb-actions
  on \(A\)
  and~\(E\)
  satisfy the right compatibility assumptions to induce a continuous
  action of~\(G\)
  on~\(\mathcal{T}_E\)
  (see \cite{Pimsner:Generalizing_Cuntz-Krieger}*{Remark~4.10.(2)}).
  The natural inclusion of~\(A\)
  into~\(\mathcal{T}_E\)
  is a KK-equivalence by
  \cite{Pimsner:Generalizing_Cuntz-Krieger}*{Theorem~4.4}, and this
  remains true in~\(\KK^G\)
  because all Kasparov cycles and homotopies between them used in the
  proof of that theorem are defined naturally and hence are exactly
  \(G\)\nb-equivariant.
  This is already observed by Pimsner in
  \cite{Pimsner:Generalizing_Cuntz-Krieger}*{Remark~4.10.(2)}.

  Since no non-zero element of~\(A\)
  acts on~\(E\)
  by a compact operator, \cite{Kumjian:Cuntz-Pimsner}*{Theorem~2.8}
  shows that \(\mathcal{T}_E = \mathcal{O}_E\)
  is purely infinite and simple.  To make it stable, we
  replace~\(\mathcal{T}_E\)
  by its \(\Cst\)\nb-stabilisation
  \(\mathcal{T}_E \otimes \Comp(\ell^2\N)\),
  which remains purely infinite and simple and \(\KK^G\)-equivalent
  to~\(A\).
  The structural analysis of the Toeplitz algebra in
  \cite{Pimsner:Generalizing_Cuntz-Krieger} or
  \cite{Kumjian:Cuntz-Pimsner} shows that it inherits both nuclearity
  and exactness from~\(A\).

  Finally, we show that the induced action~\(\beta\)
  of~\(G\)
  on~\(\mathcal{T}_E\)
  is pointwise outer (the \(\Cst\)\nb-stabilisation
  does not affect this).  Assume the contrary.  So there are
  \(g\in G\setminus\{1\}\)
  and a unitary multiplier~\(U\)
  of~\(B\)
  with \(\beta_g = \Ad_U\).
  Let \(\gamma\colon \T\to\Aut(\mathcal{T}_E)\)
  be the gauge action on the Toeplitz algebra~\(\mathcal{T}_E\).
  It commutes with the action of~\(G\).  Therefore,
  \[
  \Ad_{\gamma_z(U)}
  = \gamma_z \beta_g \gamma_z^{-1}
  = \beta_g
  = \Ad_U.
  \]
  This implies that conjugation by the unitary~\(\gamma_z(U) U^*\)
  is the identity automorphism.  That is, this unitary is central.
  Since~\(\mathcal{T}_E\)
  is simple, \(\gamma_z(U) = c_z\cdot U\)
  for some scalar \(c_z\in \C\)
  with \(\abs{c_z}=1\).
  The map \(z\mapsto c_z\)
  is a character on~\(\T\).
  Since the dual group of~\(\T\)
  is~\(\Z\),
  there is \(n\in\Z\)
  with \(\gamma_z(U) = z^n\cdot U\)
  for all \(z\in\T\).
  Recall that the Fock representation of~\(\mathcal{T}_E\)
  is a faithful and non-degenerate representation
  on the Hilbert \(A\)\nb-module
  \(\mathcal{F} \defeq \bigoplus_{k=0}^\infty E^{\otimes_A k}\).
  The unitary~\(U\)
  gives a unitary operator on~\(\mathcal{F}\).
  We describe it by a block matrix \((U_{k,l})_{k,l\in\N}\)
  in the direct sum decomposition of~\(\mathcal{F}\).
  The gauge action~\(\gamma_z\)
  on~\(\mathcal{T}_E\)
  acts on the matrix coefficients by
  \(\gamma_z(U_{k,l}) = z^{k-l} U_{k,l}\)
  for all \(k,l\in\N\).
  So the homogeneity condition \(\gamma_z(U) = z^n\cdot U\)
  says that \(U_{k,l}=0\)
  for \(k-l \neq n\).
  If \(n>0\),
  then this implies that \(k>0\),
  so that the range of~\(U\)
  is orthogonal to the first summand~\(A\)
  in~\(\mathcal{F}\).
  This is impossible for a unitary operator.
  Similarly, \(U^*\)
  cannot be unitary if \(n<0\).
  So we must have \(n=0\).

  So~\(U\)
  is a multiplier of the fixed-point subalgebra
  \(\mathcal{T}_E^\T \subseteq \mathcal{T}_E\)
  of the gauge action.  This subalgebra acts on~\(\mathcal{F}\)
  by diagonal operators.
  Projecting to the first summand~\(A\)
  in~\(\mathcal{F}\)
  is a non-degenerate representation of~\(\mathcal{T}_E^\T\).
  It maps~\(U\)
  to a unitary multiplier~\(V\)
  of~\(A\)
  with \(\Ad_V = \alpha_g\).
  Projecting to the second summand~\(E\)
  in~\(\mathcal{F}\)
  is another non-degenerate representation of~\(\mathcal{T}_E^\T\),
  which maps~\(U\)
  to a unitary operator~\(W\)
  on~\(E\).
  Conjugation by~\(W\)
  must give the restriction of~\(\beta_g\)
  on \(\Comp(E) \cong \Comp(L^2(G,\Hils))\otimes A\).
  Conjugation by \(\lambda_g \otimes V\)
  has the same effect.  So \(W = W_0\cdot (\lambda_g \otimes V)\)
  for some unitary~\(W_0\)
  in the centre of the multiplier algebra of \(\Comp(E)\).
  Such operators must be of the form
  \(W_0 = 1_{L^2(G,\Hils)}\otimes W_1\)
  for some central unitary multiplier~\(W_1\) of~\(A\).
  So \(W = \lambda_g \otimes (W_1\cdot V)\).
  The image of~\(\mathcal{T}_E^\T\)
  in~\(\Bound(E)\)
  is the linear span of \(\Comp(E)\)
  and \(\varphi(A)\),
  where \(\varphi\colon A\to\Bound(E)\)
  is the left action chosen above.
  We are going to prove that
  \begin{equation}
    \label{eq:proof_detail_1}
    W\cdot \varphi(a) \notin \varphi(A) + \Comp(E)
  \end{equation}
  for \(a\in A\), \(a\neq0\).  Then~\(W\) is not a multiplier of
  \(\varphi(A) + \Comp(E)\).  But it must be one, coming from a
  multiplier of~\(\mathcal{T}_E^\T\).  This contradiction shows that
  the automorphism~\(\beta_g\) on~\(\mathcal{T}_E\) cannot be inner.

  It remains to prove~\eqref{eq:proof_detail_1}.  Let \(a\in A\) be
  such that \(W\cdot \varphi(a) \in \varphi(A) + \Comp(E)\).  We
  are going to prove that \(a=0\).  Since \(g\neq1\), there is
  \(h\in \Contc(G)\) so that \(h\) and~\(\lambda_g h\) have disjoint
  support.  We use the creation operator
  \(T_h\colon \Hils \otimes A \to E\), \(\xi\mapsto h\otimes \xi\).
  Then \(T_{\lambda_g h}^* \varphi(A) T_h = 0\)
  because~\(\varphi(A)\) does not change the support of functions
  on~\(G\).  So
  \(T_{\lambda_g h}^* W \varphi(a) T_h \in \Comp(\Hils \otimes A)\).
  Let \(\xi\in \Hils \otimes A\).  Then
  \begin{multline*}
    T_{\lambda_g h}^* W \varphi(a) T_h(\xi) =
    \int_G \overline{h(x)} W_1 V (\pi(\alpha_x^{-1}(a)) \otimes 1_E) h(x)\xi \,\diff x
    \\= W_1 V \pi\biggl( \int_G \abs{h(x)}^2 \alpha_x^{-1}(a) \,\diff
    x\biggr) \otimes 1.
  \end{multline*}
  Since~\(W_1 V\) is unitary and \(\pi(A) \cap \Comp(\Hils) = 0\),
  this is only compact if
  \[
    \int_G \abs{h(x)}^2 \alpha_x^{-1}(a) \,\diff x = 0.
  \]
  Then \(a=0\) because the action on~\(A\) is continuous.  This
  proves the claim.
\end{proof}

There are not yet any definite classification results for general
group actions on \(\Cst\)\nb-algebras.
So it is unclear which conditions are sufficient for classification.
If a generalisation of Kirchberg's classification theorem exists, it
should probably assume a \(G\)\nb-action
on a simple, nuclear, separable \(\Cst\)\nb-algebra
that tensorially absorbs a suitable model \(G\)\nb-action
on~\(\mathcal{O}_\infty\),
which is \(\KK^G\)-equivalent
to~\(\C\)
with the trivial action.  This is analogous to the strong pure
infiniteness requirement in Kirchberg's classification theorem for
non-simple \(\Cst\)\nb-algebras,
which says that the \(\Cst\)\nb-algebra
tensorially absorbs~\(\mathcal{O}_\infty\).
The model action on~\(\mathcal{O}_\infty\)
should tensorially absorb itself, so that tensoring with it produces a
\(\KK^G\)-equivalent
object that tensorially absorbs the model action.
Such model actions are constructed in
\cite{Goldstein-Izumi:Quasi-free}*{Theorem~5.1},
\cite{Izumi-Matui:Poly-Z}*{Theorem~4.13} and
\cite{Szabo:Equivariant_Absorption}*{Corollary~3.7}.  I thank the
referee for pointing out these references.
If a classification
theorem for group actions will be proven, then one may use
Theorem~\ref{the:make_actions_simple} to show that any object
of~\(\KK^G\)
whose underlying \(\Cst\)\nb-algebra
is nuclear is \(\KK^G\)-equivalent
to a classifiable object, by tensoring the \(G\)\nb-action
from Theorem~\ref{the:make_actions_simple} with the model action.  It
may well be that the actions produced by
Theorem~\ref{the:make_actions_simple} already absorb the model action.
We do not discuss this further here because, anyway, our
classification results are only up to \(\KK^G\)-equivalence.

\section{Actions of torsion-free, amenable groups and a conjecture of Izumi}
\label{sec:torsion-free_Izumi}

Throughout this section, we assume that~\(G\)
is an amenable, second countable, Hausdorff, locally compact group.
Let~\(\mathcal{E}G\)
denote a second countable, locally compact, Hausdorff space with a
proper, continuous \(G\)\nb-action
that is the universal proper action in the following sense:
\begin{enumerate}
\item for any other locally compact, proper \(G\)\nb-space~\(Z\)
  there is a continuous \(G\)\nb-equivariant
  map \(Z \to \mathcal{E}G\), and
\item any two continuous \(G\)\nb-equivariant
  maps \(Z \rightrightarrows \mathcal{E}G\)
  are homotopic through a continuous \(G\)\nb-equivariant
  homotopy \(Z\times [0,1] \to \mathcal{E}G\).
\end{enumerate}
Such a space exists by \cite{Kasparov-Skandalis:Bolic}*{Lemma~4.1}
(take \(X=G\)).  But smaller models are preferable for actual
computations.

We will soon also assume~\(G\)
to be torsion-free, but the first step in our argument does not yet
need this.  For an amenable, torsion-free group, we will show that
\(\KK^G\)-equivalence
classes of actions of~\(G\)
on a stable Kirchberg algebra~\(A\)
are in bijection with isomorphism classes of locally trivial
\(\Aut(A)\)-bundles
over the classifying space~\(B G\).
This relates a conjecture by Izumi~\cite{Izumi:Group_actions} to
Kirchberg's Classification Theorem.  First we recall a key step in the
proof of the Baum--Connes conjecture for amenable groups by Higson and
Kasparov~\cite{Higson-Kasparov:E_and_KK}.

\begin{definition}
  \label{def:proper_G-Cstar}
  A \(G\)\nb-\(\Cst\)-algebra~\(\mathcal{P}\)
  is \emph{proper} if there are a locally compact, proper
  \(G\)\nb-space~\(X\)
  and a \(G\)\nb-equivariant,
  nondegenerate \Star{}homomorphism from \(\Cont_0(X)\)
  to the centre of the multiplier algebra of~\(\mathcal{P}\).
  In other words, \(\mathcal{P}\)
  carries an action of the transformation groupoid \(G\ltimes X\).
  Since the latter groupoid is proper, proper group actions on
  \(\Cst\)\nb-algebras behave much better than general actions.
\end{definition}

\begin{theorem}[\cite{Higson-Kasparov:E_and_KK}]
  \label{the:Higson-Kasparov}
  Let~\(G\)
  be an amenable group or, more generally, a group with the Haagerup
  approximation property.  Then there is a proper
  \(G\)\nb-\(\Cst\)-algebra~\(\mathcal{P}\)
  that is \(\KK^G\)-equivalent
  to~\(\C\).  In addition, \(\mathcal{P}\) is nuclear.
\end{theorem}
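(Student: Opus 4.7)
The plan is to follow the Higson--Kasparov strategy, using the Haagerup approximation property directly to produce the proper algebra and then using a Bott--Dirac construction to establish the $\KK^G$-equivalence with~$\C$.

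First, I would use the Haagerup property to choose a proper, affine, isometric action of~$G$ on a real, separable Hilbert space~$V$. Writing $V$ as the union of an increasing sequence of finite-dimensional, $G$-invariant (up to perturbation) affine subspaces $V_n \subseteq V$, I would associate to each~$V_n$ the $\Z/2$\nb-graded $\Cst$\nb-algebra $\mathcal{A}(V_n)\defeq \Cont_0(V_n,\mathrm{Cliff}(V_n^*))$ of continuous, $\Cst$\nb-Clifford-valued functions vanishing at infinity. Affine maps $V_n\hookrightarrow V_{n+1}$ together with the Bott element induce graded \Star{}homomorphisms $\mathcal{A}(V_n)\to \mathcal{A}(V_{n+1})$, and the inductive limit $\mathcal{A}(V)\defeq \varinjlim \mathcal{A}(V_n)$ carries a canonical continuous $G$\nb-action from the affine action on~$V$. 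I would set $\mathcal{P}\defeq \mathcal{A}(V)$.

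Next, I would verify that~$\mathcal{P}$ is proper in the sense of Definition~\ref{def:proper_G-Cstar}. The key observation is that $\Cont_0(V)$ embeds $G$\nb-equivariantly and nondegenerately into the centre of $\Mult(\mathcal{P})$ (as the even, scalar-valued part of the Clifford field), and the $G$\nb-action on~$V$ is proper since we chose the affine action to be proper. Nuclearity of~$\mathcal{P}$ follows because each $\mathcal{A}(V_n)$ is a continuous-trace algebra with finite-dimensional fibres, hence nuclear, and nuclearity is preserved by inductive limits.

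The main step, and the hardest one, is to construct the \emph{Dirac} class $\alpha\in\KK^G(\mathcal{P},\C)$ and the \emph{dual Dirac} class $\beta\in\KK^G(\C,\mathcal{P})$ with $\beta\otimes_{\mathcal P}\alpha=1_\C$ and $\alpha\otimes_\C\beta=1_{\mathcal P}$. The dual Dirac is the Bott element, given by the obvious unit map $\C\to \mathcal{A}(V)$, while~$\alpha$ is assembled from the finite-dimensional Dirac operators on~$V_n$ using Kasparov's asymptotic machinery; the $G$\nb-equivariance of both classes relies on the fact that the affine action commutes, up to inner perturbation, with the Clifford structure. Proving $\beta\otimes\alpha=1$ amounts to infinite-dimensional Bott periodicity and is the technical heart of Higson--Kasparov: one uses a rotation homotopy through the isometric group of $V\oplus V$ to reduce to a finite-dimensional computation. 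Once both compositions are identified with the units, $\mathcal{P}$ is $\KK^G$\nb-equivalent to~$\C$, completing the proof.
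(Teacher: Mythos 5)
This theorem is not proved in Meyer's paper: it is stated as a citation to Higson and Kasparov, and there is no internal proof to compare your attempt against. Your sketch captures the Dirac--dual-Dirac strategy at a high level, but the properness step contains a genuine gap. The Hilbert space~$V$ is infinite-dimensional, hence not locally compact, so $\Cont_0(V)$ is not a $\Cst$-algebra of continuous functions on a locally compact space, and Definition~\ref{def:proper_G-Cstar} explicitly requires a locally compact proper $G$-space~$X$. Restricting to finite-dimensional pieces does not repair this: $\Cont_0(V_n)$ does sit centrally inside $\mathcal{A}(V_n)$, but the Bott $*$-homomorphism $\mathcal{A}(V_n)\to\mathcal{A}(V_{n+1})$ does \emph{not} carry $\Cont_0(V_n)$ into $\Cont_0(V_{n+1})$; the Bott element is genuinely Clifford-valued, so the image of a scalar-valued function acquires non-scalar Clifford components and ceases to be central. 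Consequently $\mathcal{A}(V)$ has no natural copy of any $\Cont_0(V_n)$, let alone of ``$\Cont_0(V)$'', in the centre of its multiplier algebra, and your argument for properness fails. The actual Higson--Kasparov argument for properness is considerably more delicate and does not proceed through any such central embedding.

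Two smaller imprecisions are also worth flagging. The Bott $*$-homomorphisms between finite-dimensional building blocks only exist after tensoring in a graded auxiliary factor $\mathcal{S}=\Cont_0(\R)$, which you omit; without it the Bott element does not yield an inductive system of honest $*$-homomorphisms. And for a general group with the Haagerup property there is usually no $G$-invariant nested exhaustion of~$V$ by finite-dimensional affine subspaces --- Higson and Kasparov take a colimit over the filtered poset of all finite-dimensional affine subspaces, on which $G$ acts, rather than over a nested sequence of approximately invariant ones.
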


The \(\KK^G\)-equivalence
between~\(\mathcal{P}\)
and~\(\C\)
consists of two elements \(D\in \KK^G_0(\mathcal{P},\C)\),
\(\eta\in \KK^G_0(\C,\mathcal{P})\),
called Dirac and dual Dirac elements.  Their composites in both
direction are~\(1\).
This data is built in~\cite{Higson-Kasparov:E_and_KK} and then used to
prove the Baum--Connes conjecture for~\(G\).
If~\(X\)
is as in Definition~\ref{def:proper_G-Cstar}, then there is a map
\(X \to \mathcal{E}G\),
which induces a \(G\)\nb-equivariant,
nondegenerate \Star{}homomorphism
\(\Cont_0(\mathcal{E}G)\to \Mult(\Cont_0(X))\).
So we may replace~\(X\)
by~\(\mathcal{E}G\)
in Definition~\ref{def:proper_G-Cstar}.  Then~\(\mathcal{P}\)
carries an action of the transformation groupoid
\(G\ltimes \mathcal{E}G\)
as in~\cite{LeGall:KK_groupoid}.  Given a
\(G\)\nb-\(\Cst\)-algebra~\(A\),
the \(\Cst\)\nb-algebra
\(\Cont_0(\mathcal{E}G)\otimes A \cong \Cont_0(\mathcal{E}G,A)\)
carries a canonical action of \(G\ltimes \mathcal{E}G\),
using the diagonal \(G\)\nb-action
and the equivariant nondegenerate \Star{}homomorphism
\(\Cont_0(\mathcal{E}G) \to Z\Mult(\Cont_0(\mathcal{E}G)\otimes A)\),
\(f\mapsto f\otimes 1\).

The construction \(A\mapsto \Cont_0(\mathcal{E} G,A)\)
above is part of a functor
\(p_{\mathcal{E}G}^*\colon \KK^G \to \KK^{G\ltimes\mathcal{E}G}\).
This may be proven directly by writing down
what~\(p_{\mathcal{E}G}^*\)
does with Kasparov cycles.  An elegant proof using the universal
property of~\(\KK^G\)
is explained in~\cite{Meyer-Nest:BC}*{Section~3.2}.  More generally,
the same proof gives pull-back functors
\[
f^*\colon \KK^{G\ltimes X} \to \KK^{G\ltimes Y}
\]
for two locally compact \(G\)\nb-spaces
\(X\)
and~\(Y\)
and a \(G\)\nb-equivariant
continuous map \(f\colon Y\to X\);
the functor \(p_{\mathcal{E}G}^*\)
is the special case where~\(f\)
is the constant map from~\(\mathcal{E}G\) to the one-point space.

\begin{definition}
  \label{def:symmetric_algebra_over_EG}
  Let
  \(p_1,p_2\colon \mathcal{E}G \times \mathcal{E}G \rightrightarrows \mathcal{E}G\)
  be the two coordinate projections.  An object of
  \(\KK^{G\ltimes\mathcal{E}G}\)
  is called \emph{symmetric} if \(p_1^*(\mathcal{B})\)
  and \(p_2^*(\mathcal{B})\)
  are \(\KK^{G\ltimes(\mathcal{E}G\times\mathcal{E}G)}\)-equivalent.
\end{definition}

\begin{proposition}
  \label{pro:KKG_vs_KK_over_EG}
  Let~\(G\)
  be a locally compact group and \(\mathcal{E}G\)
  a universal proper \(G\)\nb-space.
  Assume that there is a nuclear, proper
  \(G\)\nb-\(\Cst\)-algebra~\(\mathcal{P}\)
  that is \(\KK^G\)-equivalent
  to~\(\C\).
  The functor \(\KK^G\to \KK^{G\ltimes\mathcal{E}G}\)
  defined above is an equivalence from~\(\KK^G\)
  onto the full subcategory of symmetric objects in
  \(\KK^{G\ltimes\mathcal{E}G}\).
  This functor restricts to an equivalence from the
  subcategory of \(G\)\nb-actions
  on nuclear \(\Cst\)\nb-algebras
  to the full subcategory of symmetric objects
  in \(\KK^{G\ltimes\mathcal{E}G}\)
  whose underlying \(\Cst\)\nb-algebra
  is nuclear.
\end{proposition}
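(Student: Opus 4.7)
The plan is to construct an explicit quasi-inverse to \(p^*_{\mathcal{E}G}\) on the full subcategory of symmetric objects, using the Dirac and dual Dirac elements \(D\in\KK^G_0(\mathcal{P},\C)\) and \(\eta\in\KK^G_0(\C,\mathcal{P})\) to transport between \(\KK^G\) and \(\KK^{G\ltimes\mathcal{E}G}\).  The guiding principle is that for any \(A\in\KK^G\) the tensor product \(\mathcal{P}\otimes A\) is \(\KK^G\)\nb-equivalent to \(A\) via \(\eta\) while carrying a canonical \(\Cont_0(\mathcal{E}G)\)\nb-structure from \(\mathcal{P}\), so every \(G\)\nb-\(\Cst\)\nb-algebra is secretly a \(G\ltimes\mathcal{E}G\)\nb-algebra up to \(\KK^G\)\nb-equivalence.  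Before the main argument, I would verify that \(p^*_{\mathcal{E}G}\) lands in symmetric objects: for any \(A\in\KK^G\), the two pullbacks \(p_1^*p^*A\) and \(p_2^*p^*A\) are both canonically identified with the constant bundle \(\Cont_0(\mathcal{E}G\times\mathcal{E}G,A)\) as \(G\ltimes(\mathcal{E}G\times\mathcal{E}G)\)\nb-algebras, so \(p^*A\) is symmetric.

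For fully faithfulness, I would use the \(\KK^G\)\nb-equivalence \(A \sim \mathcal{P}\otimes A\) to rewrite \(\KK^G(A,B)\cong \KK^G(\mathcal{P}\otimes A, B)\).  Since \(\mathcal{P}\otimes A\) is a \(G\ltimes\mathcal{E}G\)\nb-algebra, the key step is the adjunction
\[
\KK^G(\mathcal{A}, B) \cong \KK^{G\ltimes\mathcal{E}G}(\mathcal{A}, p^*B)
\]
for any \(G\ltimes\mathcal{E}G\)\nb-algebra~\(\mathcal{A}\) and any \(G\)\nb-\(\Cst\)\nb-algebra~\(B\).  This reflects that Kasparov cycles with a proper source automatically inherit an equivariance for the \(\Cont_0(\mathcal{E}G)\)\nb-structure on the target \(p^*B\), and may be established along the lines of \cite{Meyer-Nest:BC}*{Section~3.2} via the universal property of \(\KK^{G\ltimes\mathcal{E}G}\).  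Combined with a \(\KK^{G\ltimes\mathcal{E}G}\)\nb-equivalence \(\mathcal{P}\otimes A \sim p^*A\), obtained by tensoring a structural equivalence \(\mathcal{P} \sim \Cont_0(\mathcal{E}G)\) in \(\KK^{G\ltimes\mathcal{E}G}\) with~\(A\), this yields the fully faithful assertion.

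For essential surjectivity onto symmetric objects, given symmetric \(\mathcal{B}\), I would show that \(p^*R\mathcal{B} \sim \mathcal{B}\) in \(\KK^{G\ltimes\mathcal{E}G}\), where \(R\) denotes the forgetful functor to \(\KK^G\).  The algebra \(\mathcal{B}\otimes\mathcal{P}\) carries two distinct \(\Cont_0(\mathcal{E}G)\)\nb-structures, making it an object of \(\KK^{G\ltimes(\mathcal{E}G\times\mathcal{E}G)}\) that agrees with \(p_1^*\mathcal{B}\) up to the Dirac equivalence of~\(\mathcal{P}\).  The symmetry hypothesis identifies this with \(p_2^*\mathcal{B}\), and pulling back along the diagonal \(\mathcal{E}G\hookrightarrow \mathcal{E}G\times\mathcal{E}G\) followed by application of the Dirac element produces the desired \(\KK^{G\ltimes\mathcal{E}G}\)\nb-equivalence between \(\mathcal{B}\) and \(p^*R\mathcal{B}\).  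Nuclearity is preserved throughout because \(\mathcal{P}\) and \(\Cont_0(\mathcal{E}G)\) are nuclear and the construction only uses tensor products and \(G\)\nb-equivariant homotopies.

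The main obstacle is this last step: rigorously translating the abstract symmetry of \(\mathcal{B}\) into an explicit equivariant Kasparov equivalence requires careful bookkeeping of which \(\Cont_0(\mathcal{E}G)\)\nb-structure is being invoked at each stage, and a consistent use of the pullback--pushforward formalism for the equivariant KK-theories \(\KK^{G\ltimes X}\) of \cite{LeGall:KK_groupoid} and \cite{Meyer-Nest:BC}.  Once this framework is in place, the argument proceeds uniformly and no further hypothesis on~\(G\) beyond the existence of the proper nuclear algebra~\(\mathcal{P}\) supplied by Theorem~\ref{the:Higson-Kasparov} is required.
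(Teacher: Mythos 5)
Your verification that \(p^*_{\mathcal{E}G}\) lands in the symmetric objects is correct and is exactly the paper's argument. The rest of the proposal, however, rests on two claims that are genuine gaps. First, you identify the quasi-inverse with the forgetful functor \(R\colon \KK^{G\ltimes\mathcal{E}G}\to\KK^G\) and assert \(p^*R\mathcal{B}\sim\mathcal{B}\) for symmetric~\(\mathcal{B}\). This fails already for \(\mathcal{B}=p^*\C=\Cont_0(\mathcal{E}G)\): then \(p^*R\mathcal{B}=\Cont_0(\mathcal{E}G\times\mathcal{E}G)\) with the first-leg structure, and an equivalence with \(\Cont_0(\mathcal{E}G)\) over~\(\mathcal{E}G\) would force \(\Cont_0(\mathcal{E}G)\sim_{\KK}\C\) fibrewise, which is false for, say, \(G=\Z\), \(\mathcal{E}G=\R\). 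The correct quasi-inverse is \(\mathcal{B}\mapsto\mathcal{B}\otimes_{\mathcal{E}G}\mathcal{P}\) (restrict \(\mathcal{B}\otimes\mathcal{P}\) to the diagonal, then forget the structure); the whole point of \(\mathcal{P}\) is to repair the discrepancy between \(R\) and a genuine inverse. Second, your fully-faithfulness argument needs ``a structural equivalence \(\mathcal{P}\sim\Cont_0(\mathcal{E}G)\) in \(\KK^{G\ltimes\mathcal{E}G}\)''. This does not follow from the hypothesis \(\mathcal{P}\sim_{\KK^G}\C\): functoriality of \(p^*\) only gives \(p^*\mathcal{P}\sim p^*\C\), where \(p^*\mathcal{P}=\Cont_0(\mathcal{E}G)\otimes\mathcal{P}\) carries the first-leg structure, not the intrinsic structure of~\(\mathcal{P}\). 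An equivalence of \(\mathcal{P}\) with \(\Cont_0(\mathcal{E}G)\) \emph{over} \(\mathcal{E}G\) is a fibrewise statement about the Dirac element that is neither assumed nor provable from the stated hypotheses. (Your intermediate adjunction \(\KK^G(\mathcal{A},B)\cong\KK^{G\ltimes\mathcal{E}G}(\mathcal{A},p^*B)\) for proper~\(\mathcal{A}\) is also a substantive unproved claim, though it is at least plausible.)

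The paper's proof is arranged precisely to avoid both issues. It defines the two functors \(A\mapsto\Cont_0(\mathcal{E}G,A)\) and \(\mathcal{B}\mapsto\mathcal{B}\otimes_{\mathcal{E}G}\mathcal{P}\), checks that the composite on \(\KK^G\) is \(A\mapsto A\otimes\mathcal{P}\sim A\), and then shows that the other composite fixes symmetric objects by the chain of identifications in~\eqref{eq:tensor_with_symmetric}. There, the symmetry hypothesis is used to move the \(\Cont_0(\mathcal{E}G)\)-structure from the \(\Cont_0(\mathcal{E}G)\)-leg to the \(\mathcal{B}\)-leg, after which the equivalence \(\mathcal{P}\sim_{\KK^G}\C\) is applied only \emph{externally}, tensored with an object that retains its own structure on the other leg. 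No fibrewise comparison of \(\mathcal{P}\) with \(\Cont_0(\mathcal{E}G)\) and no adjunction is ever needed. If you rewrite your argument with \((\blank)\otimes_{\mathcal{E}G}\mathcal{P}\) in place of \(R\) and follow this bookkeeping, the ``careful bookkeeping'' obstacle you flag at the end disappears.
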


\begin{proof}
  The proper \(G\)\nb-\(\Cst\)-algebra~\(\mathcal{P}\)
  gives rise to functor
  \[
  \KK^{G\ltimes\mathcal{E}G} \to \KK^G,\qquad
  A\mapsto A\otimes_{\mathcal{E}G} \mathcal{P}.
  \]
  On the object level this functor first forms the tensor product
  \(A\otimes \mathcal{P}\),
  which is a \(\Cst\)\nb-algebra
  over \(\mathcal{E}G\times\mathcal{E}G\),
  and then restricts this to the diagonal
  \(\setgiven{(x,x)}{x\in\mathcal{E}G}\).
  Then the \(\Cont_0(\mathcal{E}G)\)-\(\Cst\)\nb-algebra
  structure is forgotten.  The group action of~\(G\)
  goes along in these constructions.  The universal property
  of~\(\KK^{G\ltimes X}\)
  shows that it defines a functor on Kasparov theory
  (see~\cite{Meyer-Nest:BC}*{Section~3.2}).  Both functors
  \(\KK^G \leftrightarrow \KK^{G\ltimes\mathcal{E}G}\)
  preserve nuclearity of \(\Cst\)\nb-algebras.
  The composite functor \(\KK^G \to \KK^G\)
  maps a \(G\)-\(\Cst\)-algebra~\(A\)
  to \(\Cont_0(\mathcal{E}G,A) \otimes_{\mathcal{E}G}\mathcal{P}\),
  which is canonically isomorphic to \(A\otimes \mathcal{P}\).
  And this is \(\KK^G\)-equivalent
  to~\(A\)
  because of the \(\KK^G\)-equivalence
  between \(\mathcal{P}\)
  and~\(\C\).
  So the composite \(\KK^G \to \KK^G\)
  is equivalent to the identity functor.  Hence the other composite
  functor
  \(S\colon \KK^{G\ltimes\mathcal{E}G}\to\KK^{G\ltimes\mathcal{E}G}\)
  is a projection onto a subcategory
  of~\(\KK^{G\ltimes\mathcal{E}G}\).  Since
  \[
  p_1^*(\Cont_0(\mathcal{E}G,A)) \cong
  \Cont_0(\mathcal{E}G\times\mathcal{E}G,A)
  \cong p_2^*(\Cont_0(\mathcal{E}G,A)),
  \]
  the objects of this subcategory must be symmetric.  Conversely,
  assume that~\(\mathcal{B}\)
  is a symmetric object of~\(\KK^{G\ltimes\mathcal{E}G}\).
  The composite functor~\(S\) above maps~\(\mathcal{B}\) to
  \begin{multline}
    \label{eq:tensor_with_symmetric}
    \Cont_0(\mathcal{E}G) \otimes
    (\mathcal{B}\otimes_{\mathcal{E}G} \mathcal{P})
    \cong \Cont_0(\mathcal{E}G,\mathcal{P})
    \otimes_{\mathcal{E}G\times\mathcal{E}G}
    (\Cont_0(\mathcal{E}G) \otimes \mathcal{B})
    \\\sim_{\KK^{G\ltimes \mathcal{E}G}}
      \Cont_0(\mathcal{E}G,\mathcal{P})
    \otimes_{\mathcal{E}G\times\mathcal{E}G}
    (\mathcal{B} \otimes \Cont_0(\mathcal{E}G))
    \cong \mathcal{B} \otimes \mathcal{P}
    \sim_{\KK^{G\ltimes \mathcal{E}G}} \mathcal{B} \otimes \C
    \cong \mathcal{B}.
  \end{multline}
  All objects in~\eqref{eq:tensor_with_symmetric} carry the diagonal
  action of~\(G\)
  and the \(\Cont_0(\mathcal{E}G)\)-\(\Cst\)-algebra
  structure that acts only on the first leg, coming from the
  projection
  \(p_1\colon \mathcal{E}G\times\mathcal{E}G \to \mathcal{E}G\).
  The first \(\KK^{G\ltimes\mathcal{E}G}\)-equivalence
  in~\eqref{eq:tensor_with_symmetric} uses that~\(\mathcal{B}\)
  is symmetric.  The second \(\KK^{G\ltimes\mathcal{E}G}\)-equivalence
  uses once again that~\(\mathcal{P}\)
  is \(\KK^G\)-equivalent
  to~\(\C\).
  So all objects in the range of~\(S\)
  are symmetric, and~\(S\)
  is equivalent to the identity functor on the full subcategory of
  symmetric objects.  This finishes the proof.
\end{proof}

Theorem~\ref{the:Higson-Kasparov} shows that
Proposition~\ref{pro:KKG_vs_KK_over_EG} applies if~\(G\)
is amenable or has the Haagerup approximation property.

Now assume also that~\(G\)
is torsion-free, that is, the only compact subgroup in~\(G\)
is the trivial subgroup~\(\{1\}\).
Then any proper \(G\)\nb-action
is free.  So~\(\mathcal{E}G\)
is also a universal free and proper action.  Since~\(G\)
has only the trivial compact subgroup, Abels' Slice Theorem for proper
actions in \cite{Abels:Universal}*{Theorem~3.3} says here that any
\(x\in\mathcal{E}G\)
has a \(G\)\nb-invariant
open neighbourhood \(U\subseteq\mathcal{E}G\)
that is \(G\)\nb-equivariantly
homeomorphic to \(G\times Y\)
with the translation action on~\(G\times Y\).
In other words, the projection \(\mathcal{E}G \to \mathcal{E}G/G\)
is locally trivial.  So~\(\mathcal{E}G/G\)
is a model for the classifying space of~\(G\),
and it is legitimate to define \(BG\defeq \mathcal{E}G/G\).
We are going to build a functor
\[
M\colon \KK^{BG} \to \KK^{G\ltimes\mathcal{E}G},\qquad
A\mapsto \Cont_0(\mathcal{E}G) \otimes_{BG} A,
\]
and show that it is an equivalence of categories.  An action of~\(BG\)
on a \(\Cst\)\nb-algebra
is simply a \(\Cont_0(BG)\)-\(\Cst\)-algebra
structure.  The bundle projection \(q\colon \mathcal{E}G \to BG\)
is \(G\)\nb-equivariant
if we let~\(G\)
act trivially on~\(BG\).
Equipping a \(\Cont_0(BG)\)-\(\Cst\)-algebra
with the trivial action of~\(G\)
gives a functor \(\KK^{BG} \to KK^{BG\rtimes G}\),
which we may compose with the pull-back functor~\(q^*\)
to get the functor~\(M\)
above.  The explanation why~\(M\)
is an equivalence of categories is that the \(G\)\nb-action
on~\(\mathcal{E}G\)
is free and proper, so that the groupoid \(G\ltimes\mathcal{E}G\)
is equivalent to the space~\(BG\).
  
\begin{proposition}
  \label{pro:equivalence_KK_G-EG_BG}
  Let~\(G\)
  be a torsion-free, locally compact group.  The functor~\(M\)
  above is an equivalence of categories
  \(\KK^{G\ltimes\mathcal{E}G} \simeq \KK^{BG}\).
  The equivalence restricts to the subcategories of objects whose
  underlying \(\Cst\)\nb-algebra is nuclear.
\end{proposition}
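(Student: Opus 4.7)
The plan is to interpret $M$ as the $\KK$\nb-theoretic shadow of a Morita equivalence of locally compact groupoids. Since~$G$ is torsion-free, its action on~$\mathcal{E}G$ is free as well as proper, and the local triviality of \(q\colon \mathcal{E}G\to BG\) recalled above exhibits~$\mathcal{E}G$ as a principal $G$\nb-bundle over~$BG$. It follows that~$\mathcal{E}G$, equipped with its left action of the transformation groupoid \(G\ltimes\mathcal{E}G\) by translation and its right action of the unit groupoid~$BG$ given by the bundle projection~$q$, is a Morita equivalence bibundle from \(G\ltimes\mathcal{E}G\) to~$BG$.

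First, I would appeal to \cite{LeGall:KK_groupoid}, where equivariant Kasparov theory is shown to be functorial along equivalences of locally compact groupoids, to conclude from this that there is an equivalence of categories \(\KK^{G\ltimes\mathcal{E}G}\simeq\KK^{BG}\). Then I would identify one direction of the induced equivalence, \(\KK^{BG}\to\KK^{G\ltimes\mathcal{E}G}\), with~$M$: on objects, induction along the bibundle~$\mathcal{E}G$ sends a $\Cont_0(BG)$\nb-algebra~$A$ to \(\Cont_0(\mathcal{E}G)\otimes_{\Cont_0(BG)}A\) equipped with the diagonal \(G\ltimes\mathcal{E}G\)\nb-action, which is precisely~$M(A)$; on morphisms, the universal property of equivariant $\KK$ invoked in \cite{Meyer-Nest:BC}*{Section~3.2} shows that this construction coincides with the composition of~$q^*$ with the functor that equips a $\Cont_0(BG)$\nb-algebra with the trivial $G$\nb-action, exactly as in the definition of~$M$ given just before the statement.

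If one prefers to avoid the full groupoid-equivalence machinery, I would instead construct an explicit quasi-inverse~$N$ by \(N(B)\defeq B\rtimes G\), endowed with the $\Cont_0(BG)$\nb-structure coming from the inclusion of $G$\nb-invariant functions \(\Cont_0(BG)\hookrightarrow Z\Mult(B\rtimes G)\) pulled back along~$q$. The composite $N\circ M$ sends \(A\in\KK^{BG}\) to \((\Cont_0(\mathcal{E}G)\otimes_{BG}A)\rtimes G\), which is Morita equivalent, and hence $\KK^{BG}$\nb-equivalent, to~$A$ by Green's imprimitivity theorem applied to the principal bundle~$q$; the other composite $M\circ N$ can be treated by a parallel calculation after trivialising~$q$ locally over~$BG$.

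The main obstacle is the identification of~$M$ with the abstract induction functor attached to the bibundle, together with the naturality of the Green-type imprimitivity equivalences needed to upgrade them from object-by-object isomorphisms to a natural isomorphism of functors on~$\KK$. The nuclearity statement is automatic at each step: the pull-back \(A\mapsto \Cont_0(\mathcal{E}G,A)\) preserves nuclearity because it is locally a minimal tensor product with a commutative $\Cst$\nb-algebra, and the crossed product \(B\mapsto B\rtimes G\) preserves nuclearity since~$G$ is amenable.
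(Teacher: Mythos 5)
Your proposal is correct, but it packages the argument differently from the paper. The paper constructs an explicit quasi-inverse by sending a \(G\ltimes\mathcal{E}G\)-algebra \(\mathcal{B}\) to its generalised fixed-point algebra \(\mathcal{B}^G = \Cont_0(BG)\cdot\Mult(\mathcal{B})^G\); because the \(G\)\nb-action is free and proper, this is literally the operation of viewing a \(G\)\nb-equivariant field over \(\mathcal{E}G\) as a field over \(BG\), and the two composites are naturally isomorphic to the identity already at the level of \Star{}homomorphisms (the key point being that \(\Cont_0(\mathcal{E}G)\otimes_{\Cont_0(BG)}\mathcal{B}^G\to\mathcal{B}\) is a fibrewise, hence global, isomorphism); full faithfulness is quoted from Kasparov. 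Your first route instead black-boxes all of this into Le Gall's Morita invariance of groupoid-equivariant \(\KK\) for the equivalence bibundle \(\mathcal{E}G\) between \(G\ltimes\mathcal{E}G\) and the unit groupoid \(BG\); your second route uses \(B\mapsto B\rtimes G\), which for a free and proper action is Morita--Rieffel equivalent to the paper's \(\mathcal{B}^G\), so the two quasi-inverses agree in \(\KK^{BG}\). What the paper's choice buys is that the unit and counit of the adjunction are honest isomorphisms of \(\Cst\)\nb-algebras, so no naturality of imprimitivity bimodules has to be tracked; your crossed-product route defers exactly that naturality question (which you correctly identify as the main obstacle), and the Le Gall route defers it to the general groupoid-equivalence theorem. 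Both are legitimate, and the conceptual content is the same: the groupoid \(G\ltimes\mathcal{E}G\) is equivalent to the space \(BG\).

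One small caveat: you justify that \(B\mapsto B\rtimes G\) preserves nuclearity by amenability of~\(G\). The proposition as stated assumes only that~\(G\) is torsion-free and locally compact (amenability is a standing hypothesis of the section but is not used in the paper's proof and is not needed here). The correct reason in this generality is properness: \(B\rtimes G\) is Morita--Rieffel equivalent to the generalised fixed-point algebra, which is a locally trivial bundle with the same fibres as~\(B\), so nuclearity passes back and forth without any amenability assumption.
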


\begin{proof}
  Kasparov~\cite{Kasparov:Novikov} builds a functor from
  \(\KK^{G\ltimes\mathcal{E}G}\)
  to \(\KK^{BG}\)
  by taking a fixed-point algebra.  Let~\(\mathcal{B}\)
  be a \(G\ltimes\mathcal{E}G\)-\(\Cst\)-algebra.  Let
  \[
  \Mult(\mathcal{B})^G \defeq
  \setgiven{b\in \Mult(\mathcal{B})}
  {\beta_g(b) = b\text{ for all }g\in G \text{ and }
    \Cont_0(\mathcal{E}G)\cdot b\subseteq \mathcal{B}}.
  \]
  We map \(\Cont_0(BG)\)
  to \(\Contb (\mathcal{E}G) = \Mult(\Cont_0(\mathcal{E}G))\)
  using~\(q^*\)
  and further to \(\Mult(\mathcal{B})\).
  The image of \(\Cont_0(BG)\)
  belongs to the centre of \(\Mult(\mathcal{B})^G\).  So
  \[
  \mathcal{B}^G \defeq \Cont_0(BG) \cdot \Mult(\mathcal{B})^G
  \]
  is a well defined ideal in~\(\Mult(\mathcal{B})^G\).
  This is the \emph{generalised fixed-point algebra} for the
  \(G\)\nb-action
  on~\(\mathcal{B}\).
  It is naturally a \(\Cont_0(BG)\)-\(\Cst\)-algebra.
  This construction is equivalent to the definition
  in~\cite{Kasparov:Novikov} and to the construction of generalised
  fixed-point algebras for generalised proper actions by
  Rieffel~\cite{Rieffel:Proper}.  The fibre of the
  \(\Cont_0(BG)\)-\(\Cst\)-algebra~\(\mathcal{B}^G\)
  at \(x\in BG\)
  is isomorphic to the fibre of~\(\mathcal{B}\)
  at any \(y\in\mathcal{E}G\)
  with \(q(y)=x\);
  these fibres in~\(\mathcal{B}\)
  are canonically isomorphic using the \(G\)\nb-action.
  So in terms of upper semicontinuous fields of \(\Cst\)\nb-algebras,
  our construction is simply viewing a \(G\)\nb-equivariant
  upper semicontinuous field of \(\Cst\)\nb-algebras
  over~\(\mathcal{E}G\)
  as an upper semicontinuous field of \(\Cst\)\nb-algebras
  over~\(BG\).
  The construction of generalised fixed-point algebras above is
  clearly functorial for nondegenerate \(G\)\nb-equivariant,
  \(\Cont_0(\mathcal{E}G)\)-linear
  \Star{}homomorphisms.  And a \(G\)\nb-invariant
  ideal in~\(\mathcal{B}\)
  gives an ideal in~\(\mathcal{B}^G\)
  in an obvious way.  This allows to extend the functoriality of the
  generalised fixed-point algebra construction to degenerate
  \(G\)\nb-equivariant,
  \(\Cont_0(\mathcal{E}G)\)-linear
  \Star{}homomorphisms.  The functor so obtained is split-exact and
  \(\Cst\)\nb-stable
  in the appropriate sense that it defines a functor
  \(\KK^{G\ltimes\mathcal{E}G} \to \KK^{BG}\)
  by the universal property of equivariant \(\KK\)-theory.
  This functor is built by Kasparov~\cite{Kasparov:Novikov} in terms
  of Kasparov cycles, and shown in
  \cite{Kasparov:Novikov}*{Theorem~3.4} to be fully faithful.

  We have built two functors
  \(\KK^{G\ltimes\mathcal{E}G} \leftrightarrow \KK^{BG}\) in
  opposite directions.  We claim that they are inverse to each other
  up to natural equivalence.  Indeed, when we start with a
  \(\Cont_0(BG)\)-\(\Cst\)-algebra~\(A\), we first map it to
  \(\Cont_0(\mathcal{E}G)\otimes_{BG} A\) and then take the
  generalised fixed-point algebra.  In terms of upper semicontinuous
  fields, the field over~\(\mathcal{E}G\) has the same fibres
  as~\(A\) over~\(BG\), and so we get back~\(A\).  Conversely,
  let~\(\mathcal{B}\) be a \(G\ltimes\mathcal{E}G\)\nb-action.  Then
  the structural homomorphism
  \(\Cont_0(\mathcal{E}G) \otimes_{\Cont_0(B G)} \mathcal{B}^G \to
  \mathcal{B}\), \(f\otimes b \mapsto f\cdot b\), is an isomorphism
  of \(\Cont_0(\mathcal{E}G)\)-algebras because it is an isomorphism
  fibrewise.  Therefore, \(\mathcal{B}\) is in the essential range
  of the functor~\(M\) above.  This equivalence of categories is
  checked in different notation
  in~\cite{Huef-Raeburn-Williams:Brauer_semigroup}.
\end{proof}

\begin{theorem}
  \label{the:KKG_torsion-free}
  Let~\(G\)
  be a torsion-free, amenable, second countable group.  Let~\(BG\)
  be a second countable, locally compact model for the classifying
  space of~\(G\).
  The category~\(\KK^G\)
  is equivalent to the subcategory of~\(\KK^{BG}\)
  whose objects are the locally trivial bundles of
  \(\Cst\)\nb-algebras
  over~\(BG\).
  This equivalence restricts to an equivalence between the
  subcategories of objects whose underlying \(\Cst\)\nb-algebra
  is nuclear.
\end{theorem}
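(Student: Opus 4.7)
The strategy is to compose the two equivalences from Propositions~\ref{pro:KKG_vs_KK_over_EG} and~\ref{pro:equivalence_KK_G-EG_BG}, which both apply since \(G\) is amenable and torsion-free, and then to identify the essential image of the resulting fully faithful functor \(\KK^G \to \KK^{BG}\) with the locally trivial bundles. Proposition~\ref{pro:KKG_vs_KK_over_EG} embeds \(\KK^G\) as the subcategory of symmetric objects in \(\KK^{G\ltimes\mathcal{E}G}\), and Proposition~\ref{pro:equivalence_KK_G-EG_BG} identifies \(\KK^{G\ltimes\mathcal{E}G}\) with \(\KK^{BG}\) via the generalised fixed-point algebra.

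For the forward direction, let \(A\) be a \(G\)\nb-\(\Cst\)\nb-algebra; under the composite it goes to the generalised fixed-point algebra \(\Cont_0(\mathcal{E}G,A)^G\). Since \(G\) is torsion-free, its only compact subgroup is trivial, so Abels' Slice Theorem ensures that \(q\colon \mathcal{E}G \to BG\) is locally trivial as a principal \(G\)\nb-bundle: there is an open cover \(\{U_i\}\) of \(BG\) with \(q^{-1}(U_i) \cong U_i \times G\) as \(G\)\nb-spaces. Over each \(U_i\),
\[
(\Cont_0(U_i \times G) \otimes A)^G \cong \Cont_0(U_i,A)
\]
via \(f \mapsto \bigl(u \mapsto f(u,1)\bigr)\), exhibiting \(\Cont_0(\mathcal{E}G,A)^G\) as a locally trivial bundle over \(BG\) with fibre \(A\). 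Thus the composite factors through the subcategory of locally trivial bundles.

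For the backward direction, I would show that every locally trivial \(B \in \KK^{BG}\) lies in the essential image by verifying that \(M(B) \defeq \Cont_0(\mathcal{E}G) \otimes_{BG} B\) is symmetric in the sense of Definition~\ref{def:symmetric_algebra_over_EG}. The two pullbacks are \((q\circ p_1)^* B\) and \((q\circ p_2)^* B\), pullbacks of the locally trivial bundle \(B\) along two \(G\)\nb-equivariant maps \(\mathcal{E}G \times \mathcal{E}G \to BG\): the universal property of \(\mathcal{E}G\), applied to the proper \(G\)\nb-space \(\mathcal{E}G \times \mathcal{E}G\) with the diagonal action, provides a \(G\)\nb-equivariant homotopy \(p_1 \simeq p_2\), and composing with \(q\) gives \(q\circ p_1 \simeq q\circ p_2\). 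The classical homotopy classification of locally trivial bundles over the paracompact base \(BG\) then yields an isomorphism of the two pullbacks, which one checks is \(G\)\nb-equivariant and so in particular a \(\KK^{G\ltimes(\mathcal{E}G\times\mathcal{E}G)}\)\nb-equivalence. By Proposition~\ref{pro:KKG_vs_KK_over_EG}, \(M(B)\) then comes from some \(A \in \KK^G\), whose image is \(B\).

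The main obstacle is this backward step: lifting the topological statement that homotopic maps have isomorphic pullbacks of a locally trivial bundle to a \(G\)\nb-equivariant \(\Cst\)\nb-algebraic \(\KK\)\nb-equivalence over \(\mathcal{E}G \times \mathcal{E}G\), with proper care for the full \(G\ltimes(\mathcal{E}G\times\mathcal{E}G)\)\nb-structure. The nuclear version then follows immediately since both propositions preserve nuclearity and, for a locally trivial bundle, nuclearity of the total algebra is equivalent to nuclearity of the fibre.
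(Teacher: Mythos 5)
Your proposal follows the same strategy as the paper: compose Propositions~\ref{pro:KKG_vs_KK_over_EG} and~\ref{pro:equivalence_KK_G-EG_BG}, then identify the essential image with locally trivial bundles. The forward direction, using Abels' Slice Theorem to show that \(\mathcal{E}G \to BG\) is a locally trivial principal bundle and hence that generalised fixed-point algebras \(\Cont_0(\mathcal{E}G,A)^G\) are locally trivial, matches the paper's argument exactly.

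The backward direction contains the gap you yourself flag as the ``main obstacle,'' and it is a genuine one. You need \(p_1^*(M(B))\) and \(p_2^*(M(B))\) to be \(\KK^{G\ltimes(\mathcal{E}G\times\mathcal{E}G)}\)-equivalent, and you propose to get a \(G\)\nb-equivariant isomorphism of pullback bundles over \(\mathcal{E}G\times\mathcal{E}G\) from the classical homotopy classification. But the Husemoller-type theorem, applied to the paracompact base \(\mathcal{E}G\times\mathcal{E}G\), only produces a \emph{non-equivariant} isomorphism \((q\circ p_1)^*B \cong (q\circ p_2)^*B\); there is no reason the clutching data comes out \(G\)\nb-equivariant, and ``one checks it is \(G\)\nb-equivariant'' is precisely the assertion that is missing a proof. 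The paper's fix is to descend before applying the bundle theory: set \(BG^{(2)} \defeq (\mathcal{E}G\times\mathcal{E}G)/G\) (paracompact since \(\mathcal{E}G\times\mathcal{E}G\) is second countable locally compact Hausdorff with a free proper \(G\)\nb-action), note that the induced maps \(\check{p}_1,\check{p}_2\colon BG^{(2)}\rightrightarrows BG\) are ordinary homotopic maps, and apply the non-equivariant homotopy classification there to get \(\check{p}_1^*\mathcal{B}\cong\check{p}_2^*\mathcal{B}\) as bundles over \(BG^{(2)}\). Proposition~\ref{pro:equivalence_KK_G-EG_BG}, applied to the free proper \(G\)\nb-space \(\mathcal{E}G\times\mathcal{E}G\), then translates this \(\KK^{BG^{(2)}}\)-equivalence into the required symmetry upstairs. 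So the descent you sensed was needed is the whole point: quotient by \(G\) first, then use ordinary bundle theory, rather than trying to establish equivariance of a non-equivariantly produced isomorphism.
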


\begin{proof}
  Any model of~\(BG\) carries a universal principal \(G\)\nb-bundle,
  whose total space is a model for~\(\mathcal{E}G\).  Therefore, the
  choice of~\(BG\) does not matter.

  Proposition~\ref{pro:KKG_vs_KK_over_EG} applies here because of
  Theorem~\ref{the:Higson-Kasparov}.
  Proposition~\ref{pro:equivalence_KK_G-EG_BG} applies because~\(G\)
  is torsion-free.  Both propositions together give an equivalence
  of categories from~\(\KK^G\) onto a full subcategory
  of~\(\KK^{B G}\).  Let
  \(BG^{(2)} \defeq (\mathcal{E}G \times \mathcal{E}G)/G\).  The
  coordinate projections
  \(p_1,p_2\colon \mathcal{E}G\times\mathcal{E}G \rightrightarrows
  \mathcal{E}G\) induce continuous maps
  \(\check{p}_1, \check{p}_2\colon BG^{(2)} \rightrightarrows BG\).
  The description of the range of the equivalence from~\(\KK^G\) to
  a full subcategory of \(\KK^{G\ltimes \mathcal{E}G}\) implies that a
  \(\Cont_0(B G)\)-\(\Cst\)-algebra~\(\mathcal{B}\) belongs to the
  essential image of~\(\KK^G\) if and only if
  \(\check{p}_1^*(\mathcal{B})\) and~\(\check{p}_2^*(\mathcal{B})\)
  are \(\KK^{B G^{(2)}}\)-equivalent.  We must show that this holds
  if and only if~\(\mathcal{B}\) is \(\KK^{B G}\)-equivalent to a
  locally trivial bundle of \(\Cst\)\nb-algebras.

  In one direction, we may directly compute the functor
  \(\KK^G \to \KK^{B G}\)
  on objects.  Let~\(A\)
  be a separable \(G\)\nb-\(\Cst\)-algebra,
  viewed as an object of~\(\KK^G\).
  The functor to~\(\KK^{BG}\)
  maps~\(A\)
  to the generalised fixed-point algebra
  \(\Cont_0(\mathcal{E}G,A)^G\).
  This bundle of \(\Cst\)\nb-algebras
  over~\(B G\)
  is locally trivial because the projection \(\mathcal{E} G \to B G\)
  is locally trivial.  So any object in the essential range of our
  functor is \(\KK^{B G}\)-equivalent
  to a locally trivial bundle of \(\Cst\)\nb-algebras.
  Conversely, assume~\(\mathcal{B}\)
  to be a locally trivial bundle of \(\Cst\)\nb-algebras
  over~\(B G\).
  The maps
  \(\check{p}_1, \check{p}_2\colon BG^{(2)} \rightrightarrows BG\)
  are homotopic to each other because \(p_1,p_2\)
  are \(G\)\nb-equivariantly
  homotopic.  The quotient spaces \(BG\)
  and~\(BG^{(2)}\)
  are second countable and locally compact, Hausdorff because
  \(\mathcal{E}G\)
  and~\(\mathcal{E}G\times\mathcal{E}G\)
  are so and~\(G\)
  acts properly on them.  So \(BG\)
  and~\(BG^{(2)}\)
  are paracompact.  Hence the pull-backs of~\(\mathcal{B}\)
  along the maps
  \(\check{p}_1, \check{p}_2\colon BG^{(2)} \rightrightarrows BG\)
  are isomorphic as locally trivial bundles over~\(BG^{(2)}\)
  (see \cite{Husemoller:Fibre_bundles}*{Theorem 2.9.9 on p.~51}).
  Hence they are \(\KK^{B G^{(2)}}\)-equivalent as needed.

  Finally, the constructions above all preserve nuclearity.  So the
  subcategory of all \(G\)\nb-actions
  on separable nuclear \(\Cst\)\nb-algebras
  is equivalent to the subcategory of all locally trivial bundles of
  nuclear \(\Cst\)\nb-algebras over~\(B G\).
\end{proof}

\begin{remark}
  The equivalence from~\(\KK^G\) to a full subcategory
  of~\(\KK^{BG}\) maps a \(\Cst\)\nb-algebra~\(A\) with
  \(G\)\nb-action~\(\alpha\) to a locally trivial bundle of
  \(\Cst\)\nb-algebras with fibre~\(A\).  This is equivalent to a
  locally trivial \(\Aut(A)\)-principal bundle.  The latter is equal
  to the principal bundle~\(\mathcal{P}_\alpha\) that appears in the
  conjectures of Izumi~\cite{Izumi:Group_actions}.
\end{remark}

\begin{definition}
  An isomorphism or \(\KK^G\)-equivalence \(A\to A\) is called
  \emph{\(\KK\)-trivial} if the forgetful functor maps it to the
  identity in \(\KK(A,A)\).  An isomorphism of \(\Aut(A)\)-principal
  bundles is \(\KK\)-trivial if the induced isomorphism of locally
  trivial \(\Cst\)\nb-algebra bundles with fibre~\(A\) restricts to
  the identity on~\(A\) in each fibre.
\end{definition}

The notion of a \(\KK\)-trivial isomorphism of \(\Aut(A)\)-principal
bundles is well defined, that is, it does not depend on the
trivialisation of the locally trivial bundle.  This is because the
identity in \(\KK(A,A)\) is invariant under conjugation by
automorphisms.

\begin{theorem}
  \label{the:actions_Kirchberg_algebra}
  Let~\(A\) be a purely infinite, simple, nuclear,
  \(\Cst\)\nb-stable, separable \(\Cst\)\nb-algebra and let~\(G\) be
  a torsion-free, amenable, second countable, locally compact group.
  Let~\(B G\) be a second countable, locally compact classifying
  space for~\(G\).  Then \textup{(}\(\KK\)-trivial\textup{)}
  \(\KK^G\)-equivalence classes of \(G\)\nb-actions on~\(A\) are in
  bijection with \textup{(}\(\KK\)-trivial\textup{)} isomorphism
  classes of \(\Aut(A)\)-principal bundles over~\(B G\).
\end{theorem}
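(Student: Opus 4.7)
The plan is to reduce the statement to a parametrised version of the Kirchberg--Phillips classification theorem by means of Theorem~\ref{the:KKG_torsion-free}. That theorem provides an equivalence between $\KK^G$ and the full subcategory of $\KK^{BG}$ consisting of locally trivial bundles of $\Cst$\nb-algebras over $BG$, and the Remark after it identifies, on objects, the image of a $G$\nb-action $(A,\alpha)$ with the locally trivial $A$\nb-bundle $\mathcal{P}_\alpha\times_{\Aut(A)}A$. Since $BG$ is paracompact, the associated bundle functor $P\mapsto P\times_{\Aut(A)}A$ is a bijection between isomorphism classes of $\Aut(A)$-principal bundles and isomorphism classes of locally trivial $\Cst$\nb-algebra bundles with fibre $A$ over $BG$, and it matches up $\KK$\nb-trivial isomorphisms on the two sides. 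Under the equivalence of Theorem~\ref{the:KKG_torsion-free}, $G$\nb-actions on $A$ correspond exactly to those locally trivial bundles whose fibre is literally $A$, because the fibre of the bundle at any point agrees with the underlying $\Cst$\nb-algebra of the $G$\nb-action.

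The theorem thereby reduces to the following parametrised Kirchberg--Phillips statement: two locally trivial bundles of Kirchberg algebras with fibre $A$ over $BG$ are $\KK^{BG}$\nb-equivalent if and only if they are isomorphic as $\Cst$\nb-algebra bundles, and similarly in the $\KK$-trivial variant. One direction is immediate. For the other direction I would invoke Dadarlat's classification of continuous fields of Kirchberg algebras by their $\KK^X$-class. I expect this to be the main technical obstacle: the formulations of that classification in the literature typically assume a finite-dimensional or finite-CW-complex base, whereas here $BG$ is only assumed second countable and locally compact. One can either exhaust $BG$ by such subspaces and argue by colimit, or appeal to a sufficiently general formulation of the parametrised Kirchberg--Phillips theorem.

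For surjectivity of the map $[\alpha]\mapsto[\mathcal{P}_\alpha]$, given an $\Aut(A)$-principal bundle $P$ over $BG$, form the nuclear, locally trivial $A$\nb-bundle $\mathcal{B}\defeq P\times_{\Aut(A)}A$. By Theorem~\ref{the:KKG_torsion-free}, it is the image of some nuclear $G$\nb-$\Cst$-algebra $(X,\gamma)$; comparing fibres through the $\KK^{BG}$\nb-equivalence then gives $X\sim_\KK A$. Now apply Theorem~\ref{the:make_actions_simple} to replace $(X,\gamma)$ by a pointwise outer action on a stable, nuclear, separable, simple, purely infinite $\Cst$\nb-algebra $(Y,\delta)$ which remains $\KK^G$\nb-equivalent to $(X,\gamma)$. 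Then $Y$ is a stabilised Kirchberg algebra with $Y\sim_\KK A$, so $Y\cong A$ by the non-equivariant Kirchberg--Phillips theorem; transporting $\delta$ along this isomorphism produces an action $\alpha$ of $G$ on $A$ with $(A,\alpha)\sim_{\KK^G}(X,\gamma)$, and the bundle-classification step from the previous paragraph then forces $\mathcal{P}_\alpha\cong P$. The remaining directions --- $\KK^G$\nb-equivalence implies isomorphism of principal bundles, and conversely --- follow formally from Theorem~\ref{the:KKG_torsion-free} combined with the same parametrised Kirchberg--Phillips input, with the $\KK$\nb-trivial refinement handled by tracking $\KK$\nb-trivial morphisms through each step.
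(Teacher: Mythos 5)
Your reduction via Theorem~\ref{the:KKG_torsion-free} and your surjectivity argument via Theorem~\ref{the:make_actions_simple} combined with the non-equivariant Kirchberg--Phillips theorem match the paper's proof. The genuine gap is exactly where you flag uncertainty: lifting a $\KK^{BG}$-equivalence of locally trivial $A$-bundles to an isomorphism of bundles. Dadarlat's classification of continuous fields of Kirchberg algebras does require finite covering dimension of the base, and neither of your two fallbacks is available here. A colimit argument over an exhaustion of $BG$ by compact subsets runs into $\lim^1$ phantom-map problems when trying to assemble isomorphisms, and, more basically, compact subsets of a general second countable locally compact $BG$ need not have finite covering dimension, so Dadarlat's hypotheses still fail on the pieces. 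There is no off-the-shelf "sufficiently general formulation" of a bundle-theoretic Kirchberg--Phillips theorem to invoke.

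The paper sidesteps the continuous-field formulation entirely. It treats the bundle $\mathcal{A}$ as a single separable nuclear $\Cst$-algebra equipped with a central $\Cont_0(BG)$-action and appeals to Kirchberg's classification theorem for non-simple, \emph{strongly purely infinite} $\Cst$-algebras over a fixed ideal structure (\cite{Kirchberg:Michael}*{Folgerung~4.3}): any $\KK^{BG}$-equivalence between such algebras lifts to a $\Cont_0(BG)$-linear isomorphism. The work is then to verify the hypotheses. That $\mathcal{A}$ is strongly purely infinite is immediate from \cite{Blanchard-Kirchberg:Non-simple_infinite}*{Theorem~5.8} when $BG$ is finite-dimensional, but for infinite-dimensional $BG$ the paper instead observes that $\mathcal{A}$ has the ``locally central decomposition property'' of Kirchberg--Rørdam (the fibre $A$ has an approximate unit of projections and $\Cont_0(BG)$ is central), so \cite{Kirchberg-Rordam:Infinite_absorbing}*{Theorems~6.8 and~8.6} give $\mathcal{A}\cong\mathcal{A}\otimes\mathcal{O}_\infty$; $\Cst$-stability of $\mathcal{A}$ then comes from \cite{Hirshberg-Rordam-Winter:Stability}*{Proposition~3.9}. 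This is the ingredient your proposal is missing, and it is precisely what lets the argument run without any dimension restriction on $BG$.
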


\begin{proof}
  Let \(\alpha,\beta\)
  be two actions of~\(G\)
  on~\(A\).
  Let \(\mathcal{A}\)
  and~\(\mathcal{B}\)
  be the corresponding locally trivial bundles of \(\Cst\)\nb-algebras
  over~\(B G\).
  Theorem~\ref{the:KKG_torsion-free} shows that
  \[
  \KK^G\bigl((A,\alpha),(B,\beta)\bigr)
  \cong \KK^{B G}(\mathcal{A},\mathcal{B}).
  \]
  In particular, our functor maps \(\KK^G\)-equivalences
  between \((A,\alpha)\)
  and \((B,\beta)\)
  bijectively to \(\KK^{B G}\)-equivalences
  between \(\mathcal{A}\)
  and~\(\mathcal{B}\).
  Since the fibres of the bundles \(\mathcal{A}\)
  and~\(\mathcal{B}\)
  are isomorphic to the Kirchberg algebra~\(A\),
  these bundles are purely infinite.  We claim that they are even
  strongly purely infinite.  If~\(B G\)
  is finite-dimensional, this follows from
  \cite{Blanchard-Kirchberg:Non-simple_infinite}*{Theorem~5.8}.
  If~\(B G\)
  is infinite-dimensional, the result is still true for a different
  reason.  It is easy to see that~\(\mathcal{A}\)
  is (``weakly'') purely infinite.  Since \(\Cont_0(B G)\)
  is in the centre of the multiplier algebra of~\(\mathcal{A}\)
  and~\(A\)
  has an approximate unit of projections, it follows
  that~\(\mathcal{A}\)
  has the ``locally central decomposition property'' (see
  \cite{Kirchberg-Rordam:Infinite_absorbing}*{Definition~6.1}).  And a
  purely infinite \(\Cst\)\nb-algebra
  with the the locally central decomposition property is strongly
  purely infinite by
  \cite{Kirchberg-Rordam:Infinite_absorbing}*{Theorem~6.8}.  Then
  \(\mathcal{A} \cong \mathcal{A}\otimes \mathcal{O}_\infty\)
  by \cite{Kirchberg-Rordam:Infinite_absorbing}*{Theorem~8.6}.
  In addition, \(\mathcal{A}\) is \(\Cst\)\nb-stable by
  \cite{Hirshberg-Rordam-Winter:Stability}*{Proposition~3.9}; I
  thank the referee for clarifying the need for this reference.

  Hence \(\mathcal{A}\)
  and~\(\mathcal{B}\)
  are covered by Kirchberg's Classification Theorem (see
  \cite{Kirchberg:Michael}*{Folgerung~4.3}).  Namely, any
  \(\KK^{B G}\)-equivalence
  \(\mathcal{A}\cong \mathcal{B}\)
  is induced by an isomorphism \(\mathcal{A}\cong \mathcal{B}\)
  as \(\Cont_0(B G)\)-\(\Cst\)\nb-algebras.
  We are also interested in \(\KK^{B G}\)-equivalences
  between two actions \(\alpha,\beta\) on the same stable Kirchberg
  algebra~\(A\) that restrict to the identity in \(\KK(A,A)\)
  in each fibre.  This happens if and only if the corresponding
  equivalence in \(\KK^G((A,\alpha),A,\beta))\)
  is \(\KK\)-trivial.
  We see that any \(\KK\)-trivial equivalence
  in \(\KK^G((A,\alpha),A,\beta))\)
  comes from a \(\KK\)-trivial
  isomorphism \(\mathcal{A}\cong \mathcal{B}\)
  as \(\Cont_0(B G)\)-\(\Cst\)\nb-algebras.
  And the latter are the same as \(\KK\)-trivial
  isomorphisms of \(\Aut(A)\)-principal bundles.

  To prove the desired bijection, we must show that any locally
  trivial bundle of \(\Cst\)\nb-algebras~\(\mathcal{A}\)
  over~\(B G\)
  with fibre~\(A\)
  comes from some action~\(\alpha\)
  on~\(A\).
  To begin with, Theorem~\ref{the:KKG_torsion-free} shows that there
  is some object \((B,\beta)\)
  of \(\KK^G\)
  so that~\(B\)
  is nuclear and the locally trivial \(\Cst\)\nb-algebra
  bundle over~\(B G\)
  associated to \((B, \beta)\)
  is \(\KK^{B G}\)-equivalent
  to~\(\mathcal{A}\).  Theorem~\ref{the:make_actions_simple} gives a
  \(\KK^G\)-equivalence
  from~\((B,\beta)\)
  to some action \((\tilde{B},\tilde{\delta})\)
  where~\(\tilde{B}\)
  is purely infinite, simple, nuclear, \(\Cst\)\nb-stable
  and separable.  By the Kirchberg--Phillips Theorem,
  \(\tilde{B} \cong A\).
  So the action~\(\tilde{\delta}\)
  transfers to an action~\(\alpha\)
  on~\(A\)
  so that the associated locally trivial \(\Cst\)\nb-algebra
  bundle over~\(B G\)
  is \(\KK^{B G}\)-equivalent
  to our given~\(\mathcal{A}\).
  As above, this \(\KK^{B G}\)-equivalence
  may be improved to an isomorphism.  And if the isomorphism is not
  yet \(\KK\)-trivial,
  we simply transfer the action along this isomorphism to get another
  action on~\(A\)
  such that the resulting bundle over~\(B G\)
  is even \(\KK\)-trivially isomorphic to the given bundle.
\end{proof}

\begin{example}
  Let \(G=\R^n\).  This group is torsion-free and the translation
  action on~\(\R^n\) is a model for~\(\mathcal{E}G\) with
  \(BG=\pt\).  So Theorem~\ref{the:actions_Kirchberg_algebra}
  implies that all \(\R^n\)\nb-actions on~\(A\) are
  \(\KK\)-trivially \(\KK^G\)-equivalent to the trivial action.
\end{example}

\begin{example}
  Let \(G=\Z\).  The translation action of~\(\Z\) on~\(\R\) is a
  universal proper action.  So \(BG=\R/\Z\) is a circle.
  Isomorphism classes of \(\Aut(A)\)-principal bundles over the
  circle are naturally in bijection with the path connected
  components of \(\Aut(A)\).  Any automorphism of~\(A\) determines
  an action of~\(\Z\).  Thus any \(\Aut(A)\)-principal bundle over
  the circle comes from an action of~\(\Z\) on~\(A\).
  Theorem~\ref{the:actions_Kirchberg_algebra} says, in addition,
  that two \(\Z\)\nb-actions are \(\KK\)-trivially
  \(\KK^G\)-equivalent if and only if the generating automorphisms
  of~\(A\) are homotopic.
\end{example}

\begin{example}
  Let \(G=\Z^2\).  Then the translation action of~\(\Z^2\)
  on~\(\R^2\) is a universal proper action.  So \(BG=\R^2/\Z^2\) is
  a \(2\)\nb-torus.  Theorem~\ref{the:actions_Kirchberg_algebra}
  says that any \(\Aut(A)\)-principal bundle over the \(2\)\nb-torus
  is associated to an action of~\(\Z^2\) on~\(A\).  Using the
  standard CW-complex decomposition of the \(2\)\nb-torus and some
  obstruction theory, we may describe an \(\Aut(A)\)-principal
  bundle by two elements \(\alpha_1,\alpha_2\) of \(\Aut(A)\)
  together with a homotopy~\(H\) between \(\alpha_1 \alpha_2\) and
  \(\alpha_2 \alpha_1\) in \(\Aut(A)\).  The existence part of
  Theorem~\ref{the:actions_Kirchberg_algebra} says that there are
  automorphisms \(\beta_1,\beta_2\) of~\(A\) with
  \(\beta_1\beta_2 = \beta_2\beta_1\) and homotopies~\(K_j\)
  from~\(\alpha_j\) to~\(\beta_j\) for \(j=1,2\), such that the
  concatenation of the homotopies \(K_2\cdot K_1\) reversed, \(H\)
  and \(K_1\cdot K_2\) is homotopic to the constant homotopy on
  \(\beta_1 \beta_2 = \beta_2 \beta_1\).  The pair
  \(\beta_1,\beta_2\) with \(\beta_1 \beta_2 = \beta_2 \beta_1\) is
  equivalent to an action of~\(\Z^2\).

  Take two actions of~\(\Z^2\) on~\(A\) and describe the
  corresponding principal \(\Aut(A)\)-bundles through triples
  \((\alpha_1,\alpha_2,H)\) and \((\alpha_1',\alpha_2',H')\) as
  above.  Since we start with \(\Z^2\)\nb-actions, we may arrange
  that \(\alpha_1 \alpha_2 = \alpha_2 \alpha_1\),
  \(\alpha'_1 \alpha'_2 = \alpha'_2 \alpha'_1\), and \(H\)
  and~\(H'\) are the constant homotopies.  The principal bundles are
  isomorphic if and only if~\(\alpha_j\) is homotopic
  to~\(\alpha_j'\) for \(j=1,2\) in such a way that composing the
  resulting homotopy from
  \(\alpha_1 \alpha_2 \alpha_1^{-1} \alpha_2^{-1}= \id_A\) to
  \(\alpha_1' \alpha_2' (\alpha_1')^{-1} (\alpha_2')^{-1} = \id_A\)
  is homotopic to the constant homotopy.  The uniqueness part of
  Theorem~\ref{the:actions_Kirchberg_algebra} says that this happens
  if and only if the two \(\Z^2\)\nb-actions on~\(A\) are
  \(\KK\)\nb-trivially \(\KK^{\Z^2}\)-equivalent.  Both the
  existence and uniqueness part of
  Theorem~\ref{the:actions_Kirchberg_algebra} are non-trivial in
  this case and depend on~\(A\) being a Kirchberg algebra.
\end{example}

These examples end the discussion of actions of torsion-free groups.
We now turn to the opposite end: actions of cyclic groups of prime
order.

\section{The equivariant bootstrap class}
\label{sec:equivariant_bootstrap}

\begin{definition}[\cite{dellAmbrogio-Emerson-Meyer:Equivariant_Lefschetz}*{§3.1}]
  Let~\(G\)
  be a compact group.  A \(G\)\nb-action
  \((A,\alpha)\)
  on a separable \(\Cst\)\nb-algebra~\(A\)
  belongs to the \emph{\(G\)\nb-equivariant
    bootstrap class~\(\mathfrak{B}^G\)}
  if it is \(\KK^G\)-equivalent
  to an action~\((B,\beta)\) where~\(B\) is a \(\Cst\)\nb-algebra of Type~I.
\end{definition}

The equivariant bootstrap class is also described
in~\cite{dellAmbrogio-Emerson-Meyer:Equivariant_Lefschetz} as the
localising subcategory generated by certain ``elementary'' building
blocks.  These building blocks are actions of~\(G\)
on \(\Cst\)\nb-algebras
that are contained in \(\Comp(\Hils)\)
for a separable Hilbert space~\(\Hils\).
Up to taking direct sums, we may get them all by induction applied to
an action of a closed subgroup of~\(G\)
on a matrix algebra.  Actions on matrix algebras, in turn, correspond
to projective representations.  Two projective representations with
the same cocycle give equivariantly Morita equivalent actions on
matrix algebras.  So it suffices to take one projective representation
from each cohomology class.

The equivariant bootstrap class~\(\mathfrak{B}^G\)
is closed under \(\KK^G\)-equivalence
by definition.  So \((B,\beta)\)
in Theorem~\ref{the:make_actions_simple} belongs to the equivariant
bootstrap class if \((A,\alpha)\) does.

From now on, we specialise to a finite cyclic group of prime order,
that is, \(G=\Z/p\)
for a prime~\(p\).
The only proper subgroup of~\(G\)
is the trivial subgroup, and neither~\(G\)
nor the trivial group have non-trivial projective representations.
Thus the equivariant bootstrap class~\(\mathfrak{B}^G\)
for~\(G\)
is generated by two objects, namely, \(\C = \Cont(G/G)\)
with the trivial action of~\(G\)
and \(\Cont(G)\)
with the translation action of~\(G\).
Any \(G\)\nb-action
on a finite-dimensional \(\Cst\)\nb-algebra
is \(G\)\nb-equivariantly
Morita--Rieffel equivalent to a direct sum of copies of these two
actions.  Given a separable \(G\)-\(\Cst\)\nb-algebra~\(A\),
its \emph{little invariant} is defined by
\[
L_*(A) \defeq \KK_*^G(\C,A) \oplus \KK_*^G(\Cont(G),A)
\cong \KK_*^G(\C\oplus \Cont(G),A),
\]
where the isomorphism uses the well-known additivity of~\(\KK^G\).

The little invariant does not classify actions up to
\(\KK^G\)-equivalence; that is, there are isomorphisms
\(L_*(A) \cong L_*(B)\) that do not lift to isomorphisms
\(A \cong B\) (see Example~\ref{exa:actions_on_Cuntz_4}).  It has
enough information, however, to detect whether an object in the
equivariant bootstrap class is non-zero:

\begin{lemma}
  \label{lem:little_invariant_detects_zero}
  Let \(A,B\in\mathfrak{B}^G\).
  Then \(L_*(A) \cong 0\)
  if and only if \(A\)
  is \(\KK^G\)-equivalent
  to~\(0\).
  And \(f\in\KK^G_0(A,B)\)
  is invertible if and only if \(L_*(f) \colon L_*(A) \to L_*(B)\)
  is invertible.  
\end{lemma}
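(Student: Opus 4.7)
The plan is to use the fact, recalled in the preceding paragraph, that $\mathfrak{B}^G$ is the localising subcategory of~$\KK^G$ generated by $\C$ and $\Cont(G)$, together with the standard principle in triangulated categories that a cohomological functor vanishing on a set of generators of a localising subcategory vanishes on the whole subcategory.

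First I would treat the statement about~\(L_*(A)\). Consider the class
\[
\mathcal{N} \defeq \setgiven{X \in \KK^G}{\KK^G_*(X, A) = 0}.
\]
Because \(\KK^G\) is triangulated, countable direct sums exist in it, and the representable functor \(\KK^G_*(-, A)\) sends them to products, \(\mathcal{N}\) is closed under suspensions, mapping cones and countable direct sums; hence it is a localising subcategory of~\(\KK^G\). If \(L_*(A) = 0\), then by construction \(\C\) and \(\Cont(G)\) both lie in~\(\mathcal{N}\), so \(\mathfrak{B}^G \subseteq \mathcal{N}\). Since \(A \in \mathfrak{B}^G\), this forces \(\KK^G_0(A, A) = 0\); in particular \(\id_A = 0\), so \(A\) is \(\KK^G\)-equivalent to~\(0\). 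The converse implication is immediate.

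For the second assertion I would embed~\(f\) into an exact triangle
\[
A \xrightarrow{f} B \to C_f \to \Sigma A
\]
in~\(\KK^G\). Since \(\mathfrak{B}^G\) is closed under mapping cones and \(A, B \in \mathfrak{B}^G\), the cone~\(C_f\) also lies in~\(\mathfrak{B}^G\). Applying the cohomological functor~\(L_*\) yields a long exact sequence from which one reads off that \(L_*(f)\) is invertible if and only if \(L_*(C_f) = 0\). By the first part applied to~\(C_f\), this is equivalent to \(C_f\) being \(\KK^G\)-equivalent to~\(0\); and in any triangulated category the vanishing of the cone is equivalent to \(f\) being an isomorphism.

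I do not anticipate any genuine obstacle here: the argument is the standard \emph{generators detect equivalences} trick for triangulated categories presented through a generating set of a localising subcategory. The only point to keep in mind is that the identification of~\(\mathfrak{B}^G\) with the localising subcategory generated by~\(\C\) and~\(\Cont(G)\) — invoked in order to pass from ``\(\mathcal{N}\) contains the generators'' to ``\(\mathcal{N}\) contains~\(A\) (respectively~\(C_f\))'' — is precisely the description of the equivariant bootstrap class recalled at the beginning of Section~\ref{sec:equivariant_bootstrap}.
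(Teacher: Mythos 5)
Your argument is correct and is essentially the same as the paper's: both rest on the observation that the class of objects $D$ with $\KK^G_*(D,-)$ vanishing on a fixed target is localising and, once it contains $\C$ and $\Cont(G)$, must contain all of $\mathfrak{B}^G$, whence $\id$ vanishes. The only difference is the order of exposition (you prove the vanishing statement first and then derive the isomorphism criterion via the cone, while the paper reduces the second statement to the first applied to the cone before proving the first); this has no mathematical significance.
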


\begin{proof}
  This uses basic triangulated category theory.  Any arrow~\(f\)
  is part of an exact triangle
  \[
  \Sigma B \to C \to A \xrightarrow{f} B.
  \]
  Here the object~\(C\)
  of~\(\mathfrak{B}^G\)
  is unique up to isomorphism.  It is called the \emph{cone} of~\(f\).
  Versions of the Five Lemma show that~\(f\)
  is invertible if and only if \(C\cong 0\)
  and that \(L_*(f)\)
  is invertible if and only if \(L_*(C)\cong 0\).
  So the second statement follows from the first statement
  for~\(C\).
  It is clear that \(C\cong 0\)
  implies \(L_*(C) \cong 0\).
  Conversely, assume \(L_*(C)\cong0\).
  Let \(Z\subseteq \mathfrak{B}^G\)
  be the class of all objects~\(D\)
  with \(\KK^G_*(D,C)=0\).
  This is localising and contains \(\C\)
  and \(\Cont(G)\)
  by assumption.  Hence it contains all of~\(\mathfrak{B}^G\).
  So \(\KK^G_*(C,C)=0\).
  This is equivalent to \(C\cong 0\).
\end{proof}

Next we relate \(L_*(A)\)
to \(\K\)\nb-theory.
We use the Green--Julg isomorphism
\[
\KK_*^G(\C,A) \cong \KK_*(\C,A\rtimes G)
\]
(see \cite{Julg:K_equivariante} or
\cite{Blackadar:K-theory}*{Theorem 20.2.7.(a)}), the
induction--restriction adjunction
\[
\KK_*^G(\Cont(G),A) \cong \KK_*(\C,A)
\]
for the inclusion of the trivial subgroup in~\(G\)
(see \cite{Meyer-Nest:BC}*{Section~3.2}) and the isomorphism
\[
\KK_*(\C,A\rtimes G) \cong \K_*(A\rtimes G)
\]
(see \cite{Kasparov:Operator_K} or
\cite{Blackadar:K-theory}*{Proposition~17.5.5}).  These give a
natural isomorphism
\[
L_*(A) \cong \K_*(A\rtimes G) \oplus \K_*(A).
\]
In particular, \(L_*(A)\)
is countable for any separable \(G\)\nb-\(\Cst\)-algebra~\(A\).

The invariant \(L_*(A)\)
is a \(\Z/2\)\nb-graded
\emph{right} module over the \(\Z/2\)\nb-graded
ring \(\KK^G_*(\C\oplus \Cont(G),\C\oplus\Cont(G))\)
by the Kasparov composition product.  We will later describe modules
as representations of the underlying ring.  Then it becomes confusing
to use right modules.  Therefore, we use the opposite
\(\Z/2\)\nb-graded ring
\[
\littlering \defeq\KK^G_*(\C\oplus \Cont(G),\C\oplus\Cont(G))^\op
\]
and treat \(L_*(A) \cong \KK^G_*(\C\oplus \Cont(G),A)\)
as a \emph{left} \(\littlering\)\nb-module.
We denote reverse order composition products in~\(\KK^G\)
by the symbol~\(\otimes\);
this symbol is also used by Kasparov, but with a subscript denoting
the \(\Cst\)\nb-algebra
over which tensor products of \(\KK\)-cycles
are balanced.  We will never use exterior products here,
so~\(\otimes\)
for \(\KK^G\)-cycles
will always mean the reverse-order composition product.

We may compute the underlying \(\Z/2\)\nb-graded
Abelian group of~\(\littlering\)
using the additivity of~\(\KK^G\)
and the isomorphisms that relate~\(L_*(A)\) to \(\K\)\nb-theory:
\begin{multline}
  \label{eq:R01_through_K-theory}
  \littlering = L_*(\C\oplus \Cont(G))
  \cong L_*(\C)\oplus L_*(\Cont(G))
  \\\cong \K_*(\C\rtimes G) \oplus \K_*(\C) \oplus \K_*(\Cont(G)\rtimes G) \oplus \K_*(\Cont(G))
  \\\cong \Z[G] \oplus \Z \oplus \Z \oplus \Z[G]
  \cong \Z^{2 p + 2}.
\end{multline}
In particular, the odd part of~\(\littlering\)
vanishes.  The ring structure on~\(\littlering\)
is also transparent from this computation.  First, the direct sum
decomposition of~\(\littlering\)
used in~\eqref{eq:R01_through_K-theory} gives~\(\littlering\)
a \(2\times 2\)-matrix structure with elements written as
\[
\begin{pmatrix}
  x_{00}&x_{01}\\x_{10}&x_{11}
\end{pmatrix},\qquad
\begin{alignedat}{2}
  x_{00} &\in \KK^G_0(\C,\C) \cong \Z[G],&\ x_{01} &\in \KK^G_0(\C,\Cont(G))\cong \Z,\\
  x_{10} &\in \KK^G_0(\Cont(G),\C) \cong \Z,&\ x_{11} &\in \KK^G_0(\Cont(G),\Cont(G))\cong \Z[G].
\end{alignedat}
\]
These are multiplied like \(2\times2\)\nb-matrices.  The diagonal
entry \(\KK^G_0(\C,\C)\) is isomorphic to the representation ring
of~\(G\).  It acts on \(\K_*(A\rtimes G) \cong \K_*^G(A)\) by
exterior tensor product of \(G\)\nb-equivariant projective modules
with representations because the Kasparov composition product with
elements of \(\KK^G_0(\C,\C)\) is equal to the exterior product.
The diagonal entry \(\KK^G_0(\Cont(G),\Cont(G))\) is isomorphic to
the group ring of~\(G\), realised by the \(\KK^G\)-classes of
translations \(G\to G\), \(x\mapsto g\cdot x\), for \(g\in G\).
Since~\(G\) is Abelian, the map on \(\KK^G_*(\Cont(G),A)\) induced
by a translation in \(\Cont(G)\) is the same as the map induced by
applying the corresponding automorphism~\(\alpha_g\) of~\(A\).
Therefore, the induced action of~\(G\) on
\(\KK^G_*(\Cont(G),A) \cong \K_*(A)\) is the canonical one induced
by the \(G\)\nb-action on~\(A\).

Since~\(G\) is the cyclic group of order~\(p\), its representation
ring and its group ring are canonically isomorphic to
\begin{equation}
  \label{eq:S_is_group_ring}
  \Sring \defeq \Z[t]/(t^p-1).
\end{equation}
This ring will play a crucial role in our study.  Next we explain the
isomorphism between the representation ring and the group ring
of~\(G\)
using Baaj--Skandalis duality (see~\cite{Baaj-Skandalis:Hopf_KK}).
This duality will generate an important involutive automorphism of
Manuel Köhler's ring as well.

Baaj--Skandalis duality for a locally compact group~\(G\)
says that taking the crossed product and giving it
the dual coaction defines an equivalence of \(\Z/2\)\nb-graded
categories
\[
\BS\colon \KK^G \congto \KK^{\widehat{G}},\qquad
A\mapsto A\rtimes G,\quad
\KK^G_*(A,B) \congto \KK^{\widehat{G}}_*(A\rtimes G,B\rtimes G).
\]
For our group~\(G\),
a coaction of~\(G\)
is equivalent to an action of the dual group~\(\widehat{G}\),
and \(\widehat{G} \cong G\).
So we get an automorphism \(\KK^G \to \KK^G\),
mapping a separable \(G\)\nb-\(\Cst\)\nb-algebra~\(A\)
to~\(A\rtimes G\)
with the dual action of~\(G\).
Taking this isomorphism twice gives
\((A\rtimes G)\rtimes \widehat{G} \cong A\otimes \Comp(\ell^2 G)\),
which is equivariantly Morita--Rieffel equivalent to~\(A\).
Now \(\Cont(G) \rtimes G \cong \Comp(\ell^2 G)\)
is equivariantly Morita--Rieffel equivalent to~\(\C\)
and \(\C\rtimes G \cong \Cst(G) \cong \Cont(G)\)
equivariantly.  So
\((\C\oplus \Cont(G))\rtimes G \sim_{\KK^G} \C\oplus \Cont(G)\),
but this \(\KK^G\)-equivalence
exchanges the two summands.  And Baaj--Skandalis duality contains an
isomorphism
\[
\KK^G_0(\C,\C) \cong \KK^G_0(\Cont(G),\Cont(G)).
\]

Actually, the Baaj--Skandalis duality isomorphism depends on the
choice of the isomorphism \(\widehat{G} \cong G\)
that is used to turn \(G\)\nb-coactions
into \(G\)\nb-actions.
Different choices differ by an automorphism~\(\varphi\)
of~\(G\),
and the resulting automorphisms~\(\BS\)
of~\(\KK^G\)
differ by the automorphism of~\(\KK^G\)
that replaces an action \(\alpha\colon G\to\Aut(A)\)
by \(\alpha\circ\varphi\).
We choose the isomorphism \(\widehat{G} \cong G\)
that is given by the symmetric pairing
\[
\Z/p \times \Z/p \to \mathrm{U}(1),\qquad
(g,h) \mapsto \exp(2\pi \ima\cdot g\cdot h/p).
\]
For this pairing, the Baaj--Skandalis duality automorphism is
involutive, that is, \(\BS\circ \BS\)
is naturally equivalent to the identity functor on~\(\KK^G\).

Baaj--Skandalis duality induces a ring automorphism
of~\(\littlering\),
which exchanges the two copies of~\(\Sring\)
on the diagonal and the two copies of~\(\Z\)
off the diagonal.  One copy of~\(\Z\) is
\begin{equation}
  \label{eq:KK_G_C_G}
  \KK^G_0(\C,\Cont(G)) \cong \K_0(\Cont(G)\rtimes G) \cong
  \K_0(\Comp(\ell^2 G)) \cong \K_0(\C) \cong \Z.
\end{equation}
Both the representation ring and the group ring of~\(\Z\)
act on this by the augmentation character
\begin{equation}
  \label{eq:augmentation_character}
  \tau\colon \Sring =\Z[t]/(t^p-1) \to \Z,\qquad
  t\mapsto 1.
\end{equation}
This is because the regular representation \(\lambda\)
of~\(G\)
satisfies \(\chi \otimes \lambda \cong \lambda\)
for all characters~\(\chi\)
of~\(G\)
and because translations in~\(G\)
become inner automorphisms of \(\Cont(G)\rtimes G\)
and thus act trivially on \(\K\)\nb-theory.
This gives the multiplication of the diagonal entries with the
\(01\)-entries
of \(2\times2\)-matrices
in~\(\littlering\).
By Baaj--Skandalis duality, the multiplication of the \(10\)-entries
with diagonal entries is given by the same formula.

To compute the multiplication of two off-diagonal entries, we
describe the generators of these entries more concretely.  Let
\(\Ga{0}{1}\colon \C \to \Cont(G)\) be the unit map.  We also denote
its class in \(\KK^G_0(\C,\Cont(G))\) by~\(\Ga{0}{1}\).  The first
isomorphism in~\eqref{eq:KK_G_C_G} maps~\(\Ga{0}{1}\) to the
projection onto the trivial representation of~\(G\)
in~\(\ell^2(G)\).  Since this has dimension~\(1\), it follows
that~\(\Ga{0}{1}\) is a generator of
\(\KK^G_0(\C,\Cont(G)) \cong \Z\).

Baaj--Skandalis duality maps~\(\Ga{0}{1}\) to a generator
\begin{equation}
  \label{eq:alpha_10}
  \Ga{1}{0}\defeq \BS(\Ga{0}{1}) \in \KK^G_0(\Cont(G),\C)
\end{equation}
for the \(10\)-summand
\(\KK^G_0(\Cont(G),\C) \cong \Z\)
in~\(\littlering\).
Concretely, this is represented by the \(G\)\nb-equivariant
\Star{}homomorphism
\[
\Cont(G) \cong \C\rtimes G \xrightarrow{\Ga{0}{1}\rtimes G}
\Cont(G)\rtimes G \cong \Comp(\ell^2 G),
\]
combined with the canonical equivariant Morita--Rieffel equivalence
\(\Comp(\ell^2 G) \sim \C\).
A short computation identifies this \(G\)\nb-equivariant
\Star{}homomorphism with the canonical inclusion
\(\Cont(G) \to\Comp(\ell^2 G)\)
by pointwise multiplication.

\begin{lemma}
  \label{lem:natural_trafo_alpha_10_01}
  The natural transformations
  \(\K_*(A\rtimes G) \leftrightarrows \K_*(A)\) induced by
  \(\Ga{1}{0}\) and \(\Ga{0}{1}\) are equal to the natural
  transformations induced by the inclusions
  \(A \hookrightarrow A\rtimes G \hookrightarrow A\otimes
  \Comp(\ell^2 G)\).
\end{lemma}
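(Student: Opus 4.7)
The plan is to apply Yoneda's lemma. The Green--Julg isomorphism and the induction-restriction adjunction corepresent the functors $\K_*(-\rtimes G) \cong \KK^G_*(\C,-)$ and $\K_*(-) \cong \KK^G_*(\Cont(G),-)$ on $\KK^G$, so natural transformations between these two functors are classified by $\KK^G_0(\C,\Cont(G)) \cong \Z$ in one direction and by $\KK^G_0(\Cont(G),\C) \cong \Z$ in the other; the odd parts vanish by~\eqref{eq:R01_through_K-theory}. By construction, the natural transformations given by Kasparov composition with $\Ga{0}{1}$ and $\Ga{1}{0}$ correspond under this classification to the generators $\Ga{0}{1}$ and $\Ga{1}{0}$ themselves.

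First I would check that the inclusions $\iota_A \colon A \hookrightarrow A\rtimes G$ and $\tilde\iota_A \colon A\rtimes G \hookrightarrow A\otimes \Comp(\ell^2 G)$ are natural in $A \in \KK^G$, not only in $A$ viewed in the category of separable $G$-\(\Cst\)-algebras and equivariant \Star{}homomorphisms. Since the naturality squares visibly commute for equivariant \Star{}homomorphisms, this extension follows from the universal property of equivariant $\KK$-theory, using the functoriality of the forgetful functor $\KK^G \to \KK$ and of Kasparov's descent functor $j^G \colon \KK^G \to \KK$. Thus these inclusions also define natural transformations between the two functors above, and it remains to match their Yoneda classes with those of $\Ga{0}{1}$ and $\Ga{1}{0}$.

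For $\Ga{0}{1}$, I would evaluate the Yoneda correspondence at $A = \Cont(G)$ with translation action. Under induction-restriction, the identity of $\KK^G_0(\Cont(G),\Cont(G))$ corresponds to the minimal projection $[\delta_e] \in \K_0(\Cont(G))$, and the pointwise multiplication inclusion $\Cont(G) \hookrightarrow \Cont(G)\rtimes G \cong \Comp(\ell^2 G)$ sends $\delta_e$ to a rank-one projection on $\ell^2(G)$. Its class is the generator $1 \in \K_0(\Comp(\ell^2 G)) \cong \Z$, which is identified with $\Ga{0}{1}$ by the computation in the text right after~\eqref{eq:KK_G_C_G}. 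Hence the Yoneda class of $\iota_*$ equals that of the Kasparov operator for $\Ga{0}{1}$.

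For $\Ga{1}{0}$, I would evaluate at $A = \C$ with trivial action. The identity of $\KK^G_0(\C,\C)$ corresponds under Green--Julg to the trivial-representation projection $p_e \in \C^*(G)$, and the inclusion $\tilde\iota \colon \C^*(G) \hookrightarrow \Comp(\ell^2 G)$ is the left regular representation, which sends $p_e$ to the rank-one projection onto the one-dimensional $G$-fixed subspace of $\ell^2(G)$. This again produces the generator of $\K_0(\Comp(\ell^2 G)) \cong \Z$, which via Baaj--Skandalis duality—the involution $\BS$ exchanges $\Ga{0}{1}$ and $\Ga{1}{0}$ as well as the two off-diagonal $\Z$-summands of~$\littlering$—corresponds to $\Ga{1}{0}$ in $\KK^G_0(\Cont(G),\C)$. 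The most delicate step in the argument is establishing naturality of $\iota$ and $\tilde\iota$ for arbitrary $\KK^G$-morphisms; once this is in place, the lemma reduces to the two elementary projection computations above.
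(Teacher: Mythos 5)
Your proof is correct and follows essentially the same Yoneda-lemma strategy as the paper. The only real differences are cosmetic: the paper notes that $\KK^G_0(\C,\Cont(G))\cong\Z$, so the transformation must be an integer multiple of the one given by $\Ga{0}{1}$, and pins down the multiple by evaluating at $A=\C$, whereas you read off the Yoneda class directly by testing at the corepresenting objects $\Cont(G)$ and $\C$; you also spell out the point --- which the paper uses silently --- that naturality of the two inclusions must hold for arbitrary $\KK^G$-morphisms, not merely for equivariant $*$\nb-homomorphisms.
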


\begin{proof}
  The inclusions \(A \hookrightarrow A\rtimes G\) are natural and
  hence form a natural transformation
  \(\eta_A\colon \K_*(A) \to \K_*(A\rtimes G)\).  By the Yoneda
  Lemma, this must come from a class in \(\KK^G_0(\C,\Cont(G))\).
  Since \(\KK^G_0(\C,\Cont(G)) \cong \Z\) with
  generator~\(\Ga{0}{1}\), the natural transformation~\(\eta\) must
  be a multiple of the natural transformation induced
  by~\(\Ga{0}{1}\).  Now~\(\eta_\C\) maps the class of the unit in
  \(\K_0(\C)\) onto the class of the unit in
  \(\K_0(\Cst(G)) \cong \Z[G]\).  So does the natural transformation
  induced by~\(\Ga{0}{1}\).  This proves the claim for the inclusion
  \(A \hookrightarrow A\rtimes G\).  The other inclusion is treated
  similarly.
\end{proof}

The computation of \(\Ga{0}{1}\) and \(\Ga{1}{0}\) implies that
\(\Ga{0}{1}\otimes \Ga{1}{0}\in \KK^G_0(\C,\C)\)
is represented by the unit map \(\C \to \Comp(\ell^2 G)\).
Its image in the representation ring is the class of the regular
representation.  The isomorphism \(\KK^G_0(\C,\C) \cong \Sring\)
maps this to the \emph{norm element}
\begin{equation}
  \label{eq:define_Norm_element}
  N(t) \defeq 1 + t + t^2 + \dotsb + t^{p-1} \in \Sring
\end{equation}
because each character appears in the regular representation with
multiplicity one.  We compute the other composition using that
Baaj--Skandalis duality is an involutive ring automorphism:
\[
\Ga{1}{0}\otimes \Ga{0}{1}
= \BS(\Ga{0}{1}) \otimes \BS^2(\Ga{0}{1})
= \BS(\Ga{0}{1}\otimes \Ga{1}{0})
= \BS(N(t)).
\]
This completely describes the multiplication in the
ring~\(\littlering\).  The following lemma summarises our results:

\begin{lemma}
  \label{lem:little_invariant_ring}
  The elements in the ring~\(\littlering\)
  are equivalent to \(2\times2\)-matrices
  \[
  \begin{pmatrix}
    x_{00}&x_{01}\\x_{10}&x_{11}
  \end{pmatrix},\qquad
  x_{00},x_{11} \in \Sring,\ x_{01},x_{10}\in \Z,
  \]
  and these are multiplied by
  \[
  \begin{pmatrix}
    x_{00}&x_{01}\\x_{10}&x_{11}
  \end{pmatrix}\otimes
  \begin{pmatrix}
    y_{00}&y_{01}\\y_{10}&y_{11}
  \end{pmatrix} =
  \begin{pmatrix}
    x_{00} y_{00} + x_{01}  y_{10}  N(t)&
    \tau(x_{00})  y_{01} + x_{01}  \tau(y_{11})\\
    x_{10} \tau(y_{00}) + \tau(x_{11}) y_{10}&
    x_{10} y_{01} N(t) + x_{11} y_{11}
  \end{pmatrix}.
  \]
  The augmentation character~\(\tau\)
  and the norm element~\(N(t)\)
  are defined in \eqref{eq:augmentation_character}
  and~\eqref{eq:define_Norm_element}.  Baaj--Skandalis duality acts on
  this ring by the involutive automorphism
  \[
  \BS \colon \littlering \congto \littlering,\qquad
  \begin{pmatrix}
    x_{00}&x_{01}\\x_{10}&x_{11}
  \end{pmatrix}\mapsto
  \begin{pmatrix}
    x_{11}&x_{10}\\x_{01}&x_{00}
  \end{pmatrix}.
  \]
\end{lemma}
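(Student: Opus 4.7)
The plan is to assemble the three claims from the pieces already developed in the preceding paragraphs, one matrix entry at a time. The additive decomposition of $\littlering$ as $2\times 2$-matrices is given by \eqref{eq:R01_through_K-theory}: the diagonal entries $\KK^G_0(\C,\C)\cong R(G)$ and $\KK^G_0(\Cont(G),\Cont(G))\cong \Z[G]$ are both identified with $\Sring=\Z[t]/(t^p-1)$ using \eqref{eq:S_is_group_ring} and the Fourier duality coming from our fixed pairing $\Z/p\times\Z/p\to \mathrm U(1)$, while the off-diagonal entries are infinite cyclic, generated by $\Ga{0}{1}$ and by $\Ga{1}{0}=\BS(\Ga{0}{1})$.

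To prove the multiplication formula, I would expand each entry $(x\otimes y)_{ij}=\sum_k x_{ik}\otimes y_{kj}$ and match it against the four cases. The diagonal--diagonal products $x_{00}y_{00}$ and $x_{11}y_{11}$ are multiplications within $\Sring$ in its two incarnations. The diagonal--off-diagonal terms all reduce to the two absorption identities
\[
  x\otimes \Ga{0}{1}=\tau(x)\,\Ga{0}{1}\quad(x\in\KK^G_0(\C,\C)),
  \qquad
  \Ga{0}{1}\otimes y=\tau(y)\,\Ga{0}{1}\quad(y\in\KK^G_0(\Cont(G),\Cont(G))),
\]
together with their images under $\BS$. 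The first identity holds because, after Green--Julg, $\Ga{0}{1}$ is the inclusion $\Cst(G)\hookrightarrow \Comp(\ell^2G)$, which on $\K_0$ is precisely the augmentation $R(G)\to\Z$. The second holds because the constant function $\Ga{0}{1}(1)\in\Cont(G)$ is translation-invariant, so the generator $t$ of $\Z[G]$ acts trivially on $\Ga{0}{1}$; the general case follows by $\Z$-linearity and again yields $\tau$. Finally the off-diagonal--off-diagonal terms are supplied by the computation $\Ga{0}{1}\otimes \Ga{1}{0}=N(t)$ carried out in the text and by $\Ga{1}{0}\otimes \Ga{0}{1}=\BS(N(t))$; since the Fourier identifications used to write both diagonal corners as $\Sring$ send the regular representation to $\sum_{g\in G} g$, both equal $N(t)$, matching the formula.

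The description of $\BS$ on $\littlering$ then follows because $\BS$ is an involutive ring automorphism of $\KK^G$ with $\BS(\C)\sim_{\KK^G}\Cont(G)$ and $\BS(\Cont(G))\sim_{\KK^G}\C$, so it swaps the two summands of $\C\oplus \Cont(G)$ and hence transposes the $2\times 2$-matrix structure; by construction it sends $\Ga{0}{1}\leftrightarrow \Ga{1}{0}$ on the off-diagonal, and by our choice of pairing it acts as the identity on the abstract ring $\Sring$ on the diagonal. The only real bookkeeping hurdle is to keep the two identifications $\KK^G_0(\C,\C)\cong\Sring$ and $\KK^G_0(\Cont(G),\Cont(G))\cong\Sring$ compatible enough that $\BS$ really acts as the identity on $\Sring$ and fixes $N(t)$; this is precisely what the symmetric self-dual pairing chosen above arranges, so once this compatibility is unwound there is no further obstacle.
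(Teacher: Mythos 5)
Your proposal is correct and follows essentially the same route as the paper: it reads off the additive structure from \eqref{eq:R01_through_K-theory}, uses Green--Julg to see that the $\Sring$-actions on $\KK^G_0(\C,\Cont(G))\cong\Z$ factor through $\tau$, and quotes the regular-representation computation of $\Ga{0}{1}\otimes\Ga{1}{0}=N(t)$ together with the Baaj--Skandalis automorphism for the remaining off-diagonal product and the matrix transpose $\BS$. The only cosmetic difference is that you justify the second absorption identity by the direct observation that the constant function in $\Cont(G)$ is translation-invariant, where the paper instead points out that translations become inner in $\Cont(G)\rtimes G$; both are valid one-line derivations of the same identity.
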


The ring~\(\littlering\)
described in Lemma~\ref{lem:little_invariant_ring} is generated by the
following elements:
\begin{alignat*}{2}
  \Gu{0}&\defeq
  \begin{pmatrix}
    1&0\\0&0
  \end{pmatrix},&\qquad
  \Gu{1}&\defeq
  \begin{pmatrix}
    0&0\\0&1
  \end{pmatrix},\\
  \Gt{0}&\defeq
  \begin{pmatrix}
    t&0\\0&0
  \end{pmatrix},&\qquad
  \Gt{1}&\defeq
  \begin{pmatrix}
    0&0\\0&t
  \end{pmatrix},\\
  \Ga{0}{1}&\defeq
  \begin{pmatrix}
    0&1\\0&0
  \end{pmatrix},&\qquad
  \Ga{1}{0}&\defeq
  \begin{pmatrix}
    0&0\\1&0
  \end{pmatrix}.
\end{alignat*}
These are subject to the following relations:
\begin{enumerate}
\item \(\Gu{0}\)
  and \(\Gu{1}\)
  are idempotent and \(\Gu{0}+\Gu{1}\)
  is a unit element in~\(\littlering\);
\item \(\Gu{j}\otimes \Gt{j} = \Gt{j} = \Gt{j}\otimes \Gu{j}\)
  for \(j=0,1\)
  and
  \(\Gu{0}\otimes \Ga{0}{1}= \Ga{0}{1} = \Ga{0}{1}\otimes \Gu{1}\),
  \(\Gu{1}\otimes \Ga{1}{0} = \Ga{1}{0} = \Ga{1}{0}\otimes \Gu{0}\);
\item
  \(\Ga{0}{1}\otimes \Gt{1}= \Ga{0}{1}= \Gt{0}\otimes \Ga{0}{1}\)
  and
  \(\Ga{1}{0}\otimes \Gt{0}= \Ga{1}{0}= \Gt{1}\otimes \Ga{1}{0}\);
\item \(\Ga{0}{1}\otimes \Ga{1}{0}= N(\Gt{0})\) and
 \(\Ga{1}{0}\otimes \Ga{0}{1}= N(\Gt{1})\).
\end{enumerate}
The relations \(\Gt{j}^p = \Gu{j}\) for \(j=0,1\) follow from this by writing
\[
\Gt{j}^p-\Gu{j}
= (\Gt{j}-\Gu{j}) \otimes N(\Gt{j})
= (\Gt{j}-\Gu{j}) \otimes (\Ga{j}{k}\otimes \Ga{k}{j}) = 0
\]
with \(k=1-j\).  So the generators~\(\Gu{j}\) are redundant.

\section{Manuel Köhler's invariant}
\label{sec:Koehler_invariant}

Now we extend the invariant
\(L_*(A) \cong \K_*(A\rtimes G) \oplus \K_*(A)\)
studied above as in the thesis of Manuel Köhler~\cite{Koehler:Thesis}.
Let
\[
D\defeq \cone(\Ga{0}{1})
\cong \setgiven{f\in \Cont_0(G\times [0,\infty), \C)}{f(g,0) = f(h,0)
  \text{ for all }g,h\in G}
\]
be the mapping cone of the embedding \(\C\to\Cont(G)\)
as constant functions.  This \(\Cst\)\nb-algebra
is commutative.  We identify it with \(\Cont_0(X)\) for
\begin{equation}
  \label{eq:cone_D_explicit}
  X\defeq \setgiven{(r\cos(\varphi), r\sin(\varphi))\in\R^2}
  {r\ge0,\ \varphi - \varphi_0 \in 2 \pi \Z/p}
\end{equation}
for any \(\varphi_0\in\R\),
with~\(G\)
acting by rotations around the origin, cyclically permuting the rays
in~\(X\).
This particular description will be used in the proof of
Proposition~\ref{pro:BS_self-dual}.

Adding~\(D\)
as an extra generator does not enlarge the equivariant bootstrap
class~\(\mathfrak{B}^G\).
So~\(\mathfrak{B}^G\)
is also the localising subcategory of~\(\KK^G\)
generated by \(\C\),
\(\Cont(G)\)
and~\(D\).
And the same localising subcategory is generated by the direct sum
\[
B \defeq \C \oplus \Cont(G) \oplus D.
\]
Here we use that idempotents in localising subcategories of~\(\KK^G\)
split because they are triangulated categories with countable direct
sums (see \cite{Neeman:Triangulated}*{§1.3}).

Let \(\Kring \defeq \KK^G_*(B,B)^\op\)
be the opposite of the \(\Z/2\)\nb-graded
endomorphism ring of~\(B\)
in~\(\KK^G\),
that is, \(\Kring_0 \defeq \KK^G_0(B,B)^\op\)
and \(\Kring_1 \defeq \KK^G_1(B,B)^\op\),
with the Kasparov composition product in reverse order as composition.
Let~\(A\) be an object of~\(\KK^G\).  Then
\[
F_*(A) \defeq \KK_*^G(B,A) =  \KK_0^G(B,A) \oplus  \KK_1^G(B,A)
\]
is a \(\Z/2\)\nb-graded
left \(\Kring\)\nb-module by the Kasparov product in reverse order.

Let \(\Gu{0},\Gu{1},\Gu{2}\)
be the projections to the summands \(\C\),
\(\Cont(G)\)
and~\(D\)
in~\(B\),
respectively.  These are even elements of~\(\Kring\),
and they are orthogonal idempotents whose sum is the unit element
in~\(\Kring\), that is,
\begin{align*}
  \Gu{j}\otimes \Gu{k}&= \delta_{j,k} \; \Gu{j}
  \qquad \text{for }j,k\in\{0,1,2\},\\
  \Gu{0}+\Gu{1}+\Gu{2}&= 1.
\end{align*}
Using these idempotent elements, we may describe~\(\Kring\)
as a ring of \(3\times3\)-matrices with entries
\begin{alignat*}{3}
  x_{00}&\in \KK^G_*(\C,\C),&\qquad
  x_{01}&\in \KK^G_*(\C,\Cont(G)),&\qquad
  x_{02}&\in \KK^G_*(\C,D),\\
  x_{10}&\in \KK^G_*(\Cont(G),\C),&\qquad
  x_{11}&\in \KK^G_*(\Cont(G),\Cont(G)),&\qquad
  x_{12}&\in \KK^G_*(\Cont(G),D),\\
  x_{20}&\in \KK^G_*(D,\C),&\qquad
  x_{21}&\in \KK^G_*(D,\Cont(G)),&\qquad
  x_{22}&\in \KK^G_*(D,D).
\end{alignat*}
The multiplication follows the matrix multiplication pattern as for
the ring~\(\littlering\)
that acts on the little invariant.  The idempotents~\(\Gu{j}\)
also decompose any \(\Z/2\)\nb-graded
\(\Kring\)\nb-module~\(M\)
as a direct sum \(M = M_0 \oplus M_1 \oplus M_2\)
with \(\Z/2\)\nb-graded
subgroups \(M_j \defeq \Gu{j}\cdot M\).
The action of~\(\Kring\)
on~\(M\)
is given by matrix-vector multiplication when we write elements
of~\(M\)
as column vectors with entries in~\(M_j\)
for \(j=0,1,2\).
We shall tacitly identify \(\Kring\)\nb-modules
with such triples \((M_j)_{j=0,1,2}\)
in the following.
Let~\(A\) be an object of~\(\KK^G\).  Then
\[
F_*(A)_0 = \K_*(A\rtimes G),\qquad
F_*(A)_1 = \K_*(A),\qquad
F_*(A)_2 = \KK_*^G(D,A),
\]
where~\(\K_*\) denotes ordinary K\nb-theory, because
\begin{align*}
  \KK_*^G(B,A) &\cong \KK_*^G(\C,A) \oplus \KK_*^G(\Cont(G),A) \oplus \KK_*^G(D,A)\\
  &\cong \K_*(A\rtimes G) \oplus \K_*(A) \oplus \KK_*^G(D,A).
\end{align*}
  
There are canonical equivariant \Star{}homomorphisms
\begin{equation}
  \label{eq:first_cone_sequence}
  \Sigma \Cont(G) \xrightarrow{\Ga{1}{2}}
  D \xrightarrow{\Ga{2}{0}}
  \C \xrightarrow{\Ga{0}{1}}
  \Cont(G),
\end{equation}
namely,
\(\Ga{1}{2}\colon \Sigma \Cont(G) = \Cont_0(G\times\R) \to
D=\Cont_0(X)\)
is the ideal inclusion corresponding to the subset
\(X\setminus\{0\} \subseteq X\),
and \(\Ga{2}{0}\colon \Cont_0(X) = D \to \C\)
is evaluation at~\(0\).
The maps in~\eqref{eq:first_cone_sequence} form an exact triangle
in~\(\KK^G\)
(see~\cite{Meyer-Nest:BC}).  This exact triangle induces a long
exact sequence of \(\KK\)\nb-groups,
the Puppe sequence for \(\C\to\Cont(G)\):
\begin{equation}
  \label{eq:K-theory_long_exact_sequence}
  \begin{tikzcd}[baseline=(current bounding box.west)]
    \K_0(A) \arrow[r, "\Ga{0}{1}^*"] &
    \K_0(A\rtimes G) \arrow[r, "\Ga{2}{0}^*"] &
    \KK_0^G(D,A) \arrow[d, "\Ga{1}{2}^*"] \\
    \KK_1^G(D,A) \arrow[u, "\Ga{1}{2}^*"] &
    \K_1(A\rtimes G) \arrow[l, "\Ga{2}{0}^*"] &
    \K_1(A).  \arrow[l, "\Ga{0}{1}^*"]
  \end{tikzcd}
\end{equation}
This long exact sequence implies that \(\KK_*^G(D,A)\)
is countable because \(\K_*(A)\)
and \(\K_*(A\rtimes G)\)
are countable for any separable \(\Cst\)\nb-algebra~\(A\).
Thus \(F_*(A)\)
is a countable \(\Z/2\)\nb-graded
\(\Kring\)\nb-module.
The functor~\(F\)
from \(\KK^G\)
to the category of countable \(\Z/2\)\nb-graded
\(\Kring\)\nb-modules
is the invariant studied by Manuel Köhler.  This functor is a stable
homological functor as in
\cites{Meyer-Nest:Homology_in_KK,Meyer:Homology_in_KK_II}, so that the
homological algebra machinery developed in these articles applies to
it.  Even more, \(F\)
is the universal stable homological functor defined by its kernel on
morphisms.  Roughly speaking, this says that the homological algebra
in~\(\KK^G\)
that is produced by~\(F\)
is equivalent to the homological algebra in the category of
\(\Z/2\)\nb-graded
countable \(\Kring\)\nb-modules.
While this is an important point for the proof of the main theorem
of~\cite{Koehler:Thesis}, there is no need to discuss this here.
The following is the main result of~\cite{Koehler:Thesis}:

\begin{theorem}[\cite{Koehler:Thesis}]
  \label{the:Koehler_invariant_UCT_classifies}
  Let \(A,C\)
  be separable \(G\)\nb-\(\Cst\)-algebras.
  Assume \(A\in\mathfrak{B}^G\).  Then there is a natural short exact sequence
  \[
  \Ext^1_\Kring\bigl(F_{1+*}(A),F_*(C)\bigr) \into \KK^G_*(A,C) \prto \Hom_\Kring\bigl(F_*(A),F_*(C)\bigr).
  \]
  If \(A,C\in\mathfrak{B}^G\),
  then an isomorphism \(\varphi\colon F_*(A) \to F_*(C)\)
  of \(\Z/2\)\nb-graded
  \(\Kring\)\nb-modules
  lifts to a \(\KK^G\)-equivalence
  in \(\KK^G_0(A,C)\).
  In particular, \(A,C\)
  are \(\KK^G\)-equivalent
  if and only if \(F_*(A) \cong F_*(C)\)
  as \(\Z/2\)\nb-graded \(\Kring\)\nb-modules.
\end{theorem}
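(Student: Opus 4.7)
The plan is to derive this theorem from the Meyer--Nest framework for relative homological algebra in triangulated categories, using the fact (asserted just before the statement) that~\(F\) is the universal stable homological functor defined by its kernel on morphisms. First I identify the \(F\)-projective objects: since \(F_*(B) = \KK^G_*(B,B)\) is \(\Kring\) itself as a left \(\Kring\)\nb-module, the three summands \(\C\), \(\Cont(G)\), \(D\) of~\(B\) have \(F\)\nb-values equal to the column idempotent summands \(\Kring\cdot \Gu{j}\), which together form a projective generator of the category of \(\Z/2\)\nb-graded \(\Kring\)\nb-modules. Countable direct sums of shifts of \(\C\), \(\Cont(G)\), \(D\) therefore realize all countable free \(\Kring\)\nb-modules, and~\(\mathfrak{B}^G\) is by construction the localising subcategory generated by these \(F\)\nb-projectives.

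The key technical step is to show that for every \(A \in \mathfrak{B}^G\) the \(\Kring\)\nb-module \(F_*(A)\) admits a projective resolution of length one in \(\Z/2\)\nb-graded \(\Kring\)\nb-modules. One chooses a surjection \(P_0 \prto F_*(A)\) from a free \(\Kring\)\nb-module (using countability of \(F_*(A)\)) and shows that its kernel is again free, exploiting the Puppe exactness of~\eqref{eq:K-theory_long_exact_sequence} together with its Baaj--Skandalis dual and the explicit relations on \(\Kring\) provided by Theorem~\ref{the:generators_and_relations}. The Meyer--Nest phantom tower attached to~\(A\) through the adjoint of~\(F\) then lifts such a resolution to an exact triangle
\[
P_1 \to P_0 \to A \to \Sigma P_1
\]
in~\(\KK^G\) with \(F\)\nb-projective vertices; universality of~\(F\) guarantees that any two lifts agree in~\(\KK^G\) and that all \(F\)\nb-images of the arrows are what we chose.

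Applying \(\KK^G_*(-,C)\) to this exact triangle gives a six-term exact sequence. Because \(P_0, P_1\) are \(F\)\nb-projective, freeness and universality identify \(\KK^G_*(P_i,C)\) with \(\Hom_\Kring(F_*(P_i),F_*(C))\), so the resulting two-term complex computes \(\Hom_\Kring(F_*(A),F_*(C))\) and \(\Ext^1_\Kring(F_*(A),F_*(C))\) as kernel and cokernel. This yields the UCT short exact sequence, with naturality in \(A\) and \(C\) automatic from the construction. For the second assertion, given an isomorphism \(\varphi \colon F_*(A) \to F_*(C)\) with \(A,C \in \mathfrak{B}^G\), surjectivity of the Hom-quotient map produces a lift \(\tilde\varphi \in \KK^G_0(A,C)\); completing to an exact triangle with cone~\(E\) and applying~\(F\) shows that \(F_*(E) = 0\), and then the analogue for~\(F\) of Lemma~\ref{lem:little_invariant_detects_zero} (using that \(\C\), \(\Cont(G)\), \(D\) jointly generate \(\mathfrak{B}^G\) as a localising subcategory) forces \(E \cong 0\) in~\(\KK^G\), so \(\tilde\varphi\) is a \(\KK^G\)\nb-equivalence.

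The main obstacle is the projective-dimension-one claim in the second step. A generic module over~\(\Kring\) need not have a short free resolution; the crux of Köhler's argument is that \emph{exact} \(\Kring\)\nb-modules — those satisfying the two Puppe-type exactness conditions imposed by the triangle-shaped diagrams — do have projective dimension at most one, and one must produce the second syzygy explicitly using the generators \(\Ga{j}{k}\) and their relations. Everything else in the proof is a formal consequence of the triangulated machinery once this resolution is in hand.
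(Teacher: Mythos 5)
Your proposal reconstructs exactly the Meyer--Nest route that the paper indicates and, like the paper, defers to Köhler's thesis for the one genuinely hard step: that exact \(\Kring\)\nb-modules have projective dimension at most one. The outline you give — identify the \(F\)-projective objects as direct sums of shifts of \(\C\), \(\Cont(G)\), \(D\); lift a length-one projective resolution of \(F_*(A)\) to an exact triangle in \(\KK^G\); apply \(\KK^G_*(\blank,C)\) to produce the six-term sequence computing \(\Hom_\Kring\) and \(\Ext^1_\Kring\) — is precisely the standard machinery Köhler implements, so the approach is essentially the same as the paper's. The only small deviation is the closing step of the classification part: you form the cone~\(E\) of a lift~\(\tilde\varphi\), observe \(F_*(E)=0\) from the long exact sequence, and conclude \(E\simeq 0\) from the fact that \(F_*\) detects vanishing on~\(\mathfrak{B}^G\), whereas the paper's sketch argues via nilpotence of the \(\Ext^1_\Kring\)-ideal inside \(\KK^G_*(A,C)\), so that any lift of an isomorphism is automatically invertible. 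Both closures are correct, essentially interchangeable, and standard in this formalism; yours is the cleaner category-theoretic statement, while the nilpotence route exposes slightly more of the structure of the \(\KK^G\)-hom-groups. Two minor imprecisions worth tightening if you were to flesh this out: the second syzygy in Köhler's resolution needs only to be \emph{projective}, not literally free as you write; and the assertion that the phantom tower produces an exact triangle whose third vertex is \(A\) itself (rather than only a cellular approximation of~\(A\)) is exactly where the hypothesis \(A\in\mathfrak{B}^G\) is used — you do assume it, but it deserves to be flagged as the load-bearing appeal to the hypothesis, since the UCT sequence is not asserted outside~\(\mathfrak{B}^G\).
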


The main part in the proof of the Universal Coefficient Theorem exact
sequence in Theorem~\ref{the:Koehler_invariant_UCT_classifies} is to
show that \(F_*(A)\)
for~\(A\) in~\(\KK^G\)
has a projective resolution of length~\(1\)
in the category of \(\Z/2\)\nb-graded
\(\Kring\)\nb-modules.
The naturality of the UCT exact sequence means that composing the image
of \(\Ext^1_\Kring\)
with an element \(\varphi\in \KK^G_*(A,C)\)
is equivalent to composing with \(F_*(\varphi)\)
in the Ext-theory over~\(\Kring\).
In particular, the \(\Ext^1_\Kring\)-part
is nilpotent.  Therefore, any lifting of an isomorphism in
\(\Hom_\Kring(F_*(A),F_*(C))\)
is invertible in~\(\KK^G\),
provided the UCT applies to both \(A\)
and~\(C\).
The general machinery of homological algebra in triangulated
categories in~\cite{Meyer-Nest:Homology_in_KK} also shows that a
separable \(G\)\nb-\(\Cst\)-algebra~\(A\)
belongs to~\(\mathfrak{B}^G\)
if and only if \(\KK^G_*(A,C)\)
is computed by the UCT for all~\(C\),
if and only if \(\KK^G_*(A,C)=0\)
for all~\(C\)
with \(F_*(C)=0\).

In addition to Theorem~\ref{the:Koehler_invariant_UCT_classifies},
Köhler also describes the range of the invariant~\(F_*\).
This result is particularly important for us.  Put in a nutshell, a
countable \(\Kring\)\nb-module
belongs to the essential range of~\(F_*\)
if and only if two sequences built from the \(\Kring\)\nb-module
structure are exact.  One of these exact sequences is the Puppe
sequence in~\eqref{eq:K-theory_long_exact_sequence}.  The most elegant
way to get the second exact sequence uses that the object~\(D\)
above is self-dual with respect to Baaj--Skandalis duality.  This is a
special feature of working with \(\Cst\)\nb-algebras
and equivariant~\(\KK^G\).
It seems to have no analogue in the homological algebra of
\(\Sring\)\nb-modules or in the algebraic topology of \(G\)\nb-spaces.

\begin{proposition}
  \label{pro:BS_self-dual}
  \(\BS(D) \cong \Sigma D\),
  where~\(\Sigma\)
  denotes suspension.  Hence Baaj--Skandalis duality induces an
  involutive automorphism of the ring~\(\Kring\)
  and a compatible isomorphism \(F_*(A) \cong F_*(\BS(A))\)
  -- but these do not preserve the \(\Z/2\)\nb-gradings.
\end{proposition}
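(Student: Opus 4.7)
My plan is to apply Baaj--Skandalis duality to the defining triangle of $D$ and then identify the result as $\Sigma D$. Since $\BS$ is an equivalence of triangulated categories commuting with suspension, and since under the chosen symmetric pairing we have the $\KK^G$-equivalences $\BS(\C)\cong\Cont(G)$ (via $\C\rtimes G\cong\Cst(G)\cong\Cont(G)$) and $\BS(\Cont(G))\cong\C$ (via $\Cont(G)\rtimes G\cong\Comp(\ell^2 G)\sim\C$), applying $\BS$ to the triangle $\Sigma\Cont(G)\to D\to \C\xrightarrow{\Ga{0}{1}}\Cont(G)$ produces a distinguished triangle $\Sigma\C\to \BS(D)\to \Cont(G)\xrightarrow{\Ga{1}{0}}\C$. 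The last morphism is $\BS(\Ga{0}{1})=\Ga{1}{0}$ by the definition of $\Ga{1}{0}$ in \eqref{eq:alpha_10}, so $\BS(D)$ is a completion of $\Ga{1}{0}$ to a distinguished triangle.

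The main step is then to show that $\Sigma D$ also completes $\Ga{1}{0}$ to a distinguished triangle of the same shape, after which the uniqueness of the third vertex in a triangulated category gives $\BS(D)\cong\Sigma D$. I would do this by computing $D\rtimes G$ directly from the geometric model $D=\Cont_0(X)$ in \eqref{eq:cone_D_explicit}: the $G$-equivariant short exact sequence $0\to \Cont(G)\otimes\Sigma\C\to D\to \C\to 0$, with ideal $\Cont_0(X\setminus\{0\})$ and quotient evaluation at the origin, yields after crossed product
\[
0\to \Comp(\ell^2 G)\otimes\Sigma\C\to D\rtimes G\to \Cst(G)\to 0,
\]
which after the Morita and Fourier identifications of the outer terms reproduces the triangle derived abstractly above. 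Constructing an explicit equivariant correspondence that identifies $D\rtimes G$ with $\Sigma D$ uses the specific choice of angular parameter $\varphi_0$ in \eqref{eq:cone_D_explicit}; the verification that the connecting map in both completions is $\Ga{1}{0}$ (up to sign) amounts to a calculation inside the triangulated category using Bott periodicity.

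The structural consequences are formal. Since $\BS$ maps $B=\C\oplus\Cont(G)\oplus D$ to a direct sum $\KK^G$-equivalent to $\Cont(G)\oplus\C\oplus\Sigma D$, the induced operation on $\Kring=\KK^G_*(B,B)^\op$ is a ring automorphism, and functoriality produces a natural isomorphism $F_*(A)=\KK^G_*(B,A)\cong\KK^G_*(B,\BS(A))=F_*(\BS(A))$. Involutivity $\BS\circ\BS\cong\id$ is built into the symmetric pairing $\widehat{G}\cong G$. The $\Z/2$-grading is not preserved because the $\Sigma$ on the $D$-summand shifts the parity of the $M_2$-component of the invariant. The main obstacle is the identification of the connecting morphism with $\Ga{1}{0}$ in the triangle built from $\Sigma D$, which requires careful tracking of signs combined with Bott periodicity, or equivalently an explicit geometric construction exploiting the embedding $X\subset\R^2$.
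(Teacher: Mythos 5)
Your overall framing is the same as the paper's: apply $\BS$ to the defining triangle of $D$ to see that $\BS(D)$ is a cone of $\Ga{1}{0}$, then show separately that $\Sigma D$ is also a cone of $\Ga{1}{0}$, and conclude by uniqueness of cones. The structural consequences at the end (the ring automorphism, the natural isomorphism $F_*(A)\cong F_*(\BS(A))$, the parity shift on the $M_2$-summand) are handled correctly and formally.

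However, the proposal has a genuine gap exactly where the proposition has its content. Your computation of $D\rtimes G$ by taking the crossed product of the extension $\Sigma\Cont(G)\into D\prto\C$ only reproduces (tautologically) the same abstract triangle you already obtained by applying $\BS$: it tells you that $D\rtimes G$ is a cone of $\Ga{1}{0}$, which you knew. It does not identify $D\rtimes G$, or any cone of $\Ga{1}{0}$, with $\Sigma D$. You then say one should ``construct an explicit equivariant correspondence that identifies $D\rtimes G$ with $\Sigma D$'' and that the rest ``amounts to a calculation... using Bott periodicity,'' but this phrase is precisely the whole theorem, and you give no indication of how to build that correspondence or where Bott periodicity enters. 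The paper's route is substantially different at this point: it identifies the class $\Ga{1}{0}\in\KK^G_0(\Cont(G),\C)$ with the wrong-way element $p!$ of the constant map $p\colon G\to\pt$ in the sense of geometric equivariant bivariant $\K$-theory, then invokes the equality $p!=p!_{\mathrm{top}}$ of analytic and topological index. The topological index is built from a Thom isomorphism, an open embedding $G\times\R^2_\varrho\hookrightarrow\R^2_\varrho$ coming from a tubular neighbourhood, and Bott periodicity. Because Thom and Bott classes are invertible, the cone of $p!_{\mathrm{top}}$ equals the cone of the open embedding, which is $\Sigma\Cont_0$ of the complement; and with a suitable choice of tubular neighbourhood the complement is exactly $X$ from~\eqref{eq:cone_D_explicit}. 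This use of the geometric (correspondence/index-theoretic) description of $\KK^G$ is the missing engine in your argument, and I see no way to replace it by a purely formal ``calculation inside the triangulated category.'' (As a side remark, the paper defines $X$ for an arbitrary $\varphi_0$, so there is no ``specific choice of angular parameter'' playing a role; what is chosen at the end is the tubular neighbourhood parametrisation, not $\varphi_0$.)
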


\begin{proof}
  This is shown in~\cite{Koehler:Thesis}.  We sketch a proof that uses
  the geometric description of equivariant bivariant \(\K\)\nb-theory
  developed in~\cite{Emerson-Meyer:Correspondences}.  Define
  \(\Ga{1}{0}\in \KK^G_0(\Cont(G),\C)\)
  as in~\eqref{eq:alpha_10}.  Then \(\BS(D)\)
  is a mapping cone for~\(\Ga{1}{0}\).
  We shall describe this cone in a different way to show that it is
  \(\KK^G\)\nb-equivalent
  to~\(\Sigma D\).
  The groups \(\KK^G_*(\Cont_0(X),\Cont_0(Y))\)
  for two \(G\)\nb-manifolds
  \(X,Y\)
  are described in~\cite{Emerson-Meyer:Correspondences} through
  equivariant correspondences, following Connes and
  Skandalis~\cite{Connes-Skandalis:Longitudinal}.  In particular,
  some geometric \(\KK^G\)-cycle
  must give~\(\Ga{1}{0}\).
  Actually, the geometric cycles that we need are only wrong-way
  elements of equivariantly \(\K\)\nb-oriented
  smooth maps, which are treated in~\cite{Emerson-Meyer:Normal_maps}.
  We claim that \(\Ga{1}{0}\)
  is the wrong-way element~\(p!\)
  associated to the constant map \(p\colon G\to \pt\)
  with the obvious \(\K\)\nb-orientation,
  where~\(\pt\)
  denotes the one-point space.  The Kasparov cycle associated
  to~\(p!\)
  for a surjective submersion is, in general, represented by the
  Kasparov cycle of the family of Dirac operators along the fibres of
  the submersion; this is a form of the analytic index map.  In our
  \(0\)\nb-dimensional
  case, this boils down to the class in \(\KK^G_0(\Cont(G),\Cont(\pt))\)
  of \(\ell^2(G)\)
  with~\(\Cont(G)\)
  acting by poinwise multiplication and with zero Fredholm operator.
  This is exactly our~\(\Ga{1}{0}\).
  The analytic index is always equal to the topological index, which
  is defined using only Thom isomorphisms and open embeddings.  In
  general, the topological index map for a smooth \(\K\)\nb-oriented
  map \(X\to Y\)
  uses an equivariant embedding of~\(X\)
  into~\(\R^n\)
  with some linear action of~\(G\).
  In the case at hand, let~\(\R^2_\varrho\)
  denote~\(\R^2\)
  with the action of \(G=\Z/p\)
  by rotations by angles~\(2\pi k/p\);
  this is \(\K\)\nb-oriented in a canonical way.  Then
  \[
  G\to \R^2_\varrho,\qquad
  k\mapsto(\cos(2\pi k/p),\sin(2\pi k/p)),
  \]
  is a \(G\)\nb-equivariant
  \(\K\)\nb-oriented
  embedding.  Its normal bundle is \(G\times \R^2_\varrho \to G\).
  We choose a diffeomorphism from \(G\times \R^2_\varrho\)
  onto an open neighbourhood of~\(G\)
  in~\(\R^2_\varrho\).
  The topological index element
  \(p!_\mathrm{top} \in \KK^G_0(\Cont(G),\C)\)
  is the composite of the Thom isomorphism class in
  \(\KK^G_0(\Cont(G),\Cont_0(G\times \R^2_\varrho))\),
  the inclusion
  \(\Cont_0(G\times\R^2_\varrho) \hookrightarrow
  \Cont_0(\R^2_\varrho)\)
  as an ideal from the open inclusion
  \(G\times \R^2_\varrho \hookrightarrow \R^2_\varrho\),
  and the class of the Bott isomorphism in
  \(\KK^G_0(\Cont_0(\R^2_\varrho),\C)\).
  The general index theorem in
  \cite{Emerson-Meyer:Normal_maps}*{Theorem~6.1} contains the
  statement that \(p! = p!_\mathrm{top}\)
  in \(\KK^G_0(\Cont(G),\C)\).
  So \(\BS(D)\)
  is also a cone for \(p!_\mathrm{top}\).
  Here the ``cone'' is defined as the third entry of an exact triangle
  \[
  \Sigma \Cont(G) \to \BS(D) \to\Cont(G) \xrightarrow{p!_\mathrm{top}} \C,
  \]
  using the triangulated category structure on~\(\KK^G\).
  This cone is determined uniquely up to \(\KK^G\)-equivalence.
  The \(\KK^G\)-cycle
  above does not yet come from an equivariant \Star{}homomorphism.
  But since the Thom isomorphism and Bott isomorphism classes are
  invertible in \(\KK^G\),
  a cone for the ideal inclusion
  \(\Cont_0(G\times\R^2_\varrho) \hookrightarrow
  \Cont_0(\R^2_\varrho)\)
  is also a cone for~\(p!_\mathrm{top}\).
  The extension
  \(\Cont_0(G\times\R^2_\varrho) \into \Cont_0(\R^2_\varrho)
  \prto \Cont_0(\R^2_\varrho \setminus (G\times\R^2_\varrho))\)
  has long exact sequences in \(\KK^G\).
  So there is an exact triangle
  \[
  \Cont_0(G\times\R^2_\varrho) \to \Cont_0(\R^2_\varrho)
  \to \Cont_0(\R^2 \setminus (G\times\R^2_\varrho)) \to
  \Sigma \Cont_0(G\times\R^2_\varrho)
  \]
  in~\(\KK^G\).
  Hence the suspension of
  \(\Cont_0(\R^2_\varrho \setminus (G\times\R^2_\varrho))\)
  is also a cone of~\(p!_\mathrm{top}\).
  Now we choose the tubular neighbourhood diffeomorphism
  \(G\times \R^2_\varrho \to \R^2_\varrho\)
  so that each \(\{k\} \times \R^2\)
  is mapped diffeomorphically onto an entire open sector
  \[
    \setgiven{(r\cos \varphi,r\sin \varphi)}
    {0<r,\ \varphi \in ((2k-1)\cdot\pi/p,(2k+1)\cdot\pi/p)}.
  \]
  Its complement in~\(\R^2\) is the union of the rays
  \((r\cos \varphi,r\sin \varphi)\)
  with \(0\le r <\infty\)
  and \(\varphi \in (2k-1)\cdot \pi/p\),
  \(k\in\Z/p\).
  This is diffeomorphic to the space~\(X\)
  in~\eqref{eq:cone_D_explicit}.  So Baaj--Skandalis duality
  maps~\(D\) to something \(\KK^G\)-equivalent to~\(\Sigma D\).

  Now we replace the generator~\(B\)
  by \(B\oplus \Sigma B\).
  This is more precise because we are, anyway, taking the
  \(\Z/2\)\nb-graded
  group \(\KK^G_*(B,A) = \KK^G_0(B,A) \oplus \KK^G_0(\Sigma B,A)\).
  The isomorphisms \(\BS(\C) \cong \Cont(G)\),
  \(\BS(\Cont(G)) \cong \C\),
  \(\BS(D) \cong \Sigma D\)
  in \(\KK^G\)
  and their suspensions combine to an isomorphism
  \(\BS(B\oplus \Sigma B) \cong B\oplus \Sigma B\).
  This induces an involutive automorphism~\(\BS\)
  of the ring
  \(\Kring = \KK^G_0(B\oplus\Sigma B, B \oplus \Sigma B)^\op\),
  and it induces an isomorphism
  \[
  \BS\colon F_*(A) \defeq \KK^G_0(B\oplus \Sigma B,A) \to
  \KK^G_0(\BS(B\oplus \Sigma B),\BS(A)) \cong F_*(\BS(A)),
  \]
  which is compatible with the \(\Kring\)\nb-module
  structure in the sense that
  \(\BS(x)\otimes \BS(y) = \BS(x\otimes y)\)
  for all \(x\in \Kring\),
  \(y\in F_*(A)\).
  We must be aware, however, that \(\BS\)
  does not preserve the \(\Z/2\)\nb-gradings
  on \(\Kring\)
  and~\(F_*(A)\).
  When we decompose them according to the direct sum decomposition
  \(B = \C \oplus \Cont(G) \oplus D\),
  then the grading is preserved on \(\C\)
  and \(\Cont(G)\) and reversed on~\(D\).
\end{proof}

Baaj--Skandalis duality applied to the exact
triangle~\eqref{eq:first_cone_sequence} gives another
exact triangle in~\(\KK^G\):
\[
\Sigma \BS(\Cont(G))  \xrightarrow{\BS(\Ga{1}{2})}
\BS(D) \xrightarrow{\BS(\Ga{2}{0})}
\BS(\C) \xrightarrow{\BS(\Ga{0}{1})}
\BS(\Cont(G)).
\]
The \(\KK^G\)-equivalences \(\BS(\Cont(G)) \sim \C\),
\(\BS(\C) \sim \Cont(G)\) and \(\BS(D) \sim \Sigma D\) show that it
is isomorphic to an exact triangle
\begin{equation}
  \label{eq:second_cone_sequence}
  \Sigma \C \xrightarrow{\Sigma \Ga{0}{2}}
  \Sigma D \xrightarrow{\Ga{2}{1}}
  \Cont(G) \xrightarrow{\Ga{1}{0}}
  \C.
\end{equation}
Implicitly, this defines \(\Ga{0}{2}\), \(\Ga{2}{1}\)
and~\(\Ga{1}{0}\) as the images of \(\Ga{1}{2}\), \(\Ga{2}{0}\)
and~\(\Ga{0}{1}\) under~\(\BS\), composed with the chosen
\(\KK^G\)-equivalences.  In other words, the automorphism
\(\BS\colon \Kring\to \Kring\) satisfies
\[
\BS(\Ga{1}{2}) = \Sigma \Ga{0}{2},\qquad
\BS(\Ga{2}{0}) = \Ga{2}{1},\qquad
\BS(\Ga{0}{1}) = \Ga{1}{0}.
\]
Here we have viewed the \(\KK^G\)-classes
of~\(\Ga{j}{k}\)
as elements of~\(\Kring\).  Since~\(\BS\) is an involution, we also get
\[
\BS(\Ga{0}{2}) = \Sigma \Ga{1}{2},\qquad
\BS(\Ga{2}{1}) = \Ga{2}{0},\qquad
\BS(\Ga{1}{0}) = \Ga{0}{1}.
\]
The elements \(\Ga{j}{k}\in \Kring\)
are even or odd depending on the suspensions in
\eqref{eq:first_cone_sequence} and~\eqref{eq:second_cone_sequence}.
Namely, \(\Ga{1}{2}\)
and \(\Ga{2}{1}\)
are odd and \(\Ga{2}{0}\),
\(\Ga{0}{1}\),
\(\Ga{0}{2}\),
\(\Ga{1}{0}\)
are even.  Since~\(F_*\)
is a homological functor, the exact
triangle~\eqref{eq:second_cone_sequence} induces a second exact
sequence, called the \emph{dual Puppe sequence},
\begin{equation}
  \label{eq:dual_Puppe}
  \begin{tikzcd}[baseline=(current bounding box.west)]
    \KK_0^G(D,A) \arrow[r, "\Ga{0}{2}^*"] &
    \K_0(A\rtimes G) \arrow[r, "\Ga{1}{0}^*"] &
    \K_0(A) \arrow[d, "\Ga{2}{1}^*"] \\
    \K_1(A) \arrow[u, "\Ga{2}{1}^*"] &
    \K_1(A\rtimes G) \arrow[l, "\Ga{1}{0}^*"] &
    \KK_1^G(D,A). \arrow[l, "\Ga{0}{2}^*"]
  \end{tikzcd}
\end{equation}

\begin{definition}
  \label{def:module_notation}
  Let~\(M\) be a \(\Kring\)\nb-module, \(j,k\in\{0,1,2\}\) and
  \(x\in \Kring\) with \(x = \Gu{j}\otimes x \otimes \Gu{k}\).
  Write \(M = M_0 \oplus M_1 \oplus M_2\) as above.  We
  write~\(x^M\) for the map \(M_k \to M_j\),
  \(y\mapsto y\otimes x\).  In particular, this
  defines~\(\Ga{j}{k}^M\) for \(j\neq k\).  Later, we will also get
  elements~\(\Gt{j}^M\) for \(j=0,2\) and~\(\Gs{j}^M\) for
  \(j=1,2\).
\end{definition}

We can now characterise the range of the invariant~\(F_*\).

\begin{definition}
  \label{def:exact_R-module}
  A \(\Kring\)\nb-module~\(M\)
  is \emph{exact} if the two sequences of
  Abelian groups
  \[
  \begin{tikzcd}[row sep=large, column sep=large]
    &M_1 \ar[dr, shift right, "\Ga{2}{1}^M"'] \ar[dl, shift right, "\Ga{0}{1}^M"']&\\
    M_0 \ar[rr, shift right, "\Ga{2}{0}^M"'] \ar[ur, shift right, "\Ga{1}{0}^M"']&&
    M_2 \ar[ul, shift right, "\Ga{1}{2}^M"'] \ar[ll, shift right, "\Ga{0}{2}^M"']
  \end{tikzcd}
  \]
  are exact.
\end{definition}

\begin{theorem}[\cite{Koehler:Thesis}]
  \label{the:range_F}
  Let~\(M\)
  be a countable \(\Z/2\)\nb-graded
  \(\Kring\)\nb-module.  The following are equivalent:
  \begin{enumerate}
  \item \label{en:range_F_BG}%
    \(M = F_*(A)\) for some~\(A\) in~\(\mathfrak{B}^G\);
  \item \label{en:range_F_KKG}%
    \(M = F_*(A)\) for some~\(A\) in~\(\KK^G\);
  \item \label{en:range_F_exact}%
    \(M\) is exact;
  \item \label{en:range_F_proj}%
    \(M\) has a projective \(\Kring\)\nb-module resolution of length~\(1\).
  \end{enumerate}
\end{theorem}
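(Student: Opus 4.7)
The plan is to prove the cycle
\((1)\Rightarrow(2)\Rightarrow(3)\Rightarrow(4)\Rightarrow(1)\).
The implication $(1)\Rightarrow(2)$ is trivial because $\mathfrak{B}^G$ is a full subcategory of $\KK^G$. For $(2)\Rightarrow(3)$, apply the homological functor $F_* = \KK^G_*(B,-)$ to the exact triangles \eqref{eq:first_cone_sequence} and \eqref{eq:second_cone_sequence}. These yield precisely the hexagonal Puppe sequences \eqref{eq:K-theory_long_exact_sequence} and \eqref{eq:dual_Puppe}, whose exactness is the content of Definition~\ref{def:exact_R-module}. This step works for any $A\in\KK^G$, not just for $A\in\mathfrak{B}^G$, which is why condition~(3) cannot by itself distinguish between (1) and~(2); that distinction must come from the other two implications.

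For $(4)\Rightarrow(1)$, I would use that $F_*(\C) \cong \Kring\Gu{0}$, $F_*(\Cont(G)) \cong \Kring\Gu{1}$ and $F_*(D) \cong \Kring\Gu{2}$ are, together with their suspensions, the projective generators of the category of countable $\Z/2$\nb-graded $\Kring$\nb-modules; this is just the Yoneda isomorphism $\Hom_\Kring(\Kring\Gu{j},N) \cong N_j$. Given a length-one projective resolution $0 \to Q_1 \xrightarrow{\psi} Q_0 \to M \to 0$, one writes each $Q_i$ as a countable direct sum of shifted copies of these three generators and realises it as $Q_i = F_*(A_i)$ for an object $A_i\in\mathfrak{B}^G$ built as the corresponding direct sum of copies and suspensions of $\C$, $\Cont(G)$, $D$. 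By additivity of $\KK^G$, $\Hom_\Kring(F_*(A_1),F_*(A_0)) \cong \KK^G_0(A_1,A_0)$, so $\psi$ lifts to some $\tilde\psi \in \KK^G_0(A_1,A_0)$. Completing $\tilde\psi$ to an exact triangle $A_1\xrightarrow{\tilde\psi} A_0 \to A \to \Sigma A_1$ in $\KK^G$ produces an object $A\in\mathfrak{B}^G$ (since the localising subcategory is closed under cones and countable direct sums), and the long exact sequence obtained by applying $F_*$ collapses, using the injectivity of $\psi$ at the module level, to an isomorphism $F_*(A)\cong M$.

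The technical heart, and the main obstacle, is $(3)\Rightarrow(4)$: every countable exact $\Kring$\nb-module has projective dimension at most one. Here I would choose surjective $\Sring$\nb-module covers $\Sring^{(I_j)} \twoheadrightarrow M_j$ for $j=0,1$ and an abelian group cover $\Z^{(I_2)}\twoheadrightarrow M_2$, assemble them into a surjection $\pi\colon P \twoheadrightarrow M$ with $P$ a countable direct sum of shifted projective generators, and show that $K\defeq\ker\pi$ is itself projective. A Snake-Lemma comparison with the Puppe sequences of $P$ and $M$ produces exact Puppe sequences in $K$, and one then deduces that each $K_j$ is free over its appropriate base ring by exploiting the specific relations in $\Kring$ (the norm element $N(t)$ and the augmentation character $\tau$). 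The delicate point is to verify that no higher $\Ext$ obstructions arise; this is the algebraic computation carried out in detail in Köhler's thesis~\cite{Koehler:Thesis}, and is what makes the UCT of Theorem~\ref{the:Koehler_invariant_UCT_classifies} available in the first place.
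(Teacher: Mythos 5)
Your proposal matches the paper's proof: the paper likewise proves (1)$\Rightarrow$(2) trivially, gets (2)$\Rightarrow$(3) from the exactness of the two Puppe sequences, obtains (4)$\Rightarrow$(1) by lifting a length-one projective resolution to the triangulated category (citing \cite{Meyer-Nest:Homology_in_KK}*{Theorem~59}, whose content is exactly the argument you spell out with the representable projectives \(F_*(\C)\), \(F_*(\Cont(G))\), \(F_*(D)\) and a cone of a lifted map), and defers the hard implication (3)$\Rightarrow$(4) entirely to K\"ohler's thesis, just as you do. Your sketch of (3)$\Rightarrow$(4) is only a plausible outline, but since both you and the paper explicitly attribute that step to \cite{Koehler:Thesis}, the proposal is correct and follows the same route.
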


\begin{proof}[Straightforward parts of the proof]
  It is trivial that \ref{en:range_F_BG}
  implies~\ref{en:range_F_KKG}.  Since the Puppe
  sequence~\eqref{eq:K-theory_long_exact_sequence} and the dual
  Puppe sequence~\eqref{eq:dual_Puppe} are exact,
  \ref{en:range_F_KKG} implies~\ref{en:range_F_exact}.
  Condition~\ref{en:range_F_proj} implies~\ref{en:range_F_BG} by
  lifting projective resolutions in the Abelian approximation to
  projective resolutions in the triangulated category (see
  \cite{Meyer-Nest:Homology_in_KK}*{Theorem~59}).  The highly
  non-trivial point is that~\ref{en:range_F_exact}
  implies~\ref{en:range_F_proj}, that is, exact modules have
  projective resolutions of length~\(1\).  We refer
  to~\cite{Koehler:Thesis} for this.  This together with the
  abstract UCT in \cite{Meyer-Nest:Homology_in_KK}*{Theorem~66} also
  implies Theorem~\ref{the:Koehler_invariant_UCT_classifies}.
\end{proof}

\subsection{Generators and relations for Köhler's ring}
\label{sec:generators_relations}

We are going to describe the ring~\(\Kring\) by generators and
relations.  This makes it easier to apply Theorems
\ref{the:Koehler_invariant_UCT_classifies} and~\ref{the:range_F} to
classify objects of~\(\mathfrak{B}^G\) up to \(\KK^G\)-equivalence.
Recall that
\[
  N(t) = 1 + t + \dotsb + t^{p-1}.
\]

\begin{theorem}
  \label{the:generators_and_relations}
  The ring~\(\Kring\)
  is the universal ring generated by the elements \(\Gu{j}\)
  for \(j=0,1,2\)
  and~\(\Ga{j}{k}\)
  for \(0\le j,k \le 2\)
  with \(j\neq k\) with the following relations:
  \begin{align}
    \label{eq:1_vs_1}
    \Gu{j}\otimes\Gu{k}&= \delta_{j,k} \Gu{j}
    \qquad \text{for }j,k\in\{0,1,2\},\\
    \label{eq:1_sum_is_unit}
    \Gu{0}+\Gu{1}+\Gu{2}&= 1.\\
    \label{eq:alpha_vs_1}
    \Gu{j}\otimes \Ga{j}{k}\otimes \Gu{k}&= \Ga{j}{k},\\
    \label{eq:alpha_exact}
    \Ga{j}{k}\otimes \Ga{k}{m}&= 0\qquad \text{if } \{j,k,m\}=\{0,1,2\}, \\
    \label{eq:alpha_010}
    \Ga{0}{1}\otimes\Ga{1}{0}&= N(\Gu{0}- \Ga{0}{2}\otimes\Ga{2}{0}),\\
    \label{eq:alpha_101}
    \Ga{1}{0}\otimes\Ga{0}{1}&= N(\Gu{1}- \Ga{1}{2}\otimes\Ga{2}{1}),\\
    \label{eq:alpha_2}
    p\cdot \Gu{2} &= N(\Gu{2}- \Ga{2}{0}\otimes\Ga{0}{2})
        +  N(\Gu{2}- \Ga{2}{1}\otimes\Ga{1}{2}).
  \end{align}
  The \(\Z/2\)\nb-grading on~\(\Kring\) is such that \(\Ga{1}{2}\)
  and \(\Ga{2}{1}\) are odd and all other generators are even.
\end{theorem}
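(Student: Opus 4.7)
The plan is to proceed in two stages: verify that each listed relation holds in $\Kring$, and then show the relations are complete by computing the abstract ring they present and matching it against $\Kring$.

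For the verification, relations \eqref{eq:1_vs_1}--\eqref{eq:alpha_vs_1} are immediate from the fact that the $\Gu{j}$ are the orthogonal summand projections of $B = \C \oplus \Cont(G) \oplus D$ and each $\Ga{j}{k}$ lives in the corresponding matrix entry. Relation \eqref{eq:alpha_exact} collects the six vanishing identities for three consecutive morphisms in the exact triangles \eqref{eq:first_cone_sequence} and \eqref{eq:second_cone_sequence}. For \eqref{eq:alpha_010}, I would start from the Section~\ref{sec:equivariant_bootstrap} computation $\Ga{0}{1} \otimes \Ga{1}{0} = N(t)$ in $\Sring \cong \KK^G_0(\C,\C)$, then argue that $\Gu{0} - \Ga{0}{2} \otimes \Ga{2}{0}$ is a multiplicative generator $t^a$ of the cyclic subgroup of $\Sring^\times$: the element $\Ga{0}{2} \otimes \Ga{2}{0}$ lies in the augmentation ideal $\ker\tau = (1-t)\Sring$ by exactness of the dual Puppe sequence \eqref{eq:dual_Puppe} at $\K_0(\C\rtimes G)$ combined with Lemma~\ref{lem:natural_trafo_alpha_10_01}, and its precise value should follow by tracking the connecting-map generator of $\KK^G_0(D,\C)$. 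Since $N(t^a) = N(t)$ in $\Sring$ for any $a$ coprime to $p$, this gives \eqref{eq:alpha_010}. Relation \eqref{eq:alpha_101} is then the Baaj--Skandalis image of \eqref{eq:alpha_010}, using the involution from Proposition~\ref{pro:BS_self-dual}. Relation \eqref{eq:alpha_2} lives in $\KK^G_0(D,D)$ and should come from combining both Puppe sequences applied to $\KK^G_*(D,-)$, with each exact triangle at the vertex $D$ contributing one of the two norm summands.

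For completeness, let $R$ denote the abstract ring presented by the generators and relations, and $\varphi \colon R \to \Kring$ the canonical homomorphism. The idempotents decompose $R = \bigoplus_{j,k} R_{jk}$ and reduce the problem to matching each component. Setting $\Gt{0} \defeq \Gu{0} - \Ga{0}{2} \otimes \Ga{2}{0}$ and $\Gt{1} \defeq \Gu{1} - \Ga{1}{2} \otimes \Ga{2}{1}$, one deduces $\Gt{j}^p = \Gu{j}$ from \eqref{eq:alpha_exact}--\eqref{eq:alpha_101} (as already noted at the end of the theorem statement), so $R_{jj}$ is a quotient of $\Sring$ for $j = 0, 1$; comparing with $\KK^G_0(\C,\C) \cong \Sring$ and $\KK^G_0(\Cont(G),\Cont(G)) \cong \Sring$ should show these surjections are isomorphisms. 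The off-diagonal pieces with $j, k \in \{0,1\}$ become cyclic $\Z$-modules matching Lemma~\ref{lem:little_invariant_ring}, and the pieces $R_{02}, R_{20}, R_{12}, R_{21}$ are determined through the module structure and the exactness relations. The main obstacle will be the $(2,2)$ entry $R_{22}$: since $D$ is not itself one of the basic generators of the little invariant, pinning down $\KK^G_0(D,D)$ requires combining both Puppe long exact sequences together with the self-duality $\BS(D) \cong \Sigma D$ from Proposition~\ref{pro:BS_self-dual}. I expect relation \eqref{eq:alpha_2} to be precisely the extra identity (beyond the exactness relations) needed to force $R_{22} \to \KK^G_0(D,D)$ to be an isomorphism, and verifying this match would be the most delicate part of the argument.
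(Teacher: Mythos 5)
Your high-level plan — verify the relations in $\Kring$, then show completeness by decomposing the presented ring into its nine matrix entries and comparing with $\Kring$ — is the same as the paper's. But there are two real gaps in your verification of the non-trivial relations.

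For \eqref{eq:alpha_010}, you reduce the problem to knowing that $\Gt{0} \defeq \Gu{0} - \Ga{0}{2}\otimes\Ga{2}{0}$ equals $t^a$ for some $a$ coprime to $p$, invoking $N(t^a)=N(t)$. That reduction is a clean observation, but the premise is exactly what needs proof, and the argument you give does not supply it. Knowing $\Ga{0}{2}\otimes\Ga{2}{0}\in(1-t)\Sring$ only tells you $\tau(\Gt{0})=1$; the group of units in $1+(1-t)\Sring$ is much larger than $\{t^a\}$ (for $p>3$ it is infinite, by Dirichlet's unit theorem applied to $\Z[\vartheta]$), so there is no reason a priori for $\Gt{0}$ to be a power of $t$, nor even for $N(\Gt{0})=N(t)$. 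You gesture at "tracking the connecting-map generator of $\KK^G_0(D,\C)$", but that is exactly where the content lies. The paper's proof (Lemma~\ref{lem:alpha_2_compose}) pins down $\Ga{0}{2}\otimes\Ga{2}{0}=1-t$ by a concrete geometric computation: $\Ga{0}{2}$ is identified, via the index theorem of \cite{Emerson-Meyer:Normal_maps}, as the Thom class of $\R^2_\varrho$ followed by restriction, and evaluating at the origin produces $1\ominus\varrho$, the representation-ring class of $1-t$. Some such input is unavoidable; the purely algebraic constraints you list do not determine the answer.

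For \eqref{eq:alpha_2}, your sketch ("each exact triangle at the vertex $D$ contributing one of the two norm summands") does not describe an argument that could pin down the coefficient. The paper's Lemma~\ref{lem:norm_relation_ts2} is genuinely delicate: one first shows $N(\Gt{2})\Ga{2}{0}=0$, deduces from the exact sequence~\eqref{eq:exact_sequence_KKDD} that $N(\Gt{2})$ lies in the image of $\KK^G_1(D,\Cont(G))$ and so equals $f(\Gs{2})$ for a unique $f$ in the principal ideal $(1-\Gs{2})$, then applies the Baaj--Skandalis involution $\Gt{2}\leftrightarrow\Gs{2}$ to get a second linear relation, and finally uses that $\KK^G_0(D,D)$ is free of rank $2(p-1)$ to force $f=p-N$. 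Both the involution and the rank count are essential; without them there is no way to exclude other coefficients. Your completeness outline is roughly right, but it too would require the explicit rank computations (in particular of the $(2,2)$ entry) to close the loop, and those same computations are what make Lemma~\ref{lem:norm_relation_ts2} work — so the two difficulties are not independent.
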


This theorem follows from the computations in~\cite{Koehler:Thesis},
but is not stated there as such.  It will take a while to prove it.
The relations \eqref{eq:1_vs_1} and~\eqref{eq:1_sum_is_unit}
say that the elements \(\Gu{j}\)
are orthogonal idempotents that sum up to~\(1\).
This is equivalent to the \(3\times3\)-matrix
decomposition of the ring~\(\Kring\).
Equation~\eqref{eq:alpha_vs_1} says that~\(\Ga{j}{k}\)
belongs to the \(j,k\)-th
entry in the matrix description of~\(\Kring\).
Condition~\eqref{eq:alpha_exact} consists of six equations and says
that the composite of any two consecutive maps in the two
exact triangles \eqref{eq:first_cone_sequence}
and~\eqref{eq:second_cone_sequence} vanishes -- as needed for an exact
triangle.  Our next goal is to understand the relations
\eqref{eq:alpha_010} and~\eqref{eq:alpha_101}.  First we add some
convenient definitions to the relations above:
\begin{equation}
  \label{eq:def_st}
  \begin{aligned}
    \Gt{0}&\defeq \Gu{0}- \Ga{0}{2}\otimes \Ga{2}{0},&\qquad
    \Gs{1}&\defeq \Gu{1}- \Ga{1}{2}\otimes \Ga{2}{1},\\
    \Gt{2}&\defeq \Gu{2}- \Ga{2}{0}\otimes \Ga{0}{2},&\qquad
    \Gs{2}&\defeq \Gu{2}- \Ga{2}{1}\otimes \Ga{1}{2}.
  \end{aligned}
\end{equation}
Since~\(\Ga{j}{k}\)
belongs to the \(j,k\)-component
of~\(\Kring\),
\(\Gt{j}\)
for \(j=0,2\)
and \(\Gs{j}\)
for \(j=1,2\)
belong to the \(j,j\)-component
of~\(\Kring\),
respectively.  The elements \(\Gt{j}\)
and~\(\Gs{j}\)
are even.  The relations \eqref{eq:alpha_010},
\eqref{eq:alpha_101} and~\eqref{eq:alpha_2} in
Theorem~\ref{the:generators_and_relations} say that
\begin{align}
  \label{eq:alpha_010_N}
  \Ga{0}{1}\otimes \Ga{1}{0}&= N(\Gt{0}),\\
  \label{eq:alpha_101_N}
  \Ga{1}{0}\otimes \Ga{0}{1}&= N(\Gs{1}),\\
  \label{eq:alpha_2_N}
  p\cdot \Gu{2} &= N(\Gt{2}) +  N(\Gs{2}).
\end{align}
The definitions of \(\Gt{j}\) and~\(\Gs{j}\) say that
\begin{align}
  \label{eq:alpha_020}
  \Ga{0}{2}\otimes \Ga{2}{0}&=  \Gu{0}- \Gt{0},\\
  \label{eq:alpha_121}
  \Ga{1}{2}\otimes \Ga{2}{1}&=  \Gu{1}- \Gs{1},\\
  \label{eq:alpha_202}
  \Ga{2}{0}\otimes \Ga{0}{2}&=  \Gu{2}- \Gt{2},\\
  \label{eq:alpha_212}
  \Ga{2}{1}\otimes \Ga{1}{2}&=  \Gu{2}- \Gs{2}.
\end{align}
The equations \eqref{eq:alpha_010_N}, \eqref{eq:alpha_101_N} and
\eqref{eq:alpha_020}--\eqref{eq:alpha_212} express the products
\(\Ga{j}{k}\otimes \Ga{k}{j}\)
for all \(j,k\in\{0,1,2\}\)
with \(j\neq k\)
as a polynomial in \(\Gt{j}\)
or~\(\Gs{j}\).
All other products of two \(\alpha\)\nb-generators
vanish by~\eqref{eq:alpha_exact}.  Before we prove
Theorem~\ref{the:generators_and_relations}, we list
further relations among the elements \(\Gt{j}\),
\(\Gs{j}\)
and~\(\Ga{j}{k}\)
that follow from the relations in
Theorem~\ref{the:generators_and_relations}.

\begin{proposition}
  \label{pro:further_relations}
  The relations in Theorem~\textup{\ref{the:generators_and_relations}}
  imply the following relations:
  \begin{alignat}{2}
    \label{eq:tp}
    \Gt{j}^p &=1&\qquad &\text{for }j=0,2,\\
    \label{eq:sp}
    \Gs{j}^p &=1&\qquad &\text{for }j=1,2,\\
    \label{eq:ta_01}%
    \Gt{0}\otimes \Ga{0}{1}&= \Ga{0}{1},\\
    \label{eq:ta_02}%
    \Gt{0}\otimes \Ga{0}{2}&= \Ga{0}{2}\otimes \Gt{2},\\
    \label{eq:ta_21}%
    \Gt{2}\otimes \Ga{2}{1}&= \Ga{2}{1},\\
    \label{eq:ta_20}%
    \Gt{2}\otimes \Ga{2}{0}&= \Ga{2}{0}\otimes \Gt{0},\\
    \label{eq:at_10}%
    \Ga{1}{0}\otimes \Gt{0}&= \Ga{1}{0},\\
    \label{eq:at_12}%
    \Ga{1}{2}\otimes \Gt{2}&= \Ga{1}{2},\\
    \label{eq:sa_10}%
    \Gs{1}\otimes \Ga{1}{0}&= \Ga{1}{0},\\
    \label{eq:sa_12}%
    \Gs{1}\otimes \Ga{1}{2}&= \Ga{1}{2}\otimes \Gs{2},\\
    \label{eq:sa_20}%
    \Gs{2}\otimes \Ga{2}{0}&= \Ga{2}{0},\\
    \label{eq:sa_21}%
    \Gs{2}\otimes \Ga{2}{1}&= \Ga{2}{1}\otimes \Gs{1},\\
    \label{eq:as_01}%
    \Ga{0}{1}\otimes \Gs{1}&= \Ga{0}{1},\\
    \label{eq:as_02}%
    \Ga{0}{2}\otimes \Gs{2}&= \Ga{0}{2},\\
    \label{eq:st_2}%
    (\Gu{2}- \Gs{2})\otimes (\Gu{2}- \Gt{2})&= 0,\\
    \label{eq:ts_2}%
    (\Gu{2}- \Gt{2})\otimes (\Gu{2}- \Gs{2})&= 0,\\
    \label{eq:Na_20}%
    N(\Gt{2})\otimes \Ga{2}{0}&= 0,\\
    \label{eq:Na_02}%
    N(\Gt{0})\otimes \Ga{0}{2}&= 0,\\
    \label{eq:aN_20}%
    \Ga{2}{0}\otimes N(\Gt{0}) &= 0,\\
    \label{eq:aN_02}%
    \Ga{0}{2}\otimes N(\Gt{2}) &= 0,\\
    \label{eq:Na_21}%
    N(\Gs{2})\otimes \Ga{2}{1}&= 0,\\
    \label{eq:Na_12}%
    N(\Gs{1})\otimes \Ga{1}{2}&= 0,\\
    \label{eq:aN_21}%
    \Ga{2}{1}\otimes N(\Gs{1}) &= 0,\\
    \label{eq:aN_12}%
    \Ga{1}{2}\otimes N(\Gs{2}) &= 0.
  \end{alignat}
\end{proposition}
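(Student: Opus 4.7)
The strategy is to derive everything by repeated use of two ingredients: the vanishing $\Ga{j}{k}\otimes \Ga{k}{m}=0$ for $\{j,k,m\}=\{0,1,2\}$ from \eqref{eq:alpha_exact}, and the fact that each $\Gu{j}-\Gt{j}$ or $\Gu{j}-\Gs{j}$ is, by \eqref{eq:alpha_020}--\eqref{eq:alpha_212}, a product of two adjacent $\alpha$-generators, while $N(\Gt{0})$, $N(\Gs{1})$, $N(\Gt{2})+N(\Gs{2})$ can each be re-expressed via \eqref{eq:alpha_010_N}, \eqref{eq:alpha_101_N}, \eqref{eq:alpha_2_N}. The dependency order of the derivations is what matters; no individual step is deep.

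First I would dispatch the twelve intertwining relations \eqref{eq:ta_01}--\eqref{eq:as_02}. Each follows from a one-line substitution: expand the $\Gt{}$ or $\Gs{}$ as $\Gu{j}$ minus a product of two $\alpha$'s, and then either kill the extra $\alpha$ using \eqref{eq:alpha_exact} or, when no vanishing is available, recognise the resulting triple product using \eqref{eq:alpha_020}--\eqref{eq:alpha_212} to land on the intertwined form. For instance, $\Gt{0}\otimes \Ga{0}{1}=\Ga{0}{1}-\Ga{0}{2}\otimes \Ga{2}{0}\otimes \Ga{0}{1}=\Ga{0}{1}$ by \eqref{eq:alpha_exact}, while $\Gt{0}\otimes \Ga{0}{2}=\Ga{0}{2}-\Ga{0}{2}\otimes(\Gu{2}-\Gt{2})=\Ga{0}{2}\otimes \Gt{2}$ by \eqref{eq:alpha_202}. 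The remaining ten cases proceed by the same two patterns, with the roles of $0,1,2$ and of $t,s$ permuted. After this, \eqref{eq:st_2} and \eqref{eq:ts_2} are immediate: substituting the definitions gives $(\Gu{2}-\Gs{2})\otimes(\Gu{2}-\Gt{2})=\Ga{2}{1}\otimes(\Ga{1}{2}\otimes \Ga{2}{0})\otimes \Ga{0}{2}=0$ and symmetrically for \eqref{eq:ts_2}.

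Next I would establish the $p$-th power relations \eqref{eq:tp} and \eqref{eq:sp} from the polynomial identity $x^p-1=(x-1)\cdot N(x)$. For \eqref{eq:tp} with $j=0$,
\[
\Gt{0}^p-\Gu{0}=(\Gt{0}-\Gu{0})\otimes N(\Gt{0})=-\Ga{0}{2}\otimes \Ga{2}{0}\otimes \Ga{0}{1}\otimes \Ga{1}{0}=0
\]
using \eqref{eq:alpha_010_N} and $\Ga{2}{0}\otimes \Ga{0}{1}=0$; the case $\Gs{1}^p=1$ is symmetric. For $\Gs{2}^p=1$, having already proved \eqref{eq:sa_12} one gets $\Ga{1}{2}\otimes N(\Gs{2})=N(\Gs{1})\otimes \Ga{1}{2}=\Ga{1}{0}\otimes \Ga{0}{1}\otimes \Ga{1}{2}=0$, so $(\Gs{2}-\Gu{2})\otimes N(\Gs{2})=0$; and $\Gt{2}^p=\Gu{2}$ follows from \eqref{eq:alpha_2_N} by the same mechanism, interchanging $s$ and $t$.

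Finally, the eight $N$-annihilation identities \eqref{eq:Na_20}--\eqref{eq:aN_12} reduce to a vanishing of a four-fold $\alpha$-product together with the intertwining relations already in hand. For example, $\Ga{2}{0}\otimes N(\Gt{0})=\Ga{2}{0}\otimes \Ga{0}{1}\otimes \Ga{1}{0}=0$ by \eqref{eq:alpha_010_N} and \eqref{eq:alpha_exact}, while $N(\Gt{2})\otimes \Ga{2}{0}=\Ga{2}{0}\otimes N(\Gt{0})=0$ by iterating \eqref{eq:ta_20}; the other six are analogous. The main obstacle, if there is one, is simply the bookkeeping: choosing an order of derivations that respects dependencies so that each intertwining relation of Group~A is available before it is invoked in the $p$-th power or $N$-annihilation arguments.
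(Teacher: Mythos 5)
Your proposal is correct and uses essentially the same ingredients and overall strategy as the paper's proof: expand $\Gt{j}$ and $\Gs{j}$ via their definitions, kill triple $\alpha$-products with \eqref{eq:alpha_exact} or recompose them via \eqref{eq:alpha_020}--\eqref{eq:alpha_212}, and combine the polynomial identity $x^p-1=(x-1)N(x)$ with \eqref{eq:alpha_010_N}--\eqref{eq:alpha_2_N}. The only substantive variation is cosmetic: for the four $N$-annihilation identities \eqref{eq:Na_20}, \eqref{eq:aN_02}, \eqref{eq:Na_21}, \eqref{eq:aN_12} you derive each by first commuting via the intertwining relations (e.g. $N(\Gt{2})\otimes\Ga{2}{0}=\Ga{2}{0}\otimes N(\Gt{0})$) and then killing via \eqref{eq:alpha_010_N}/\eqref{eq:alpha_101_N} and \eqref{eq:alpha_exact}, whereas the paper instead substitutes $N(\Gt{2})=p\,\Gu{2}-N(\Gs{2})$ from \eqref{eq:alpha_2_N} and uses \eqref{eq:sa_20}-type absorption; both are one-liners. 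One minor imprecision: your remark that $\Gt{2}^p=\Gu{2}$ ``follows from \eqref{eq:alpha_2_N} by the same mechanism'' conflates two routes --- the $s\leftrightarrow t$ analogue of your $\Gs{2}^p$ argument (via \eqref{eq:ta_02}, \eqref{eq:alpha_010_N}, \eqref{eq:alpha_exact}) does not invoke \eqref{eq:alpha_2_N}, while the paper's route $(\Gu{2}-\Gt{2})\otimes N(\Gt{2})=(\Gu{2}-\Gt{2})\otimes(p\,\Gu{2}-N(\Gs{2}))=0$ does use \eqref{eq:alpha_2_N} together with \eqref{eq:ts_2} and the factorisation $p-N(s)=(1-s)f(s)$. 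Either works, but the citation should match whichever mechanism you actually mean.
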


\begin{proof}
  Equations \eqref{eq:ta_02}, \eqref{eq:ta_20}, \eqref{eq:sa_12}
  and~\eqref{eq:sa_21} follow from the definitions of the elements
  \(\Gt{j}\) and~\(\Gs{j}\).  One of these computations is
  \[
  \Gt{0}\otimes \Ga{0}{2}= (\Gu{0}- \Ga{0}{2}\otimes \Ga{2}{0})\otimes \Ga{0}{2}
  = \Ga{0}{2}- \Ga{0}{2}\otimes \Ga{2}{0}\otimes \Ga{0}{2}
  = \Ga{0}{2}\otimes \Gt{2}.
  \]
  The others are the same with different indices.
  The definitions of the elements \(\Gt{j}\)
  and~\(\Gs{j}\)
  and Equation~\eqref{eq:alpha_exact} imply \eqref{eq:ta_01},
  \eqref{eq:ta_21}, \eqref{eq:at_10}, \eqref{eq:at_12},
  \eqref{eq:sa_10}, \eqref{eq:sa_20}, \eqref{eq:as_01}
  and~\eqref{eq:as_02}.  One of these computations is
  \[
  \Gt{0}\otimes \Ga{0}{1}= (\Gu{0}- \Ga{0}{2}\otimes \Ga{2}{0})\otimes \Ga{0}{1}
  = \Ga{0}{1}- \Ga{0}{2}\otimes \Ga{2}{0}\otimes \Ga{0}{1}= \Ga{0}{1}.
  \]
  The others are the same with different indices.  A similar
  computation shows \eqref{eq:st_2} and~\eqref{eq:ts_2}.  One
  computation suffices:
  \[
  (\Gu{2}- \Gs{2})\otimes (\Gu{2}- \Gt{2})
  = \Ga{2}{1}\otimes \Ga{1}{2}\otimes \Ga{2}{0}\otimes \Ga{0}{2}
  = 0.
  \]
  Equations \eqref{eq:alpha_2_N} and~\eqref{eq:sa_20}
  imply~\eqref{eq:Na_20}:
  \[
  N(\Gt{2})\otimes \Ga{2}{0}
  = (p\cdot \Gu{2}- N(\Gs{2}))\otimes \Ga{2}{0}
  = (p- p)\cdot \Ga{2}{0}
  = 0.
  \]
  Similar computations give \eqref{eq:aN_02}, \eqref{eq:Na_21}
  and~\eqref{eq:aN_12}.  Equation~\eqref{eq:Na_02} holds because
  \[
  N(\Gt{0})\otimes \Ga{0}{2}= \Ga{0}{1}\otimes \Ga{1}{0}\otimes \Ga{0}{2}=0.
  \]
  Similar computations give \eqref{eq:aN_20}, \eqref{eq:Na_12},
  \eqref{eq:aN_21}.  We use \(t^p-1 = -N(t)(1-t)\)
  and~\eqref{eq:alpha_exact} to check \(\Gt{0}^p = \Gu{0}\):
  \[
  \Gu{0}- \Gt{0}^p = \Ga{0}{1}\otimes \Ga{1}{0}\otimes \Ga{0}{2}\otimes \Ga{2}{0}= 0,
  \]
  A similar computation gives \(\Gs{1}^p - \Gu{1}= 0\).
  Finally,
  \[
  \Gu{2}- \Gt{2}^p
  = (\Gu{2}- \Gt{2})\otimes N(\Gt{2})
  = (\Gu{2}- \Gt{2})\otimes (p\cdot \Gu{2} - N(\Gs{2})).
  \]
  This vanishes by~\eqref{eq:ts_2} because the polynomial \(p-N\)
  vanishes at~\(1\)
  and therefore may be written as \((1-s)f(s)\)
  for some polynomial~\(f\).
  A similar computations gives \(\Gu{2}- \Gs{2}^p = 0\).
\end{proof}

As a first step in the proof of
Theorem~\ref{the:generators_and_relations}, we justify the definitions
of \(\Gt{0}\)
and~\(\Gs{1}\)
above.  Namely, we show that these elements are the canonical
generators of \(\Sring \defeq \Z[t]/(t^p-1)\)
in the description of~\(\littlering\)
in Lemma~\ref{lem:little_invariant_ring}.

\begin{lemma}
  \label{lem:alpha_2_compose}
  Describe \(\KK^G_0(\C,\C)\)
  and \(\KK^G_0(\Cont(G),\Cont(G))\)
  as \(\Sring\defeq \Z[t]/(t^p-1)\)
  as in Lemma~\textup{\ref{lem:little_invariant_ring}}.
  Then \(\Ga{0}{2}\otimes \Ga{2}{0}\)
  and \(\Ga{1}{2}\otimes \Ga{2}{1}\) become \(1 - t \in \Sring\).
\end{lemma}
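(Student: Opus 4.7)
The two asserted identities are equivalent under the Baaj--Skandalis involution $\BS$ on $\Kring$ (Proposition~\ref{pro:BS_self-dual}): $\BS$ exchanges the diagonal entries $\KK^G_0(\C,\C)$ and $\KK^G_0(\Cont(G),\Cont(G))$, both identified with $\Sring$, and sends $\Ga{0}{2}\otimes\Ga{2}{0}$ to $\Ga{1}{2}\otimes\Ga{2}{1}$ (using $\BS(\Ga{1}{2})=\Sigma\Ga{0}{2}$ and $\BS(\Ga{2}{0})=\Ga{2}{1}$, the suspension being absorbed in the identification $\BS(D)\cong\Sigma D$). Because the chosen symmetric pairing $(g,h)\mapsto\exp(2\pi i gh/p)$ matches the representation-ring generator $t$ with the group-ring generator $t$, it suffices to establish the first identity, $\Ga{0}{2}\otimes\Ga{2}{0}=1-t$ in $R(G)=\Sring$.

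The main computation goes through Green--Julg. We have $\KK^G_0(\C,D)\cong \K_0(D\rtimes G)$, so $\Ga{0}{2}$ corresponds to some class in $\K_0(D\rtimes G)$. Crossing the defining extension $\Sigma\Cont(G)\into D\prto\C$ with $G$ yields $\Sigma\Comp\into D\rtimes G\prto \Cst(G)$. In its six-term sequence, the boundary $\Sring=\K_0(\Cst(G))\to\K_1(\Sigma\Comp)=\Z$ is induced by the connecting morphism $\Ga{0}{1}$ of the underlying triangle, and equals the augmentation $\tau$ by Lemma~\ref{lem:natural_trafo_alpha_10_01} (with $A=\C$). Hence $\K_0(D\rtimes G)\cong\ker(\tau)=I$, the augmentation ideal, and the map $\K_0(D\rtimes G)\hookrightarrow\K_0(\Cst(G))=\Sring$ from the six-term sequence is precisely $(\Ga{2}{0})_*$. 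Therefore $\Ga{0}{2}\otimes\Ga{2}{0}$ equals $\Ga{0}{2}$ viewed as an element of $I\subset\Sring$.

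It remains to pin down which element of $I$ the class $\Ga{0}{2}$ is. As an $R(G)/\Z\cdot N(t)$-module (isomorphic to $\Z[\zeta_p]$), $I$ is cyclic and free of rank one, generated by any $1-t^k$ with $\gcd(k,p)=1$; the various choices differ by Galois automorphisms $\zeta_p\mapsto\zeta_p^k$. To single out the correct $k=1$, I carry Baaj--Skandalis duality through the geometric model: $\Ga{1}{2}$ is the explicit ideal inclusion $\Sigma\Cont(G)\hookrightarrow D=\Cont_0(X)$, and its image under $\BS$ and under the $\KK^G$-equivalence $\BS(D)\cong\Sigma D$ of Proposition~\ref{pro:BS_self-dual} (built from the equivariant Thom and Bott cycles for the embedding $G\hookrightarrow\R^2_\varrho$ that is compatible with the chosen symmetric pairing) is $\Sigma\Ga{0}{2}$. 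Matching these cycles against the pairing $(g,h)\mapsto\exp(2\pi igh/p)$ pins down $\Ga{0}{2}=1-t$ in $I$.

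The main obstacle is the last step: the abstract six-term data plus $R(G)$-module structure alone determines $\Ga{0}{2}$ only up to a Galois automorphism $\sigma_k$ of $\Z[\zeta_p]$, and singling out $\sigma_1=\id$ requires the explicit bookkeeping of the Bott/Thom cycles in Proposition~\ref{pro:BS_self-dual} against the specific symmetric pairing that defines the generator $t$ of $\Sring$ used in Lemma~\ref{lem:little_invariant_ring}.
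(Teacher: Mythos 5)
Your detour through Green--Julg and the crossed-product six-term sequence is correct as far as it goes: crossing $\Sigma\Cont(G)\into D\prto\C$ with $G$ and using that $(\Ga{0}{1}\rtimes G)_*$ is the augmentation $\tau$ does give $\KK^G_0(\C,D)\cong\K_0(D\rtimes G)\cong\ker\tau$, with $(\Ga{2}{0})_*$ as the inclusion into $\Sring$. But that reduction is essentially a tautology: it rewrites $\Ga{0}{2}\otimes\Ga{2}{0}$ as ``the element of $\ker\tau$ determined by $\Ga{0}{2}$'' without computing it. The entire content of the lemma is then pushed into your final step, which you assert rather than prove, and which you yourself flag as the main obstacle. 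Also, your ambiguity analysis is too optimistic: if all one knows is that $\Ga{0}{2}$ generates $\ker\tau$ as a free rank-one $\Z[\vartheta]$\nb-module, the residual freedom is multiplication by an arbitrary unit of $\Z[\vartheta]$, which for $p>3$ is an infinite group strictly larger than the cyclotomic units $(1-t^k)/(1-t)$ coming from Galois conjugation.

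The paper does not pass through Green--Julg or the six-term sequence at all; in fact it proves this lemma \emph{first} and only afterwards derives the structure of $\KK^G_0(\C,D)$ from it. Its proof of the lemma is direct: $\Ga{0}{2}$ was constructed in the proof of Proposition~\ref{pro:BS_self-dual} as the Thom class in $\KK^G_0(\C,\Cont_0(\R^2_\varrho))$ followed by the restriction $\Cont_0(\R^2_\varrho)\to\Cont_0(X)=D$, and $\Ga{2}{0}$ is evaluation at the origin. So $\Ga{0}{2}\otimes\Ga{2}{0}$ is the Thom class restricted to $0\in\R^2_\varrho$. By the Koszul description, the Thom class lives on $\Lambda^* T(\R^2_\varrho)=\C_1\ominus\C_\varrho$; at the fixed point $0$ the Fredholm operator is irrelevant and one reads off the virtual representation $1\ominus\varrho$, which is $1-t$ under $R(G)\cong\Sring$. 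The second identity then follows by $\BS$\nb-duality exactly as you say. The step you label ``matching these cycles against the pairing pins down $\Ga{0}{2}=1-t$'' \emph{is} this computation; as written your proposal leaves it out, so there is a genuine gap.
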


\begin{proof}
  The proof of Proposition~\ref{pro:BS_self-dual} describes~\(\Ga{0}{2}\)
  as the composite of the Thom isomorphism in
  \(\KK^G_0(\C,\Cont_0(\R^2_\varrho))\)
  and the restriction map
  \(\Cont_0(\R^2_\varrho) \to \Cont_0(X) = D\).
  And \(\Ga{2}{0}\)
  is represented by the \Star{}homomorphism \(\Cont_0(X) \to \C\),
  \(f\mapsto f(0)\).
  Thus \(\Ga{0}{2}\otimes \Ga{2}{0}\)
  is represented by the composite of the Thom isomorphism in
  \(\KK^G_0(\C,\Cont_0(\R^2_\varrho))\)
  and the evaluation map \(\Cont_0(\R^2_\varrho) \to \C\),
  \(f\mapsto f(0)\).
  The Thom isomorphism uses the \(\K\)\nb-orientation
  on~\(\R^2_\varrho\) that is given by the obvious complex structure
  on the tangent bundle.  Namely,
  \(T(\R^2_\varrho)\cong \C_\varrho\),
  where the action of~\(G\)
  on~\(\C_\varrho\)
  is given by the character \(k\bmod p\mapsto \exp(2\pi\ima k/p)\).
  The Thom isomorphism involves a Fredholm operator on the
  \(\Z/2\)\nb-graded
  vector bundle
  \(\Lambda^* T(\R^2_\varrho) = \C_1 \ominus \C_\varrho\),
  where~\(\C_1\)
  denotes the trivial vector bundle with the trivial action of~\(G\)
  in even parity, and~\(\ominus\)
  signifies that~\(\C_\varrho\)
  has odd parity.  Restricting to~\(0\),
  the Fredholm operator becomes redundant and we get the class in the
  representation ring \(\KK^G_0(\C,\C)\)
  of \(1\ominus\varrho\),
  the trivial representation minus the character~\(\varrho\).
  This corresponds to the element \(1-t\)
  in \(\KK^G_0(\C,\C) \cong \Sring\).

  The ring automorphism~\(\BS\)
  maps \(\Ga{0}{2}\otimes \Ga{2}{0}\)
  to \(\Ga{1}{2}\otimes \Ga{2}{1}\)
  and it induces the identity map on~\(\Sring\).
  So the claim about \(\Ga{1}{2}\otimes \Ga{2}{1}\) follows.
\end{proof}

Lemma~\ref{lem:alpha_2_compose} shows that we do not need the
generators~\(\Gt{0}\)
and~\(\Gs{1}\)
of~\(\littlering\)
as generators of~\(\Kring\).
Together with the relations of~\(\littlering\)
listed after Lemma~\ref{lem:little_invariant_ring},
Lemma~\ref{lem:alpha_2_compose} implies the relations
\eqref{eq:alpha_010} and~\eqref{eq:alpha_101} in
Theorem~\ref{the:generators_and_relations}.

The ring~\(\littlering\)
described in Lemma~\ref{lem:little_invariant_ring} gives four of the
nine matrix entries of the ring~\(\Kring\).
Now we describe the remaining five.  First, we describe
\(\KK^G_*(\C,D)\).
The exact triangle~\eqref{eq:first_cone_sequence} induces an exact
sequence
\begin{multline*}
  0 \to \KK^G_0(\C,D)
  \xrightarrow{(\Ga{2}{0})_*} \KK^G_0(\C,\C)
  \\\xrightarrow{(\Ga{0}{1})_*} \KK^G_0(\C,\Cont(G))
  \xrightarrow{(\Ga{1}{2})_*}
  \KK^G_1(\C,D) \to 0.
\end{multline*}
Lemma~\ref{lem:little_invariant_ring} shows that the map
\((\Ga{0}{1})_*\)
here is equivalent to the augmentation character
\(\tau\colon \Sring \to \Z\),
which is surjective.  So \(\KK^G_1(\C,D)=0\)
and \((\Ga{2}{0})_*\)
is an isomorphism \(\KK^G_0(\C,D)\congto\ker \tau\).
Now \(\ker \tau\)
is the principal ideal
generated by
\(t-1 \in \Sring\),
and it is isomorphic to \(\Z[t]/(N(t))\)
because \(f\in \Z[t]\)
satisfies \(t^p-1\mid (t-1)f\)
if and only if \(N(t) \mid f\).
Lemma~\ref{lem:alpha_2_compose} shows that \(1-t \in \Sring\)
is the composite \(\Ga{0}{2}\otimes \Ga{2}{0}\).
Hence there is an isomorphism
\[
\Z[t]/(N(t)) \to \KK^G_0(\C,D),\qquad
f\mapsto f(\Gt{0})\otimes \Ga{0}{2}.
\]

Then Baaj--Skandalis duality implies
\(\KK^G_0(\Cont(G),D)=0\)
and that the following map is an isomorphism:
\[
\Z[t]/(N(t)) \to \KK^G_1(\Cont(G),D),\qquad
f\mapsto f(\Gs{1})\otimes \Ga{1}{2}.
\]

The exact triangle~\eqref{eq:first_cone_sequence}
also implies an exact sequence for \(\KK^G_*(\blank,\C)\).
This has the following form:
\begin{multline*}
  0 \leftarrow \KK^G_0(D,\C)
  \xleftarrow{(\Ga{2}{0})^*} \KK^G_0(\C,\C)
  \\\xleftarrow{(\Ga{0}{1})^*} \KK^G_0(\Cont(G),\C)
  \xleftarrow{(\Ga{1}{2})^*} \KK^G_1(D,\C) \leftarrow 0.
\end{multline*}
Lemma~\ref{lem:little_invariant_ring} shows that the map
\((\Ga{0}{1})^*\)
here is equivalent to the map
\(\Z\to \Sring = \Z[t]/(t^p-1)\),
\(k\mapsto k\cdot N(t)\).
This map is injective.  As above, it follows that
\(\KK^G_1(D,\C)=0\) and that there is an isomorphism
\[
\Z[t]/(N(t)) \to \KK^G_0(D,\C),\qquad
f\mapsto  \Ga{2}{0} \otimes f(\Gt{0}).
\]
Baaj--Skandalis duality now gives an analogous
isomorphism
\[
\Z[t]/(N(t)) \to \KK^G_*(D,\Cont(G)),\qquad
f\mapsto \Ga{2}{1} \otimes f(\Gs{1}).
\]
Since \(\Ga{2}{1}\)
is odd, this component of~\(\Kring\)
is only odd.  Finally, it remains to describe the diagonal entry
\(\KK^G_*(D,D)\).  Here we use the cyclic long exact sequence
\begin{equation}
  \label{eq:exact_sequence_KKDD}
  \begin{tikzcd}[baseline=(current bounding box.west),column sep=large]
    \KK^G_1(D,\Cont(G)) \ar[r, "(\Ga{1}{2})_*"] &
    \KK^G_0(D,D) \arrow[r, "(\Ga{2}{0})_*"] &
    \KK^G_0(D,\C) \arrow[d] \\
    0 \arrow[u] &
    \KK_1^G(D,D) \arrow[l] &
    0.  \arrow[l]
  \end{tikzcd}
\end{equation}
Here we have used that \(\KK^G_*(D,\C)\)
lives in even and \(\KK^G_*(D,\Cont(G))\)
in odd parity.  So \(\KK_1^G(D,D)=0\),
the map
\((\Ga{1}{2})_*\colon \KK^G_1(D,\Cont(G)) \to \KK^G_0(D,D)\)
is injective, and its cokernel is identified by~\((\Ga{2}{0})_*\)
with \(\KK^G_0(D,\C)\).
Define \(\Gt{2}\)
and~\(\Gs{2}\)
by~\eqref{eq:def_st}.  Then \eqref{eq:alpha_202}
and~\eqref{eq:alpha_212} say that
\[
\Ga{2}{0}\otimes \Ga{0}{2}= \Gu{2}- \Gt{2},\qquad
\Ga{2}{1}\otimes \Ga{1}{2}= \Gu{2}- \Gs{2}.
\]
It follows from the definition of~\(\Gs{1}\) above that
\[
\Ga{2}{1}\otimes (\Gu{1}- \Gs{1})^j \otimes \Ga{1}{2}
= \Ga{2}{1}\otimes (\Ga{1}{2}\otimes \Ga{2}{1})^j\otimes \Ga{1}{2}
= (\Gu{2}- \Gs{2})^{j+1}.
\]
The computations above imply that the elements of the form
\(\Ga{2}{1} \otimes (\Gu{1}- \Gs{1})^j \)
for \(j=0,\dotsc,p-2\)
form a \(\Z\)\nb-module
basis of \(\KK^G_1(D,\Cont(G))\).
Their images in \(\KK^G_0(D,D)\)
are \((\Gu{2}- \Gs{2})^j\)
for \(j=1,\dotsc,p-1\).
These form a \(\Z\)\nb-module
basis of the image of \(\KK^G_1(D,\Cont(G))\)
in \(\KK^G_0(D,D)\).
The definitions of \(\Gt{0}\)
and~\(\Gt{2}\) in~\eqref{eq:def_st} imply
\[
\Ga{0}{2}\otimes (\Gu{2}- \Gt{2})
= \Ga{0}{2}\otimes \Ga{2}{0}\otimes \Ga{0}{2}
= (\Gu{0}- \Gt{0}) \otimes \Ga{0}{2}.
\]
So
\((\Gu{2}- \Gt{2})^j \otimes \Ga{2}{0} = \Ga{2}{0} \otimes
(\Gu{0}- \Gt{0})^j\).
These elements for \(j=0,\dotsc,p-2\)
form a \(\Z\)\nb-module
basis of \(\KK^G_0(D,\C)\)
by the computation above.  Putting things together, we conclude that
the elements \((\Gu{2}- \Gt{2})^j\)
for \(j=0,\dotsc,p-2\)
and \((\Gu{2}- \Gs{2})^j\)
for \(j=1,\dotsc,p-1\)
form a \(\Z\)\nb-module basis of~\(\KK^G_0(D,D)\).

\begin{lemma}
  \label{lem:norm_relation_ts2}
  \(N(\Gt{2}) + N(\Gs{2}) = p\).
\end{lemma}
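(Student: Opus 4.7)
The plan is to form the element $Z \defeq N(\Gt{2}) + N(\Gs{2}) - p\,\Gu{2}$ in $\KK^G_0(D,D)$ and show that $Z=0$ by testing it with both $\blank\otimes\Ga{2}{0}$ and $\blank\otimes\Ga{2}{1}$. The long exact sequence \eqref{eq:exact_sequence_KKDD} identifies the kernel of the first test with the image of $\blank\otimes\Ga{1}{2}$, and under the isomorphism $\Z[t]/N(t)\congto\KK^G_1(D,\Cont(G))$, $f\mapsto\Ga{2}{1}\otimes f(\Gs{1})$, established just before the lemma, applying the second test to an element $\Ga{2}{1}\otimes f(\Gs{1})\otimes\Ga{1}{2}$ of this kernel and using $\Ga{1}{2}\otimes\Ga{2}{1}=\Gu{1}-\Gs{1}$ amounts to multiplication by $1-t$. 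Granting injectivity of this multiplication on $\Z[t]/N(t)$, the vanishing of both tests forces $Z=0$.

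Both identities $Z\otimes\Ga{2}{0}=0$ and $Z\otimes\Ga{2}{1}=0$ follow from short calculations using only the definitions \eqref{eq:def_st} and the vanishing of consecutive compositions in the triangles \eqref{eq:first_cone_sequence} and \eqref{eq:second_cone_sequence}. For the first, the definition $\Gt{2}=\Gu{2}-\Ga{2}{0}\otimes\Ga{0}{2}$ together with $\Ga{0}{2}\otimes\Ga{2}{0}=\Gu{0}-\Gt{0}$ gives by induction the intertwining identity $\Gt{2}^k\otimes\Ga{2}{0}=\Ga{2}{0}\otimes\Gt{0}^k$, so $N(\Gt{2})\otimes\Ga{2}{0}=\Ga{2}{0}\otimes N(\Gt{0})$, and this vanishes by the already-established relation $N(\Gt{0})=\Ga{0}{1}\otimes\Ga{1}{0}$ of \eqref{eq:alpha_010_N} combined with $\Ga{2}{0}\otimes\Ga{0}{1}=0$ from \eqref{eq:first_cone_sequence}. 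Similarly, $\Ga{1}{2}\otimes\Ga{2}{0}=0$ yields $\Gs{2}\otimes\Ga{2}{0}=\Ga{2}{0}$, hence $N(\Gs{2})\otimes\Ga{2}{0}=p\,\Ga{2}{0}$, which cancels the $-p\,\Ga{2}{0}$ contribution from the $p\,\Gu{2}$ term. The verification of $Z\otimes\Ga{2}{1}=0$ is entirely symmetric: it uses \eqref{eq:alpha_101_N} and the vanishings $\Ga{0}{2}\otimes\Ga{2}{1}=0$, $\Ga{2}{1}\otimes\Ga{1}{0}=0$ from the second triangle \eqref{eq:second_cone_sequence}.

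The only non-formal step, and the main obstacle of the argument, is the injectivity of multiplication by $1-t$ on $\Z[t]/N(t)$. This holds because $\Z[t]/N(t)\cong\Z[\zeta_p]$ is the ring of integers of the cyclotomic field $\Q(\zeta_p)$, a domain, and $1-\zeta_p\ne 0$; equivalently, if $(1-t)f(t)\equiv 0\pmod{N(t)}$ then evaluating at $\zeta_p$ forces $f(\zeta_p)=0$, and since $N(t)$ is the monic minimal polynomial of $\zeta_p$ over $\Q$, it divides $f(t)$ in $\Z[t]$. Either way, $f\equiv 0$, so $Z=0$.
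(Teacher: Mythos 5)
Your proof is correct, and it takes a genuinely different route from the one in the paper. Both arguments begin with the same observation, namely
\(N(\Gt{2})\otimes\Ga{2}{0} = \Ga{2}{0}\otimes N(\Gt{0}) = \Ga{2}{0}\otimes\Ga{0}{1}\otimes\Ga{1}{0} = 0\),
and both rely on the exact sequence~\eqref{eq:exact_sequence_KKDD} together with the identification of \(\KK^G_1(D,\Cont(G))\) with \(\Z[t]/(N(t))\) via \(f\mapsto\Ga{2}{1}\otimes f(\Gs{1})\), which is established just before the lemma. From there the paper tests only against \(\Ga{2}{0}\): it writes \(N(\Gt{2}) = f(\Gs{2})\) for a unique~\(f\) in the principal ideal generated by \(1-\Gs{2}\), then invokes the Baaj--Skandalis involution \(\Gs{2}\leftrightarrow\Gt{2}\) to produce a second relation \(f(\Gt{2})=N(\Gs{2})\), and finally pins down \(f=p-N\) by a rank count in the free Abelian group \(\KK^G_0(D,D)\) of rank \(2(p-1)\) combined with evaluation at a primitive \(p\)th root of unity. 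You instead test the single element \(Z\defeq N(\Gt{2})+N(\Gs{2})-p\,\Gu{2}\) against both \(\Ga{2}{0}\) and \(\Ga{2}{1}\); the kernel of the first test is parametrised by \(\Z[t]/(N(t))\), and on that kernel the second test becomes multiplication by \(1-\vartheta\) on \(\Z[\vartheta]\), whose injectivity (a domain, \(1-\vartheta\neq0\)) forces \(Z=0\). Your two vanishing computations are legitimate at that point of the text: they use only the definitions~\eqref{eq:def_st}, the vanishing of consecutive composites in the two exact triangles, and the relations \eqref{eq:alpha_010_N} and~\eqref{eq:alpha_101_N}, all of which are already established. What your approach buys is the complete avoidance of the duality symmetry and of the linear-algebra step; the price is the extra (but entirely symmetric and equally short) verification that \(Z\otimes\Ga{2}{1}=0\).
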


\begin{proof}
  The definitions of \(\Gt{0}\) and~\(\Gt{2}\) in~\eqref{eq:def_st}
  imply \(f(\Gt{2}) \Ga{2}{0} = \Ga{2}{0} f(\Gt{0})\) for all
  polynomials~\(f\).  Thus
  \(N(\Gt{2}) \Ga{2}{0} = \Ga{2}{0} N(\Gt{0})\).  Using Lemmas
  \ref{lem:little_invariant_ring} and~\ref{lem:alpha_2_compose}, we
  rewrite this further as
  \(\Ga{2}{0} N(\Gt{0}) = \Ga{2}{0} \otimes \Ga{0}{1}\otimes
  \Ga{1}{0} = 0\).  Since the
  sequence~\eqref{eq:exact_sequence_KKDD} is exact, \(N(\Gt{2})\)
  belongs to the image of \(\KK^G_1(D,\Cont(G))\) in
  \(\KK^G_0(D,\C)\).  And \((1- \Gs{2})^j\) for \(j=1,\dotsc,p-1\)
  is a \(\Z\)\nb-module basis for this image.  This is also a basis
  for the principal ideal in \(\Z[\Gs{2}]/(\Gs{2}^p-1)\) generated
  by \(1- \Gs{2}\).  So there is a unique element \(f(\Gs{2})\) in
  this principal ideal with \(N(\Gt{2}) = f(\Gs{2})\).  Since
  \(N(1)=p\), \(p-N\) belongs to this principal ideal, and we must
  show that \(f = p-N\).  The Baaj--Skandalis duality
  involution~\(\BS\) on the ring~\(\Kring\) acts on the component
  \(\KK^G_0(D,D)\) by exchanging \(\Gs{2}\leftrightarrow \Gt{2}\).
  Therefore, the relation \(N(\Gt{2}) = f(\Gs{2})\) also implies
  \(f(\Gt{2}) = N(\Gs{2})\).  We rewrite this as
  \((p-f)(\Gt{2}) = (p-N)(\Gs{2})\) in order to get another
  \(\Z\)\nb-relation among the elements \((1- \Gs{2})^j\) for
  \(j=1,\dotsc,p-1\) and \((1- \Gt{2})^j\) for \(j=0,\dotsc,p-1\).
  Our computation shows that~\(\KK^G_0(D,D)\) is a free Abelian
  group of rank \(2(p-1)\).  So any two \(\Z\)\nb-linear relations
  among the elements above must be multiples of each other.  Hence
  there is \(c\in\Z\) with \(p-f = c\cdot N\) and
  \(p-N = c\cdot f\).  So \(f = p - c\cdot N\) and
  \(p-N = c\cdot p - c^2\cdot N\).  Plugging in a primitive
  \(p\)\nb-th root of unity, this implies \(p = c\cdot p\).  So
  \(f = p -N\) as desired.
\end{proof}

Our computations so far show that the ring~\(\Kring\)
is generated by the elements \(\Gu{j}\)
and~\(\Ga{j}{k}\)
and that all relations in Theorem~\ref{the:generators_and_relations}
hold in it.  It remains to show that these relations already imply all
relations in~\(\Kring\).
To begin with, any element of~\(\Kring\)
is a linear combination of \(\Gu{j}\)
and of products in the generators~\(\Ga{j}{k}\).
Here we may restrict attention to products of the form
\((\Ga{j}{k}\otimes \Ga{k}{j})^\ell\)
or \((\Ga{j}{k}\otimes \Ga{k}{j})^\ell\otimes \Ga{j}{k}\)
for \(j\neq k\)
because all other products vanish by~\eqref{eq:alpha_exact}.  Now we
have to go through the nine matrix entries and check that the
relations in Proposition~\ref{pro:further_relations}, which follow
from those in Theorem~\ref{the:generators_and_relations}, allow to
reduce any such expression to terms in a \(\Z\)\nb-module
basis of that entry.  This is easier for the \(j,k\)\nb-th
entries with \(j\neq k\)
because only terms of the form
\((\Ga{j}{k}\otimes \Ga{k}{j})^\ell\otimes \Ga{j}{k}\)
for \(\ell\in\N\)
may occur.  And \((\Ga{j}{k}\otimes \Ga{k}{j})^\ell\)
is a polynomial in \(\Gt{j}\)
or \(\Gs{j}\), respectively.  For the \(0,1\)-entry, we get
\[
(\Ga{0}{1}\otimes \Ga{1}{0})\otimes \Ga{0}{1}
= N(\Gt{0}) \otimes \Ga{0}{1}= p\cdot \Ga{0}{1}
\]
because of~\eqref{eq:ta_01}.  This implies by induction that
\((\Ga{0}{1}\otimes \Ga{1}{0})^\ell\otimes \Ga{0}{1}= p^\ell
\Ga{0}{1}\).
So the \(01\)-component
in the universal ring given by the relations in
Theorem~\ref{the:generators_and_relations} is at most~\(\Z\),
which is the corresponding component in~\(\Kring\).
A similar computation works for the \(10\)-component.
The \(02\)-component in the universal ring is spanned by
\[
(\Gu{0}- \Ga{0}{2}\otimes \Ga{2}{0})^\ell \otimes \Ga{0}{2}
= \Gt{0}^\ell \otimes \Ga{0}{2},
\]
and \(N(\Gt{0}) \otimes \Ga{0}{2}= 0\)
implies that it has rank at most~\(p-1\).
This is also the rank of the corresponding component in~\(\Kring\).
Similar computations work for the \(12\)-,
\(20\)-
and \(21\)-components,
which all have rank~\(p-1\)
and are identified with \(\Z[t]/(N(t))\).

Now consider the \(00\)-component.
Here \(\Ga{0}{1}\otimes \Ga{1}{0}= N(\Gt{0})\)
and \(\Ga{0}{2}\otimes \Ga{2}{0}= \Gu{0}- \Gt{0}\)
are polynomials in~\(\Gt{0}\),
and the relation \(\Gt{0}^p = \Gu{0}\)
in~\eqref{eq:tp} implies that the \(\Z\)\nb-rank
is at most~\(p\)
as desired.  A similar computation works for the \(11\)-component.
Finally, consider the \(22\)-component.
This is spanned by polynomials in \(\Gs{2}\)
and~\(\Gt{2}\);
\eqref{eq:st_2} and~\eqref{eq:ts_2} allow us to get rid of products of
these types of generators.  The relations \(\Gt{2}^p = \Gu{2}\)
and \(\Gs{2}^p = \Gu{2}\)
from \eqref{eq:tp} and~\eqref{eq:sp} reduce to polynomials in
\(\Gs{2}\)
and~\(\Gt{2}\)
of degree at most \(p-1\).
The constants in these two types of polynomials are the same, and
\eqref{eq:alpha_2} makes one more generator redundant, say,
\(\Gs{2}^{p-1}\).
So we remain with a generating set with at most \(2p-2\)
elements.  This is just enough to span the ring \(\KK^G_0(D,D)\)
computed above.  So the canonical map from the universal ring defined
by the generators and relations in
Theorem~\ref{the:generators_and_relations} to~\(\Kring\)
is an isomorphism.  This finishes the proof of
Theorem~\ref{the:generators_and_relations}.

\begin{remark}
  \label{rem:BS_automorphism}
  We have used Baaj--Skandalis duality several times to simplify the
  description of the ring~\(\Kring\).  It is visible from the
  generators and relations in
  Theorem~\ref{the:generators_and_relations} that~\(\Kring\) has an
  involutive automorphism~\(\BS\).  Namely, it is defined on
  generators by \(\BS(\Gu{j}) = \Gu{\sigma(j)}\) and
  \(\BS(\Ga{j}{k}) = \Ga{\sigma(j)}{\sigma(k)}\) for the transposition
  \(\sigma=(01)\) acting on \(j,k\in\{0,1,2\}\), \(j\neq k\).  This
  preserves the relations in
  Theorem~\ref{the:generators_and_relations}.  Hence it defines an
  automorphism~\(\BS\) of~\(\Kring\).  Since \(\BS\circ \BS\) is the
  identity on generators, it is the identity on the whole
  ring~\(\Kring\).  We warn the reader once again that~\(\BS\) does
  not preserve the \(\Z/2\)\nb-grading on~\(\Kring\).
\end{remark}

We also note a canonical anti-automorphism of~\(\Kring\):

\begin{proposition}
  \label{pro:anti-auto_duality}
  There is a unique ring anti-automorphism \(x\mapsto x^*\)
  of~\(\Kring\) with \(\Gu{j}^* = \Gu{j}\) and
  \(\Ga{j}{k}^* = \Ga{k}{j}\) for all \(j,k\in\{0,1,2\}\) with
  \(j \neq k\).  It is grading-preserving and involutive.
\end{proposition}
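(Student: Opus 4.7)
The plan is to invoke Theorem~\ref{the:generators_and_relations} and construct the anti\nb-automorphism by specifying it on generators. I would define a map~$\sigma$ on the generating set $\{\Gu{j},\Ga{j}{k}\}$ by $\sigma(\Gu{j})=\Gu{j}$ and $\sigma(\Ga{j}{k})=\Ga{k}{j}$, and extend it to a $\Z$\nb-linear anti\nb-homomorphism on the free associative $\Z$\nb-algebra on these generators. To show that~$\sigma$ descends to a ring anti\nb-homomorphism on~$\Kring$, I must check that, for every defining relation $r_1 = r_2$ of Theorem~\ref{the:generators_and_relations}, the equation $\sigma(r_1)=\sigma(r_2)$ also holds in~$\Kring$ (in fact, is itself among the defining relations).

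I would verify this relation by relation. Relations \eqref{eq:1_vs_1} and \eqref{eq:1_sum_is_unit} are manifestly fixed. For \eqref{eq:alpha_vs_1}, applying~$\sigma$ to $\Gu{j}\otimes \Ga{j}{k}\otimes\Gu{k}=\Ga{j}{k}$ gives $\Gu{k}\otimes\Ga{k}{j}\otimes\Gu{j}=\Ga{k}{j}$, another instance of \eqref{eq:alpha_vs_1}. The six relations in \eqref{eq:alpha_exact} are permuted among themselves by $(j,k,m)\mapsto(m,k,j)$. The key observation for \eqref{eq:alpha_010}--\eqref{eq:alpha_2} is that every loop $\Ga{j}{k}\otimes\Ga{k}{j}$ is fixed by~$\sigma$, since
\[
\sigma(\Ga{j}{k}\otimes\Ga{k}{j}) = \sigma(\Ga{k}{j})\otimes \sigma(\Ga{j}{k}) = \Ga{j}{k}\otimes\Ga{k}{j}.
\]
Therefore both sides of \eqref{eq:alpha_010}, \eqref{eq:alpha_101} and \eqref{eq:alpha_2} are fixed by~$\sigma$, and these relations survive. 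Hence $\sigma$ descends to a well-defined ring anti-homomorphism $x\mapsto x^*$ on~$\Kring$.

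The remaining assertions fall out easily. Uniqueness is immediate because the $\Gu{j}$ and~$\Ga{j}{k}$ generate~$\Kring$ as a ring, so any anti-homomorphism is determined by its values on them. Involutivity follows because $x\mapsto (x^*)^*$ is a ring homomorphism on~$\Kring$ that fixes every generator, hence equals the identity; in particular~$*$ is bijective and thus an anti-automorphism. Grading preservation is clear: $\Ga{1}{2}$ and~$\Ga{2}{1}$ are both odd and are interchanged, while all other generators are even and fixed. The only potential ``obstacle'' here is bookkeeping; the real content is the observation that the relations in Theorem~\ref{the:generators_and_relations} are manifestly symmetric under the combined operation of swapping $\Ga{j}{k}\leftrightarrow\Ga{k}{j}$ and reversing the order of multiplication.
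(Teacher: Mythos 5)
Your proposal is correct and follows essentially the same route as the paper: both arguments rest on the observation that the generators $\Gu{j}$ and the products $\Ga{j}{k}\otimes\Ga{k}{j}$ are fixed by the assignment combined with order reversal, so that the relations of Theorem~\ref{the:generators_and_relations} are permuted among themselves, and then uniqueness, involutivity and grading preservation follow from checking on generators. Your version just spells out the relation-by-relation verification slightly more explicitly than the paper does.
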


\begin{proof}
  The formulas for~\(x^*\) on generators imply that the idempotent
  generators~\(\Gu{j}\) and the products
  \(\Ga{j}{k}\otimes \Ga{k}{j}\) are self-adjoint.  Hence
  \(x\mapsto x^*\) leaves most of the relations in
  Theorem~\ref{the:generators_and_relations} unchanged, and it
  replaces the relations \eqref{eq:alpha_vs_1}
  and~\eqref{eq:alpha_exact} for \(j,k\) and~\(j,k,m\) by the same
  relations for \(k,j\) and \(m,k,j\), respectively.  So there is
  indeed a unique anti-automorphism of~\(\Kring\) with the given
  values on the generators.  Since it preserves the grading on
  generators and satisfies \((x^*)^* = x\) for all generators, it is
  grading-preserving and involutive.
\end{proof}

This anti-automorphism should be explicable by Poincaré duality
in~\(\KK^G\) (see \cites{Kasparov:Novikov,
  Emerson-Meyer:Dualities}).  It can be shown that the three
generators \(\C\), \(\Cont(G)\) and~\(D\) are Poincaré self-dual to
themselves.  This duality induces an anti-automorphism on the
ring~\(\Kring\).  I guess that this automorphism is the one
described in Proposition~\ref{pro:anti-auto_duality}.  The
computations needed to verify this seem excessive, however, and
therefore I decided not to discuss this any further in this article.

\subsection{The prime~2}
\label{sec:prime_2}

Let \(p=2\).
Then the ring~\(\Kring\)
is much simpler:
the group ring of~\(\Z/2\)
is quite small, namely, \(\Sring = \Z[t]/(t^2-1)\),
and \(N(t) = 1+t\)
and \(1-t\)
differ only by a sign.  The relations \eqref{eq:alpha_010},
\eqref{eq:alpha_101} and~\eqref{eq:alpha_2} are equivalent to
\begin{align}
  \label{eq:alpha_010_prime2}
  \Ga{0}{1}\otimes \Ga{1}{0}+ \Ga{0}{2}\otimes \Ga{2}{0}&= 2\cdot \Gu{0},\\
  \label{eq:alpha_101_prime2}
  \Ga{1}{0}\otimes \Ga{0}{1}+ \Ga{1}{2}\otimes \Ga{2}{1}&= 2\cdot \Gu{1},\\
  \label{eq:alpha_2_prime2}
  \Ga{2}{0}\otimes \Ga{0}{2}+ \Ga{2}{1}\otimes \Ga{1}{2}&= 2\cdot \Gu{2}.
\end{align}
In brief, we get the relation
\(\Ga{i}{j}\otimes \Ga{j}{i}+ \Ga{i}{k}\otimes \Ga{k}{i}=
2\cdot \Gu{i}\)
whenever \(\{i,j,k\} = \{0,1,2\}\).
These relations are symmetric under
\(\Ga{j}{k}\mapsto \Ga{\sigma(j)}{\sigma(k)}\)
for any permutation~\(\sigma\)
of \(\{0,1,2\}\).
The other relations in Theorem~\ref{the:generators_and_relations} are
also symmetric under index permutations.  So the ring~\(\Kring\)
carries an action of the symmetric group on three letters if \(p=2\).
In particular, the component \(\KK^G_*(D,D)\)
in the ring does not behave differently from the other components
\(\KK^G_*(\C,\C)\)
and \(\KK^G_*(\Cont(G),\Cont(G))\).
This is a special feature of the prime~\(2\).
In general, the \(\Z\)\nb-ranks
of these rings are \(2p-2\),
\(p\) and~\(p\), respectively, and these are only equal for \(p=2\).

The \(\Cst\)\nb-algebra~\(D\)
for \(p=2\)
in~\eqref{eq:cone_D_explicit} is isomorphic to \(\Cont_0(\R)\)
with the action of~\(\Z/2\)
by reflection at the origin.  The group \(\KK^{\Z/2}_*(D,A)\)
for a \(\Z/2\)\nb-\(\Cst\)-algebra
is canonically isomorphic to the \(\K\)\nb-theory
of~\(A\)
as a \(\Z/2\)\nb-graded
\(\Cst\)\nb-algebra
(see also \cites{Haag:Graded, Haag:Algebraic, Meyer:Equivariant}).
Hence this piece in Köhler's invariant is also a ``standard''
\(\K\)\nb-theory group in this case.

\begin{lemma}
  \label{lem:prime2_tensor_D}
  Let \(p=2\).
  Then the functor \(\KK^G\to\KK^G\),
  \(A\mapsto A\otimes D\),
  is an involutive automorphism up to natural equivalence, that is,
  \(A\otimes D\otimes D\)
  is naturally \(\KK^G\)-equivalent
  to~\(A\)
  for all~\(A\) in~\(\KK^G\).
  And \(\C\otimes D \sim D\),
  \(D\otimes D \sim \C\),
  \(\Cont(G) \otimes D \sim \Sigma \Cont(G)\).
  So tensoring with~\(D\)
  induces an automorphism of~\(\Kring\).
  It corresponds to the automorphism that acts on generators by the
  permutation \((02)\).
\end{lemma}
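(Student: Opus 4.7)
The plan is to verify the three $\KK^G$-equivalences on generators, promote them to a natural equivalence $A \sim_{\KK^G} A\otimes D\otimes D$ via an equivariant Bott element, and then read off the induced automorphism on $\Kring$. Since $p=2$, the description~\eqref{eq:cone_D_explicit} identifies $D$ with $\Cont_0(\R)$ equipped with the reflection action of $G=\Z/2$ at the origin, as noted immediately above the lemma. The identification $\C\otimes D \cong D$ is tautological. For $\Cont(G)\otimes D$, I would use a Fell-type absorption trick: the equivariant \(^*\)\nb-iso\-mor\-phism $\Phi\colon \Cont(G,D)\to\Cont(G,D)$, $\Phi(f)(g)\defeq \alpha_g(f(g))$, intertwines the diagonal $G$\nb-action on $\Cont(G)\otimes D$ with the action that only translates on the $\Cont(G)$\nb-factor and is trivial on $D$. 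That latter algebra is manifestly $\Cont(G)\otimes \Cont_0(\R) = \Sigma\Cont(G)$ with translation action. Finally, $D\otimes D$ is equivariantly isomorphic to $\Cont_0(\R^2)$ with the antipodal $\Z/2$\nb-action, that is, to $\Cont_0(\R^2_\varrho)$ in the notation of the proof of Proposition~\ref{pro:BS_self-dual}; the equivariant Thom/Bott class there provides the $\KK^G$-equivalence $\Cont_0(\R^2_\varrho)\sim_{\KK^G}\C$.

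Now let $\beta\in \KK^G_0(\C,D\otimes D)$ be the resulting Bott element and let $\beta^{-1}$ be its inverse. For any object $A$ of $\KK^G$, exterior product with $\beta$ gives a $\KK^G$\nb-equivalence $A\sim_{\KK^G} A\otimes D\otimes D$ which is natural in $A$ because it is composition with a fixed Kasparov element. Hence $-\otimes D\colon \KK^G\to\KK^G$ is an involutive auto-equivalence, and in particular it induces an involutive ring automorphism of $\Kring = \KK^G_*(B,B)^\op$, where $B = \C\oplus \Cont(G)\oplus D$.

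To identify this automorphism, apply $-\otimes D$ to the three summands of~$B$: they go to $D$, $\Sigma\Cont(G)$, $\C$, respectively. Hence on the idempotents the automorphism realises the transposition $\sigma = (02)$, fixing $\Gu{1}$ (the suspension on the middle summand only reverses parity; it does not change the label in $\{0,1,2\}$). Because $-\otimes D$ is an exact functor of triangulated categories, it carries the exact triangle~\eqref{eq:first_cone_sequence} to one that, after the above identifications and one Bott shift, is isomorphic to~\eqref{eq:second_cone_sequence}; hence the generators $\Ga{j}{k}$ are permuted compatibly, yielding precisely the automorphism $\Gu{j}\mapsto \Gu{\sigma(j)}$, $\Ga{j}{k}\mapsto \Ga{\sigma(j)}{\sigma(k)}$ of Remark~\ref{rem:BS_automorphism} for $\sigma = (02)$. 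The main technical step is the equivariant Bott periodicity $\Cont_0(\R^2_\varrho)\sim_{\KK^G}\C$, but this is already invoked in the proof of Proposition~\ref{pro:BS_self-dual} and requires no new input; the rest is formal manipulation with tensor products and Fell absorption.
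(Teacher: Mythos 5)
Your proof follows the same route as the paper: the $\KK^G$-equivalence $D\otimes D\sim\C$ via equivariant Bott periodicity for the point-reflection (antipodal) action on~$\R^2$, and the identification $\Cont(G)\otimes D\cong\Sigma\Cont(G)$ via a Fell-absorption conjugation (which, as you implicitly rely on, uses $2g=0$ in~$\Z/2$). You supply somewhat more detail than the paper does -- the explicit naturality argument by exterior product with a Bott class, and the check that applying $-\otimes D$ to the triangle~\eqref{eq:first_cone_sequence} recovers a rotation of~\eqref{eq:second_cone_sequence}, identifying the induced automorphism of~$\Kring$ as the $(02)$ permutation on generators -- all of which is correct and welcome.
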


\begin{proof}
  The tensor product \(D\otimes D\) corresponds to \(\Cont_0(\R^2)\)
  with the action by point reflection at the origin.  This is
  orientation-preserving, even \(\K\)\nb-oriented -- unlike the
  action on~\(\R\).  Hence equivariant Bott periodicity gives a
  \(\KK^G\)-equivalence \(D \otimes D \sim \C\).  Thus tensoring
  with~\(D\) is an involutive automorphism of \(\KK^G\).  The tensor
  product \(\Cont(G) \otimes D\) is isomorphic to the suspension
  of~\(\Cont(G)\) because the diagonal action on
  \(\Cont(G) \otimes D\) is conjugate to the action only on the
  factor \(\Cont(G)\).
\end{proof}

\subsection{Exact modules versus group actions on C*-algebras}
\label{sec:exact_to_Cstar}

The following theorem combines Köhler's results with
Theorem~\ref{the:make_actions_simple}.

\begin{theorem}
  \label{the:exact_vs_Cstar}
  There is a bijection between isomorphism
  classes of exact, countable \(\Kring\)\nb-modules
  and \(\KK^G\)-equivalence
  classes of pointwise outer actions of~\(G\)
  on stable, UCT Kirchberg algebras in the equivariant bootstrap
  class~\(\mathfrak{B}^G\).
\end{theorem}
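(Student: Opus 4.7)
The plan is to combine the three main results already established: Köhler's range theorem (Theorem~\ref{the:range_F}), Köhler's UCT-classification theorem (Theorem~\ref{the:Koehler_invariant_UCT_classifies}), and the realisation result (Theorem~\ref{the:make_actions_simple}). The functor~\(F_*\) will provide the bijection in both directions, and each of its three necessary features -- well-defined, injective, surjective onto the stated target -- will correspond to one of these theorems.

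For injectivity I would argue as follows. Suppose \((A,\alpha)\) and \((B,\beta)\) are two pointwise outer actions of~\(G\) on stable UCT Kirchberg algebras in~\(\mathfrak{B}^G\) with \(F_*(A)\cong F_*(B)\) as \(\Z/2\)-graded \(\Kring\)-modules. Since both objects lie in~\(\mathfrak{B}^G\), Theorem~\ref{the:Koehler_invariant_UCT_classifies} lifts this isomorphism to a \(\KK^G\)-equivalence between \((A,\alpha)\) and \((B,\beta)\). That the module \(F_*(A)\) is exact and countable follows from the Puppe sequence~\eqref{eq:K-theory_long_exact_sequence} and its dual~\eqref{eq:dual_Puppe} together with countability of the separable K-theory groups involved, so the assignment \([(A,\alpha)] \mapsto [F_*(A)]\) lands in the set of isomorphism classes of exact, countable \(\Kring\)-modules.

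For surjectivity, start with an arbitrary exact, countable \(\Kring\)\nb-module~\(M\). By Theorem~\ref{the:range_F}, there exists an object \(A\in\mathfrak{B}^G\) with \(F_*(A)\cong M\). Since \(\mathfrak{B}^G\) was identified (in the discussion of the equivariant bootstrap class) with those \(G\)\nb-actions that are \(\KK^G\)-equivalent to an action on a Type~I \(\Cst\)\nb-algebra, we may replace~\(A\) by a \(\KK^G\)-equivalent Type~I, hence nuclear, \(G\)\nb-\(\Cst\)-algebra. Now apply Theorem~\ref{the:make_actions_simple} to this nuclear representative: it produces a pointwise outer, continuous \(G\)\nb-action~\(\beta\) on a non-zero, simple, purely infinite, stable, nuclear, separable \(\Cst\)\nb-algebra~\(B\) that is \(\KK^G\)-equivalent to~\(A\). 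Then~\(B\) is a Kirchberg algebra, and it lies in~\(\mathfrak{B}^G\) since~\(\mathfrak{B}^G\) is closed under \(\KK^G\)-equivalence. Because objects of~\(\mathfrak{B}^G\) have underlying \(\Cst\)\nb-algebras in the ordinary bootstrap class (noted in the introduction), \(B\)~satisfies the UCT. Finally \(F_*(B)\cong F_*(A)\cong M\), as required.

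The two directions together show that \(F_*\) descends to a bijection between the indicated sets of isomorphism classes. I do not expect any serious obstacle here: the entire argument is a careful bookkeeping of which properties (pointwise outerness, simplicity, pure infiniteness, stability, nuclearity, UCT, membership in~\(\mathfrak{B}^G\)) are preserved by which step. The only point requiring a line of comment is the passage from an abstract \(A\in\mathfrak{B}^G\) to a nuclear representative before invoking Theorem~\ref{the:make_actions_simple}, since Theorem~\ref{the:make_actions_simple} only preserves nuclearity rather than producing it.
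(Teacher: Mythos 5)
Your proof is correct and takes essentially the same route as the paper: surjectivity via Theorem~\ref{the:range_F} to find a preimage in $\mathfrak{B}^G$, passing to a Type~I (hence nuclear) representative, then applying Theorem~\ref{the:make_actions_simple}; injectivity via Theorem~\ref{the:Koehler_invariant_UCT_classifies}. Your explicit note that nuclearity must be arranged before invoking Theorem~\ref{the:make_actions_simple} matches the paper's reasoning as well.
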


\begin{proof}
  By Theorem~\ref{the:range_F}, a \(\Kring\)\nb-module~\(M\)
  is of the form \(F_*(A)\)
  for an object~\(A\)
  of the equivariant bootstrap class~\(\mathfrak{B}^G\)
  if and only if it is exact and countable.  Then the underlying
  \(\Cst\)\nb-algebra
  of~\(A\)
  may be chosen to be of Type~I and hence nuclear, and it satisfies
  the UCT.  Theorem~\ref{the:make_actions_simple} shows that~\(A\)
  may be chosen to be a stable UCT Kirchberg algebra with a
  pointwise outer action of~\(G\).
  Two actions of~\(G\)
  in~\(\mathfrak{B}^G\)
  that lift the same exact, countable \(\Kring\)\nb-module
  are \(\KK^G\)-equivalent by
  Theorem~\ref{the:Koehler_invariant_UCT_classifies}.
\end{proof}

For a pointwise outer action on a (purely infinite and) simple
\(\Cst\)\nb-algebra~\(A\), the crossed product \(A\rtimes G\) is
again (purely infinite and) simple (see
\cites{Kishimoto:Outer_crossed,
  Jeong-Kodaka-Osaka:Purely_infinite_crossed}).  Hence~\(A\) viewed
as a Hilbert bimodule over the fixed point algebra~\(A^G\)
and~\(A\rtimes G\) is full, so that \(A\rtimes G\) is
Morita--Rieffel equivalent to the fixed point algebra~\(A^G\).  We
may, therefore, identify \(\K_*(A\rtimes G) \cong \K_*(A^G)\).  Then
the map
\((\Ga{1}{0})^*\colon \K_*(A)^G \cong \K_*(A\rtimes G) \to \K_*(A)\)
becomes the map induced by the inclusion \(A^G \hookrightarrow A\);
this follows from Lemma~\ref{lem:natural_trafo_alpha_10_01}.  So the
pieces \(M_0\) and~\(M_1\) and the maps \((\Ga{0}{1})^*\)
and~\((\Ga{1}{0})^*\) between them are standard ingredients in the
study of \(\Z/p\)\nb-actions on simple \(\Cst\)\nb-algebras.  The
exactness of the two sequences in
Definition~\ref{def:exact_R-module} restricts~\(M_2\) considerably.
In particular, if both \(M_0\) and~\(M_1\) vanish, then \(M=0\).
However, \(M_0\) and~\(M_1\) with the maps \(\Gt{0}^M\),
\(\Gs{1}^M\), \(\Ga{0}{1}^M\) and~\(\Ga{1}{0}^M\) do not yet
determine~\(M\).  A concrete counterexample in the case \(p\neq 2\)
is Example~\ref{exa:actions_on_Cuntz_4}.  This example describes
exact \(\Kring\)\nb-modules that depend on a parameter
\(\tau\in \{1,\dotsc,p-1\}\), which enters only in the
map~\(\Ga{2}{1}^M\).  The maps \(\Gt{0}^M\), \(\Gs{1}^M\),
\(\Ga{0}{1}^M\) and~\(\Ga{1}{0}^M\) do not depend on~\(\tau\).
Nevertheless, these examples for different choices of~\(\tau\) are
not isomorphic as \(\Kring\)\nb-modules, and there are at least two
choices for~\(\tau\) if \(p\neq2\).

\subsection{Some consequences of exactness}
\label{sec:exactness_consequences}

This section collects some general properties of exact
\(\Kring\)\nb-modules.
Our starting point is the following elementary lemma, which we are
going to apply to the composites~\(\Ga{j}{k}\otimes \Ga{k}{j}\):

\begin{lemma}
  \label{lem:ker_coker_composite}
  Let \(X\), \(Y\) and~\(Z\) be Abelian groups.  Let \(f\colon X\to Y\)
  and \(g\colon Y\to Z\)
  be composable homomorphisms and \(h=g\circ f\).
  Then \(\ker f\subseteq \ker h\)
  and \(\im h\subseteq \im g\), and
  \[
  \frac{\ker h}{\ker f} \cong \im f \cap \ker g,\qquad
  \frac{\im g}{\im h} \cong \frac{Y}{\im f + \ker g}
  \]
  by applying \(f\) or~\(g^{-1}\), respectively.
  So there is a short exact sequence
  \[
  Y/(\im f+\ker g) \into \coker h \prto \coker g.
  \]
\end{lemma}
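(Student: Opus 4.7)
The plan is to verify each claim by a direct diagram chase; the statement is elementary homological algebra and involves no genuine obstacle. First I would dispatch the two inclusions: $x \in \ker f$ gives $h(x)=g(0)=0$, so $\ker f\subseteq \ker h$; and $h(x)=g(f(x))$ visibly lies in $\im g$, so $\im h\subseteq\im g$.

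For the first isomorphism $\ker h/\ker f \cong \im f\cap\ker g$, I would restrict $f$ to a homomorphism $\bar f\colon \ker h\to Y$. The kernel of $\bar f$ is $\ker f\cap \ker h=\ker f$ by the first inclusion. The image of $\bar f$ is exactly $\im f\cap \ker g$: if $x\in\ker h$ then $f(x)\in\im f$ and $g(f(x))=h(x)=0$; conversely, if $y=f(x)\in\im f\cap\ker g$, then $h(x)=g(y)=0$, so $x\in\ker h$. Apply the first isomorphism theorem.

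For the second isomorphism $\im g/\im h \cong Y/(\im f+\ker g)$, I would use the surjection $\bar g\colon Y\twoheadrightarrow \im g$ and compute its preimage $\bar g^{-1}(\im h)$. An element $y\in Y$ satisfies $g(y)\in\im h$ iff $g(y)=g(f(x))$ for some $x\in X$, iff $y-f(x)\in\ker g$ for some $x$, iff $y\in\im f+\ker g$. The induced surjection $Y\twoheadrightarrow \im g/\im h$ therefore has kernel $\im f+\ker g$, giving the desired isomorphism.

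Finally, the short exact sequence comes from the general fact that an inclusion of subgroups $\im h\subseteq\im g\subseteq Z$ yields an extension $\im g/\im h \rightarrowtail Z/\im h \twoheadrightarrow Z/\im g$, i.e.\ $\im g/\im h \rightarrowtail \coker h \twoheadrightarrow \coker g$; substituting the isomorphism of the previous paragraph identifies the leftmost term with $Y/(\im f+\ker g)$.
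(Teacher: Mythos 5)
Your proof is correct and follows essentially the same route as the paper's: both establish the first isomorphism by restricting $f$ to $\ker h$ (the paper phrases it as $f$ mapping $f^{-1}(\ker g)$ onto $\im f\cap\ker g$, which is the same computation) and the second by identifying, modulo $\ker g$, which elements of $Y$ hit $\im h$ under $g$. The only cosmetic difference is that the paper works via the inverse bijection $\im g\congto Y/\ker g$ while you work with the forward surjection $Y\prto \im g/\im h$, and you spell out the final short exact sequence via the standard third-isomorphism-theorem filtration $\im h\subseteq\im g\subseteq Z$, which the paper leaves implicit.
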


\begin{proof}
  Let \(x\in X\).
  Then \(h(x) = 0\)
  if and only if \(f(x)\in \ker g\),
  that is, \(x\in f^{-1}(\ker g)\).
  Since~\(f\)
  maps \(f^{-1}(\ker g)\)
  onto \(\im f \cap \ker g\)
  and \(f^{-1}(\ker g) \supseteq \ker f\),
  the map~\(f\)
  induces an isomorphism
  \(\ker h\bigm/ \ker f \cong \im f \cap \ker g\).
  Now let \(z\in Z\).
  If \(z \in \im h\),
  then there is \(x\in X\)
  with \(g\bigl(f(x)\bigr) = z\).
  So \(z\in \im g\).
  The map~\(g\)
  induces a bijection from \(Y/\ker g\)
  onto \(\im g\).
  The inverse of this bijection maps \(\im h\subseteq \im g\)
  onto the set of all elements of the form \([f(x)] \in Y/\ker g\)
  for \(x\in X\).
  This is the quotient \((\im f+\ker g)/\ker g\).
  This implies the claim about \(\im g/\im h\).
  And then the claim about cokernels also follows.
\end{proof}

When we combine \eqref{eq:alpha_010_N}--\eqref{eq:alpha_212} with
Lemma~\ref{lem:ker_coker_composite}, we get twelve short exact
sequences involving kernels and cokernels of the
maps~\(\Ga{j}{k}^M\).
And these are grouped into six pairs because exactness implies that
some of the third entries in the short exact sequences are equal.
These pairs of exact sequences are collected in
Table~\ref{tab:short_exact_sequences}.  The next lemma condenses these
into isomorphisms between certain quotients of kernels or images.

\begin{table}[htbp]
  \centering
  \begin{tikzcd}
    \ker (\Ga{0}{2}^M) \arrow[r, >->, "\subseteq"] &
    \ker (\Gu{2}^M - \Gt{2}^M) \arrow[r, ->>, "\Ga{0}{2}^M"] &
    \im (\Ga{0}{2}^M) \cap \ker (\Ga{2}{0}^M) \arrow[d, equal] \\
    \ker (\Ga{0}{1}^M) \arrow[r, >->, "\subseteq"] &
    \ker N(\Gs{1}^M) \arrow[r, ->>, "\Ga{0}{1}^M"] &
    \im (\Ga{0}{1}^M) \cap \ker (\Ga{1}{0}^M)
  \end{tikzcd}

  \bigskip
  \begin{tikzcd}
    \ker (\Ga{2}{0}^M) \arrow[r, >->, "\subseteq"] &
    \ker (\Gu{0}^M - \Gt{0}^M) \arrow[r, ->>, "\Ga{2}{0}^M"] &
    \im (\Ga{2}{0}^M) \cap \ker (\Ga{0}{2}^M) \arrow[d, equal] \\
    \ker (\Ga{2}{1}^M) \arrow[r, >->, "\subseteq"] &
    \ker (\Gu{1}^M - \Gs{1}^M) \arrow[r, ->>, "\Ga{2}{1}^M"] &
    \im (\Ga{2}{1}^M) \cap \ker (\Ga{1}{2}^M)
  \end{tikzcd}

  \bigskip
  \begin{tikzcd}
    \ker (\Ga{1}{0}^M) \arrow[r, >->, "\subseteq"] &
    \ker N(\Gt{0}^M) \arrow[r, ->>, "\Ga{1}{0}^M"] &
    \im (\Ga{1}{0}^M) \cap \ker (\Ga{0}{1}^M) \arrow[d, equal] \\
    \ker (\Ga{1}{2}^M) \arrow[r, >->, "\subseteq"] &
    \ker (\Gu{2}^M - \Gs{2}^M) \arrow[r, ->>, "\Ga{1}{2}^M"] &
    \im (\Ga{1}{2}^M) \cap \ker (\Ga{2}{1}^M)
  \end{tikzcd}

  \bigskip
  \begin{tikzcd}
    M_0\bigm/ \bigl(\im (\Ga{0}{2}^M) + \ker (\Ga{2}{0}^M)\bigr)
    \arrow[d, equal] \arrow[r, >->, "\Gd{2}{0}^M"] &
    \coker (\Gu{2}^M - \Gt{2}^M) \arrow[r, ->>, "\textrm{can.}"] &
    \coker (\Ga{2}{0}^M) \\
    M_0\bigm/ \bigl(\im (\Ga{0}{1}^M) + \ker (\Ga{1}{0}^M)\bigr)
    \arrow[r, >->, "\Gd{1}{0}^M"] &
    \coker N(\Gs{1}^M) \arrow[r, ->>, "\textrm{can.}"] &
    \coker (\Ga{1}{0}^M)
  \end{tikzcd}

  \bigskip
  \begin{tikzcd}
    M_2\bigm/ \bigl(\im (\Ga{2}{0}^M) + \ker (\Ga{0}{2}^M)\bigr)
    \arrow[d, equal] \arrow[r, >->, "\Gd{0}{2}^M"] &
    \coker (\Gu{0}^M - \Gt{0}^M) \arrow[r, ->>, "\textrm{can.}"] &
    \coker (\Ga{0}{2}^M) \\
    M_2\bigm/ \bigl(\im (\Ga{2}{1}^M) + \ker (\Ga{1}{2}^M)\bigr)
    \arrow[r, >->, "\Gd{1}{2}^M"] &
    \coker (\Gu{1}^M - \Gs{1}^M) \arrow[r, ->>, "\textrm{can.}"] &
    \coker (\Ga{1}{2}^M)
  \end{tikzcd}

  \bigskip
  \begin{tikzcd}
    M_1\bigm/ \bigl(\im (\Ga{1}{0}^M) + \ker (\Ga{0}{1}^M)\bigr)
    \arrow[d, equal] \arrow[r, >->, "\Gd{0}{1}^M"] &
    \coker N(\Gt{0}^M) \arrow[r, ->>, "\textrm{can.}"] &
    \coker (\Ga{0}{1}^M) \\
    M_1\bigm/ \bigl(\im (\Ga{1}{2}^M) + \ker (\Ga{2}{1}^M)\bigr)
    \arrow[r, >->, "\Gd{2}{1}^M"] &
    \coker (\Gu{2}^M - \Gs{2}^M) \arrow[r, ->>, "\textrm{can.}"] &
    \coker (\Ga{2}{1}^M).
  \end{tikzcd}

  \bigskip
  \caption{Short exact sequences in an exact \(\Kring\)\nb-module.
    The maps denoted~``\(\subseteq\)''
    are inclusion maps, those denoted ``\(\mathrm{can.}\)''
    are the quotient maps, and those denoted~``\(\Gd{j}{k}^M\)''
    or~``\(\Ga{j}{k}^M\)''
    are induced by~\(\Ga{j}{k}^M\)
    between suitable quotients or subgroups, respectively.  Beware
    that some of the maps in the exact sequences above are
    grading-reversing.}
  \label{tab:short_exact_sequences}
\end{table}

\begin{lemma}
  \label{lem:short_exact_sequences_consequences}
  Let~\(M\)
  be an exact \(\Kring\)\nb-module.
  There are isomorphisms
  \begin{alignat*}{2}
    \frac{\ker (\Gu{2}^M - \Gt{2}^M)}{\ker (\Ga{0}{2}^M)}
    &\cong \frac{\ker N(\Gs{1}^M)}{\ker (\Ga{0}{1}^M)},&\qquad
    \frac{\im (\Ga{2}{0}^M)}{\im (\Gu{2}^M - \Gt{2}^M)}
    &\cong \frac{\im (\Ga{1}{0}^M)}{\im N(\Gs{1}^M)},\\
    \frac{\ker (\Gu{0}^M - \Gt{0}^M)}{\ker (\Ga{2}{0}^M)}
    &\cong \frac{\ker (\Gu{1}^M - \Gs{1}^M)}{\ker (\Ga{2}{1}^M)},&\qquad
    \frac{\im (\Ga{0}{2}^M)}{\im (\Gu{0}^M - \Gt{0}^M)}
    &\cong \frac{\im (\Ga{1}{2}^M)}{\im (\Gu{1}^M - \Gs{1}^M)},\\
    \frac{\ker N(\Gt{0}^M)}{\ker (\Ga{1}{0}^M)}
    &\cong \frac{\ker (\Gu{2}^M - \Gs{2}^M)}{\ker (\Ga{1}{2}^M)},&\qquad
    \frac{\im (\Ga{0}{1}^M)}{\im N(\Gt{0}^M)}
    &\cong \frac{\im (\Ga{2}{1}^M)}{\im (\Gu{2}^M - \Gs{2}^M)}.
  \end{alignat*}
  For each quotient~\(X/Y\)
  above, it is also asserted implicitly that \(Y\subseteq X\).
\end{lemma}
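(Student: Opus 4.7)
The plan is to read each of the six isomorphisms directly off the corresponding pair of short exact sequences collected in Table~\ref{tab:short_exact_sequences}, using the exactness of~$M$ to identify the rightmost entries within each pair.

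Concretely, Table~\ref{tab:short_exact_sequences} is itself an immediate consequence of Lemma~\ref{lem:ker_coker_composite} applied to the six factorisations coming from the ring identities \eqref{eq:alpha_010_N}--\eqref{eq:alpha_212}: each of the endomorphisms $\Gu{j}^M - \Gt{j}^M$, $\Gu{j}^M - \Gs{j}^M$, $N(\Gt{0}^M)$, and $N(\Gs{1}^M)$ of $M_j$ factors as a composite of two opposite structural arrows $\Ga{j}{k}^M$ and $\Ga{k}{j}^M$, and each such factorisation $h = g\circ f$ produces one kernel and one cokernel short exact sequence via Lemma~\ref{lem:ker_coker_composite}, accounting for all twelve sequences in the table.

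The second step is to invoke the two cycle-exactness conditions of Definition~\ref{def:exact_R-module} to verify that the rightmost entries of the two sequences in each row of the table actually coincide. For instance, in the first pair the two third terms $\im(\Ga{0}{2}^M)\cap \ker(\Ga{2}{0}^M)$ and $\im(\Ga{0}{1}^M)\cap \ker(\Ga{1}{0}^M)$ both equal $\ker(\Ga{1}{0}^M)\cap \ker(\Ga{2}{0}^M)$ once one substitutes the exactness identities $\im(\Ga{0}{2}^M) = \ker(\Ga{1}{0}^M)$ and $\im(\Ga{0}{1}^M) = \ker(\Ga{2}{0}^M)$ in~$M_0$, coming respectively from the two cycles in the exact diagram of Definition~\ref{def:exact_R-module}. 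The analogous substitutions in $M_1$ and $M_2$ match the rightmost entries of the other two pairs, and the same substitutions handle the cokernel-side third terms $M_j/(\im(\Ga{j}{k}^M) + \ker(\Ga{k}{j}^M))$ with sums replacing intersections.

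Once the common third term~$C$ is fixed within each pair, the two short exact sequences $0\to A_i\to B_i\to C\to 0$ produce canonical isomorphisms $B_1/A_1 \cong C \cong B_2/A_2$, which is precisely the asserted isomorphism; all six isomorphisms of the lemma drop out simultaneously. The whole argument is therefore a short bookkeeping exercise with no substantive obstacle: one simply applies the two exactness relations cyclically in each of $M_0$, $M_1$, $M_2$, keeping track of the parity shifts caused by the odd generators $\Ga{1}{2}^M$ and $\Ga{2}{1}^M$, as already flagged in the caption of Table~\ref{tab:short_exact_sequences}.
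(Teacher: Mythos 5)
Your proposal is correct and follows essentially the same route as the paper, which likewise obtains the twelve short exact sequences of Table~\ref{tab:short_exact_sequences} by applying Lemma~\ref{lem:ker_coker_composite} to the factorisations coming from \eqref{eq:alpha_010_N}--\eqref{eq:alpha_212}, identifies the matching entries within each pair via the two exactness conditions of Definition~\ref{def:exact_R-module}, and reads off the six isomorphisms. The only cosmetic slip is calling the common terms of the cokernel pairs ``third terms'' (they are the first entries $M_j/(\im(\Ga{j}{k}^M)+\ker(\Ga{k}{j}^M))$ of those sequences), but the identification you perform is the right one and the argument goes through.
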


\section{Integral representations of groups of prime order}
\label{sec:elementary_S-modules}

The group ring of \(G=\Z/p\) is isomorphic to
\(\Sring\defeq \Z[t]/(t^p-1)\).  This section collects some facts
about \(\Sring\)\nb-modules, which are certainly known to the
experts.  Recall that \(N(t) \defeq 1 + t + \dotsb + t^{p-1}\) and
that \((1-t) \cdot N(t) = 1-t^p \equiv 0\) in~\(\Sring\).  The ring
\(\Z[t]/(N(t))\) is isomorphic to \(\Z[\vartheta]\),
where~\(\vartheta\) denotes a primitive \(p\)-th root of unity.
Let~\(M\) be an \(\Sring\)\nb-module.  As in
Definition~\ref{def:module_notation}, we define \(t^M\colon M\to M\)
by \(t^M(x) \defeq x\cdot t\) for all \(x\in M\).

\begin{lemma}
  \label{lem:S-module_inclusions}
  \(N(t^M) x = p\cdot x\) for all \(x\in \ker(1-t^M)\).  And
  \begin{gather*}
    p\cdot \ker (1-t^M) \subseteq \im N(t^M) \subseteq \ker (1-t^M),\\
    p\cdot \ker N(t^M) \subseteq \im (1-t^M) \subseteq \ker N(t^M),\\
    p\cdot (\ker N(t^M) \cap \ker (1-t^M)) = 0,\\
    p\cdot M \subseteq \im N(t^M) + \im (1-t^M).
  \end{gather*}
\end{lemma}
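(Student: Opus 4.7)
The proof will rest on two elementary identities in $\Z[t]$. The first is $(1-t) N(t) = 1 - t^p$, which is zero modulo $t^p - 1$, so in $\Sring$ we have $(1-t) N(t) = 0$. The second is that, since $N(1) = p$, the polynomial $N(t) - p$ vanishes at $t=1$ and is therefore divisible by $t-1$; write
\[
  p = N(t) + (1-t) q(t)
\]
for some $q(t) \in \Z[t]$. Both identities hold after substituting $t^M$ for $t$ and applying them as endomorphisms of~$M$.

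The first assertion is immediate: if $x \in \ker(1-t^M)$, then $t^M x = x$, hence $(t^M)^k x = x$ for every $k \ge 0$, and so $N(t^M) x = \sum_{k=0}^{p-1} (t^M)^k x = p \cdot x$.

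Given that, the remaining inclusions follow by routine manipulation. The inclusions $\im N(t^M) \subseteq \ker(1-t^M)$ and $\im(1-t^M) \subseteq \ker N(t^M)$ are both consequences of $(1-t)N(t) = 0$ in $\Sring$. The inclusion $p \cdot \ker(1-t^M) \subseteq \im N(t^M)$ is just the first assertion rewritten: for $x \in \ker(1-t^M)$, the element $p\cdot x = N(t^M)x$ lies in $\im N(t^M)$. For $p\cdot \ker N(t^M) \subseteq \im(1-t^M)$, apply the second identity: for $x \in \ker N(t^M)$,
\[
  p\cdot x = N(t^M) x + (1-t^M) q(t^M) x = (1-t^M) q(t^M) x \in \im(1-t^M).
\]
The same identity applied to an arbitrary $x \in M$ writes $p\cdot x$ as a sum of an element of $\im N(t^M)$ and an element of $\im(1-t^M)$, which gives the final inclusion $p\cdot M \subseteq \im N(t^M) + \im(1-t^M)$. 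Finally, if $x \in \ker N(t^M) \cap \ker(1-t^M)$, then the first assertion gives $p\cdot x = N(t^M) x = 0$, proving $p \cdot (\ker N(t^M) \cap \ker(1-t^M)) = 0$.

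There is no real obstacle here; the whole lemma is a direct consequence of the two identities above, and each bullet requires only a one-line verification. The only piece that uses more than the $(1-t)N(t)=0$ relation is the non-trivial inclusion $p\cdot \ker N(t^M) \subseteq \im(1-t^M)$ (and the parallel $pM \subseteq \im N(t^M) + \im(1-t^M)$), both of which are handled uniformly by the Bezout-type identity $p = N(t) + (1-t) q(t)$ in $\Z[t]$.
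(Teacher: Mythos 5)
Your proof is correct and follows essentially the same route as the paper's: the first assertion by iterating $t^M x = x$, the easy inclusions from $(1-t)N(t)=0$ in $\Sring$, and the remaining ones from the Bezout identity $p = N(t)+(1-t)q(t)$. The only small divergence is your treatment of $p\cdot(\ker N(t^M)\cap\ker(1-t^M))=0$, which you get directly from the first assertion ($p\cdot x = N(t^M)x = 0$) rather than, as in the paper, re-invoking the Bezout identity and commutativity of $f(t^M)$ with $1-t^M$ — a marginally cleaner step.
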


\begin{proof}
  If \(x=t^M x\), then \(x = (t^M)^j x\) for all \(j\in\N\) and
  hence \(N(t^M)x = p\cdot x\).  The inclusions
  \(\im N(t^M) \subseteq \ker (1-t^M)\) and
  \(\im (1-t^M) \subseteq \ker N(t^M)\) follow immediately from
  \((1-t^M) N(t^M) = N(t^M) (1-t^M) = (t^M)^p-1= 0\).  Since
  \(N(1) = p\), there is a polynomial~\(f\) with
  \(f\cdot (1-t) = N(t)-p\).  Therefore, if \(x\in \ker N(t^M)\),
  then
  \[
    p\cdot x = (p-N(t^M))\cdot x = (1-t^M)f(t^M)(x) \in \im
    (1-t^M).
  \]
  That is, \(p\cdot (\ker N(t^M) \cap \ker (1-t^M)) = 0\).  If
  \(x\in \ker N(t^M) \cap \ker (1-t^M)\), then this computation
  shows \(p\cdot x=0\).  If \(y\in M\), then
  \[
    p\cdot y
    = N(t^M)y - (1-t^M)\bigl(f(t^M)y\bigr)
    \in \im N(t^M) + \im (1-t^M).\qedhere
  \]
\end{proof}

\begin{lemma}
  \label{lem:unit_in_Ztheta}
  There is a unit~\(u\) in the ring~\(\Z[\vartheta]\) with
  \(p = u\cdot (1-\vartheta)^{p-1}\).
\end{lemma}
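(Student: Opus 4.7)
The plan is to prove this by exploiting the factorisation of the cyclotomic polynomial $\Phi_p(x) = 1 + x + \cdots + x^{p-1}$. Since the minimal polynomial of the primitive $p$\nb-th root of unity $\vartheta$ is~$\Phi_p$, we have $\Phi_p(x) = \prod_{k=1}^{p-1}(x - \vartheta^k)$ in $\Z[\vartheta][x]$. Substituting $x = 1$ gives
\[
  p = \Phi_p(1) = \prod_{k=1}^{p-1} (1 - \vartheta^k).
\]
So it suffices to show that $(1-\vartheta^k)/(1-\vartheta)$ lies in $\Z[\vartheta]^\times$ for every $k \in \{1,\dotsc,p-1\}$; then the unit $u$ is simply the product of these ratios.

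First I would observe that $1-\vartheta^k = (1-\vartheta)(1 + \vartheta + \cdots + \vartheta^{k-1})$, which shows that $(1-\vartheta^k)/(1-\vartheta)$ is at least an element of $\Z[\vartheta]$. For the inverse, I would use that $\gcd(k,p) = 1$, so there is an integer~$k'$ with $k k' \equiv 1 \pmod{p}$. Then $\vartheta = \vartheta^{k k'} = (\vartheta^k)^{k'}$, and consequently
\[
  1 - \vartheta = 1 - (\vartheta^k)^{k'} = (1 - \vartheta^k)\cdot \bigl(1 + \vartheta^k + \vartheta^{2k} + \cdots + \vartheta^{(k'-1)k}\bigr).
\]
This exhibits $(1-\vartheta)/(1-\vartheta^k)$ as an element of $\Z[\vartheta]$ as well, which completes the verification that $(1-\vartheta^k)/(1-\vartheta)$ is a unit.

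The main step is really only the clean identification that the ratios are units; there is no serious obstacle here since everything reduces to elementary manipulations with the relation $\vartheta^p = 1$ and the primality of~$p$. Setting $u \defeq \prod_{k=1}^{p-1} (1-\vartheta^k)/(1-\vartheta) \in \Z[\vartheta]^\times$, the displayed formula $p = \prod_{k=1}^{p-1}(1-\vartheta^k)$ then rearranges to $p = u \cdot (1-\vartheta)^{p-1}$, as required.
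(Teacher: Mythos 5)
Your proof is correct and follows the same overall structure as the paper's: both start from the factorisation $N(t)=\prod_{j=1}^{p-1}(t-\vartheta^j)$, evaluate at $t=1$ to get $p=\prod_{j=1}^{p-1}(1-\vartheta^j)$, and reduce to showing that each $u_j\defeq (1-\vartheta^j)/(1-\vartheta)$ is a unit. The only difference is in that last step: the paper invokes the Galois action, noting that $1-\vartheta^j$ and $1-\vartheta$ are Galois conjugates and hence have equal norms, which forces $u_j$ to be a unit; you instead write down the inverse explicitly, using $kk'\equiv 1\pmod p$ to realise $(1-\vartheta)/(1-\vartheta^k)$ as a geometric sum in $\Z[\vartheta^k]\subseteq\Z[\vartheta]$. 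Your version is more elementary and self-contained, avoiding any reference to norms or Galois theory; the paper's version is shorter once one takes the Galois action for granted. Either way the lemma is established.
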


\begin{proof}
  Let \(j\in\{2,\dotsc,p-1\}\).
  The fraction
  \(u_j \defeq \frac{1-\vartheta^j}{1-\vartheta} = \sum_{k=0}^{j-1}
  \vartheta^k\)
  lies in~\(\Z[\vartheta]\).
  The Galois group of the extension
  \(\Q\subseteq \Q(\vartheta)\)
  acts simply transitively on the primitive \(p\)th
  roots of unity~\(\vartheta^j\),
  \(j=1,\dotsc,p-1\).
  Since~\(N\)
  is the minimal polynomial of~\(\vartheta\),
  \(N(t) = \prod_{j=1}^{p-1} (t-\vartheta^j)\).  So
  \begin{equation}
    \label{eq:p_vs_theta}
    p = N(1) = \prod_{j=1}^{p-1} (1-\vartheta^j).
  \end{equation}
  Since \(1-\vartheta^j\)
  and \(1-\vartheta\)
  are Galois conjugates, they have the same norm.  Hence~\(u_j\)
  is a unit in~\(\Z[\vartheta]\).  Now~\eqref{eq:p_vs_theta} implies
  \begin{equation}
    \label{eq:p_vs_theta_order}
    p = \prod_{j=1}^{p-1} (1-\vartheta^j)
    = \prod_{j=1}^{p-1} \bigl(u_j\cdot (1-\vartheta)\bigr)
    = (1-\vartheta)^{p-1} \prod_{j=1}^{p-1} u_j.
  \end{equation}
  So the unit \(u \defeq \prod_{j=1}^{p-1} u_j\) does the job.
\end{proof}

\begin{definition}
  \label{def:divisible}
  An Abelian group~\(M\)
  (or a module over some ring) is called \emph{uniquely
    \(p\)\nb-divisible}
  if multiplication by~\(p\) on~\(M\) is invertible.
\end{definition}

\begin{example}
  \label{exa:finite_group_divisible}
  A finite group is uniquely \(p\)\nb-divisible
  if and only if~\(p\) does not divide its order.
\end{example}

\begin{definition}
  \label{def:coh_trivial}
  An \(\Sring\)\nb-module~\(M\)
  is called \emph{cohomologically trivial} if
  \(\ker (1-t^M) = \im N(t^M)\) and \(\im (1-t^M) = \ker N(t^M)\).
\end{definition}

\begin{proposition}
  \label{pro:divisible_S-module}
  Let~\(M\) be an \(\Sring\)\nb-module.  The following are equivalent:
  \begin{enumerate}
  \item \label{pro:divisible_S-module_u}%
    \(M\) is uniquely \(p\)\nb-divisible;
  \item \label{pro:divisible_S-module_c}%
    \(M\)
    is cohomologically trivial and
    \(M = \ker N(t^M) \oplus \ker (1-t^M)\);
  \item \label{pro:divisible_S-module_i}%
    \(1-t^M\)
    restricts to an invertible map \(\ker N(t^M) \to \ker N(t^M)\)
    and \(N(t^M)\)
    restricts to an invertible map \(\ker (1-t^M) \to \ker (1-t^M)\).
  \end{enumerate}
\end{proposition}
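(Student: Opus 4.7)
The plan is to prove the chain \ref{pro:divisible_S-module_u}$\Rightarrow$\ref{pro:divisible_S-module_c}$\Rightarrow$\ref{pro:divisible_S-module_i}$\Rightarrow$\ref{pro:divisible_S-module_u}, controlling everything through the two subgroups \(K_0 \defeq \ker(1-t^M)\) and \(K_1 \defeq \ker N(t^M)\).  The main engine will be Lemma~\ref{lem:S-module_inclusions}, which gives the four sandwich-inclusions
\(p\cdot K_0 \subseteq \im N(t^M) \subseteq K_0\) and
\(p\cdot K_1 \subseteq \im (1-t^M) \subseteq K_1\) together with
\(p\cdot (K_0\cap K_1) = 0\) and \(p\cdot M \subseteq \im N(t^M) + \im(1-t^M)\).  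The only other external input will be Lemma~\ref{lem:unit_in_Ztheta}, used once in the final step to translate between invertibility of \(1-t^M\) and invertibility of~\(p\).

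For \ref{pro:divisible_S-module_u}$\Rightarrow$\ref{pro:divisible_S-module_c} I would first upgrade both sandwich inclusions to equalities.  Given \(x\in K_0\), write \(p\cdot x = N(t^M) y\) using Lemma~\ref{lem:S-module_inclusions} and then set \(x = N(t^M)(p^{-1}y)\); this uses that \(p^{-1}\) commutes with every polynomial in \(t^M\).  The same trick treats \(\ker N(t^M)\).  Invertibility of~\(p\) kills \(K_0\cap K_1\), while \(p\cdot M \subseteq \im N(t^M) + \im(1-t^M) \subseteq K_0 + K_1\) together with invertibility of~\(p\) forces \(M = K_0 + K_1\); these together give the direct sum.

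For \ref{pro:divisible_S-module_c}$\Rightarrow$\ref{pro:divisible_S-module_i} I would observe that \(N(t^M)\) annihilates~\(K_1\) and \(1-t^M\) annihilates~\(K_0\), so \(\im N(t^M) = N(t^M)(K_0)\) and \(\im (1-t^M) = (1-t^M)(K_1)\).  Cohomological triviality identifies these images with~\(K_0\) and~\(K_1\) respectively, which is surjectivity of the two restrictions; injectivity is immediate from \(K_0 \cap K_1 = 0\).

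The last implication \ref{pro:divisible_S-module_i}$\Rightarrow$\ref{pro:divisible_S-module_u} is the main obstacle, because hypothesis~\ref{pro:divisible_S-module_i} mentions only~\(N(t^M)\) and \(1-t^M\), not~\(p\).  Here I would use that~\(K_1\) is naturally a module over \(\Sring/(N(t)) \cong \Z[\vartheta]\), on which \(1-t^M\) acts as multiplication by \(1-\vartheta\).  Lemma~\ref{lem:unit_in_Ztheta} writes \(p = u\cdot (1-\vartheta)^{p-1}\) for a unit~\(u \in \Z[\vartheta]\), so invertibility of \(1-t^M\) on~\(K_1\) forces invertibility of~\(p\) on~\(K_1\).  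On \(K_0\) the identity \(N(t^M)x = p\cdot x\) from Lemma~\ref{lem:S-module_inclusions} directly turns invertibility of \(N(t^M)\) into invertibility of~\(p\).  To conclude, I would re-derive \(M = K_0 \oplus K_1\) from~\ref{pro:divisible_S-module_i} alone: given \(x\in M\), pick \(y\in K_0\) with \(N(t^M)y = N(t^M)x\) and \(z\in K_1\) with \(1-t^M\)-image equal to that of~\(x\); then \(x-(y+z)\) lies in~\(K_1\) and is killed by \(1-t^M|_{K_1}\), hence vanishes.  Invertibility of~\(p\) on each summand then yields~\ref{pro:divisible_S-module_u}.
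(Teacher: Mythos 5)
Your proposal is correct and follows essentially the same strategy as the paper: the same cycle of implications, organised around the two subgroups $\ker(1-t^M)$ and $\ker N(t^M)$, with Lemma~\ref{lem:S-module_inclusions} doing the bookkeeping and Lemma~\ref{lem:unit_in_Ztheta} supplying the one nontrivial ingredient in the final step. The only slight divergence is at the end of \ref{pro:divisible_S-module_i}$\Rightarrow$\ref{pro:divisible_S-module_u}: you re-derive the direct sum $M = \ker(1-t^M) \oplus \ker N(t^M)$ from~\ref{pro:divisible_S-module_i} alone and then invert~$p$ summand by summand, whereas the paper skips the direct sum and instead applies the Snake Lemma to the short exact sequence $\ker N(t^M) \into M \prto \im N(t^M)$, having already shown both ends are uniquely $p$-divisible — a marginally more economical finish, but the same ideas in the same roles.
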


\begin{proof}
  We prove that~\ref{pro:divisible_S-module_u}
  implies~\ref{pro:divisible_S-module_c}.  So assume~\(M\)
  to be uniquely \(p\)\nb-divisible.
  If \(x\in \ker (1-t^M)\),
  then \(p^{-1}\cdot x \in \ker(1-t^M)\)
  as well because~\(M\)
  is uniquely \(p\)\nb-divisible.
  So \(\ker (1-t^M) = p\cdot \ker (1-t^M)\).
  Then Lemma~\ref{lem:S-module_inclusions} implies
  \(\ker (1-t^M) = \im N(t^M)\).
  Similarly, \(\ker N(t^M) = p\cdot \ker N(t^M)\)
  and hence \(\ker N(t^M) = \im (1-t^M)\).
  Since~\(M\)
  is uniquely \(p\)\nb-divisible,
  the last two equations in Lemma~\ref{lem:S-module_inclusions} imply
  \(\ker N(t^M) \cap \ker (1-t^M) = 0\)
  and \(M = \im N(t^M) + \im (1-t^M)\).
  Now \(M = \ker N(t^M) \oplus \ker (1-t^M)\)
  follows.  We have shown that~\ref{pro:divisible_S-module_u}
  implies~\ref{pro:divisible_S-module_c}.

  Next we prove that~\ref{pro:divisible_S-module_c}
  implies~\ref{pro:divisible_S-module_i}.  So we assume
  \(M = \ker N(t^M) \oplus \ker (1-t^M)\)
  and that~\(M\)
  be cohomologically trivial.  Then
  \(\ker N(t^M) \cap \ker (1-t^M) = 0\).
  So the restrictions of \((1-t^M)\)
  to \(\ker N(t^M)\)
  and of \(N(t^M)\)
  to \(\ker (1-t^M)\)
  are injective.  Since \(1-t^M\)
  vanishes on the complementary summand \(\ker (1-t^M)\),
  it follows that \((1-t^M)(\ker N(t^M)) = \im(1-t^M) = \ker N(t^M)\).
  Similarly, \(N(t^M)(\ker (1-t^M)) = \im N(t^M) = \ker (1-t^M)\).
  So \(1-t^M\)
  restricts to an invertible map \(\ker N(t^M) \to \ker N(t^M)\)
  and \(N(t^M)\)
  restricts to an invertible map \(\ker (1-t^M) \to \ker (1-t^M)\).
  We have shown that~\ref{pro:divisible_S-module_c}
  implies~\ref{pro:divisible_S-module_i}.

  We prove that~\ref{pro:divisible_S-module_i}
  implies~\ref{pro:divisible_S-module_u}.  The restriction
  of~\(N(t^M)\)
  to \(\ker (1-t^M)\)
  is multiplication by~\(p\)
  by Lemma~\ref{lem:S-module_inclusions}.
  Hence~\ref{pro:divisible_S-module_i} implies that \(\ker (1-t^M)\)
  is uniquely \(p\)\nb-divisible.
  Then \(\ker (1-t^M) = \im N(t^M)\)
  by Lemma~\ref{lem:S-module_inclusions}.
  The \(\Sring\)\nb-module
  structure on \(\ker N(t^M)\)
  descends to one over the ring \(\Z[t]/(N(t)) \cong \Z[\vartheta]\).
  Since~\(1-t^M\) is invertible on \(\ker N(t^M)\),
  Lemma~\ref{lem:unit_in_Ztheta} shows that multiplicationby~\(p\)
  is invertible on \(\ker N(t^M)\).  That is,
  \(\ker N(t^M)\) is uniquely \(p\)\nb-divisible.  There is a short
  exact sequence \(\ker N(t^M) \into M \prto \im N(t^M)\).  Since
  the kernel and quotient in it are uniquely \(p\)\nb-divisible, the
  Snake Lemma implies that~\(M\) is uniquely \(p\)\nb-divisible.
\end{proof}

Next we describe \(\Sring\)\nb-modules
where \(\ker (1-t^M)=0\)
or \(\coker (1-t^M)=0\).
These will appear in the classification of exact \(\Kring\)\nb-modules.

\begin{lemma}
  \label{lem:S-module_injective}
  Let~\(M\)
  be an \(\Sring\)\nb-module
  with \(\ker (1-t^M)=0\).
  Then \(N(t^M)=0\)
  and \(\coker (1-t^M)\)
  is a \(\Z/p\)\nb-vector
  space.  Let \((e_i)_{i\in I}\)
  be elements of~\(M\)
  that represent a \(\Z/p\)\nb-basis
  of \(\coker (1-t^M)\).  Then the map
  \[
  \psi\colon \bigoplus_{i\in I} \Z[t]/(N(t)) \to M,\qquad
  (f_i)_{i\in I} \mapsto \sum_{i\in I} f_i(t^M) e_i,
  \]
  is injective, and its cokernel is uniquely \(p\)\nb-divisible.
\end{lemma}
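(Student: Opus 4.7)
The first two assertions I would dispatch using Lemma \ref{lem:S-module_inclusions}. Since $\im N(t^M) \subseteq \ker(1-t^M) = 0$, that lemma immediately gives $N(t^M) = 0$, so $\ker N(t^M) = M$; feeding this back in yields $pM \subseteq \im(1-t^M)$. Because $t$ acts as the identity on $\coker(1-t^M)$ (as $tx - x = -(1-t^M)x$), this cokernel is a $\Z$\nb-module annihilated by~$p$, hence a $\Z/p$\nb-vector space.

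For the main statement, my plan is to exploit that $M$ is now a module over the Dedekind domain $R \defeq \Z[t]/(N(t)) \cong \Z[\vartheta]$ and to trade all questions about $p$ for questions about $\pi \defeq 1-\vartheta$; this is legitimate because Lemma \ref{lem:unit_in_Ztheta} gives $p = u\pi^{p-1}$ for a unit $u \in R$. Set $F \defeq \bigoplus_{i\in I} R$ and let $N,M',C$ denote the kernel, image and cokernel of $\psi$. The key preliminary to verify is that the composite $F/\pi F \to M'/\pi M' \to M/\pi M$ is an isomorphism: the first arrow is surjective because $\psi$ is, while the full composite sends the standard $R/\pi$\nb-basis of $F/\pi F \cong (R/\pi)^{(I)}$ onto the chosen basis of $M/\pi M$. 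Both arrows are therefore isomorphisms.

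I would next feed this into the $\operatorname{Tor}^R_*(-,R/\pi)$ long exact sequences of the short exact sequences $0 \to M' \to M \to C \to 0$ and $0 \to N \to F \to M' \to 0$, using $\operatorname{Tor}_1^R(X,R/\pi) \cong X[\pi]$ (the $\pi$\nb-torsion) from the length-one resolution $0 \to R \xrightarrow{\pi} R \to R/\pi \to 0$. Combined with the hypothesis $M[\pi] = \ker(1-t^M) = 0$ and the isomorphism just established, these produce $C[\pi] = 0 = C/\pi C$ and $N/\pi N = 0$. The first two equations say that $\pi$ acts invertibly on $C$, hence so does $p = u\pi^{p-1}$, giving unique $p$\nb-divisibility of~$C$.

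To conclude $N = 0$ I would argue as follows: $N$ is a submodule of the torsion-free $R$\nb-module $F$, hence torsion-free, and $\pi N = N$ then forces any $x \in N$ to lie in $\bigcap_n \pi^n F$. In the Dedekind domain $R$ we have $\bigcap_n \pi^n R = 0$ by Krull's theorem, and this passes to $F$ since every element has finite support. Thus $N = 0$ and $\psi$ is injective. The main obstacle is really just the bookkeeping in the two $\operatorname{Tor}$ sequences; conceptually everything hinges on Lemma \ref{lem:unit_in_Ztheta}, which reduces the $p$\nb-adic question to a $\pi$\nb-adic one where a single Nakayama-style argument suffices.
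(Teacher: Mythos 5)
Your proof is correct but takes a genuinely different route from the paper's. The paper argues by hand: it first establishes that the maps \((1-t^M)^k\) induce isomorphisms \(M/(1-t^M)M \congto (1-t^M)^k M / (1-t^M)^{k+1}M\) for all \(k\), and then proves injectivity of \(\psi\) by an explicit induction showing that any kernel element \((f_i)\) must satisfy \((1-t)^k\mid f_i\) in \(\Z[t]/(N(t))\) for every \(k\), forcing \(f_i=0\); the unique \(p\)\nb-divisibility of the cokernel is then obtained separately via the Snake Lemma applied to the short exact sequence \(\bigoplus_I \Z[\vartheta] \into M \prto \coker\psi\) with vertical maps \(1-t\). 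Your version packages both conclusions into the \(\operatorname{Tor}^R_*(\blank,R/\pi)\) long exact sequences after observing that the composite \(F/\pi F \to M'/\pi M' \to M/\pi M\) is an isomorphism, and finishes the injectivity by pushing \(N = \pi N\) into \(\bigcap_n \pi^n F = 0\) using that \(R=\Z[\vartheta]\) is a Noetherian (indeed Dedekind) domain. Both routes ultimately rest on Lemma~\ref{lem:unit_in_Ztheta} to trade \(p\) for \(\pi=1-\vartheta\), but yours invokes more machinery (Tor over a Dedekind domain, Krull intersection) in exchange for avoiding the explicit polynomial induction; the paper's is more elementary and self-contained, and its Snake-Lemma step reuses Proposition~\ref{pro:divisible_S-module} directly, which keeps the argument uniform with the companion Lemma~\ref{lem:S-module_surjective}. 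Either way the conclusion is the same, and all the intermediate claims you use -- \(M'[\pi]=0\) because \(M'\subseteq M\), the flatness of \(F\), the injectivity of \(N/\pi N\to F/\pi F\), and the vanishing of \(\bigcap \pi^n R\) -- check out.
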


\begin{proof}
  Lemma~\ref{lem:S-module_inclusions} implies \(\im N(t^M)=0\)
  -- that is, \(N(t^M)=0\)
  -- and
  \(p\cdot M \subseteq \im N(t^M) + \im (1-t^M) = \im (1-t^M)\).
  Thus \(\coker (1-t^M) = M/\im (1-t^M)\)
  is a \(\Z/p\)\nb-vector
  space.  And the map~\(\psi\)
  is well defined.  Since \(1-t^M\)
  acts by an injective map on~\(M\),
  so does \((1-t^M)^k\) for all \(k\in\N\).  We claim that the maps
  \[
  (1- t^M)^k\colon
  M/(1-t^M)M \to (1-t^M)^k M \bigm/ (1-t^M)^{k+1} M
  \]
  are isomorphisms for all \(k\in\N\).
  They are well defined surjections by definition.  If \(\xi\in M\)
  is such that \((1-t^M)^k \xi \in(1-t^M)^{k+1} M\),
  then \((1-t^M)^k \xi =(1-t^M)^{k+1} \eta\)
  for some \(\eta\in M\).
  Then \(\xi = (1-t^M)\eta\)
  because \((1-t^M)^k\)
  is injective.  So the map above is indeed bijective.

  Now let~\((f_i)_{i\in I}\)
  belong to the kernel of~\(\psi\).
  That is, \(\sum_{i\in I} f_i(t^M) e_i=0\).
  In particular, \(\sum_{i\in I} f_i(t^M) e_i\in (1-t^M)M\).
  Since~\(e_i\)
  is a basis for \(M/(1-t^M)M\),
  this implies \(f_i(t^M) e_i \in \im (1-t^M)\)
  for all \(i\in I\).
  Then \(1-t \mid f_i\)
  for all \(i\in I\)
  because the map \(\Z[t]/(t-1,N(t)) \to \Z/p\),
  \(f\mapsto [f(1) \bmod p]\),
  is an isomorphism.
  Similarly, an induction over~\(k\)
  shows that \((1-t)^k\mid f_i\)
  for all \(i\in I\),
  using that \((1-t^M)^{k-1} e_i\)
  is a basis for \((1-t^M)^{k-1} M/(1-t^M)^k M\)
  as a \(\Z/p\)\nb-vector
  space.  But \((1-t)^k\mid f_i\)
  for all \(k\in\N\)
  forces \(f_i=0\)
  in the ring \(\Z[t]/(N(t))\).  So the map~\(\psi\) is injective.

  The quotient \(X\defeq \coker \psi\)
  is a module over \(\Z[\vartheta] \cong \Z[t]/(N(t))\).
  The cokernel of multiplication by \(1-t\)
  on~\(\Z[t]/(N(t))\)
  is \(\Z[t]/(1-t,N(t)) \cong \Z/p\).
  Therefore, \(\psi\)
  induces an isomorphism on the cokernels of multiplication
  by~\(1-t\).  Now the Snake Lemma for the morphism of extensions
  \[
  \begin{tikzcd}
    \bigoplus_{i\in I} \Z[\vartheta] \arrow[r, >->, "\psi"] \arrow[d, "1-\vartheta"]  &
    M \arrow[r, ->>] \arrow[d, "1-t^M"]  &
    X \arrow[d, "1-t^X"]  \\
    \bigoplus_{i\in I} \Z[\vartheta] \arrow[r, >->, "\psi"] &
    M \arrow[r, ->>] &
    X
  \end{tikzcd}
  \]
  shows \(\ker (1-t^X)=0\)
  and \(\coker (1-t^X)=0\).
  This is equivalent to~\(X\)
  being uniquely \(p\)\nb-divisible
  by Proposition~\ref{pro:divisible_S-module}.
\end{proof}

\begin{lemma}
  \label{lem:S-module_surjective}
  Let~\(M\)
  be an \(\Sring\)\nb-module
  with \(\im (1-t^M)=M\).
  Then \(N(t^M)=0\)
  and \(\ker (1-t^M)\)
  is a \(\Z/p\)\nb-vector
  space.  Let \((e_i)_{i\in I}\)
  be a basis of \(\ker (1-t^M)\).
  Then there is an injective \(\Sring\)\nb-module map
  \[
  \psi\colon \bigoplus_{i\in I}
  \frac{\Z[\vartheta,1/p]}{(1-\vartheta)\Z[\vartheta]} \to M
  \]
  with uniquely \(p\)\nb-divisible cokernel.
\end{lemma}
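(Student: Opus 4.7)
First I would show that $N(t^M) = 0$: since $N(t)(1-t)=0$ in $\Sring$ and $1-t^M$ is surjective, every $x\in M$ has the form $(1-t^M)y$, whence $N(t^M)(x) = N(t^M)(1-t^M)(y) = 0$. By Lemma~\ref{lem:S-module_inclusions}, any $\xi\in\ker(1-t^M)$ satisfies $p\xi = N(t^M)\xi = 0$, so $\ker(1-t^M)$ is a $\Z/p$\nb-vector space. In particular, $M$ is a module over $\Sring/(N(t)) \cong \Z[\vartheta]$, and I may identify $t$ with $\vartheta$ throughout.

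Set $\pi\defeq 1-\vartheta$. By Lemma~\ref{lem:unit_in_Ztheta}, $p$ and $\pi^{p-1}$ are associate in $\Z[\vartheta]$, so $\Z[\vartheta,1/p] = \Z[\vartheta][\pi^{-1}]$, and the quotient admits a direct-limit description
\[
  \Z[\vartheta,1/p]/\pi\Z[\vartheta] \cong \varinjlim_n \Z[\vartheta]/\pi^{n+1}\Z[\vartheta],
\]
with transition maps given by multiplication by $\pi$. Using surjectivity of $1-t^M$, choose compatible lifts $e_i^{(n)}\in M$ with $e_i^{(0)}=e_i$ and $(1-t^M) e_i^{(n+1)} = e_i^{(n)}$. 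Define $\psi_i^n\colon \Z[\vartheta]/\pi^{n+1} \to M$ by $r\mapsto r\cdot e_i^{(n)}$; this is well defined because $\pi^{n+1} e_i^{(n)} = (1-t^M)^{n+1} e_i^{(n)} = 0$, and it is compatible with the transition maps because $\pi e_i^{(n+1)} = e_i^{(n)}$. Assembling gives $\psi_i$, and I set $\psi \defeq \bigoplus_{i\in I} \psi_i$.

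For injectivity, I would take a finite relation $\sum_{i\in J} r_i e_i^{(n)} = 0$ with $r_i\in\Z[\vartheta]$ representing classes in $\Z[\vartheta]/\pi^{n+1}$. Applying $(1-t^M)^n$ gives $\sum_{i\in J} r_i(1)\, e_i = 0$, since $t^M$ acts trivially on each $e_i$; linear independence of the $e_i$ in $\ker(1-t^M)$ forces $r_i(1)\equiv 0\pmod p$, equivalently $r_i\in\pi\Z[\vartheta]$. Writing $r_i=\pi s_i$ and using $\pi e_i^{(n)} = e_i^{(n-1)}$ reduces the relation to $\sum_i s_i e_i^{(n-1)} = 0$; iterating $n+1$ times shows $r_i\in\pi^{n+1}\Z[\vartheta]$, so all classes vanish.

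Finally, set $X\defeq\coker\psi$; since $N(t^M)=0$, also $N(t^X)=0$, and Proposition~\ref{pro:divisible_S-module} reduces the unique $p$\nb-divisibility of $X$ to the statement that $1-t^X$ is an automorphism. Surjectivity is inherited from $M$. For injectivity, suppose $(1-t^M) m = \sum_{i\in J} r_i e_i^{(n)}$ lies in $\im\psi$. Then $h\defeq m - \sum_i r_i e_i^{(n+1)}$ satisfies $(1-t^M)h=0$, so $h=\sum c_i e_i$ with $c_i\in\Z$; each $e_i = \psi_i^0(1)$ and $\sum_i r_i e_i^{(n+1)} = \sum_i \psi_i^{n+1}(r_i)$ lie in $\im\psi$, hence so does $m$. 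The main delicate point is the $\pi$\nb-adic induction in the injectivity argument, but once the direct-limit description is in place, each step is forced by the basis property of the~$e_i$.
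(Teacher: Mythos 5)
Your proposal is correct and follows essentially the same route as the paper: the recursive choice of preimages $e_i^{(n)}$ under $1-t^M$, the resulting map~$\psi$, and the $(1-\vartheta)$\nb-adic descent for injectivity are exactly the paper's argument, with your direct-limit presentation of $\Z[\vartheta,1/p]/(1-\vartheta)\Z[\vartheta]$ being only a cosmetic repackaging. The one small divergence is at the end, where you verify by hand that $1-t$ acts injectively on $\coker\psi$ rather than invoking the Snake Lemma as the paper does; both arguments are valid and rest on the same reduction via Proposition~\ref{pro:divisible_S-module}.
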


\begin{proof}
  Lemma~\ref{lem:S-module_inclusions} implies \(\ker N(t^M)=M\)
  --~that is, \(N(t^M)=0\)~-- and
  \(p\cdot \ker (1-t^M) \subseteq \im N(t^M) = 0\).  That is,
  \(\ker (1-t^M)\) is a \(\Z/p\)\nb-vector space.  By
  Lemma~\ref{lem:unit_in_Ztheta}, making \(p\) or \(1-\vartheta\)
  invertible has the same effect:
  \(\Z[\vartheta,1/p] = \Z[\vartheta,(1-\vartheta)^{-1}]\).  We may
  write any element of this ring as \(f\cdot (1-\vartheta)^{-n}\)
  for some \(f\in \Z[\vartheta]\), \(n\in\N\).

  Since~\(1-t^M\)
  is surjective, we may recursively choose \(e_{i,n}\in M\)
  with \(e_{i,0} = e_i\)
  and \((1-t^M) e_{i,n+1} = e_{i,n}\)
  for all \(n\in\N\).
  There is a well defined map
  \[
  \bigoplus_{i\in I} \Z[\vartheta,(1-\vartheta)^{-1}] \to M,\qquad
  \sum_{i\in I} f_i\cdot (1-\vartheta)^{-n_i} \delta_i\mapsto
  \sum_{i\in I} f_i(t^M) e_{i,n_i},
  \]
  because \((1-t^M) e_{i,n+1} = e_{i,n}\)
  for all \(i\in I\),
  \(n\in\N\).
  This map vanishes on elements of
  \(\bigoplus_{i\in I} (1-\vartheta)\Z[\vartheta]\)
  because \((1-t^M)^{n+1}e_{i,n} = (1-t^M) e_i = 0\).
  We claim that the induced map~\(\psi\) on the quotient
  \[
  \frac{\bigoplus_{i\in I} \Z[\vartheta,(1-\vartheta)^{-1}]}
  {\bigoplus_{i\in I} (1-\vartheta)\Z[\vartheta]}
  \cong  \bigoplus_{i\in I}  \frac{\Z[\vartheta,(1-\vartheta)^{-1}]}
  {(1-\vartheta)\Z[\vartheta]}
  \]
  is injective.  An element of
  \(\bigoplus_{i\in I} \Z[\vartheta,(1-\vartheta)^{-1}]\)
  may be written as a finite linear combination
  \((1-\vartheta)^{-n} \sum_{i\in S} f_i \delta_i\)
  with \(f_i\in \Z[\vartheta]\),
  \(S\subseteq I\)
  finite, and \(n\ge0\).
  Assume that this is in the kernel of~\(\psi\),
  that is, \(\sum_{i\in S} f_i(t^M) e_{i,n} = 0\).
  Applying \((1-t^M)^{n-1}\)
  shows that \(\sum_{i\in S} f_i(t^M) e_i = 0\).
  Since the elements~\(e_i\)
  form a basis for \(\ker (1-t^M)\),
  this implies \(f_i \in \ker (\Z[\vartheta] \to \Z/p)\)
  for all \(i\in S\).
  This kernel is the ideal generated by~\(1-\vartheta\)
  because \(\Z[t]/(t-1,N(t)) = \Z[t]/(t-1,p) \cong \Z/p\).
  So we may rewrite \(f_i = (1-\vartheta) g_i\)
  for some \(g_i\in \Z[\vartheta]\).  Thus
  \[
  (1-\vartheta)^{-n} \sum_{i\in S} f_i \delta_i
  = (1-\vartheta)^{-(n-1)} \sum_{i\in S} g_i \delta_i.
  \]
  We continue like this until \(n=0\).
  Then we reach a term in
  \(\bigoplus_{i\in S} (1-\vartheta)\Z[\vartheta]\).
  This proves that~\(\psi\)
  is injective.  Multiplication with \(1-\vartheta\)
  is surjective on
  \(\bigoplus_{i\in I} \Z[\vartheta,1/p]\bigm/
  (1-\vartheta)\Z[\vartheta]\),
  and its kernel is the \(\Z/p\)\nb-linear
  span of~\(\delta_i\)
  for \(i\in I\).
  Hence~\(\psi\)
  induces an isomorphism between the kernels and cokernels of \(1-t\)
  acting on the domain and codomain of~\(\psi\).
  Now the Snake Lemma implies as in the proof of
  Lemma~\ref{lem:S-module_injective} that \(1-t\)
  acts by an invertible map on \(\coker \psi\).
  Then \(\coker \psi\)
  is uniquely \(p\)\nb-divisible
  by Proposition~\ref{pro:divisible_S-module}.
\end{proof}

\section{The uniquely divisible case}
\label{sec:uniquely_divisible}

We are going to describe uniquely \(p\)\nb-divisible exact
\(\Kring\)\nb-modules in terms of more simple objects.  This
is used in~\cite{dellAmbrogio--Meyer:Spectrum_Kasparov_prime} to
describe the Balmer spectrum of~\(\mathcal{B}^G\).

\begin{example}
  \label{exa:divisible}
  Let~\(X\)
  be a \(\Z/2\)-graded Abelian group and let \(Y\) and~\(Z\)
  be two \(\Z/2\)-graded \(\Z[\vartheta]\)\nb-modules.
  Assume that they are all uniquely \(p\)\nb-divisble.
  Equivalently, they are \(\Z/2\)-graded modules over the rings \(\Z[1/p]\)
  and \(\Z[\vartheta,1/p]\),
  respectively.  We define an exact \(\Kring\)\nb-module:
  \begin{alignat*}{3}
    M_0 &\defeq X\oplus Y,&\qquad
    M_1 &\defeq X\oplus Z,&\qquad
    M_2 &\defeq Y\oplus \Sigma Z,\\
    \Ga{0}{1}^M &= \begin{pmatrix} 1^X&0\\0&0 \end{pmatrix},&\quad
    \Ga{1}{2}^M &= \begin{pmatrix} 0&0\\0&(1-\vartheta)^Z \end{pmatrix},&\quad
    \Ga{2}{0}^M &= \begin{pmatrix} 0&1^Y\\0&0 \end{pmatrix},\\
    \Ga{1}{0}^M &= \begin{pmatrix} p^X&0\\0&0 \end{pmatrix},&\quad
    \Ga{2}{1}^M &= \begin{pmatrix} 0&0\\0&1^Z \end{pmatrix},&\quad
    \Ga{0}{2}^M &= \begin{pmatrix} 0&0\\(1-\vartheta)^Y&0 \end{pmatrix}.
  \end{alignat*}
  Here~\(\Sigma Z\) means~\(Z\) with opposite parity.  So
  \(\Ga{2}{1}^M\) and \(\Ga{1}{2}^M\) are odd.  The relations
  \eqref{eq:1_vs_1}--\eqref{eq:alpha_exact} are easy to check.  And
  we compute
  \[
    \Gt{0}^M = \begin{pmatrix} 1&0\\0&\vartheta \end{pmatrix},\quad
    \Gs{1}^M = \begin{pmatrix} 1&0\\0&\vartheta \end{pmatrix},\quad
    \Gt{2}^M = \begin{pmatrix} \vartheta&0\\0&1 \end{pmatrix},\quad
    \Gs{2}^M = \begin{pmatrix} 1&0\\0&\vartheta \end{pmatrix}.
  \]
  Hence
  \[
    N(\Gt{0}^M) = \begin{pmatrix} p&0\\0&0 \end{pmatrix},\
    N(\Gs{1}^M) = \begin{pmatrix} p&0\\0&0 \end{pmatrix},\
    N(\Gt{2}^M) = \begin{pmatrix} 0&0\\0&p \end{pmatrix},\
    N(\Gs{2}^M) = \begin{pmatrix} p&0\\0&0 \end{pmatrix}.
  \]
  The relations \eqref{eq:alpha_010}--\eqref{eq:alpha_2} follow.  So
  we have defined a \(\Kring\)\nb-module
  by Theorem~\ref{the:generators_and_relations}.
  The two sequences in Definition~\ref{def:exact_R-module} are exact
  because multiplication by~\(p\)
  is invertible on~\(X\)
  and multiplication by~\(1-\vartheta\)
  is invertible on \(Y\) and~\(Z\)
  by Proposition~\ref{pro:divisible_S-module}.
\end{example}

The following theorem implies that this already gives all uniquely
\(p\)\nb-divisible exact \(\Kring\)\nb-modules:

\begin{theorem}
  \label{the:divisible_exact_module}
  Let~\(M\)
  be an exact \(\Kring\)\nb-module.
  Assume that one of the pieces \(M_0\), \(M_1\) or~\(M_2\)
  is uniquely \(p\)\nb-divisible.
  Then the other two pieces are uniquely \(p\)\nb-divisible
  as well.  And
  there are modules \(X\) over \(\Z[1/p]\)
  and \(Y,Z\) over \(\Z[\vartheta,1/p]\)
  such that~\(M\)
  is isomorphic to the exact \(\Kring\)\nb-module
  built out of \(X,Y,Z\) in Example~\textup{\ref{exa:divisible}}.
\end{theorem}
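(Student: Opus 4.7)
My plan has two stages: (1) upgrade the hypothesis that one piece $M_j$ is uniquely $p$-divisible to the statement that all three pieces are uniquely $p$-divisible, and (2) construct canonical $(X, Y, Z)$ data from $M$ and match it with the explicit module of Example~\ref{exa:divisible}.

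For stage~(1), I would treat the case $M_0$ uniquely $p$-divisible in detail; the case of $M_1$ reduces to this via the Baaj--Skandalis involution of $\Kring$ (Remark~\ref{rem:BS_automorphism}), and the case of $M_2$ proceeds by a parallel argument exploiting the identity~\eqref{eq:alpha_2_N}. Proposition~\ref{pro:divisible_S-module} yields a splitting $M_0 = M_0^+ \oplus M_0^-$ with $M_0^+ \defeq \ker(1-\Gt{0}^M)$ a module over $\Z[1/p]$ and $M_0^- \defeq \ker N(\Gt{0}^M)$ a module over $\Z[\vartheta, 1/p]$. Since $\Ga{0}{1}^M\Ga{1}{0}^M = N(\Gt{0}^M)$ acts as $p\cdot\id$ on $M_0^+$ and as zero on $M_0^-$, the rescaled map $s \defeq p^{-1}\Ga{1}{0}^M|_{M_0^+}\colon M_0^+ \to M_1$ is a section of $\Ga{0}{1}^M$; combined with~\eqref{eq:at_10} and~\eqref{eq:sa_10} (which place $\im \Ga{1}{0}^M$ inside $M_1^+ \defeq \ker(1-\Gs{1}^M)$) and~\eqref{eq:Na_12} (which places $\im \Ga{1}{2}^M$ inside $M_1^- \defeq \ker N(\Gs{1}^M)$), this yields a splitting $M_1 = s(M_0^+) \oplus \ker \Ga{0}{1}^M$ whose first summand is isomorphic to $M_0^+$ and hence uniquely $p$-divisible. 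The parallel construction using $\Ga{0}{2}^M\Ga{2}{0}^M = \Gu{0}^M - \Gt{0}^M$ (invertible on $M_0^-$) embeds $M_0^-$ as a uniquely $p$-divisible direct summand of $M_2$. It then remains to verify that the complementary summands $\im\Ga{1}{2}^M \subseteq M_1$ and $\im\Ga{2}{1}^M \subseteq M_2$ are themselves uniquely $p$-divisible: any $p$-torsion element $x \in M_1$ satisfies $\Ga{0}{1}^M x \in M_0^+\cap\ker(p\cdot) = 0$, hence $x \in \im\Ga{1}{2}^M$, and a bookkeeping argument pushing $x$ back and forth between $M_1$ and $M_2$ via $\Ga{2}{1}^M, \Ga{1}{2}^M$ and exploiting the commutation relations~\eqref{eq:st_2}--\eqref{eq:ts_2} forces $x = 0$; unique $p$-divisibility of the quotients is dual.

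For stage~(2), apply Proposition~\ref{pro:divisible_S-module} to each piece to write $M_0 = M_0^+\oplus M_0^-$ via $\Gt{0}^M$, $M_1 = M_1^+\oplus M_1^-$ via $\Gs{1}^M$, and decompose $M_2$ via $\Gt{2}^M$. The relations~\eqref{eq:st_2}--\eqref{eq:ts_2} combined with unique $p$-divisibility force $\ker(1-\Gt{2}^M) = \ker N(\Gs{2}^M)$ and $\ker N(\Gt{2}^M) = \ker(1-\Gs{2}^M)$, so the $\Gt{2}^M$- and $\Gs{2}^M$-decompositions of $M_2$ coincide up to swapping the two summands, reproducing the structure $Y \oplus \Sigma Z$ in the example. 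Set $X \defeq M_0^+$, $Y \defeq M_0^-$, and $Z \defeq M_1^-$. The section $s$ from stage~(1) identifies $X \cong M_1^+$; the analogous section identifies $Y$ with $\ker N(\Gt{2}^M) \subseteq M_2$; and a symmetric construction using $\Ga{2}{1}^M, \Ga{1}{2}^M$ identifies $\Sigma Z$ with $\ker N(\Gs{2}^M)\subseteq M_2$, the parity shift arising because $\Ga{2}{1}^M$ and $\Ga{1}{2}^M$ are odd. Expressing each of $\Ga{0}{1}^M, \Ga{1}{0}^M, \Ga{2}{0}^M, \Ga{0}{2}^M, \Ga{2}{1}^M, \Ga{1}{2}^M$ in these coordinates and fixing scalars using the ring relations in Theorem~\ref{the:generators_and_relations} reproduces the six matrices of Example~\ref{exa:divisible}, giving the desired isomorphism.

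The hard part is stage~(1). A naive Snake Lemma applied to the six-term Puppe and dual Puppe sequences with the natural transformation ``multiplication by $p$'' yields only cyclic isomorphisms such as $\ker(p\cdot\id_{M_1}) \cong \coker(p\cdot\id_{M_2})$, which is insufficient to force vanishing. The actual vanishing must use the $\Sring$-module structures on all three pieces together with the norm identity~\eqref{eq:alpha_2_N}; a more conceptual route is to invert $p$ throughout and exploit the ring decomposition $\Sring[1/p] \cong \Z[1/p] \times \Z[\vartheta, 1/p]$ to break $\Kring[1/p]$ into Morita-type blocks indexed by the $X$-, $Y$-, and $Z$-type data, reducing the classification to the purely algebraic statement that an exact $\Kring[1/p]$-module is determined by the three modules captured by these blocks.
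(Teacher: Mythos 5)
Your overall plan mirrors the paper's: reduce the cases $M_1$ and $M_2$ to the case $M_0$ via Baaj--Skandalis and Proposition~\ref{pro:divisible_S2-module}, split $M_0 = M_0^+ \oplus M_0^-$ by Proposition~\ref{pro:divisible_S-module}, and transport this splitting to $M_1$ and $M_2$.  Your explicit section $s = \Ga{1}{0}^M\circ p^{-1}$ (and its analogue $s' = \Ga{2}{0}^M\circ (1-\Gt{0}^M)^{-1}$) is a nice, direct way to get the direct sum decompositions $M_1 = s(M_0^+)\oplus \ker\Ga{0}{1}^M$ and $M_2 = s'(M_0^-)\oplus\ker\Ga{0}{2}^M$; the paper obtains the same decompositions via Lemmas~\ref{lem:decomposable_case_criterion_0}--\ref{lem:decomposable_case_criterion_2} and Table~\ref{tab:short_exact_sequences}, which is more machinery but the same content.  (Small slip: \eqref{eq:at_10} constrains $\ker\Ga{1}{0}^M$, not $\im\Ga{1}{0}^M$; it is only \eqref{eq:sa_10} that places $\im\Ga{1}{0}^M$ in $\ker(\Gu{1}^M-\Gs{1}^M)$.)

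The genuine gap is the last step of your stage~(1).  You reduce the problem to showing that the complementary summands $\ker\Ga{0}{1}^M$ and $\ker\Ga{0}{2}^M$ are uniquely $p$\nobreakdash-divisible, but then only gesture at an unspecified ``bookkeeping argument,'' and your fallback -- invert $p$ throughout and decompose $\Kring[1/p]$ into blocks -- is not obviously non-circular, since you would first need to know that $M\to M\otimes\Z[1/p]$ is an isomorphism.  The missing observation is elementary and you already have all the pieces: with the two splittings in hand, exactness gives $\ker\Ga{2}{1}^M = \im\Ga{1}{0}^M = s(M_0^+)$ and $\ker\Ga{1}{2}^M = \im\Ga{2}{0}^M = s'(M_0^-)$, which are complementary to $\ker\Ga{0}{1}^M$ and $\ker\Ga{0}{2}^M$ respectively; hence $\Ga{2}{1}^M$ and $\Ga{1}{2}^M$ restrict to \emph{mutually inverse} bijections between $\ker\Ga{0}{1}^M$ and $\ker\Ga{0}{2}^M$.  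Therefore $\Gu{1}^M - \Gs{1}^M = \Ga{1}{2}^M\circ\Ga{2}{1}^M$ is invertible on $\ker\Ga{0}{1}^M$.  Moreover $N(\Gs{1}^M) = \Ga{1}{0}^M\circ\Ga{0}{1}^M$ vanishes on $\ker\Ga{0}{1}^M$, so $\ker\Ga{0}{1}^M$ is a $\Z[\vartheta]$\nobreakdash-module on which $1-\vartheta$ acts invertibly; by Lemma~\ref{lem:unit_in_Ztheta}, $p$ is then also invertible there.  This is precisely criterion~(3) of Proposition~\ref{pro:divisible_S-module}/Proposition~\ref{pro:divisible_S2-module}, which is the route the paper takes.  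Once this is inserted, your stage~(2) -- matching the resulting coordinates with the matrices of Example~\ref{exa:divisible} -- goes through as you outline, again in parallel with the paper.
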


Some ideas used in the proof are also useful for more general
examples.  We develop these systematically in
Section~\ref{sec:general_techniques} and then use them in
Section~\ref{sec:proof_divisible} to prove
Theorem~\ref{the:divisible_exact_module}.
Section~\ref{sec:divisible_Cuntz_algebras} spells out what
Theorem~\ref{the:divisible_exact_module} says about
\(\Z/p\)\nb-actions on Cuntz algebras with uniquely
\(p\)\nb-divisible \(\K\)\nb-theory.  In
Section~\ref{sec:remove_divisible}, we generalise
Theorem~\ref{the:divisible_exact_module} to a statement that extends
a uniquely \(p\)\nb-divisible submodule in~\(M_1\) to a uniquely
\(p\)\nb-divisible submodule in~\(M\).  This will be used to reduce
the study of actions on Cuntz algebras with arbitrary finite
\(\K\)\nb-theory to the special case where the \(\K\)\nb-theory is
purely \(p\)\nb-torsion.

\subsection{Some general proof techniques for exact \texorpdfstring{$\Kring$}{\Kring}-modules}
\label{sec:general_techniques}

\begin{proposition}
  \label{pro:coh_trivial_alpha}
  Let~\(M\)
  be an exact \(\Kring\)\nb-module.
  \begin{itemize}
  \item If the \(\Sring\)\nb-module \((M_0,\Gt{0})\) is
    cohomologically trivial, then
    \begin{align}
      \label{eq:coh_trivial_at_0a}
      \im N(\Gt{0}^M)
      &= \im (\Ga{0}{1}^M)
        = \ker (\Ga{2}{0}^M)
        = \ker (\Gu{0}^M - \Gt{0}^M),\\
      \label{eq:coh_trivial_at_0b}
      \im (\Gu{0}^M - \Gt{0}^M)
      &= \im (\Ga{0}{2}^M)
        = \ker (\Ga{1}{0}^M)
        = \ker N(\Gt{0}^M).
    \end{align}
  \item If the \(\Sring\)\nb-module \((M_1,\Gs{1})\) is
    cohomologically trivial, then
    \begin{align}
      \label{eq:coh_trivial_at_1a}
      \im (\Gu{1}^M - \Gs{1}^M)
      &= \im (\Ga{1}{2}^M)
        = \ker (\Ga{0}{1}^M)
        = \ker N(\Gs{1}^M),\\
      \label{eq:coh_trivial_at_1b}
      \im N(\Gs{1}^M)
      &= \im (\Ga{1}{0}^M)
        = \ker (\Ga{2}{1}^M)
        = \ker (\Gu{1}^M - \Gs{1}^M).
    \end{align}
  \item If
    \begin{equation}
      \label{eq:coh_trivial_at_2_assum}
      \im (\Gu{2}^M - \Gt{2}^M) = \ker (\Gu{2}^M - \Gs{2}^M)
      \quad\text{and}\quad
      \im (\Gu{2}^M - \Gs{2}^M) = \ker (\Gu{2}^M - \Gt{2}^M),
    \end{equation}
    then
    \begin{align}
      \label{eq:coh_trivial_at_2a}
      \im (\Gu{2}^M - \Gt{2}^M)
      &= \im (\Ga{2}{0}^M)
        = \ker (\Ga{1}{2}^M)
        = \ker (\Gu{2}^M - \Gs{2}^M),\\
      \label{eq:coh_trivial_at_2b}
      \im (\Gu{2}^M - \Gs{2}^M)
      &= \im (\Ga{2}{1}^M)
        =  \ker (\Ga{0}{2}^M)
        = \ker (\Gu{2}^M - \Gt{2}^M).
    \end{align}
  \end{itemize}
\end{proposition}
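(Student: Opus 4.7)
My plan is to prove each of the three cases by exhibiting a chain of inclusions that closes up into equality under the hypothesis. The basic principle is simple: each relation of the form $\Ga{j}{k}\otimes \Ga{k}{j} = f(\Gt{}^M)$ or $f(\Gs{}^M)$ gives an inclusion between an image/kernel of a product and that of one of its factors, exactness at each $M_j$ identifies kernel and image of consecutive maps in the Puppe triangles, and cohomological triviality then forces the sandwich to collapse.

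I would begin with the case at index $0$. The relations $\Ga{0}{1}\otimes \Ga{1}{0} = N(\Gt{0})$ and $\Ga{0}{2}\otimes \Ga{2}{0} = \Gu{0}-\Gt{0}$ give the chain of inclusions
\[
\im N(\Gt{0}^M) \subseteq \im \Ga{0}{1}^M = \ker \Ga{2}{0}^M \subseteq \ker(\Gu{0}^M - \Gt{0}^M) = \ker(1 - \Gt{0}^M),
\]
where the middle equality is exactness of the second Puppe triangle at $M_0$, and the right inclusion holds because $\Ga{2}{0}^M$-vanishing forces $\Ga{0}{2}^M\Ga{2}{0}^M$-vanishing. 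Cohomological triviality of $(M_0,\Gt{0})$ says $\ker(1-\Gt{0}^M) = \im N(\Gt{0}^M)$, closing the loop and giving \eqref{eq:coh_trivial_at_0a}. The chain for \eqref{eq:coh_trivial_at_0b} is obtained dually:
\[
\im(\Gu{0}^M - \Gt{0}^M) \subseteq \im \Ga{0}{2}^M = \ker \Ga{1}{0}^M \subseteq \ker N(\Gt{0}^M) = \im(1-\Gt{0}^M),
\]
using exactness of the first Puppe triangle at $M_0$ and cohomological triviality again.

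The case at index $1$ is completely analogous, swapping the roles of the two triangles and using $\Ga{1}{0}\otimes \Ga{0}{1} = N(\Gs{1})$ together with $\Ga{1}{2}\otimes \Ga{2}{1} = \Gu{1}-\Gs{1}$; nothing new appears. The case at index $2$ is the only one requiring more care, since the hypothesis \eqref{eq:coh_trivial_at_2_assum} couples the two endomorphisms $\Gt{2}^M$ and $\Gs{2}^M$ rather than being cohomological triviality of a single $\Sring$-module. Here I would run the same style of argument using $\Ga{2}{0}\otimes \Ga{0}{2} = \Gu{2}-\Gt{2}$ and $\Ga{2}{1}\otimes \Ga{1}{2} = \Gu{2}-\Gs{2}$: the relation $\Ga{2}{1}^M \circ \Ga{1}{2}^M = \Gu{2}^M - \Gs{2}^M$ gives $\ker \Ga{1}{2}^M \subseteq \ker(\Gu{2}^M - \Gs{2}^M)$, and one closes the sandwich
\[
\im(\Gu{2}^M - \Gt{2}^M) \subseteq \im \Ga{2}{0}^M = \ker \Ga{1}{2}^M \subseteq \ker(\Gu{2}^M - \Gs{2}^M) = \im(\Gu{2}^M - \Gt{2}^M)
\]
using exactness at $M_2$ and the first half of \eqref{eq:coh_trivial_at_2_assum}; the other chain uses the second half of \eqref{eq:coh_trivial_at_2_assum}.

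There is no serious obstacle; the proof is essentially a bookkeeping exercise combining Theorem~\ref{the:generators_and_relations} with Definition~\ref{def:exact_R-module}. The only mildly delicate point is remembering that the two assumptions in \eqref{eq:coh_trivial_at_2_assum} play the role which, at indices $0$ and $1$, is filled by the two halves of cohomological triviality of a single $\Sring$-action; once one sees that parallel, the case at $2$ reduces to the same template as the other two.
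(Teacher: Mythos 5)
Your proof is correct and takes essentially the same route as the paper: the paper also observes that the middle equalities are exactness of the two Puppe sequences and that the outer inclusions (which you derive directly from the relations $\Ga{j}{k}\otimes\Ga{k}{j}=f(\Gt{j})$ or $f(\Gs{j})$) are the ones implicitly asserted in Lemma~\ref{lem:short_exact_sequences_consequences}, so cohomological triviality (or \eqref{eq:coh_trivial_at_2_assum}) pinches the sandwich closed. The only difference is cosmetic — you spell out each inclusion chain rather than citing the intermediate lemma.
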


\begin{proof}
  The middle equality in the six equations that we assert are the six
  conditions that say that a \(\Kring\)\nb-module
  is exact.  When we replace the first and third equality in our six
  equations by~``\(\subseteq\)'',
  they become the twelve inclusions that are implicitly asserted in
  Lemma~\ref{lem:short_exact_sequences_consequences}.  In each
  equation, the first and last item are equal by cohomological
  triviality or by~\eqref{eq:coh_trivial_at_2_assum}, respectively.
  This forces the inclusions in between to be equalities.
\end{proof}

\begin{corollary}
  \label{cor:M0_M1_divisible}
  If \(M_0\)
  is uniquely \(p\)\nb-divisible,
  then \eqref{eq:coh_trivial_at_0a} and~\eqref{eq:coh_trivial_at_0b}
  hold.
  If \(M_1\)
  is uniquely \(p\)\nb-divisible,
  then \eqref{eq:coh_trivial_at_1a} and~\eqref{eq:coh_trivial_at_1b}
  hold.
\end{corollary}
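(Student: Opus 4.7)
The plan is to deduce this corollary immediately from Propositions \ref{pro:divisible_S-module} and \ref{pro:coh_trivial_alpha}. The key observation is that the relation \eqref{eq:tp} gives $(\Gt{0}^M)^p = \Gu{0}^M = \id_{M_0}$, so $M_0$ carries a canonical $\Sring$-module structure via $t \mapsto \Gt{0}^M$; likewise \eqref{eq:sp} turns $M_1$ into an $\Sring$-module via $t \mapsto \Gs{1}^M$. Unique $p$-divisibility as an Abelian group is the same as unique $p$-divisibility as an $\Sring$-module since multiplication by~$p$ is the same endomorphism in both structures.

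Assuming $M_0$ is uniquely $p$-divisible, Proposition \ref{pro:divisible_S-module} (the implication \ref{pro:divisible_S-module_u}$\Rightarrow$\ref{pro:divisible_S-module_c}) tells us that $(M_0, \Gt{0}^M)$ is cohomologically trivial in the sense of Definition \ref{def:coh_trivial}. The first bullet of Proposition \ref{pro:coh_trivial_alpha} then directly supplies the equalities \eqref{eq:coh_trivial_at_0a} and~\eqref{eq:coh_trivial_at_0b}. The argument when $M_1$ is uniquely $p$-divisible is symmetric: the second bullet of Proposition \ref{pro:coh_trivial_alpha} applied to the cohomologically trivial $\Sring$-module $(M_1, \Gs{1}^M)$ yields \eqref{eq:coh_trivial_at_1a} and~\eqref{eq:coh_trivial_at_1b}.

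There is no real obstacle here since all the work has been done in the two cited results; the corollary is essentially a bookkeeping step that matches the $\Sring$-theoretic hypothesis of Proposition \ref{pro:coh_trivial_alpha} with the simpler Abelian-group hypothesis that will be convenient in later applications.
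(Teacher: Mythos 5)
Your argument is correct and is essentially identical to the paper's own proof: the paper likewise observes that \(M_0\) and \(M_1\) are \(\Sring\)\nb-modules (via \(\Gt{0}^M\) and \(\Gs{1}^M\)), invokes Proposition~\ref{pro:divisible_S-module} to conclude cohomological triviality from unique \(p\)\nb-divisibility, and then applies Proposition~\ref{pro:coh_trivial_alpha}. Your explicit remark that the relations \eqref{eq:tp} and \eqref{eq:sp} justify the \(\Sring\)\nb-module structures is a welcome bit of bookkeeping that the paper leaves implicit.
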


\begin{proof}
  The pieces \(M_0\)
  and~\(M_1\)
  in an exact \(\Kring\)\nb-module
  are \(\Sring\)\nb-modules.
  Uniquely \(p\)\nb-divisible
  \(\Sring\)\nb-modules
  are cohomologically trivial by
  Proposition~\ref{pro:divisible_S-module}.  Therefore, the statements
  follow from Proposition~\ref{pro:coh_trivial_alpha}.
\end{proof}

We now carry this argument over to the case where~\(M_2\)
is uniquely \(p\)\nb-divisible.
The group~\(M_2\)
in a \(\Kring\)\nb-module is a module over the commutative ring
\begin{align}
  \label{eq:S2}
  \Sring_2 &\defeq \Z[s,t] \bigm/ \bigl((1-s)\cdot(1-t),N(s)+N(t)-p\bigr)
  \\&= \Z[s,t] \bigm/ \bigl((1-s)\cdot(1-t),N(s)+N(t)-p, t^p-1,s^p-1\bigr). \notag
\end{align}
The relations \(t^p-1=0\) and \(s^p-1=0\) hold in~\(\Sring_2\) because
\begin{align*}
  s^p-1 &= (s-1)(N(s)+N(t)-p) - (s-1)(t-1) \frac{N(t)-p}{t-1},\\
  t^p-1 &= (t-1)(N(s)+N(t)-p) - (t-1)(s-1) \frac{N(s)-p}{s-1}.
\end{align*}

\begin{definition}
  \label{def:cohomologically_trivial_S2}
  An \(\Sring_2\)\nb-module
  is called \emph{cohomologically trivial} if it satisfies the two
  conditions \(\im (1-t^M) = \ker (1-s^M)\)
  and \(\im (1-s^M) = \ker (1-t^M)\)
  in~\eqref{eq:coh_trivial_at_2_assum}.
\end{definition}

\begin{lemma}
  \label{lem:S2_module_im_ker}
  Let~\(M\) be an \(\Sring_2\)\nb-module.  Then
  \begin{align}
    \label{eq:S2_imNs}
    \im N(s^M)
    &\subseteq \im (1-t^M)
      \subseteq \ker (1-s^M)
      \subseteq \ker N(t^M),\\
    \label{eq:S2_imNt}
    \im N(t^M)
    &\subseteq \im (1-s^M)
      \subseteq \ker (1-t^M)
      \subseteq \ker N(s^M),
  \end{align}
\end{lemma}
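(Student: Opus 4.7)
The plan is to deduce all six inclusions directly from the two defining relations $(1-s)(1-t) = 0$ and $N(s) + N(t) = p$ of~$\Sring_2$, together with the elementary polynomial observation that $N(t) - p$ vanishes at $t = 1$ and is therefore divisible by $t - 1$ in $\Z[t]$. Nothing deeper than this is needed.

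First I would dispose of the two middle inclusions $\im (1-t^M) \subseteq \ker (1-s^M)$ and $\im (1-s^M) \subseteq \ker (1-t^M)$: both are immediate by applying the relation $(1-s)(1-t) = 0$ elementwise to elements of the form $(1-t^M) y$, respectively $(1-s^M) y$.

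Next, for the two leftmost inclusions, I would fix an $h \in \Z[t]$ with $N(t) - p = (t-1) h(t)$; since $N(s) = p - N(t)$ holds in~$\Sring_2$, this yields $N(s) = (1-t) \cdot (-h(t))$ as an identity in~$\Sring_2$, from which $\im N(s^M) \subseteq \im (1-t^M)$ follows at once. Exchanging the roles of $s$ and $t$ gives $\im N(t^M) \subseteq \im (1-s^M)$.

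Finally, for the two rightmost inclusions, suppose $x \in M$ satisfies $s^M x = x$. Then $(s^M)^j x = x$ for every~$j$, so $N(s^M) x = p \cdot x$; combined with the relation $N(s^M) + N(t^M) = p \cdot \id_M$ this gives $N(t^M) x = 0$, proving $\ker (1-s^M) \subseteq \ker N(t^M)$. The symmetric argument handles $\ker (1-t^M) \subseteq \ker N(s^M)$. I do not anticipate any real obstacle here: the entire lemma is a direct, calculational consequence of the two defining relations of~$\Sring_2$ and the trivial fact $N(1) = p$.
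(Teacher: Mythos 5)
Your proof is correct and follows essentially the same route as the paper: both deduce the middle inclusions from $(1-s)(1-t)=0$, the leftmost from $N(s)=p-N(t)$ together with $1-t \mid p-N(t)$, and the rightmost from the observation that $s^M x = x$ forces $N(s^M)x = px$, whence $N(t^M)x = 0$. (A small sign slip: with $N(t)-p=(t-1)h(t)$ one gets $N(s)=p-N(t)=(1-t)h(t)$, not $(1-t)(-h(t))$; this has no effect on the inclusion you draw from it.)
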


\begin{proof}
  Let \(x\in M\).
  Then \(N(s^M)(x) = (p-N(t^M))(x)\)
  because \(N(s)+N(t)-p=0\)
  in~\(\Sring_2\).
  Since \(1-t \mid p-N(t)\),
  it follows that \(N(s^M)(x) \in \im (1-t^M)\).
  The relation \((1-s)(1-t)=0\)
  in~\(\Sring_2\)
  implies \(\im (1-t^M) \subseteq \ker(1-s^M)\).
  If \((1-s^M)(x)=0\),
  then \((N(s^M)-p)(x)=0\).
  So the relation \(N(s)+N(t) -p=0\)
  in~\(\Sring_2\)
  implies \(N(t^M)(x)=0\).
  That is, \(\ker (1-s^M) \subseteq \ker N(t^M)\).
  The proof of~\eqref{eq:S2_imNt} is analogous.
\end{proof}

\begin{proposition}
  \label{pro:divisible_S2-module}
  Let~\(M\) be an \(\Sring_2\)\nb-module.  The following are equivalent:
  \begin{enumerate}
  \item \label{pro:divisible_S2-module_u}%
    \(M\) is uniquely \(p\)\nb-divisible;
  \item \label{pro:divisible_S2-module_c}%
    \(M\)
    is cohomologically trivial and
    \(M = \ker (1-s^M) \oplus \ker (1-t^M)\);
  \item \label{pro:divisible_S2-module_i}%
    \(1-s^M\)
    restricts to an invertible map \(\ker (1-t^M) \to \ker (1-t^M)\)
    and \(1-t^M\)
    restricts to an invertible map \(\ker (1-s^M) \to \ker (1-s^M)\).
  \end{enumerate}
\end{proposition}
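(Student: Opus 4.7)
The plan is to prove the three implications cyclically, (1) $\Rightarrow$ (2) $\Rightarrow$ (3) $\Rightarrow$ (1), following the blueprint of Proposition~\ref{pro:divisible_S-module}. The key algebraic facts I will exploit are the defining relations $N(s)+N(t) = p$ and $(1-s)(1-t) = 0$ in $\Sring_2$, together with $(1-s)N(s) = 0 = (1-t)N(t)$ which follow from $s^p = 1 = t^p$.

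For (1) $\Rightarrow$ (2), assume $p$ acts invertibly on~$M$. I would introduce the operators $e_s \defeq p^{-1} N(s^M)$ and $e_t \defeq p^{-1} N(t^M)$. The relation $N(s)+N(t) = p$ gives $e_s + e_t = \id_M$, while $(1-s)N(s) = 0$ shows that $\im e_s \subseteq \ker(1-s^M)$ and likewise for $e_t$; this forces $M = \ker(1-s^M) + \ker(1-t^M)$. To see the sum is direct, if $x$ lies in both kernels then $N(s^M)(x) = N(t^M)(x) = p\cdot x$, so the relation $N(s^M) + N(t^M) = p$ gives $2px = px$, hence $px = 0$ and $x = 0$. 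For cohomological triviality, one inclusion is Lemma~\ref{lem:S2_module_im_ker}. For the reverse $\ker(1-s^M) \subseteq \im(1-t^M)$, I would use that $N(t^M) = p - N(s^M) = p - p = 0$ on $\ker(1-s^M)$, then write $N(t) - p = (1-t)h(t)$ (possible since $N(1) = p$) to obtain $(1-t^M) h(t^M) = -p$ on $\ker(1-s^M)$, which exhibits $1-t^M$ as surjective onto $\ker(1-s^M)$. A symmetric computation handles the $s$-version.

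For (2) $\Rightarrow$ (3), observe that $1 - t^M$ sends $\ker(1-s^M)$ into itself since $s^M$ and $t^M$ commute. Injectivity of its restriction is immediate from the direct sum in~(2), since the kernel would lie in $\ker(1-s^M) \cap \ker(1-t^M) = 0$. Surjectivity uses cohomological triviality: any $z \in \ker(1-s^M)$ equals $(1-t^M)(x)$ for some $x \in M$, and splitting $x = u + v$ along the direct sum with $u \in \ker(1-s^M)$, $v \in \ker(1-t^M)$ yields $z = (1-t^M)(u)$ with $u \in \ker(1-s^M)$. Symmetric reasoning handles $1-s^M$ on $\ker(1-t^M)$.

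For (3) $\Rightarrow$ (1), on $\ker(1-s^M)$ we have $N(s^M) = p$, hence $N(t^M) = 0$, so this submodule is actually a module over $\Z[t]/(N(t)) \cong \Z[\vartheta]$ on which $1-\vartheta$ acts invertibly; Lemma~\ref{lem:unit_in_Ztheta} then produces the inverse of~$p$. Symmetrically, $p$ is invertible on $\ker(1-t^M)$. To assemble these into invertibility on~$M$, I would re-derive the decomposition $M = \ker(1-s^M) \oplus \ker(1-t^M)$ directly from (3): for $x \in M$, the element $(1-t^M)(x)$ lies in $\ker(1-s^M)$ by $(1-s)(1-t) = 0$, so by invertibility of $1 - t^M$ on $\ker(1-s^M)$ there is $u \in \ker(1-s^M)$ with $(1-t^M)(u) = (1-t^M)(x)$; then $x = u + (x-u)$ with $x - u \in \ker(1-t^M)$, and directness is again immediate from~(3). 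There is no deep obstacle here beyond bookkeeping; the main subtlety is keeping straight which relation is used where, in particular that $(1-s)(1-t) = 0$ and $N(s) + N(t) = p$ interact to force both the decomposition and the vanishing of $N(t^M)$ on $\ker(1-s^M)$.
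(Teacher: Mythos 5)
Your proof is correct, and it takes a somewhat more self-contained route than the paper.  For \ref{pro:divisible_S2-module_u} $\Rightarrow$ \ref{pro:divisible_S2-module_c}, the paper first invokes Proposition~\ref{pro:divisible_S-module} to conclude that~$M$ is cohomologically trivial as an $\Sring$\nb-module for each of $s$ and~$t$ separately, and then collapses the chains of inclusions in Lemma~\ref{lem:S2_module_im_ker}; you instead work with the idempotent-like operators $p^{-1}N(s^M)$ and $p^{-1}N(t^M)$ summing to the identity and give a direct polynomial argument with $N(t)-p=(1-t)h(t)$ to produce preimages.  (The one point worth making explicit in your argument: $h(t^M)$ and $p^{-1}$ commute with $1-s^M$, so your constructed preimage actually lies in $\ker(1-s^M)$, which is a bit more than you claim but does you no harm.)  For \ref{pro:divisible_S2-module_i} $\Rightarrow$ \ref{pro:divisible_S2-module_u}, the paper funnels through $\ker(1-t^M)=\im N(t^M)=\im(1-s^M)$ and then applies the Snake Lemma to $\ker(1-s^M) \into M \prto \im(1-s^M)$; you instead re-derive the direct sum $M=\ker(1-s^M)\oplus\ker(1-t^M)$ straight from~\ref{pro:divisible_S2-module_i} using $(1-s)(1-t)=0$, which sidesteps both the identification $\ker(1-t^M)=\im(1-s^M)$ and the Snake Lemma.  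Both routes are valid; yours is arguably a bit more economical in that it leans less on the $\Sring$\nb-module machinery, while the paper's reuse of Proposition~\ref{pro:divisible_S-module} emphasises the parallel structure with the one-variable case.  The middle implication \ref{pro:divisible_S2-module_c} $\Rightarrow$ \ref{pro:divisible_S2-module_i} is essentially identical in both arguments.
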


\begin{proof}
  If~\ref{pro:divisible_S2-module_u} holds, then
  Proposition~\ref{pro:divisible_S-module} implies that~\(M\)
  is cohomologically trivial as a module over the two copies of~\(\Sring\)
  inside~\(\Sring_2\).
  Then \(\im N(s^M) = \im (1-t^M) = \ker (1-s^M)\)
  and \(\im N(t^M) = \im (1-s^M) = \ker (1-t^M)\)
  by Lemma~\ref{lem:S2_module_im_ker}.  So~\(M\)
  is cohomologically trivial as an \(\Sring_2\)\nb-module
  and \(M = \im N(s^M) \oplus \im (1-s^M) = \ker (1-s^M) \oplus \ker (1-t^M)\).
  So~\ref{pro:divisible_S2-module_u}
  implies~\ref{pro:divisible_S2-module_c}.

  Now assume~\ref{pro:divisible_S2-module_c}.  Then
  \(\ker (1-s^M) \cap \ker (1-t^M) = 0\)
  says that \(1-s^M\)
  is injective on \(\ker (1-t^M)\).
  And \(M = \ker (1-s^M) + \ker (1-t^M)\)
  implies \((1-s^M)(M) = (1-s^M)(\ker (1-t^M))\).
  Thus \(1-s^M\colon \ker (1-t^M) \to \im (1-s^M) = \ker (1-t^M)\)
  is also surjective.  The same argument works with~\(t\)
  instead of~\(s\).
  So~\ref{pro:divisible_S2-module_c}
  implies~\ref{pro:divisible_S2-module_i}.

  Now assume~\ref{pro:divisible_S2-module_i}.  The subgroup
  \(\ker (1-s^M)\) is a module over \(\Z[t]/(N(t))\) because both
  \(1-s^M\) and \(N(t^M)\) vanish there by~\eqref{eq:S2_imNs}.
  Lemma~\ref{lem:unit_in_Ztheta} says that \(p\)
  and~\((1-\vartheta)^{p-1}\) differ only by a unit in the ring
  \(\Z[t]/(N(t))\).  Since~\(1-t^M\) is invertible on
  \(\ker (1-s^M)\), \(\ker (1-s^M)\) is uniquely \(p\)\nb-divisible.
  Similarly, \(\ker (1-t^M)\) is uniquely \(p\)\nb-divisible.  Hence
  \(\ker (1-t^M) = \im N(t^M)\) by
  Lemma~\ref{lem:S-module_inclusions}.  Then
  \(\ker (1-t^M) = \im (1-s^M)\) by
  Lemma~\ref{lem:S2_module_im_ker}.  Then the Snake Lemma applied to
  multiplication by~\(p\) in the short exact sequence
  \(\ker (1-s^M) \into M \prto \im (1-s^M)\) shows that~\(M\) is
  uniquely \(p\)\nb-divisible.
  Hence~\ref{pro:divisible_S2-module_i}
  implies~\ref{pro:divisible_S2-module_u}.
\end{proof}

Now the same argument as above gives us an analogue of
Corollary~\ref{cor:M0_M1_divisible} for~\(M_2\):

\begin{corollary}
  \label{cor:M2_divisible}
  If~\(M_2\)
  is uniquely \(p\)\nb-divisible,
  then \eqref{eq:coh_trivial_at_2a} and~\eqref{eq:coh_trivial_at_2b}
  hold.
\end{corollary}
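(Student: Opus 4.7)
The plan is to run the same argument as for Corollary~\ref{cor:M0_M1_divisible}, with the ring~\(\Sring\) replaced by~\(\Sring_2\) and Proposition~\ref{pro:divisible_S-module} replaced by Proposition~\ref{pro:divisible_S2-module}.

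First, I would observe that~\(M_2\) is canonically a module over~\(\Sring_2\) with \(s\) acting as~\(\Gs{2}^M\) and \(t\) acting as~\(\Gt{2}^M\). The relations \((1-s)(1-t)=0\) and \(N(s)+N(t)=p\) defining~\(\Sring_2\) hold for these operators by \eqref{eq:st_2} and~\eqref{eq:alpha_2_N}, and the additional relations \(s^p=1\), \(t^p=1\) (redundant in~\(\Sring_2\), but recorded in~\eqref{eq:S2}) hold by \eqref{eq:tp} and~\eqref{eq:sp}. So the hypothesis that~\(M_2\) is uniquely \(p\)-divisible really is a statement about an \(\Sring_2\)-module.

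Next, the equivalence \ref{pro:divisible_S2-module_u}\(\Leftrightarrow\)\ref{pro:divisible_S2-module_c} of Proposition~\ref{pro:divisible_S2-module} gives that a uniquely \(p\)-divisible \(\Sring_2\)-module is cohomologically trivial. By Definition~\ref{def:cohomologically_trivial_S2}, this is exactly the condition~\eqref{eq:coh_trivial_at_2_assum} required by the third bullet of Proposition~\ref{pro:coh_trivial_alpha}. Applying that bullet then yields both chains of equalities \eqref{eq:coh_trivial_at_2a} and~\eqref{eq:coh_trivial_at_2b} at once, finishing the proof.

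I do not expect a genuine obstacle here. The whole \(\Sring_2\)-module apparatus of this subsection, in particular Proposition~\ref{pro:divisible_S2-module} and Definition~\ref{def:cohomologically_trivial_S2}, was organised in parallel with the \(\Sring\)-module material precisely so that this corollary would become a direct transcription of its counterpart for \(M_0\) and \(M_1\); all the real work has been packaged into Proposition~\ref{pro:coh_trivial_alpha} and Proposition~\ref{pro:divisible_S2-module}.
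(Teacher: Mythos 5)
Your proposal is correct and is exactly the argument the paper intends: the paper's comment before the corollary ("the same argument as above gives us an analogue") refers precisely to the chain you describe, namely that \(M_2\) is an \(\Sring_2\)-module, that Proposition~\ref{pro:divisible_S2-module} makes it cohomologically trivial in the sense of Definition~\ref{def:cohomologically_trivial_S2}, and that the third bullet of Proposition~\ref{pro:coh_trivial_alpha} then delivers \eqref{eq:coh_trivial_at_2a} and~\eqref{eq:coh_trivial_at_2b}.
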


Our next goal are criteria for an exact \(\Kring\)\nb-module
to satisfy \(M_0 \cong \im (\Ga{0}{1}^M) \oplus \im(\Ga{0}{2}^M)\),
and similarly for \(M_1\)
or~\(M_2\).
These criteria help to prove Theorem~\ref{the:divisible_exact_module}
because such decompositions clearly happen in the exact
\(\Kring\)\nb-modules
in Example~\ref{exa:divisible}.

\begin{lemma}
  \label{lem:decomposable_case_criterion_0}
  Let~\(M\) be an exact \(\Kring\)\nb-module.  The following are equivalent:
  \begin{enumerate}
  \item  \label{lem:decomposable_case_criterion1}%
    \(M_0 \cong \im (\Ga{0}{1}^M) \oplus \im(\Ga{0}{2}^M)\);
  \item \label{lem:decomposable_case_criterion2}%
    \(\ker (\Ga{0}{1}^M) = \ker N(\Gs{1}^M)\)
    and \(\im (\Ga{1}{0}^M) = \im N(\Gs{1}^M)\);
  \item \label{lem:decomposable_case_criterion3}%
    \(\ker (\Ga{0}{2}^M) = \ker (\Gu{2}^M - \Gt{2}^M)\)
    and \(\im (\Ga{2}{0}^M) = \im (\Gu{2}^M - \Gt{2}^M)\).
  \end{enumerate}
  These equivalent conditions hold if the \(\Sring\)\nb-module
  \((M_1,\Gs{1})\)
  is cohomologically trivial or if the \(\Sring_2\)\nb-module
  \((M_2,\Gs{2},\Gt{2})\) is cohomologically trivial.
\end{lemma}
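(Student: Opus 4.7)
The plan is to prove the equivalences (1)$\Leftrightarrow$(2) and (2)$\Leftrightarrow$(3) using results already at our disposal, and then to derive the sufficiency of the cohomological-triviality hypotheses as an immediate corollary. The key identities I shall exploit are \eqref{eq:alpha_101_N}, namely $\Ga{1}{0}^M \circ \Ga{0}{1}^M = N(\Gs{1}^M)$ on $M_1$, and \eqref{eq:alpha_202}, namely $\Ga{2}{0}^M \circ \Ga{0}{2}^M = \Gu{2}^M - \Gt{2}^M$ on $M_2$, together with the exactness relation $\ker \Ga{1}{0}^M = \im \Ga{0}{2}^M$ from the first triangle in Definition~\ref{def:exact_R-module}.

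For (1)$\Leftrightarrow$(2), I apply Lemma~\ref{lem:ker_coker_composite} to the composite $N(\Gs{1}^M) = \Ga{1}{0}^M \circ \Ga{0}{1}^M$. This gives the isomorphisms
\[
  \frac{\ker N(\Gs{1}^M)}{\ker \Ga{0}{1}^M} \cong \im \Ga{0}{1}^M \cap \ker \Ga{1}{0}^M,\qquad
  \frac{\im \Ga{1}{0}^M}{\im N(\Gs{1}^M)} \cong \frac{M_0}{\im \Ga{0}{1}^M + \ker \Ga{1}{0}^M}.
\]
So the two equalities in (2) translate respectively into $\im \Ga{0}{1}^M \cap \ker \Ga{1}{0}^M = 0$ and $M_0 = \im \Ga{0}{1}^M + \ker \Ga{1}{0}^M$. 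Using exactness to replace $\ker \Ga{1}{0}^M$ by $\im \Ga{0}{2}^M$, these two conditions together say precisely that $M_0 = \im \Ga{0}{1}^M \oplus \im \Ga{0}{2}^M$, which is (1).

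For (2)$\Leftrightarrow$(3), there is no need to recompute anything: the first two isomorphisms recorded in Lemma~\ref{lem:short_exact_sequences_consequences} assert
\[
  \frac{\ker (\Gu{2}^M - \Gt{2}^M)}{\ker \Ga{0}{2}^M} \cong \frac{\ker N(\Gs{1}^M)}{\ker \Ga{0}{1}^M},\qquad
  \frac{\im \Ga{2}{0}^M}{\im (\Gu{2}^M - \Gt{2}^M)} \cong \frac{\im \Ga{1}{0}^M}{\im N(\Gs{1}^M)}.
\]
The right-hand quotient vanishes if and only if the left-hand one does, so (2) and (3) are equivalent term-by-term.

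Finally, the two sufficient conditions for the equivalent properties (1)--(3) read off directly from Proposition~\ref{pro:coh_trivial_alpha}. If $(M_1,\Gs{1})$ is cohomologically trivial as an $\Sring$-module, then \eqref{eq:coh_trivial_at_1a} and \eqref{eq:coh_trivial_at_1b} force $\ker \Ga{0}{1}^M = \ker N(\Gs{1}^M)$ and $\im \Ga{1}{0}^M = \im N(\Gs{1}^M)$, which is (2). Similarly, if $(M_2,\Gs{2},\Gt{2})$ is cohomologically trivial as an $\Sring_2$-module, then \eqref{eq:coh_trivial_at_2a} and \eqref{eq:coh_trivial_at_2b} yield (3). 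Overall the argument is a bookkeeping exercise in the identities already collected; the only subtlety to watch for is tracking the exactness substitution $\ker \Ga{1}{0}^M = \im \Ga{0}{2}^M$ correctly, and keeping straight which Puppe triangle is being invoked.
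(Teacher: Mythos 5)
Your proof is correct and follows essentially the same route as the paper's: both reduce (1) to the pair of conditions $\im(\Ga{0}{1}^M)\cap\im(\Ga{0}{2}^M)=0$ and $\im(\Ga{0}{1}^M)+\im(\Ga{0}{2}^M)=M_0$ via Lemma~\ref{lem:ker_coker_composite} applied to the composites $\Ga{1}{0}^M\circ\Ga{0}{1}^M$ and $\Ga{2}{0}^M\circ\Ga{0}{2}^M$ (the paper cites Table~\ref{tab:short_exact_sequences}, which records exactly these applications), and both deduce the final sufficiency claims from Proposition~\ref{pro:coh_trivial_alpha}. The only cosmetic difference is that you chain (1)$\Leftrightarrow$(2)$\Leftrightarrow$(3) while the paper proves (1)$\Leftrightarrow$(2) and (1)$\Leftrightarrow$(3) in parallel.
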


\begin{proof}
  The direct sum decomposition
  in~\ref{lem:decomposable_case_criterion1} is equivalent to
  \(\im (\Ga{0}{1}^M) \cap \im(\Ga{0}{2}^M) = 0\)
  and \(M_0\bigm/ (\im (\Ga{0}{1}^M) + \im(\Ga{0}{2}^M)\bigr) = 0\).
  By the exact sequences in Table~\ref{tab:short_exact_sequences}, the
  first condition holds if and only if
  \(\ker (\Ga{0}{2}^M) = \ker (\Gu{2}^M - \Gt{2}^M)\),
  if and only if \(\ker N(\Gs{1}^M) = \ker (\Ga{0}{1}^M)\);
  and the second condition holds if and only if
  \(\im (\Ga{2}{0}^M) = \im (\Gu{2}^M - \Gt{2}^M)\),
  if and only if \(\im (\Ga{1}{0}^M) = \im N(\Gs{1}^M)\).
  Here we use that two maps with the same codomain have equal image if
  and only if they have equal cokernels.

  If~\(M_1\)
  is cohomologically trivial, then \eqref{eq:coh_trivial_at_1a}
  and~\eqref{eq:coh_trivial_at_1b} are valid and imply our
  statement~\ref{lem:decomposable_case_criterion2}.  If~\(M_2\)
  is cohomologically trivial, then \eqref{eq:coh_trivial_at_2a}
  and~\eqref{eq:coh_trivial_at_2b} are valid and
  imply~\ref{lem:decomposable_case_criterion3}.
\end{proof}

Assume that we are in the situation of
Lemma~\ref{lem:decomposable_case_criterion_0}.  The direct sum
decomposition in
Lemma~\ref{lem:decomposable_case_criterion_0}.\ref{lem:decomposable_case_criterion1}
is one of \(\Sring\)\nb-modules
because both \(\im (\Ga{0}{1}^M)\)
and \(\im(\Ga{0}{2}^M)\)
are \(\Sring\)\nb-submodules
by \eqref{eq:ta_01} and~\eqref{eq:ta_02}.  Using
Lemma~\ref{lem:decomposable_case_criterion_0} and exactness, we
identify
\begin{align*}
  \im(\Ga{0}{1}^M)
  &\cong \frac{M_1}{\ker(\Ga{0}{1}^M)}
    = \frac{M_1}{\ker N(\Gs{1}^M)}
    \cong \im N(\Gs{1}^M),\\
  \im(\Ga{0}{2}^M)
  &\cong \frac{M_2}{\ker(\Ga{0}{2}^M)}
    = \frac{M_2}{\ker (\Gu{2}^M - \Gt{2}^M)}
    \cong \im (\Gu{2}^M - \Gt{2}^M).
\end{align*}
Let~\(\varphi\) temporarily denote the first isomorphism.  Then
\(\varphi\circ \Ga{0}{1}^M = N(\Gs{1}^M)\) and
\(\Ga{1}{0}^M \circ \varphi^{-1}= \Ga{1}{0}^M \circ \Ga{0}{1}^M =
N(\Gs{1}^M)\).  Therefore, when we use~\(\varphi\) to identify
\(\im(\Ga{0}{1}^M)\) with \(\im N(\Gs{1}^M)\), then \(\Ga{0}{1}^M\)
becomes \(N(\Gs{1}^M)\) and \(\Ga{1}{0}^M\) becomes the inclusion
map on \(\im N(\Gs{1}^M)\).  And \(\Ga{1}{0}^M\) vanishes on the
other summand \(\im(\Ga{0}{2}^M)\) by~\eqref{eq:alpha_exact}.
Similarly, \(\Ga{0}{2}^M\) becomes \(\Gu{2}^M - \Gt{2}^M\),
and~\(\Ga{2}{0}^M\) becomes the inclusion of the summand
\(\im (\Gu{2}^M - \Gt{2}^M)\) and vanishes on the other summand.  So
the \(\Kring\)\nb-module is determined by \(M_1\) and~\(M_2\) with
the maps \(\Gt{2}^M\), \(\Ga{1}{2}^M\) and~\(\Ga{2}{1}^M\);
these determine~\(\Gs{j}^M\) for \(j=1,2\) by \eqref{eq:alpha_121}
and~\eqref{eq:alpha_212}.  This data gives an exact
\(\Kring\)\nb-module if and only if
\begin{enumerate}
\item \label{en:decomposable_1}%
  \(N(\Gt{2}^M) + N(\Gs{2}^M) = p\cdot \Gu{2}^M\);
\item \label{en:decomposable_2}%
  \(\ker (\Ga{1}{2}^M) = \im (\Gu{2}^M - \Gt{2}^M)\),
  \(\im (\Ga{2}{1}^M) = \ker (\Gu{2}^M - \Gt{2}^M)\);
\item \label{en:decomposable_3}%
  \(\ker (\Ga{2}{1}^M) = \im N(\Gs{1}^M)\),
  \(\im (\Ga{1}{2}^M) = \ker N(\Gs{1}^M)\).
\end{enumerate}
We may arrange the exactness conditions in \ref{en:decomposable_2}
and~\ref{en:decomposable_3} in a periodic long exact sequence
\begin{equation}
  \label{eq:decomposable_case_long_exact}
  \dotsb \to M_1 \xrightarrow{N(\Gs{1}^M)}
  M_1 \xrightarrow{\Ga{2}{1}^M}
  M_2 \xrightarrow{\Gu{2}^M - \Gt{2}^M}
  M_2 \xrightarrow{\Ga{1}{2}^M}
  M_1 \xrightarrow{N(\Gs{1}^M)} M_1 \to \dotsb.
\end{equation}

Lemmas \ref{lem:decomposable_case_criterion_1}
and~\ref{lem:decomposable_case_criterion_2} are the analogues of
Lemma~\ref{lem:decomposable_case_criterion_0} for \(M_1\)
and~\(M_2\).
Lemma~\ref{lem:decomposable_case_criterion_1} is formally equivalent
to Lemma~\ref{lem:decomposable_case_criterion_0} because of the ring
automorphism~\(\BS\)
of~\(\Kring\).
The proof of Lemma~\ref{lem:decomposable_case_criterion_2} is similar
to the proof of Lemma~\ref{lem:decomposable_case_criterion_0}.

\begin{lemma}
  \label{lem:decomposable_case_criterion_1}
  Let~\(M\) be an exact \(\Kring\)\nb-module.  The following are equivalent:
  \begin{enumerate}
  \item  \label{lem:decomposable_case_criterion4}%
    \(M_1 \cong \im (\Ga{1}{0}^M) \oplus \im(\Ga{1}{2}^M)\);
  \item \label{lem:decomposable_case_criterion5}%
    \(\ker (\Ga{1}{0}^M) = \ker N(\Gt{0}^M)\)
    and \(\im (\Ga{0}{1}^M) = \im N(\Gt{0}^M)\);
  \item \label{lem:decomposable_case_criterion6}%
    \(\ker (\Ga{1}{2}^M) = \ker (\Gu{2}^M - \Gs{2}^M)\)
    and \(\im (\Ga{2}{1}^M) = \im (\Gu{2}^M - \Gs{2}^M)\);
  \end{enumerate}
  These equivalent conditions hold if the \(\Sring\)\nb-module
  \((M_0,\Gt{0})\)
  is cohomologically trivial or if the \(\Sring_2\)\nb-module
  \((M_2,\Gs{2},\Gt{2})\) is cohomologically trivial.
\end{lemma}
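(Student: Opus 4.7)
The plan is to derive Lemma~\ref{lem:decomposable_case_criterion_1} from Lemma~\ref{lem:decomposable_case_criterion_0} via the Baaj--Skandalis ring automorphism $\BS$ of $\Kring$ constructed in Remark~\ref{rem:BS_automorphism}. Recall that $\BS$ acts on generators via the transposition $(01)$ on the index set $\{0,1,2\}$, so it fixes $\Gu{2}$ and $\Ga{2}{1}\leftrightarrow \Ga{2}{0}$ outside and $\Gu{0}\leftrightarrow \Gu{1}$, $\Ga{0}{1}\leftrightarrow \Ga{1}{0}$, $\Ga{0}{2}\leftrightarrow \Ga{1}{2}$. A direct computation from the formulas~\eqref{eq:def_st} then gives $\BS(\Gt{0})=\Gs{1}$, $\BS(\Gs{1})=\Gt{0}$, $\BS(\Gt{2})=\Gs{2}$, and $\BS(\Gs{2})=\Gt{2}$.

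Next I would formalize the transport of modules. Given an exact $\Kring$\nb-module $M=(M_0,M_1,M_2)$, I form the pullback $M'=\BS^*(M)$, whose underlying groups are $M'_0=M_1$, $M'_1=M_0$, $M'_2=M_2$, and where each generator $x\in \Kring$ acts by the operator that $\BS(x)$ acts by on $M$. Since $\BS$ is a ring automorphism, this is a well-defined $\Kring$\nb-module structure, and the two Puppe-type chain complexes of $M'$ are obtained from those of $M$ by relabelling, so $M'$ is exact if and only if $M$ is. Under this correspondence, the maps $(\Ga{j}{k}^M)$ for $M$ become $(\Ga{\sigma(j)}{\sigma(k)}^{M'})$ for $M'$, with $\sigma=(01)$, and similarly for $\Gt{j}^M$, $\Gs{j}^M$ using the computation of $\BS$ on these elements above.

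Now I apply Lemma~\ref{lem:decomposable_case_criterion_0} to $M'$. Each of the three equivalent conditions transforms precisely into the corresponding condition of Lemma~\ref{lem:decomposable_case_criterion_1}: the decomposition $M'_0\cong \im\Ga{0}{1}^{M'}\oplus \im\Ga{0}{2}^{M'}$ reads $M_1\cong \im\Ga{1}{0}^M\oplus \im\Ga{1}{2}^M$; the kernel/image equalities involving $\Ga{0}{1}^{M'}, \Ga{1}{0}^{M'}, \Gs{1}^{M'}$ read as those involving $\Ga{1}{0}^M, \Ga{0}{1}^M, \Gt{0}^M$; and the kernel/image equalities involving $\Ga{0}{2}^{M'}, \Ga{2}{0}^{M'}, \Gt{2}^{M'}$ read as those involving $\Ga{1}{2}^M, \Ga{2}{1}^M, \Gs{2}^M$. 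For the sufficient conditions, cohomological triviality of $(M'_1,\Gs{1}^{M'})$ is cohomological triviality of $(M_0,\Gt{0}^M)$ as an $\Sring$\nb-module. Cohomological triviality of $(M'_2,\Gs{2}^{M'},\Gt{2}^{M'})$ as an $\Sring_2$\nb-module matches cohomological triviality of $(M_2,\Gt{2}^M,\Gs{2}^M)$, because Definition~\ref{def:cohomologically_trivial_S2} is manifestly symmetric in $s$ and $t$.

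The only real work is the bookkeeping of how $\BS$ acts on the auxiliary elements $\Gt{j}, \Gs{j}$ and on the short exact sequences of Table~\ref{tab:short_exact_sequences}; no new structural argument is needed. Alternatively, if one prefers a self-contained proof, one can copy the proof of Lemma~\ref{lem:decomposable_case_criterion_0} verbatim but reading off the second and fourth pairs of exact sequences in Table~\ref{tab:short_exact_sequences} in place of the first and fifth; the logic is unchanged.
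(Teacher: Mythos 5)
Your proposal is correct and is exactly the argument the paper intends: the text states that Lemma~\ref{lem:decomposable_case_criterion_1} is formally equivalent to Lemma~\ref{lem:decomposable_case_criterion_0} via the ring automorphism~$\BS$, and your bookkeeping ($\BS(\Gt{0})=\Gs{1}$, $\BS(\Gt{2})=\Gs{2}$, the swap of the two Puppe triangles, and the $s\leftrightarrow t$ symmetry of Definition~\ref{def:cohomologically_trivial_S2}) checks out. You have simply made explicit the transport of structure that the paper leaves to the reader.
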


\begin{lemma}
  \label{lem:decomposable_case_criterion_2}
  Let~\(M\) be an exact \(\Kring\)\nb-module.  The following are equivalent:
  \begin{enumerate}
  \item  \label{lem:decomposable_case_criterion7}%
    \(M_2 \cong \im (\Ga{2}{0}^M) \oplus \im(\Ga{2}{1}^M)\);
  \item \label{lem:decomposable_case_criterion8}%
    \(\ker (\Ga{2}{0}^M) = \ker (\Gu{0}^M - \Gt{0}^M)\)
    and \(\im (\Ga{0}{2}^M) = \im (\Gu{0}^M - \Gt{0}^M)\);
  \item \label{lem:decomposable_case_criterion9}%
    \(\ker (\Ga{2}{1}^M) = \ker (\Gu{1}^M - \Gs{1}^M)\)
    and \(\im (\Ga{1}{2}^M) = \im (\Gu{1}^M - \Gs{1}^M)\).
  \end{enumerate}
  These equivalent conditions hold if one of the \(\Sring\)\nb-modules
  \((M_0,\Gt{0})\) or \((M_1,\Gs{1})\) is cohomologically trivial.
\end{lemma}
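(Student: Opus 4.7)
The plan is to mirror the proof of Lemma~\ref{lem:decomposable_case_criterion_0} step by step, substituting the index~$2$ for~$0$ throughout. The direct sum decomposition in~(1) is equivalent to the conjunction of two conditions: $\im(\Ga{2}{0}^M) \cap \im(\Ga{2}{1}^M) = 0$ and $M_2 = \im(\Ga{2}{0}^M) + \im(\Ga{2}{1}^M)$. I will translate each of these via the exact sequences in Table~\ref{tab:short_exact_sequences}, exploiting the exactness of $M$ to replace certain kernels by images.

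For the intersection condition, exactness of $M$ gives $\ker(\Ga{0}{2}^M) = \im(\Ga{2}{1}^M)$ and $\ker(\Ga{1}{2}^M) = \im(\Ga{2}{0}^M)$. Substituting these into the second pair of short exact sequences in Table~\ref{tab:short_exact_sequences}, the common third entry becomes exactly $\im(\Ga{2}{0}^M) \cap \im(\Ga{2}{1}^M)$. Hence its vanishing is equivalent to $\ker(\Ga{2}{0}^M) = \ker(\Gu{0}^M - \Gt{0}^M)$ and also to $\ker(\Ga{2}{1}^M) = \ker(\Gu{1}^M - \Gs{1}^M)$.

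For the sum condition, the same exactness identities rewrite $\im(\Ga{2}{0}^M) + \ker(\Ga{0}{2}^M)$ and $\im(\Ga{2}{1}^M) + \ker(\Ga{1}{2}^M)$ both as $\im(\Ga{2}{0}^M) + \im(\Ga{2}{1}^M)$. So the quotient $M_2/(\im(\Ga{2}{0}^M) + \im(\Ga{2}{1}^M))$ is the left-hand term of the fifth pair of short exact sequences in Table~\ref{tab:short_exact_sequences}. It vanishes iff each of the canonical surjections $\coker(\Gu{0}^M - \Gt{0}^M) \twoheadrightarrow \coker(\Ga{0}{2}^M)$ and $\coker(\Gu{1}^M - \Gs{1}^M) \twoheadrightarrow \coker(\Ga{1}{2}^M)$ is an isomorphism, which since $\im(\Gu{0}^M - \Gt{0}^M) \subseteq \im(\Ga{0}{2}^M)$ and $\im(\Gu{1}^M - \Gs{1}^M) \subseteq \im(\Ga{1}{2}^M)$ (from \eqref{eq:alpha_020} and \eqref{eq:alpha_121}) translates to the image equalities in~(2) and~(3). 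Combining the intersection and sum halves yields (1)~$\Leftrightarrow$~(2)~$\Leftrightarrow$~(3).

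For the final assertion, no further work is required: if $(M_0,\Gt{0})$ is cohomologically trivial, Proposition~\ref{pro:coh_trivial_alpha} produces \eqref{eq:coh_trivial_at_0a} and \eqref{eq:coh_trivial_at_0b}, which include the two identities of condition~(2); and if $(M_1,\Gs{1})$ is cohomologically trivial, \eqref{eq:coh_trivial_at_1a} and \eqref{eq:coh_trivial_at_1b} give condition~(3). I do not anticipate any genuine obstacle in this argument: everything is already packaged in Table~\ref{tab:short_exact_sequences} and Proposition~\ref{pro:coh_trivial_alpha}, and the only care needed is to read the table correctly for the new index~$2$.
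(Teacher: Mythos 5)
Your proof is correct and follows exactly the approach the paper intends: the paper does not write out a proof of this lemma but simply remarks that it ``is similar to the proof of Lemma~\ref{lem:decomposable_case_criterion_0}'', and your argument is a faithful transcription of that earlier proof to the index~\(2\) case, using the second and fifth pairs of exact sequences from Table~\ref{tab:short_exact_sequences}, the exactness identities \(\ker(\Ga{0}{2}^M)=\im(\Ga{2}{1}^M)\), \(\ker(\Ga{1}{2}^M)=\im(\Ga{2}{0}^M)\), and \eqref{eq:alpha_020}, \eqref{eq:alpha_121} for the intersection and sum conditions, then \eqref{eq:coh_trivial_at_0a}--\eqref{eq:coh_trivial_at_0b} and \eqref{eq:coh_trivial_at_1a}--\eqref{eq:coh_trivial_at_1b} for the cohomological triviality assertion. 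The only imprecision is your opening phrase ``substituting the index~\(2\) for~\(0\) throughout'': the substitution pattern is not a straight index swap (the composites one tracks are \(\Ga{0}{2}\circ\Ga{2}{0}=\Gu{0}-\Gt{0}\) and \(\Ga{1}{2}\circ\Ga{2}{1}=\Gu{1}-\Gs{1}\), not naive relabelings of the Lemma~\ref{lem:decomposable_case_criterion_0} composites), but your actual computations get this right.
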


\subsection{Proof of Theorem~\ref{the:divisible_exact_module}}
\label{sec:proof_divisible}

Assume first that~\(M_0\)
is uniquely \(p\)\nb-divisible.
Then~\(M_0\)
is cohomologically trivial and
\(M_0 \cong \ker N(\Gt{0}^M) \oplus \ker (\Gu{0}^M - \Gt{0}^M)\)
by Proposition~\ref{pro:divisible_S-module}.  And
\eqref{eq:coh_trivial_at_0a} and~\eqref{eq:coh_trivial_at_0b} hold.
Hence the statements~\ref{lem:decomposable_case_criterion1} in
Lemma~\ref{lem:decomposable_case_criterion_0},
\ref{lem:decomposable_case_criterion5} in
Lemma~\ref{lem:decomposable_case_criterion_1}
and~\ref{lem:decomposable_case_criterion8} in
Lemma~\ref{lem:decomposable_case_criterion_2} are true.  So the other
equivalent statements in these three lemmas are also true.  In
particular,
\begin{align*}
  M_0
  &\cong \im (\Ga{0}{2}^M) \oplus \im(\Ga{0}{1}^M)
  = \ker N(\Gt{0}^M) \oplus \ker (\Gu{0}^M - \Gt{0}^M),\\
  M_1
  &\cong \im (\Ga{1}{0}^M) \oplus \im(\Ga{1}{2}^M)
  = \im N(\Gs{1}^M)  \oplus \im (\Gu{1}^M - \Gs{1}^M),\\
  M_2 &\cong \im (\Ga{2}{0}^M) \oplus \im(\Ga{2}{1}^M)
  \cong \im (\Gu{2}^M - \Gt{2}^M) \oplus \im (\Gu{2}^M - \Gs{2}^M).
\end{align*}
Let \(j,k\in\{0,1,2\}\)
with \(j\neq k\).  The map~\(\Ga{k}{j}^M\)
maps the summand \(\im (\Ga{j}{k}^M)\)
to the summand \(\im (\Ga{k}{j}^M)\)
in~\(M_j\)
and vanishes on the other summand in~\(M_k\).
The exactness of~\(M\) shows that the restriction
\[
\Ga{k}{j}^M\colon \im (\Ga{j}{k}^M) \to \im (\Ga{k}{j}^M)
\]
is invertible.  As a consequence, \(\Ga{k}{j}^M\circ \Ga{j}{k}^M\)
is invertible on the direct summand \(\im (\Ga{k}{j}^M)\).
These statements for \(k=1,2\)
are one of the equivalent characterisations of unique
\(p\)\nb-divisibility
in Propositions \ref{pro:divisible_S-module}
and~\ref{pro:divisible_S2-module}, respectively.  So \(M_0\)
and~\(M_2\)
are also uniquely \(p\)\nb-divisible,
and then so are all direct summands in them.  Now it is routine to
check that our exact \(\Kring\)\nb-module
is isomorphic to the one built in Example~\ref{exa:divisible} with
\(X = \im (\Ga{0}{1}^M) \cong \im (\Ga{1}{0}^M)\),
\(Y = \im (\Ga{0}{2}^M) \cong \im (\Ga{2}{0}^M)\),
\(Z = \im (\Ga{1}{2}^M) \cong \im (\Ga{2}{1}^M)\).
This proves the theorem when~\(M_0\)
is uniquely \(p\)\nb-divisible.
We may reduce the case where~\(M_1\) is assumed uniquely
\(p\)\nb-divisible to this case using the ring automorphism~\(\BS\),
which exchanges the roles of \(M_0\) and~\(M_1\).

Now let~\(M_2\)
be uniquely \(p\)\nb-divisible.
Proposition~\ref{pro:divisible_S2-module} shows that~\(M_2\)
is cohomologically trivial as an \(\Sring_2\)\nb-module
and decomposes as
\(M_2 = \im (\Gu{2}-\Gs{2}^M) \oplus \im (\Gu{2}-\Gt{2}^M)\).
Thus the statements in \eqref{eq:coh_trivial_at_2a}
and~\eqref{eq:coh_trivial_at_2b} hold.  Therefore,
\(M_2 = \im (\Ga{2}{1}^M) \oplus \im (\Ga{2}{0}^M)\)
as well.  Then in each of the Lemmas
\ref{lem:decomposable_case_criterion_0},
\ref{lem:decomposable_case_criterion_1}
and~\ref{lem:decomposable_case_criterion_2}, one of the equivalent
statements is true.  So all the equivalent statements in these lemmas
are true.  And now the argument proceeds as above.  This finishes
the proof of Theorem~\ref{the:divisible_exact_module}.

\subsection{Application to Cuntz algebras}
\label{sec:divisible_Cuntz_algebras}

Theorem~\ref{the:divisible_exact_module} applies, in particular, if
\(M_1=0\)
because the trivial group is uniquely \(p\)\nb-divisible.
So any exact \(\Kring\)\nb-module
with \(M_1=0\)
is isomorphic to one as in Example~\ref{exa:divisible}.  And \(M_1=0\)
forces \(X=0\)
and \(Z=0\).
So the only remaining ingredient is a uniquely \(p\)\nb-divisible
\(\Z/2\)\nb-graded
\(\Z[\vartheta]\)-module~\(Y\),
which may be chosen at will.  The resulting exact \(\Kring\)\nb-module
with \(M_1= 0\)
has \(M_0 = Y\),
\(M_2 = Y\),
\(\Ga{2}{0}^M = 1\)
and \(\Ga{0}{2}^M = 1-\vartheta\).
The following theorem summarises our results in this case:

\begin{theorem}
  \label{the:action_O2}
  Let~\(Y\)
  be a countable, uniquely \(p\)\nb-divisible
  Abelian group with a representation of~\(\Z/p\),
  \(k\mapsto t^k\),
  such that \(N(t)=0\).
  Then there is a pointwise outer action of~\(\Z/p\)
  on~\(\mathcal{O}_2\otimes \Comp(\ell^2\N)\)
  that belongs to the equivariant bootstrap class~\(\mathfrak{B}^G\),
  such that
  \(\K_*(\mathcal{O}_2\otimes\Comp(\ell^2\N) \rtimes \Z/p) \cong Y\)
  and the action on \(K\)\nb-theory
  induced by the dual action becomes the one on~\(Y\)
  generated by~\(t\).
  The \(G\)\nb-action on~\(\mathcal{O}_2\otimes \Comp(\ell^2\N)\)
  above is unique up to \(\KK^G\)-equivalence.
\end{theorem}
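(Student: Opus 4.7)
The plan is to package~$Y$ into an exact $\Kring$-module via Example~\ref{exa:divisible}, realise it by a group action using Theorem~\ref{the:exact_vs_Cstar}, identify the underlying \(\Cst\)-algebra via Kirchberg--Phillips, and deduce uniqueness from Theorem~\ref{the:divisible_exact_module}.

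The hypotheses on~$Y$ promote it to a countable $\Z/2$-graded $\Z[\vartheta, 1/p]$-module: $N(t) = 0$ factors the $\Z[G]$-action through $\Z[\vartheta] = \Z[t]/(N(t))$, and unique $p$-divisibility inverts $1 - \vartheta$ by Lemma~\ref{lem:unit_in_Ztheta}. Feeding $X = 0$, $Z = 0$ and this~$Y$ into Example~\ref{exa:divisible} produces an exact countable $\Kring$-module~$M$ with $M_1 = 0$, $M_0 \cong Y \cong M_2$, $\Gt{0}^M = \Gt{2}^M = \vartheta$, $\Ga{2}{0}^M = 1$, $\Ga{0}{2}^M = 1 - \vartheta$, and the remaining boundary maps zero. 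Theorem~\ref{the:exact_vs_Cstar} then realises~$M$ as the Köhler invariant of a pointwise outer $G$-action~$\alpha$ on a stable UCT Kirchberg algebra $A \in \mathfrak{B}^G$. Since $\K_*(A) \cong F_*(A)_1 = M_1 = 0$, Kirchberg--Phillips identifies $A \cong \mathcal{O}_2 \otimes \Comp(\ell^2\N)$, and we transport~$\alpha$ accordingly. The identification $F_*(A)_0 \cong \K_*(A \rtimes G)$ combined with the description in Lemma~\ref{lem:little_invariant_ring} of the $\Sring$-action on this piece as the dual $\widehat G$-action yields $\K_*(A \rtimes G) \cong Y$ with dual action $k \mapsto t^k$.

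For uniqueness, let $\alpha'$ be another pointwise outer $G$-action on $\mathcal{O}_2 \otimes \Comp(\ell^2\N)$ in~$\mathfrak{B}^G$ realising the same invariants. Its Köhler invariant $M'$ satisfies $M'_1 = \K_*(\mathcal{O}_2 \otimes \Comp(\ell^2\N)) = 0$, which is trivially uniquely $p$-divisible, so Theorem~\ref{the:divisible_exact_module} places $M'$ in the form of Example~\ref{exa:divisible} for some $(X', Y', Z')$. Then $X' \oplus Z' = M'_1 = 0$ forces $X' = Z' = 0$, and $Y' \cong \K_*(\mathcal{O}_2 \otimes \Comp(\ell^2\N) \rtimes_{\alpha'} G) \cong Y$ as $\Z[\vartheta, 1/p]$-modules by hypothesis. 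Hence $M' \cong M$ as $\Kring$-modules, and Theorem~\ref{the:Koehler_invariant_UCT_classifies} lifts this isomorphism to a $\KK^G$-equivalence between $\alpha'$ and~$\alpha$. The substantive obstacles have all been handled in the earlier theorems; the only remaining point requiring attention is that the $\Sring$-action of Lemma~\ref{lem:little_invariant_ring} on $F_*(A)_0 \cong \K_*(A \rtimes G)$ is indeed the dual $\widehat G$-action, which is made explicit in that lemma.
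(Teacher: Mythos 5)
Your proposal is correct and follows essentially the same route as the paper, which compresses the whole argument into a single sentence ("combine Theorem~\ref{the:exact_vs_Cstar} with the classification of exact \(\Kring\)\nb-modules with \(M_1=0\)"); you have unpacked exactly the steps that are tacit in that sentence (Example~\ref{exa:divisible} with \(X=Z=0\), Theorem~\ref{the:exact_vs_Cstar} for existence, Kirchberg--Phillips to pin down the underlying algebra, and Theorem~\ref{the:divisible_exact_module} plus Theorem~\ref{the:Koehler_invariant_UCT_classifies} for uniqueness). One minor bookkeeping remark: the identification of the \(\Sring\)\nb-action on \(\K_*(A\rtimes G)\) with the dual \(\widehat G\)\nb-action lives in the discussion surrounding Lemma~\ref{lem:little_invariant_ring} rather than in the lemma's statement itself.
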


\begin{proof}
  Combine Theorem~\ref{the:exact_vs_Cstar} with the
  classification of exact \(\Kring\)\nb-modules
  with \(M_1=0\).
  Since \(F_*(A)_1 = \K_*(A)\),
  the condition \(M_1=0\)
  corresponds to an action on a \(\Cst\)\nb-algebra
  with vanishing \(\K\)\nb-theory such as~\(\mathcal{O}_2\).
  And then \(M_0=\K_*(A\rtimes G)\).
\end{proof}

A representation of~\(\Z/p\)
on a uniquely \(p\)\nb-divisible group with \(N(t)=0\)
may also be considered as a module over the ring
\(\Z[\vartheta,1/p]\).
The action of the group ring \(\Z[\Z/p]\)
descends to an action of \(\Z[\vartheta]\)
if and only if \(N(t)=0\).
And the underlying group is uniquely \(p\)\nb-divisible
if and only if the action of \(\Z[\vartheta]\)
extends to \(\Z[\vartheta,1/p]\).

More generally, Theorem~\ref{the:divisible_exact_module} classifies
the actions in the equivariant bootstrap class on any UCT Kirchberg
algebra~\(A\)
whose \(\K\)\nb-theory
is uniquely \(p\)\nb-divisible.
Namely, \(\KK^G\)-equivalence
classes of such actions are in bijection with isomorphism classes of
countable modules over \(\Z[\vartheta,1/p]\),
exactly as for~\(\mathcal{O}_2\).
The exact \(\Kring\)\nb-module
corresponding to a given countable module~\(Y\)
over \(\Z[\vartheta,1/p]\)
is as in Example~\ref{exa:divisible}, where \(X\)
and~\(Z\)
are obtained by decomposing \(\K_*(A)\)
into \(\ker (1-t) = \im N(t)\)
and \(\im (1-t) = \ker N(t)\),
where \(t\colon \K_*(A)\to\K_*(A)\)
is induced by the generator of the \(\Z/p\)\nb-action on~\(A\).

We consider the classification of \(\Z/p\)-actions
on Cuntz algebras as a test case for our theory.  The result in the
uniquely \(p\)\nb-divisible
case covers \(\Z/p\)\nb-actions
on~\(\mathcal{O}_{n+1}\)
for all \(n\in\N_{\ge1}\)
that are coprime to~\(p\).
In each case, there are as many such actions as there are countable
modules over \(\Z[\vartheta,1/p]\).

An exact \(\Kring\)\nb-module
as in Example~\ref{the:divisible_exact_module} is a direct sum of
three pieces, which are exact \(\Kring\)\nb-modules where two of the
three pieces \(X,Y,Z\)
vanish.  Theorem~\ref{the:divisible_exact_module} says that any
object of \(\mathfrak{B}^G\)
where one of the pieces in \(F_*(A)\)
is uniquely \(p\)\nb-divisible
is such a direct sum.  Hence the rather strong lack of uniqueness of
\(\Z/p\)-actions in the situation above is caused by the many
actions of~\(\Z/p\)
on~\(\mathcal{O}_2\).
Given any object~\(A\) of~\(\mathfrak{B}^G\)
and any action on~\(\mathcal{O}_2\),
we get an action on \(A\oplus\mathcal{O}_2\).
We may then use Theorem~\ref{the:make_actions_simple} to replace
\(A\oplus\mathcal{O}_2\)
by a stable UCT Kirchberg algebra, which is again isomorphic
to~\(A\)
if~\(A\)
is already a stable UCT Kirchberg algebra.

\subsection{Removing a uniquely divisible submodule}
\label{sec:remove_divisible}

We shall need a generalisation of
Theorem~\ref{the:divisible_exact_module} where we weaken the
assumption that~\(M_1\) be uniquely \(p\)\nb-divisible and only
require a given uniquely \(p\)\nb-divisible submodule of~\(M_1\).
Similar statements hold for submodules in the other entries
of~\(M\), but we shall only use this case.

\begin{theorem}
  \label{the:remove_divisible}
  Let~\(M\)
  be an exact \(\Kring\)\nb-module.
  Let \(M_1' \subseteq M_1\)
  be an \(\Gs{1}^M\)\nb-invariant,
  uniquely \(p\)\nb-divisible
  \(\Z/2\)\nb-graded
  subgroup.  Let \(M'_0 \defeq \Ga{0}{1}^M(M'_1)\)
  and \(M'_2 \defeq \Ga{2}{1}^M(M'_1)\).
  Then~\(M'\)
  is an exact \(\Kring\)\nb-submodule
  of~\(M\).
  It is uniquely \(p\)\nb-divisible
  and isomorphic to an exact \(\Kring\)\nb-module
  as in Example~\textup{\ref{exa:divisible}}, with \(X = M'_0\)
  and \(Z = M'_2\).  The quotient~\(M/M'\) is also exact.
\end{theorem}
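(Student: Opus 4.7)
The strategy is to establish the four claims in order: $M'$ is a $\Kring$\nb-submodule, it is uniquely $p$\nb-divisible, it matches Example~\ref{exa:divisible} with $X = M'_0$, $Z = M'_2$, $Y = 0$, and $M/M'$ is exact.

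To verify closure of $M'$ under the $\Kring$\nb-action, I would reduce each generator's action on $M'_0$ or $M'_2$ to the defining expressions $M'_0 = \Ga{0}{1}^M(M'_1)$ and $M'_2 = \Ga{2}{1}^M(M'_1)$ and apply the relations from Theorem~\ref{the:generators_and_relations} and Proposition~\ref{pro:further_relations}. Typical computations: $\Ga{1}{0}^M(M'_0) = \Ga{1}{0}^M\Ga{0}{1}^M(M'_1) = N(\Gs{1}^M)(M'_1) \subseteq M'_1$ by~\eqref{eq:alpha_101_N} and the assumed $\Gs{1}^M$\nb-invariance of $M'_1$; $\Ga{1}{2}^M(M'_2) = (\Gu{1}^M - \Gs{1}^M)(M'_1) \subseteq M'_1$ by~\eqref{eq:alpha_121}; $\Ga{2}{0}^M(M'_0) = 0$ and $\Ga{0}{2}^M(M'_2) = 0$ by~\eqref{eq:alpha_exact}; invariance under $\Gs{2}^M$, $\Gt{0}^M$, $\Gt{2}^M$ comes from \eqref{eq:sa_21}, \eqref{eq:ta_01} and~\eqref{eq:ta_21}. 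Unique $p$\nb-divisibility of $M'_0$ and $M'_2$ is then automatic because they are quotients of the $\Z[1/p]$\nb-module $M'_1$.

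Next I would identify $M'$ with the module of Example~\ref{exa:divisible}. Applying Proposition~\ref{pro:divisible_S-module} to the uniquely $p$\nb-divisible $\Sring$\nb-module $M'_1$ (with $t$ acting as $\Gs{1}^M$) yields $M'_1 = X' \oplus Z'$, where $X'$ is the fixed subspace of $\Gs{1}^M$ on $M'_1$, a $\Z[1/p]$\nb-module, and $Z' = \ker\bigl(N(\Gs{1}^M)|_{M'_1}\bigr)$ is a $\Z[\vartheta,1/p]$\nb-module with $\vartheta = \Gs{1}^M|_{Z'}$. Four key facts then follow: (a)~$\Ga{0}{1}^M|_{X'}$ is injective, since $\Ga{1}{0}^M \Ga{0}{1}^M = N(\Gs{1}^M)$ restricts to multiplication by~$p$, which is invertible on $X'$; (b)~$\Ga{0}{1}^M|_{Z'} = 0$, since $\Ga{0}{1}^M(\Gu{1}^M - \Gs{1}^M) = 0$ by~\eqref{eq:as_01} and $(\Gu{1}^M - \Gs{1}^M)|_{Z'}$ is invertible by Proposition~\ref{pro:divisible_S-module}; (c)~$\Ga{2}{1}^M|_{X'} = 0$, because unique $p$\nb-divisibility lets me write each $x \in X'$ as $\Ga{1}{0}^M \Ga{0}{1}^M(x/p)$, landing in $\im(\Ga{1}{0}^M) \subseteq \ker(\Ga{2}{1}^M)$ by~\eqref{eq:alpha_exact}; (d)~$\Ga{2}{1}^M|_{Z'}$ is injective because $\Ga{1}{2}^M \Ga{2}{1}^M = \Gu{1}^M - \Gs{1}^M$ is invertible on $Z'$. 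Hence $\Ga{0}{1}^M$ induces an isomorphism $X' \cong M'_0$ and $\Ga{2}{1}^M$ induces an isomorphism $Z' \cong M'_2$; transporting the $\Kring$\nb-structure via these isomorphisms reproduces the formulas of Example~\ref{exa:divisible} with $Y = 0$. In particular $M'$ is exact.

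Finally, to see that $M/M'$ is exact, I would view each of the two triangles in Definition~\ref{def:exact_R-module} as a $3$\nb-periodic chain complex (consecutive compositions vanish by~\eqref{eq:alpha_exact}). The short exact sequence $M' \into M \prto M/M'$ of $\Kring$\nb-modules induces short exact sequences of these chain complexes, and the associated long exact sequences in homology, combined with the exactness of $M'$ and $M$, force the homology of $M/M'$ to vanish. I expect the hardest step to be the identification of $M'$ with Example~\ref{exa:divisible}: aligning the $\Sring$\nb-module decomposition $M'_1 = X' \oplus Z'$ with the structure maps landing in $M'_0$ and $M'_2$ so that the four facts (a)--(d) fit together simultaneously is the technical heart of the argument.
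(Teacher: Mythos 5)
Your proof follows the paper's own argument closely: decompose $M'_1$ via Proposition~\ref{pro:divisible_S-module} into the fixed part and the kernel of the norm, show that $\Ga{0}{1}^M$ and $\Ga{2}{1}^M$ carry these two summands isomorphically onto $M'_0$ and $M'_2$ respectively while vanishing on the complementary summand, read off the structure of Example~\ref{exa:divisible}, and invoke the long exact sequence in homology for the exactness of $M/M'$ --- this is precisely the paper's route. One small slip: the intermediate claim that unique $p$\nb-divisibility of $M'_0$ and $M'_2$ is ``automatic because they are quotients of the $\Z[1/p]$\nb-module $M'_1$'' is not a general fact, since a quotient of a uniquely $p$\nb-divisible group is always $p$\nb-divisible but may acquire $p$\nb-torsion (for instance $\Z[1/p]/\Z$); however, your facts (a)--(d) prove $M'_0\cong X'$ and $M'_2\cong Z'$ directly, which is exactly what the paper does, so this harmless parenthetical remark does not affect the validity of the argument.
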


\begin{proof}
  Each generator~\(\Ga{j}{k}\)
  maps~\(M'\)
  into itself because~\(M'_1\)
  is \(\Gs{1}^M\)\nb-invariant.
  So~\(M'\)
  is a \(\Kring\)\nb-submodule.
  By construction, \(\Ga{0}{1}^{M'}\)
  and \(\Ga{2}{1}^{M'}\)
  are surjective and hence \(\Ga{2}{0}^{M'}=0\)
  and \(\Ga{0}{2}^{M'}=0\).
  Next we claim that \(\Ga{1}{0}^{M'}\)
  and~\(\Ga{1}{2}^{M'}\)
  are injective.  To prove this, we apply
  Proposition~\ref{pro:divisible_S-module} to~\(M'\).
  This gives us equalities
  \begin{align*}
     \ker N(\Gs{1}^{M'}) &= \im (\Gu{1}^{M'} - \Gs{1}^{M'}),\\
     \im N(\Gs{1}^{M'}) &= \ker (\Gu{1}^{M'} - \Gs{1}^{M'}),\\
    M' &= \im (\Gu{1}^{M'} - \Gs{1}^{M'}) \oplus \im N(\Gs{1}^{M'}),
  \end{align*}
  and shows that the restrictions of \(\Gu{1}^{M'} - \Gs{1}^{M'}\)
  and \(N(\Gs{1}^{M'})\)
  to the appropriate direct summands are invertible.  The
  map~\(\Ga{0}{1}^{M'}\)
  vanishes on \(\im (\Gu{1}^{M'} - \Gs{1}^{M'})\)
  and is injective on \(\im N(\Gs{1}^{M'})\)
  because \(N(\Gs{1}^{M'}) = \Ga{1}{0}^{M'}\circ\Ga{0}{1}^{M'}\)
  is injective there.  So
  \(M'_0 = \Ga{0}{1}^{M'}(M'_1) \cong \im N(\Gs{1}^{M'})\).
  Similarly,
  \(M'_2 = \Ga{2}{1}^{M'}(M'_1) \cong \im (\Gu{1}^{M'} -
  \Gs{1}^{M'})\).
  And since \(N(\Gs{1}^{M'})\)
  and \(\Gu{1}^{M'} - \Gs{1}^{M'}\)
  are invertible on these direct summands in~\(M_1'\),
  the maps \(\Ga{1}{0}^{M'}\)
  and~\(\Ga{1}{2}^{M'}\)
  are injective, and their images are \(\im N(\Gs{1}^{M'})\)
  and \(\im (\Gu{1}^{M'} - \Gs{1}^{M'})\),
  respectively, which are also the kernels of the maps
  \(\Ga{2}{1}^{M'}\)
  and~\(\Ga{0}{1}^{M'}\).
  So the \(\Kring\)\nb-module~\(M'\)
  is exact.  The proof also describes it so explicitly that its unique
  \(p\)\nb-divisibility
  is manifest.  And we have directly shown that it has the form of
  Example~\ref{exa:divisible} with \(X=\im N(\Gs{1}^{M'})\),
  \(Y=0\) and \(Z=\im (\Gu{1}^{M'} - \Gs{1}^{M'})\).  A quotient
  of an exact \(\Kring\)\nb-module by an exact
  \(\Kring\)\nb-submodule inherits exactness by the long exact
  sequence in homology.  So \(M/M'\) is exact.
\end{proof}

\section{Exact modules where one generator vanishes}
\label{sec:generator_vanishes}

Section~\ref{sec:uniquely_divisible} already describes exact
\(\Kring\)\nb-modules
where one of the pieces~\(M_j\)
vanishes or, equivalently, where one of the idempotents~\(\Gu{j}\)
vanishes.  Now we study exact \(\Kring\)\nb-modules
where one of the generators~\(\Ga{j}{k}\)
vanishes.  Then one of the long exact sequences in
Definition~\ref{def:exact_R-module} reduces to a short exact
sequence.  This is particularly helpful if the object in the middle
of the short exact sequence is not~\(M_2\)
because the internal structure of \(M_0\)
and~\(M_1\)
is simpler than that of~\(M_2\).

\subsection{The case \texorpdfstring{$\Ga{2}{1}^M=0$}{α₂₁=0}}
\label{sec:a12_zero}

Let~\(M\)
be an exact \(\Kring\)\nb-module
on which~\(\Ga{2}{1}^M\)
vanishes.  This puts a restriction on~\(M_1\):
if \(\Ga{2}{1}^M=0\),
then \(\Gs{1}^M = \Gu{1}^M\)
and \(\Gs{2}^M = \Gu{2}^M\)
by \eqref{eq:alpha_121} and~\eqref{eq:alpha_212}.  By exactness,
\(\Ga{2}{1}^M=0\)
is equivalent to~\(\Ga{1}{0}^M\)
being surjective and to~\(\Ga{0}{2}^M\)
being injective.  And there is a short exact sequence
\begin{equation}
  \label{eq:short_exact_sequence_201}
  \begin{tikzcd}
    M_2 \arrow[r, >->, "\Ga{0}{2}^M"] &
    M_0 \arrow[r, ->>, "\Ga{1}{0}^M"] &
    M_1.
  \end{tikzcd}
\end{equation}

Theorem~\ref{the:a12_vanishes} will describe the category of exact
\(\Kring\)\nb-modules
with \(\Ga{2}{1}^M = 0\)
through certain triples.  To clarify this, we first simplify an exact
\(\Kring\)\nb-module
with \(\Ga{2}{1}^M=0\).
Its central piece is~\(M_0\),
which is a module over \(\Sring \defeq\Z[t]/(t^p-1)\).
We briefly write~\(t\)
instead of~\(\Gt{0}^M\)
to reduce clutter.  This operator will also determine the
operator~\(\Gt{2}^M\).
And \(\Ga{2}{1}^M=0\)
and \eqref{eq:alpha_212} and~\eqref{eq:alpha_121} imply
\(\Gs{1}= \Gu{1}\) and \(\Gs{2}= \Gu{2}\).

First, we use the exact sequence above to identify \(M_2\)
and~\(M_1\)
with the image and the cokernel of the injective map~\(\Ga{0}{2}^M\).
So~\(M_2\)
becomes a \(\Z/2\)\nb-graded
subgroup of~\(M_0\)
and~\(M_1\)
becomes the quotient \(M_0/M_2\),
and \(\Ga{0}{2}^M\)
and~\(\Ga{1}{0}^M\)
become the inclusion map \(M_2 \into M_0\)
and the quotient map \(M_0 \prto M_1\), respectively.

The relations \eqref{eq:alpha_020} and~\eqref{eq:alpha_010} imply that
\(\Ga{2}{0}^M\colon M_0 \to M_2 \subseteq M_0\)
is the corestriction of \(1-t\colon M_0 \to M_0\)
and that \(\Ga{0}{1}^M\colon M_1 \to M_0\)
is the map on the quotient induced by~\(N(t)\).
For this to make sense, we need
\begin{equation}
  \label{eq:a21_condition_M2}
  \im (1-t) \subseteq M_2 \subseteq \ker(N(t)).
\end{equation}
Let
\[
  H_0(M_0) \defeq \frac{\ker N(t)}{\im (1-t)}.
\]
The \(\Z/2\)\nb-graded
\(\Sring\)\nb-module
structure on~\(M_0\)
makes this subquotient a \(\Z/2\)\nb-graded
module over \(\Z[t]/(1-t,N(t)) \cong \Z/p\).
A \(\Z/2\)\nb-graded
subgroup~\(M_2\)
as in~\eqref{eq:a21_condition_M2} is equivalent to a
\(\Z/2\)\nb-graded
\(\Z/p\)-vector
subspace \(\dot{M}_2 \subseteq H_0(M_0)\).
For an exact \(\Kring\)\nb-module,
the kernel and image of \(\Ga{1}{2}^M\colon M_2 \to M_1\) must be
\begin{align*}
  \ker \Ga{1}{2}^M &= \im \Ga{2}{0}^M = \im (1-t) \subseteq M_2,\\
  \im \Ga{1}{2}^M &= \ker \Ga{0}{1}^M = \ker N(t)/M_2 \subseteq M_0/M_2 = M_1.
\end{align*}
So~\(\Ga{1}{2}^M\) is induced by an isomorphism
\[
  \dot\alpha\colon \dot{M}_2
  = \frac{M_2}{\im (1-t)} \cong \frac{\ker N(t)}{M_2} \cong
  \frac{H_0(M_0)}{\dot{M}_2}.
\]
The map~\(\dot\alpha\)
must reverse the grading because~\(\Ga{1}{2}^M\)
is odd, and
\begin{equation}
  \label{eq:a21_vanishes_image_a12}
  \dot{M}_2 \cong \im(\Ga{1}{2}^M).
\end{equation}

\begin{theorem}
  \label{the:a12_vanishes}
  The category of exact \(\Kring\)\nb-modules
  with \(\Ga{2}{1}^M=0\)
  is equivalent to the category of triples
  \((M_0,\dot{M}_2,\dot\alpha)\)
  where
  \begin{enumerate}
  \item \(M_0\)
    is a \(\Z/2\)\nb-graded
    \(\Sring\)\nb-module with \(\ker (1-t) = \im N(t)\);
  \item \(\dot{M}_2 \subseteq H_0(M_0)\)
    is a \(\Z/2\)\nb-graded \(\Z/p\)\nb-vector subspace;
  \item \(\dot\alpha\)
    is a grading-reversing isomorphism
    \(\dot\alpha\colon \dot{M}_2 \to H_0(M_0)/\dot{M}_2\).
  \end{enumerate}
  Morphisms of triples
  \((M_0,\dot{M}_2,\dot\alpha) \to (L_0,\dot{L}_2,\dot\beta)\)
  are grading-preserving \(\Sring\)\nb-module
  homomorphisms \(f_0\colon M_0 \to L_0\)
  such that \(H_0(f_0)\colon H_0(M_0) \to H_0(L_0)\)
  maps \(\dot{M}_2\)
  to~\(\dot{L}_2\)
  and intertwines the maps \(\dot\alpha\)
  and~\(\dot\beta\).
  The homomorphism of exact \(\Kring\)\nb-modules
  induced by a morphism of triples \(f_0\colon M_0 \to L_0\)
  is injective if and only if~\(f_0\)
  is injective and \(f_0^{-1}(L_2) = M_2\);
  it is surjective if and only if~\(f_0\)
  is surjective and \(f_0(M_2) = L_2\).
\end{theorem}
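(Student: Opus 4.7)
The plan is to construct a quasi-inverse functor from triples to exact $\Kring$-modules with $\Ga{2}{1}^M=0$, show the two functors are mutually inverse, and then characterise injective and surjective morphisms. The discussion immediately preceding the statement already builds one direction on objects: given such an $M$, the subgroup $M_2\subseteq M_0$ satisfies $\im(1-t)\subseteq M_2\subseteq \ker N(t)$, the quotient $\dot M_2\subseteq H_0(M_0)$ is a $\Z/p$-subspace, and $\Ga{1}{2}^M$ induces the grading-reversing isomorphism~$\dot\alpha$. One also verifies that $\ker(1-t)=\im N(t)$ on $M_0$ holds automatically: since $\Ga{0}{2}^M$ is injective and $\Ga{1}{0}^M$ is surjective, $\ker(1-t)=\ker\Ga{2}{0}^M=\im\Ga{0}{1}^M=\im N(t)$ by \eqref{eq:alpha_020}, \eqref{eq:alpha_010_N} and exactness at~$M_0$.

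For the inverse functor, starting from $(M_0,\dot M_2,\dot\alpha)$ I set $M_2\defeq q^{-1}(\dot M_2)$ where $q\colon\ker N(t)\twoheadrightarrow H_0(M_0)$ is the quotient, and $M_1\defeq M_0/M_2$. I take $\Ga{0}{2}^M$ to be the inclusion, $\Ga{1}{0}^M$ the quotient, $\Ga{2}{0}^M$ the corestriction of $1-t$ (valid since $\im(1-t)\subseteq M_2$), $\Ga{0}{1}^M$ the map induced by $N(t)$ on the quotient (valid since $N(t)|_{M_2}=0$), $\Ga{2}{1}^M=0$, and $\Ga{1}{2}^M$ the composite
\[
  M_2 \twoheadrightarrow \dot M_2 \xrightarrow{\dot\alpha} H_0(M_0)/\dot M_2 \hookrightarrow M_0/M_2 = M_1,
\]
where the final inclusion uses $\ker N(t)/M_2\cong H_0(M_0)/\dot M_2$. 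The operators forced by \eqref{eq:alpha_121}, \eqref{eq:alpha_202} and~\eqref{eq:alpha_212} are $\Gs{1}=\Gu{1}$, $\Gs{2}=\Gu{2}$ and $\Gt{2}=t|_{M_2}$; these make sense because $M_2$ is $t$-invariant. The relations from Theorem~\ref{the:generators_and_relations} then reduce to elementary identities, the only somewhat non-trivial one being~\eqref{eq:alpha_2_N}, which collapses to $N(t)|_{M_2}=0$ and holds by construction. Exactness of the two triangles unwinds similarly: three of the six conditions come directly from the short exact sequence $M_2\hookrightarrow M_0\twoheadrightarrow M_1$ together with $\Ga{2}{1}^M=0$, two follow from $\dot\alpha$ being an isomorphism, and the remaining one is precisely the standing hypothesis $\ker(1-t)=\im N(t)$ on~$M_0$.

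It is then routine to check that the two functors are quasi-inverse on objects by unwinding the definitions. For morphisms, a $\Kring$-module map $f=(f_0,f_1,f_2)$ is determined by~$f_0$: compatibility with $\Ga{0}{2}^M$ forces $f_2=f_0|_{M_2}$ and in particular $f_0(M_2)\subseteq L_2$, and compatibility with $\Ga{1}{0}^M$ forces $f_1$ to be induced by $f_0$ on the quotient. All compatibilities except the one for $\Ga{1}{2}^M$ are then automatic, and that last one translates into the requirement that $H_0(f_0)$ sends $\dot M_2$ into $\dot L_2$ and intertwines $\dot\alpha$ with~$\dot\beta$. Injectivity of $f$ reduces to injectivity of all three components, which amounts to $f_0$ being injective together with $f_0^{-1}(L_2)=M_2$ (so that the induced map on $M_0/M_2$ is injective); surjectivity reduces analogously to $f_0$ being surjective and $f_0(M_2)=L_2$ (so that $f_2$ is surjective).

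I expect no real obstacle here: the statement is essentially a bookkeeping exercise, and the main labour is just verifying the long list of relations in Theorem~\ref{the:generators_and_relations} and the six exactness conditions. The only conceptual point is the small observation that $\ker(1-t)=\im N(t)$ must be imposed on~$M_0$ in the target category, since it is not automatic for an arbitrary $\Sring$-module but emerges in the forward direction from the exactness of~$M$ combined with the vanishing of~$\Ga{2}{1}^M$.
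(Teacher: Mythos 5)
Your proof is correct and follows essentially the same route as the paper: the forward direction is read off from the discussion preceding the theorem, the condition $\ker(1-t)=\im N(t)$ is deduced exactly as in the paper from exactness at $M_0$ together with $\Ga{0}{2}^M$ injective and $\Ga{1}{0}^M$ surjective, the inverse functor is the same reconstruction with $M_2=q^{-1}(\dot M_2)$ and $M_1=M_0/M_2$, and the morphism and injectivity/surjectivity analysis mirror the paper's argument. Your accounting of the six exactness conditions (three from the short exact sequence with $\Ga{2}{1}^M=0$, two from $\dot\alpha$ being an isomorphism, one from $\ker(1-t)=\im N(t)$) is an accurate and slightly more explicit bookkeeping of what the paper does.
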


\begin{proof}
  We have already associated \(M_0\),
  \(\dot{M}_2\)
  and~\(\dot\alpha\)
  to an exact \(\Kring\)\nb-module.
  The exactness of one of the two sequences in
  Definition~\ref{def:exact_R-module} is built into our Ansatz.  By
  construction, the sequence \(\Ga{0}{1}^M, \Ga{2}{0}^M, \Ga{1}{2}^M\)
  is exact at \(M_2\)
  and~\(M_1\)
  if and only if the map~\(\dot\alpha\)
  is invertible.  In addition,
  \[
    \im (\Ga{0}{1}^M)
    = \im(\Ga{0}{1}^M \circ \Ga{1}{0}^M)
    = \im N(t)
  \]
  because~\(\Ga{1}{0}^M\)
  is surjective, and
  \[
    \ker (\Ga{2}{0}^M)
    = \ker(\Ga{0}{2}^M \circ \Ga{2}{0}^M)
    = \ker (1-t)
  \]
  because~\(\Ga{0}{2}^M\)
  is injective.  Exactness at~\(M_0\)
  requires \(\ker (\Ga{2}{0}^M) = \im (\Ga{0}{1}^M)\),
  which is equivalent to \(\ker (1-t) = \im N(t)\).
  So the triple \((M_0,\dot{M}_2,\dot\alpha)\)
  associated to an exact \(\Kring\)\nb-module
  satisfies the conditions in the theorem.  Conversely, assume that
  such a triple is given.  Reading our construction backwards, let
  \(M_2\subseteq \im(1-t)\)
  be the pre-image of~\(\dot{M}_2\)
  and let \(M_1\defeq M_0/M_2\).
  Let~\(\Ga{0}{2}^M\)
  be the inclusion map.  Let~\(\Ga{1}{0}^M\)
  be the quotient map.  Then we may define \(\Ga{2}{0}^M\)
  and~\(\Ga{0}{1}^M\)
  as above, induced by \(1-t\)
  and~\(N(t)\).
  Let~\(\Ga{1}{2}^M\)
  be induced by~\(\dot\alpha\)
  and let \(\Ga{2}{1}^M\defeq 0\).
  Let \(M= M_0 \oplus M_1 \oplus M_2\)
  and let~\(\Gu{j}\)
  for \(j=0,1,2\)
  be the projections onto these summands.  The operators defined above
  clearly satisfy the relations
  \eqref{eq:1_vs_1}--\eqref{eq:alpha_exact}.  By construction,
  \(\Gt{0}^M\)
  is the given operator~\(t\)
  on~\(M_0\),
  and~\eqref{eq:alpha_010} holds.  Since~\(M_1\)
  is a quotient of~\(M_0\),
  this implies~\eqref{eq:alpha_101}.  And~\eqref{eq:alpha_2} is
  equivalent to \(N(t)|_{M_2} = 0\),
  which holds because \(M_2 \subseteq \ker N(t)\).
  So all conditions in Theorem~\ref{the:generators_and_relations} hold
  and we have built a \(\Kring\)\nb-module.
  The arguments above also show that it is exact.

  Let \(M\)
  and~\(L\)
  be exact \(\Kring\)\nb-modules
  with \(\Ga{2}{1}^M = 0\)
  and \(\Ga{2}{1}^L = 0\)
  and let \(f\colon M \to L\)
  be a \(\Kring\)\nb-module
  homomorphism.  This specialises to an \(\Sring\)\nb-module
  homomorphism \(f_0\colon M_0 \to L_0\).
  Any \(\Sring\)\nb-module
  homomorphism maps \(\im N(t^M)\),
  \(\ker N(t^M)\),
  \(\im (1-t^M)\)
  and \(\ker (1-t^M)\)
  to the corresponding subgroups in~\(L_0\).
  So the condition
  \(f_0(\Ga{0}{2}^M(M_2)) \subseteq \Ga{0}{2}^L(L_2)\)
  is equivalent to \(H_0(f_0)(\dot{M}_2) \subseteq \dot{L}_2\).
  If an \(\Sring\)\nb-module
  homomorphism~\(f_0\)
  satisfies this, then it induces maps \(f_j\colon M_j \to L_j\)
  for \(j=1,2\)
  that intertwine the actions of \(\Ga{0}{1}\),
  \(\Ga{1}{0}\),
  \(\Ga{2}{0}\),
  \(\Ga{0}{2}\).
  They also intertwine the vanishing actions of~\(\Ga{2}{1}\).
  And \(f_1 \circ \Ga{1}{2}^M = \Ga{1}{2}^M \circ f_2\)
  is equivalent to
  \(H_0(f_0) \circ \dot\alpha = \dot\beta \circ H_0(f_0)\).
  This proves the asserted equivalence of categories.

  The homomorphism~\(f\)
  is injective or surjective if and only if \(f_2,f_0,f_1\)
  are injective, respectively.  View these maps as a transformation
  between the short exact
  sequences~\eqref{eq:short_exact_sequence_201} for our two exact
  \(\Kring\)\nb-modules.
  If~\(f_0\)
  is injective, then so is~\(f_2\),
  and~\(f_1\)
  is injective if and only if \(f_0^{-1}(L_2) = M_2\).
  If~\(f_0\)
  is surjective, so is~\(f_1\),
  and~\(f_2\)
  is surjective if and only if \(f_0(M_2) = L_2\).
  These are the criteria in the theorem for~\(f\)
  to be injective or surjective, respectively.
\end{proof}

The case where both \(\Ga{1}{2}^M\)
and~\(\Ga{2}{1}^M\)
vanish is particularly simple:

\begin{corollary}
  \label{cor:a_2112_exact}
  The category of exact \(\Kring\)\nb-modules
  with \(\Ga{1}{2}^M=0\)
  and \(\Ga{2}{1}^M=0\)
  is equivalent to the category of \(\Z/2\)\nb-graded
  cohomologically trivial \(\Sring\)\nb-modules.
  A homomorphism \(f_0\colon M_0\to L_0\)
  between two \(\Z/2\)\nb-graded
  cohomologically trivial \(\Sring\)\nb-modules
  induces an injective or surjective homomorphism of exact
  \(\Kring\)\nb-modules
  if and only if~\(f_0\) is injective or surjective, respectively.
\end{corollary}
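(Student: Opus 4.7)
The plan is to deduce this corollary directly from Theorem~\ref{the:a12_vanishes} by specialising to the case where the isomorphism~$\dot\alpha$ in the triple $(M_0,\dot{M}_2,\dot\alpha)$ is forced to be trivial. The key observation is the identification~\eqref{eq:a21_vanishes_image_a12}, which shows that $\im(\Ga{1}{2}^M) \cong \dot{M}_2$. So the vanishing of $\Ga{1}{2}^M$ on top of the vanishing of $\Ga{2}{1}^M$ is equivalent to $\dot{M}_2 = 0$. Since $\dot\alpha$ is an isomorphism $\dot{M}_2 \to H_0(M_0)/\dot{M}_2$, this forces $H_0(M_0) = 0$, that is, $\ker N(\Gt{0}^M) = \im(\Gu{0}^M - \Gt{0}^M)$. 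Together with the condition $\ker(\Gu{0}^M - \Gt{0}^M) = \im N(\Gt{0}^M)$ that is part of the description in Theorem~\ref{the:a12_vanishes}, this says precisely that $(M_0,\Gt{0}^M)$ is cohomologically trivial as an $\Sring$-module in the sense of Definition~\ref{def:coh_trivial}.

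Conversely, given a $\Z/2$-graded cohomologically trivial $\Sring$-module, we take it to be $M_0$ and set $\dot{M}_2 \defeq 0$; then the triple $(M_0,0,0)$ satisfies the conditions of Theorem~\ref{the:a12_vanishes} trivially and yields an exact $\Kring$-module with $M_2 = \im(\Gu{0}^M - \Gt{0}^M) = \ker N(\Gt{0}^M)$ and $M_1 = M_0/M_2$, on which both $\Ga{1}{2}^M$ and $\Ga{2}{1}^M$ vanish. This bijection is evidently compatible with morphisms: since the $\dot{M}_2$-parts are zero, the compatibility condition on morphisms from Theorem~\ref{the:a12_vanishes} becomes vacuous, so every $\Sring$-module map $f_0\colon M_0 \to L_0$ defines a morphism of the associated exact $\Kring$-modules. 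Here one uses that any $\Sring$-linear map automatically carries $\im(\Gu{0}-\Gt{0})$ into itself, hence carries~$M_2$ into~$L_2$.

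It remains to check the injectivity and surjectivity criterion. By Theorem~\ref{the:a12_vanishes}, the induced homomorphism of $\Kring$-modules is injective iff $f_0$ is injective and $f_0^{-1}(L_2) = M_2$; but if $f_0$ is injective and $x\in M_0$ satisfies $f_0(x) \in L_2 = \ker N(\Gt{0}^L)$, then $f_0(N(\Gt{0}^M) x) = 0$, so $N(\Gt{0}^M) x = 0$, that is, $x\in \ker N(\Gt{0}^M) = M_2$. Similarly, surjectivity of the induced map requires $f_0$ surjective and $f_0(M_2) = L_2$; if $f_0$ is surjective and $y\in L_2 = \im(\Gu{0}^L-\Gt{0}^L)$, write $y = (\Gu{0}^L-\Gt{0}^L)z$ and lift $z = f_0(w)$ to obtain $y = f_0((\Gu{0}^M-\Gt{0}^M)w) \in f_0(M_2)$. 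So in both cases the auxiliary condition on~$f_0$ is automatic, completing the proof.

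I expect no substantive obstacle here: the entire statement is an immediate unpacking of Theorem~\ref{the:a12_vanishes} under the additional constraint $\dot{M}_2=0$, together with the elementary observation that $\Sring$-linear maps respect the canonical cohomological subspaces $\ker N(t)$ and $\im(1-t)$, so the compatibility conditions in the equivalence of categories trivialise.
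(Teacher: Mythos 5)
Your proof is correct and follows the same route as the paper: specialise Theorem~\ref{the:a12_vanishes} to the case \(\dot{M}_2=0\), observe that the existence of the isomorphism~\(\dot\alpha\) then forces \(H_0(M_0)=0\), hence cohomological triviality, and check that the auxiliary conditions \(f_0^{-1}(L_2)=M_2\) and \(f_0(M_2)=L_2\) become automatic. The only difference is that you spell out the last step explicitly where the paper merely remarks that \(M_1\) and~\(M_2\) are natural subobjects and quotients of~\(M_0\).
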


\begin{proof}
  Under the equivalence in Theorem~\ref{the:a12_vanishes}, the
  condition \(\Ga{1}{2}^M=0\)
  corresponds to~\(\dot\alpha=0\).
  Then \(\dot{M}_2=\{0\}\)
  and hence \(\im (1-t) = \ker N(t)\).
  So the pieces \(\dot{M}_2\)
  and~\(\dot\alpha\)
  in the triples in Theorem~\ref{the:a12_vanishes} become redundant in
  this case.  Both \(M_1\)
  and~\(M_2\)
  are naturally isomorphic to subobjects and quotients of~\(M_0\).
  This implies the criterion for injectivity and surjectivity of the
  induced \(\Kring\)\nb-module homomorphism.
\end{proof}

The exact \(\Kring\)\nb-module
built from a cohomologically trivial \(\Sring\)\nb-module~\(M_0\) has
\begin{align*}
  M_1 &= \im N(t) = \ker (1-t),\\
  M_2 &= M_0/M_1 = M_0/\ker(1-t) \cong \im(1-t) = \ker N(t).
\end{align*}
The map \(\Ga{0}{1}^M\colon \im N(t) \into M_0\)
is the canonical inclusion map and
\(\Ga{1}{0}^M\colon M_0 \prto \im N(t)\)
is~\(N(t)\).
When we identify~\(M_2\)
with \(\im(1-t)\),
then \(\Ga{0}{2}^M\colon \im(1-t) \into M_0\)
also becomes the canonical inclusion map and
\(\Ga{2}{0}^M\colon M_0 \prto \im (1-t)\)
becomes \(1-t\),
respectively.  This is because the isomorphism
\(M_0/\ker(1-t) \cong \im(1-t)\)
applies the map~\(1-t\).
If~\(M_0\)
is \(\Z/2\)\nb-graded,
then the pieces \(M_1\)
and~\(M_2\) have the induced \(\Z/2\)\nb-grading.



Izumi~\cite{Izumi:Finite_group} has met a stronger property than
cohomological triviality in his classification of group actions with
the Rokhlin property.  Roughly speaking, his assumption of complete
cohomological triviality says that an \(\Sring\)\nb-module
and all the submodules of \(n\)\nb-torsion
elements for \(n\in\N_{\ge2}\)
are cohomologically trivial.  He shows that this condition is
necessary and sufficient for an \(\Sring\)\nb-module
to be isomorphic to~\(\K_*(A)\)
for an action with the Rokhlin property on a simple unital
\(\Cst\)\nb-algebra.
Whereas Izumi can study all finite groups, we must limit our study to
cyclic groups of prime order.  As a benefit, we can treat actions
without the Rokhlin property.  The class of examples treated in
Corollary~\ref{cor:a_2112_exact} seems \(\K\)\nb-theoretically
quite close to the examples studied by Izumi
in~\cite{Izumi:Finite_group}.  Our classification theorem for these
modules only needs the \(\Sring\)\nb-module~\(\K_*(A)\)
and no further data.  There are cohomologically trivial
\(\Sring\)\nb-modules
that are not completely cohomologically trivial and hence cannot
correspond to actions with the Rokhlin property.  In fact,
Example~\ref{exa:actions_on_Cuntz_9} is of this type.  It corresponds
to certain \(\Z/p\)\nb-actions
on the Cuntz algebras \(\mathcal{O}_{p^k+1}\) for \(k\ge1\).

\subsection{The case \texorpdfstring{$\Ga{1}{2}^M=0$}{α₁₂=0}}
\label{sec:a21_zero}

Now let~\(M\)
be an exact \(\Kring\)\nb-module
on which~\(\Ga{1}{2}^M\)
vanishes.  By exactness, this is equivalent to~\(\Ga{2}{0}^M\)
being surjective and to~\(\Ga{0}{1}^M\)
being injective.  In other words, there is a short exact sequence
\begin{equation}
  \label{eq:short_exact_sequence_102}
  \begin{tikzcd}
    M_1 \arrow[r, >->, "\Ga{0}{1}^M"] &
    M_0 \arrow[r, ->>, "\Ga{2}{0}^M"] &
    M_2.
  \end{tikzcd}
\end{equation}
This case is very similar to the case \(\Ga{2}{1}^M=0\)
studied in Section~\ref{sec:a12_zero}.  Hence we go through the
construction more quickly.  We write \(t\defeq\Gt{0}^M\)
for the generator of the \(\Sring\)\nb-module
structure on~\(M_0\).
We identify~\(M_1\)
with a subgroup of~\(M_0\)
and~\(M_2\)
with the quotient \(M_0/M_1\),
such that~\(\Ga{0}{1}^M\)
and~\(\Ga{2}{0}^M\)
become the inclusion map \(M_1 \into M_0\)
and the quotient map \(M_0 \prto M_1\),
respectively.  The map
\(\Ga{1}{0}^M\colon M_0 \to M_1 \subseteq M_0\)
is the corestriction of \(N(t)\colon M_0 \to M_0\),
and \(\Ga{0}{2}^M\colon M_2 \to M_0\)
is the map on the quotient induced by~\(1-t\).
This makes sense if and only if
\begin{equation}
  \label{eq:a12_condition_M1}
  \im N(t) \subseteq M_1 \subseteq \ker(1-t).
\end{equation}
Let
\[
  H^0(M_0) \defeq \frac{\ker (1-t)}{\im N(t)}.
\]
This is naturally a module over \(\Z[t]/(1-t,N(t)) \cong \Z/p\).
A subgroup~\(M_1\)
as in~\eqref{eq:a12_condition_M1} is equivalent to a \(\Z/p\)-vector
subspace \(\dot{M}_1 \subseteq H^0(M_0)\).
Equation~\eqref{eq:alpha_exact} and exactness say that the remaining
generator \(\Ga{2}{1}^M\colon M_1 \to M_2\)
has kernel \(\im N(t) = \im \Ga{1}{0}^M \subseteq M_1\)
and image
\(\ker \Ga{0}{2}^M = \ker (1-t)/M_2 \subseteq M_0/M_1 = M_2\).
So~\(\Ga{2}{1}^M\) must be induced by a grading-reversing isomorphism
\[
\dot\alpha\colon \dot{M}_1 \congto \frac{H^0(M_0)}{\dot{M}_1}.
\]
The construction also shows that
\begin{equation}
  \label{eq:a12_vanishes_image_a21}
  \dot{M}_1 \cong \im(\Ga{2}{1}^M).
\end{equation}

\begin{theorem}
  \label{the:a21_vanishes}
  The category of exact \(\Kring\)\nb-modules
  with \(\Ga{1}{2}^M=0\)
  is equivalent to the category of triples
  \((M_0,\dot{M}_1,\dot\alpha)\)
  where
  \begin{enumerate}
  \item \(M_0\)
    is a \(\Z/2\)\nb-graded
    \(\Sring\)\nb-module with \(\ker N(t) = \im (1-t)\);
  \item \(\dot{M}_1 \subseteq H^0(M_0)\)
    is a \(\Z/2\)\nb-graded \(\Z/p\)\nb-vector subspace;
  \item \(\dot\alpha\)
    is a grading-reversing isomorphism
    \(\dot\alpha\colon \dot{M}_1 \to H^0(M_0)/\dot{M}_1\).
  \end{enumerate}
  Morphisms of triples
  \((M_0,\dot{M}_1,\dot\alpha) \to (L_0,\dot{L}_1,\dot\beta)\)
  are grading-preserving \(\Sring\)\nb-module
  homomorphisms \(f_0\colon M_0 \to L_0\)
  such that \(H^0(f_0)\colon H^0(M_0) \to H^0(L_0)\)
  maps~\(\dot{M}_1\)
  to~\(\dot{L}_1\)
  and intertwines the maps \(\dot\alpha\)
  and~\(\dot\beta\).
  The homomorphism of exact \(\Kring\)\nb-modules
  induced by a morphism of triples \(f_0\colon M_0 \to L_0\)
  is injective if and only if~\(f_0\)
  is injective and \(f_0^{-1}(L_1) = M_1\);
  it is surjective if and only if~\(f_0\)
  is surjective and \(f_0(M_1) = L_1\).
\end{theorem}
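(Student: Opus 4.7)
The plan is to mirror the proof of Theorem~\ref{the:a12_vanishes}, with the roles of $M_1$ and~$M_2$ swapped and the roles of $1-t$ and $N(t)$ interchanged. An alternative route via the Baaj--Skandalis involution~$\BS$ of Remark~\ref{rem:BS_automorphism} together with the anti-automorphism of Proposition~\ref{pro:anti-auto_duality} is conceivable, but a direct parallel argument is cleaner.

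First, I would use exactness to translate $\Ga{1}{2}^M=0$ into: $\Ga{0}{1}^M$ is injective, $\Ga{2}{0}^M$ is surjective, and the sequence~\eqref{eq:short_exact_sequence_102} is short exact. Identify $M_1$ with its image in~$M_0$ under $\Ga{0}{1}^M$ and $M_2$ with $M_0/M_1$, so that $\Ga{0}{1}^M$ becomes the inclusion and $\Ga{2}{0}^M$ becomes the quotient map. The relations \eqref{eq:alpha_010_N} and~\eqref{eq:alpha_020} then force $\Ga{1}{0}^M$ to be the corestriction of $N(t)$ to~$M_1$ and $\Ga{0}{2}^M$ to be the map on the quotient $M_0/M_1$ induced by $1-t$. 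Both descriptions require precisely $\im N(t)\subseteq M_1 \subseteq \ker(1-t)$. The remaining exactness of the second Puppe triangle at~$M_0$ then reduces to $\ker N(t)=\im(1-t)$, which is the first condition on~$M_0$ in the theorem.

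Next, I would pin down the generator $\Ga{2}{1}^M\colon M_1 \to M_0/M_1$. Relation~\eqref{eq:alpha_exact} with $(j,k,m)=(2,1,0)$ gives $\Ga{2}{1}^M\circ \Ga{1}{0}^M=0$, so exactness of the first triangle at~$M_1$ yields $\ker \Ga{2}{1}^M = \im \Ga{1}{0}^M = \im N(t)$, while exactness at~$M_2$ gives $\im \Ga{2}{1}^M = \ker \Ga{0}{2}^M = \ker(1-t)/M_1$. Therefore $\Ga{2}{1}^M$ factors uniquely through a grading-reversing isomorphism $\dot\alpha\colon M_1/\im N(t) \xrightarrow{\cong} \ker(1-t)/M_1$. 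Setting $\dot{M}_1 \defeq M_1/\im N(t) \subseteq H^0(M_0)$ identifies the codomain with $H^0(M_0)/\dot{M}_1$ and produces the triple. The group~$\dot{M}_1$ is automatically a $\Z/p$-vector space because $H^0(M_0)$ is annihilated by $(1-t, N(t))$ and $\Z[t]/(1-t, N(t)) \cong \Z/p$.

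Conversely, starting from a triple I would take $M_1$ to be the preimage of $\dot{M}_1$ in $\ker(1-t)$, set $M_2\defeq M_0/M_1$, and define all structure maps as dictated above; in particular, $\Ga{2}{1}^M$ is the composite $M_1 \twoheadrightarrow \dot{M}_1 \xrightarrow{\dot\alpha} H^0(M_0)/\dot{M}_1 \cong \ker(1-t)/M_1 \hookrightarrow M_2$. Verifying the relations of Theorem~\ref{the:generators_and_relations} is mechanical: most are inherited from the $\Sring$\nb-module structure on~$M_0$, while \eqref{eq:alpha_2} reduces, using $\Gs{2}^M = \Gu{2}^M$ (valid because $\Ga{1}{2}^M=0$), to the identity $N(t)|_{M_2}=0$, equivalently $\im N(t) \subseteq M_1$. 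Exactness of both triangles is then automatic. For the functoriality statement and the injectivity/surjectivity criterion, a short diagram chase on~\eqref{eq:short_exact_sequence_102} suffices: any $\Sring$\nb-module homomorphism automatically preserves $\im N(t)$ and $\ker(1-t)$, so the only extra condition on~$f_0$ is that it descend to a compatible map of triples. I expect no genuine obstacle; the argument is formally dual to Theorem~\ref{the:a12_vanishes}, and every step is controlled by the relations catalogued in Proposition~\ref{pro:further_relations}.
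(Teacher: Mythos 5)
Your proposal is correct and follows essentially the same route as the paper: the paper proves Theorem~\ref{the:a21_vanishes} by the construction laid out at the start of Section~\ref{sec:a21_zero} (identify \(M_1\into M_0\prto M_2\) via the short exact sequence~\eqref{eq:short_exact_sequence_102}, read off \(\Ga{1}{0}^M\) and \(\Ga{0}{2}^M\) from \eqref{eq:alpha_010_N} and~\eqref{eq:alpha_020}, and encode \(\Ga{2}{1}^M\) by the pair \((\dot M_1,\dot\alpha)\)), declaring the remaining verifications analogous to Theorem~\ref{the:a12_vanishes} — exactly the dualisation you carry out. All your individual reductions (\(\ker N(t)=\im(1-t)\) from exactness at \(M_0\), \(\Gs{2}^M=\Gu{2}^M\) turning \eqref{eq:alpha_2} into \(\im N(t)\subseteq M_1\), and the diagram chase for the injectivity/surjectivity criteria) match the paper's.
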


The proof of Theorem~\ref{the:a21_vanishes} is analogous to the proof of
Theorem~\ref{the:a12_vanishes}, and we omit further details.
Theorem~\ref{the:a21_vanishes} also specialises to the classification
for exact \(\Kring\)\nb-modules
with \(\Ga{1}{2}^M=0\)
and \(\Ga{2}{1}^M=0\) in Corollary~\ref{cor:a_2112_exact}.

\subsection{Other cases of a vanishing generator}
\label{sec:other_vanishing_generator}

The cases where one of the maps \(\Ga{2}{0}^M\)
or~\(\Ga{0}{2}^M\)
vanishes are analogous to the cases where \(\Ga{2}{1}^M\)
or~\(\Ga{1}{2}^M\)
vanishes.  We may, in fact, reduce these cases to the cases treated
above using the automorphism~\(\BS\)
of the ring~\(\Kring\),
which exchanges the role of the summands \(M_0\)
and~\(M_1\)
in a \(\Kring\)\nb-module.
The analogues of Theorems \ref{the:a12_vanishes}
and~\ref{the:a21_vanishes} for these situations are:

\begin{theorem}
  \label{the:a_20_exact}
  The category of exact \(\Kring\)\nb-modules
  with \(\Ga{2}{0}^M=0\)
  is equivalent to the category of triples
  \((M_1,\dot{M}_2,\dot\alpha)\)
  where~\(M_1\) is a \(\Z/2\)\nb-graded \(\Sring\)\nb-module that satisfies
  \begin{equation}
    \label{eq:a20_condition_exact}
    \ker (1-t) = \im N(t),
  \end{equation}
  \(\dot{M}_2 \subseteq H_0(M_1)\)
  is a \(\Z/2\)\nb-graded
  \(\Z/p\)\nb-vector
  subspace and~\(\dot\alpha\)
  is a grading-preserving isomorphism
  \(\dot\alpha\colon \dot{M}_2 \to H_0(M_1)/\dot{M}_2\).

  Here a morphism
  \((M_1,\dot{M}_2,\dot\alpha) \to (L_1,\dot{L}_2,\dot\beta)\)
  is a grading-preserving \(\Sring\)\nb-module
  homomorphism \(f_1\colon M_1 \to L_1\)
  such that \(H_0(f_1)\colon H_0(M_1) \to H_0(L_1)\)
  maps \(\dot{M}_2\)
  to~\(\dot{L}_2\)
  and intertwines the maps \(\dot\alpha\)
  and~\(\dot\beta\).
  The homomorphism of exact \(\Kring\)\nb-modules
  induced by a morphism of triples \(f_1\colon M_1 \to L_1\)
  is injective if and only if~\(f_1\)
  is injective and \(M_2 = f^{-1}(L_2)\);
  it is surjective if and only if \(f_1\colon M_1 \to L_1\)
  is surjective and \(L_2 = f(M_2)\).
\end{theorem}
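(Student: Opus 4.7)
The plan is to deduce Theorem~\ref{the:a_20_exact} from Theorem~\ref{the:a12_vanishes} by pulling back along the involutive ring automorphism $\BS$ of $\Kring$ from Remark~\ref{rem:BS_automorphism}. Recall that $\BS$ acts on the generators via the transposition $\sigma=(01)$ on indices, so $\BS(\Ga{2}{0})=\Ga{2}{1}$ and $\BS(\Gt{0})=\Gs{1}$. Given an exact $\Kring$-module $M$ with $\Ga{2}{0}^M=0$, I would form the pullback module $\BS^*M$ whose underlying abelian groups satisfy $(\BS^*M)_0=M_1$, $(\BS^*M)_1=M_0$, and $(\BS^*M)_2=M_2$ with its $\Z/2$-grading reversed, reflecting $\BS(D)\cong\Sigma D$ from Proposition~\ref{pro:BS_self-dual}. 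The action of $r\in\Kring$ on $\BS^*M$ is by $\BS(r)$ on $M$, and the grading reversal on the $M_2$-component is precisely what is needed to keep $\BS^*M$ a graded $\Kring$-module despite $\BS$ not preserving parities on generators involving the index~$2$.

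Next I would verify that $\BS^*M$ is again an exact $\Kring$-module: the defining relations of Theorem~\ref{the:generators_and_relations} are preserved by $\BS$, the identity $\Ga{2}{1}^{\BS^*M}=\BS(\Ga{2}{1})^M=\Ga{2}{0}^M=0$ is immediate, and the two exact triangles in Definition~\ref{def:exact_R-module} are swapped by $\BS$ (since $\sigma$ fixes the index~$2$ and permutes $0\leftrightarrow 1$), so the exactness of each triangle for $M$ yields the exactness of the other for $\BS^*M$.

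Theorem~\ref{the:a12_vanishes} then applies to $\BS^*M$ and yields an equivalent triple $(N_0,\dot N_2,\dot\alpha_N)$. Unwinding: $N_0=M_1$ carries the $\Sring$-action via $\Gt{0}^{\BS^*M}=\Gs{1}^M$, so the required identity $\ker(1-t)=\im N(t)$ on $N_0$ becomes condition~\eqref{eq:a20_condition_exact} on $M_1$. Setting $\dot M_2\defeq\dot N_2\subseteq H_0(M_1)$ and $\dot\alpha\defeq\dot\alpha_N$ produces a triple as in Theorem~\ref{the:a_20_exact}, and the morphism identifications together with the injective/surjective criteria transport verbatim through $\BS^*$, since $\BS^*$ is an equivalence on the category of graded $\Kring$-modules.

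The delicate step, and the main obstacle, is the parity of $\dot\alpha$. Theorem~\ref{the:a12_vanishes} produces $\dot\alpha_N$ grading-reversing in the $\BS^*M$-grading because it is induced by the odd generator $\Ga{1}{2}^{\BS^*M}$, which as an operation on the underlying groups coincides with $\Ga{0}{2}^M$; but $\Ga{0}{2}^M$ is \emph{even} in the $M$-grading, and the parity flip is exactly absorbed by the grading reversal on $(\BS^*M)_2$. Reading the grading on $\dot M_2$ via the natural identification $\dot M_2\cong\im\Ga{2}{1}^M\subseteq M_2$ (the $\BS$-dual of~\eqref{eq:a12_vanishes_image_a21}), the grading-reversing $\dot\alpha_N$ becomes the grading-preserving $\dot\alpha$ asserted in the theorem. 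Once this grading bookkeeping is established, the rest is a routine translation via the involution~$\BS$.
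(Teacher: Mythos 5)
Your strategy is exactly the one the paper intends: the paragraph preceding Theorems~\ref{the:a_20_exact} and~\ref{the:a_02_exact} explicitly says the cases \(\Ga{2}{0}^M=0\) and \(\Ga{0}{2}^M=0\) are reduced to Theorems \ref{the:a12_vanishes} and~\ref{the:a21_vanishes} via the ring automorphism~\(\BS\). Pulling back along \(\BS\), with the grading on the index-\(2\) summand reversed to compensate for \(\BS(D)\cong\Sigma D\), is precisely the right thing to do, and you correctly check that \(\BS^*M\) is again exact with \(\Ga{2}{1}^{\BS^*M}=0\). You also correctly flag that the parity of \(\dot\alpha\) is where the substance of the argument lies.

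The handling of that parity, however, does not hold up. First, the identification you invoke is wrong: the \(\BS\)-transport of \eqref{eq:a21_vanishes_image_a12} (the equation associated with Theorem~\ref{the:a12_vanishes}, where \(\Ga{2}{1}=0\)) gives \(\dot N_2\cong\im\Ga{1}{2}^{\BS^*M}=\im\Ga{0}{2}^M\subseteq M_0\), not \(\im\Ga{2}{1}^M\subseteq M_2\). You cited \eqref{eq:a12_vanishes_image_a21}, which belongs to Theorem~\ref{the:a21_vanishes} (\(\Ga{1}{2}=0\)) and dualises to the \(\Ga{0}{2}^M=0\) case, not the present one. Second, and more to the point, the grading reversal on \((\BS^*M)_2\) cannot ``absorb'' a parity flip in \(\dot\alpha_N\), because \(\dot\alpha_N\) is a map between two subquotients of \((\BS^*M)_0=M_1\), on which the grading is unchanged under \(\BS^*\). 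Tracing it directly: in the case \(\Ga{2}{0}^M=0\) the short exact sequence is \(M_2\xrightarrow{\Ga{1}{2}^M}M_1\xrightarrow{\Ga{0}{1}^M}M_0\) with \(\Ga{1}{2}^M\) odd, so \(M_2\cong\Sigma P\) for a subgroup \(P\subseteq M_1\); then \(\Ga{0}{2}^M\colon\Sigma P\to M_1/P\) is even, hence \emph{odd} as a map \(P\to M_1/P\), and the induced isomorphism \(\dot{M}_2=P/\im(1-t)\to\ker N(t)/P\cong H_0(M_1)/\dot{M}_2\) is odd when both sides carry the \(M_1\)-grading. (Concretely: with \(M_1=\Z[\vartheta]\oplus\Sigma\Z[\vartheta]\) one has \(H_0(M_1)\cong\Z/p\oplus\Sigma\Z/p\) and any isomorphism \(\dot{M}_2\to H_0(M_1)/\dot{M}_2\) of one-dimensional pieces is forced to be grading-reversing, yet this input produces a valid exact module via \(\BS^*\) from Theorem~\ref{the:a12_vanishes}.) So the parity of \(\dot\alpha\) you need to justify is actually \emph{grading-reversing} when \(\dot{M}_2\) carries the subspace grading from \(H_0(M_1)\); the argument you give does not establish the ``grading-preserving'' convention of the stated theorem, and the workaround of regrading \(\dot{M}_2\) via \(M_2\) sits uneasily with the phrase ``\(\dot{M}_2\subseteq H_0(M_1)\) is a \(\Z/2\)-graded \(\Z/p\)-vector subspace''. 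This is the one place your proof needs to be redone carefully; the rest of the transport through \(\BS^*\), including the morphism description and the injectivity/surjectivity criteria, goes through verbatim as you say.
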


\begin{theorem}
  \label{the:a_02_exact}
  The category of exact \(\Kring\)\nb-modules
  with \(\Ga{0}{2}^M=0\)
  is equivalent to the category of triples
  \((M_1,\dot{M}_0,\dot\alpha)\)
  where~\(M_1\) is a \(\Z/2\)\nb-graded \(\Sring\)\nb-module that satisfies
  \begin{equation}
    \label{eq:a02_condition_exact}
    \ker N(t) = \im (1-t),
  \end{equation}
  \(\dot{M}_0 \subseteq H^0(M_1)\)
  is a \(\Z/2\)\nb-graded
  \(\Z/p\)\nb-vector
  subspace and~\(\dot\alpha\)
  is a grading-preserving isomorphism
  \(\dot\alpha\colon \dot{M}_0 \to H^0(M_1)/\dot{M}_0\).

  Morphisms
  \((M_1,\dot{M}_0,\dot\alpha) \to (L_1,\dot{L}_0,\dot\beta)\)
  are grading-preserving \(\Sring\)\nb-module
  homomorphisms \(f_1\colon M_1 \to L_1\)
  such that \(H^0(f_1)\colon H^0(M_1) \to H^0(L_1)\)
  maps~\(\dot{M}_0\)
  to~\(\dot{L}_0\)
  and intertwines the maps \(\dot\alpha\)
  and~\(\dot\beta\).
  The homomorphism of exact \(\Kring\)\nb-modules
  induced by a morphism of triples \(f_1\colon M_1 \to L_1\)
  is injective if and only if~\(f_1\)
  is injective and \(f_1^{-1}(L_0) = M_0\);
  and it is surjective if and only if~\(f_1\)
  is surjective and \(L_0 = f(M_0)\).
\end{theorem}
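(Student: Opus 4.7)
Following the template of Theorems~\ref{the:a12_vanishes}--\ref{the:a_20_exact}, my plan is to use the vanishing hypothesis to degenerate one Puppe sequence to a short exact sequence, then to express every remaining generator in terms of the central summand~\(M_1\) by means of the relations from Theorem~\ref{the:generators_and_relations}, and finally to recognize the resulting data as a triple \((M_1, \dot{M}_0, \dot\alpha)\). Alternatively, one could obtain the result by transporting Theorem~\ref{the:a21_vanishes} along the ring automorphism~\(\BS\) of~\(\Kring\) from Remark~\ref{rem:BS_automorphism}, which is induced by the transposition \((01)\) on indices and in particular sends \(\Ga{0}{2}\) to \(\Ga{1}{2}\) and \(\Gs{1}\) to \(\Gt{0}\); under this twist, an exact \(\Kring\)\nb-module with \(\Ga{0}{2}^M=0\) becomes one with \(\Ga{1}{2}^{M'}=0\) and the roles of \(M_0\) and~\(M_1\) are exchanged.

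For the direct approach, \(\Ga{0}{2}^M=0\) degenerates the first triangle in Definition~\ref{def:exact_R-module} to a short exact sequence \(M_0 \xrightarrow{\Ga{1}{0}^M} M_1 \xrightarrow{\Ga{2}{1}^M} M_2\) with \(\Ga{1}{0}^M\) (even) injective and \(\Ga{2}{1}^M\) (odd) surjective. Writing \(t=\Gs{1}^M\) and using relations \eqref{eq:sa_10} and \eqref{eq:alpha_101_N}, I would identify \(M_0\) with a subgroup of~\(M_1\) satisfying \(\im N(t) \subseteq M_0 \subseteq \ker(1-t)\), and \(\Ga{0}{1}^M\) with the corestriction of \(N(t)\colon M_1\to M_1\) to its image in~\(M_0\). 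Relation \eqref{eq:alpha_121} identifies \(\Ga{1}{2}^M\) as the map on \(M_2\cong M_1/M_0\) induced by \(1-t\), and the exactness \(\ker\Ga{0}{1}^M = \im\Ga{1}{2}^M\) of the second triangle becomes the condition~\eqref{eq:a02_condition_exact}, namely \(\ker N(t)=\im(1-t)\). The remaining generator \(\Ga{2}{0}^M\) is then pinned down by exactness of the second triangle: it has kernel \(\im\Ga{0}{1}^M = \im N(t)\) and image \(\ker\Ga{1}{2}^M \cong \ker(1-t)/M_0\), yielding the isomorphism \(\dot\alpha\colon \dot{M}_0 \congto H^0(M_1)/\dot{M}_0\) with \(\dot{M}_0 \defeq M_0/\im N(t)\).

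The inverse construction is immediate: given a triple \((M_1,\dot{M}_0,\dot\alpha)\) satisfying the stated conditions, set \(M_0\subseteq \ker(1-t)\subseteq M_1\) to be the preimage of~\(\dot{M}_0\), set \(M_2\) to be \(M_1/M_0\) with the grading shift forced by \(\Ga{2}{1}^M\) being odd, and define all the generators by the formulas above, using~\(\dot\alpha\) to reconstruct \(\Ga{2}{0}^M\). Verifying the relations of Theorem~\ref{the:generators_and_relations} and the two exactness conditions of Definition~\ref{def:exact_R-module} is routine. Functoriality and the criteria for injectivity and surjectivity then follow by tracking the data through the short exact sequence \(M_0\into M_1\prto M_2\) and applying the Five Lemma, exactly as at the end of the proof of Theorem~\ref{the:a12_vanishes}. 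The main subtlety I anticipate is the careful accounting of the \(\Z/2\)\nb-grading: since \(\Ga{2}{1}^M\) is odd and \(\BS\) does not preserve the grading on~\(\Kring\), the identification \(M_2\cong M_1/M_0\) carries a parity shift, and this shift is precisely what causes~\(\dot\alpha\) to be grading-preserving here, in contrast to the grading-reversing conclusion of Theorem~\ref{the:a21_vanishes}.
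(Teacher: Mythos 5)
Your proposal is correct and follows essentially the same route as the paper: the paper justifies Theorem~\ref{the:a_02_exact} precisely by the reduction along the automorphism~\(\BS\) to Theorem~\ref{the:a21_vanishes} (your second route), and your direct argument is the ``analogous'' computation the paper alludes to but does not write out. Your identifications of the generators, the exactness condition \(\ker N(t)=\im(1-t)\), and the parity bookkeeping for~\(\dot\alpha\) all match the paper's conventions.
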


Since \(\Ga{2}{1}\) and~\(\Ga{1}{2}\) are odd,
\[
M_2 \defeq \Sigma \setgiven{x\in \ker N(t)}{(x\bmod \im (1-t) \in \dot{M}_2}
\]
in the situation of Theorem~\ref{the:a_20_exact} and
\(M_2 \defeq \Sigma (M_1/M_0)\)
in the situation of Theorem~\ref{the:a_02_exact}.
That is, the grading on~\(M_2\)
is reversed in both cases.

There is also an analogue of these results for the cases where
\(\Ga{0}{1}^M=0\)
or \(\Ga{1}{0}^M=0\).
Since the automorphism~\(\BS\)
maps these two cases onto each other, it suffices to discuss one of
them.  The condition \(\Ga{0}{1}^M=0\)
holds if and only if~\(\Ga{1}{2}^M\)
is surjective, if and only if~\(\Ga{2}{0}^M\)
is injective, if and only if these maps fit into an exact sequence
\[
  \begin{tikzcd}
    M_0 \arrow[r, >->, "\Ga{2}{0}^M"]&
    M_2 \arrow[r, ->>, "\Ga{1}{2}^M"] &
    M_1.
  \end{tikzcd}
\]
As above, we may use this to identify~\(M_0\)
with a subgroup of~\(M_2\)
and~\(M_1\)
with \(\Sigma(M_2/M_0)\),
so that~\(\Ga{2}{0}^M\)
becomes the inclusion map and~\(\Ga{1}{2}^M\)
the quotient map, made grading-reversing.  Now~\(M_2\)
is a module over the ring~\(\Sring_2\)
defined in~\eqref{eq:S2}, and we denote its generators more briefly as
\(t^M = \Gt{2}^M\) and \(s^M = \Gs{2}^M\).

The map \(\Ga{0}{2}^M\colon M_2 \to M_0 \subseteq M_2\)
must be the corestriction of
\(\Ga{2}{0}^M \circ \Ga{0}{2}^M = 1 - t^M\)
by~\eqref{eq:alpha_202}, whereas \(\Ga{2}{1}^M\colon M_1 \to M_2\)
must be the map on the quotient induced by
\(\Ga{2}{1}^M \circ \Ga{1}{2}^M = 1-s^M\)
by~\eqref{eq:alpha_212}.  So we need
\(\im(1-t^M) \subseteq M_0 \subseteq \ker(1-s^M)\).
Let
\[
H(M_2) \defeq \ker(1-s^M)/\im(1-t^M).
\]
The maps on the subquotient~\(H(M_2)\)
induced by \(1-s^M\)
and~\(1-t^M\)
vanish, and then so does multiplication by~\(p\)
because \(p = (p-N(t)) +(p-N(s))\)
holds in~\(\Sring_2\).
So~\(H(M_2)\)
is a vector space over~\(\Z/p\).
A subgroup~\(M_0\)
as above is the preimage of a unique \(\Z/p\)\nb-linear
subspace~\(\dot{M}_0\)
in~\(H(M_2)\).
Exactness dictates that the missing map~\(\Ga{1}{0}^M\)
is a composite map
\[
M_0 \prto M_0/\im(1-t) = \dot{M}_0 \xrightarrow{\dot\alpha}
H(M_2)/\dot{M}_0 \cong \ker(1-s)/M_0 \subseteq M_2/M_0 = M_1
\]
for an isomorphism
\(\dot\alpha\colon \dot{M}_0 \congto H(M_2)/\dot{M}_0\)
and that \(\ker(1-t^M) = \im(1-s^M)\).
Conversely, any triple \((M_2,\dot{M}_0,\dot\alpha)\)
as above comes from an exact \(\Kring\)\nb-module
with \(\Ga{0}{1}^M=0\),
and the construction above is an equivalence of categories.

\begin{theorem}
  \label{the:a01_exact_module}
  The category of exact \(\Kring\)\nb-modules
  with \(\Ga{0}{1}^M=0\)
  is equivalent to the category of triples
  \((M_2,\dot{M}_0,\dot\alpha)\)
  where~\(M_2\)
  is a \(\Z/2\)\nb-graded \(\Sring_2\)\nb-module that satisfies
  \begin{equation}
    \label{eq:a01_condition_exact}
    \ker (1-t^M) = \im (1-s^M),
  \end{equation}
  \(\dot{M}_0 \subseteq H(M_2)\)
  is a \(\Z/2\)\nb-graded
  \(\Z/p\)\nb-vector
  subspace and~\(\dot\alpha\)
  is a grading-preserving isomorphism
  \(\dot\alpha\colon \dot{M}_0 \to H(M_2)/\dot{M}_0\).

  Here a morphism
  \((M_2,\dot{M}_0,\dot\alpha) \to (L_2,\dot{L}_0,\dot\beta)\)
  is a grading-preserving \(\Sring_2\)\nb-module
  homomorphism \(f_2\colon M_2 \to L_2\)
  such that \(H(f_2)\colon H(M_2) \to H(L_2)\)
  maps~\(\dot{M}_0\)
  to~\(\dot{L}_0\)
  and intertwines the maps \(\dot\alpha\)
  and~\(\dot\beta\).
  The homomorphism of exact \(\Kring\)\nb-modules
  induced by a morphism of triples \(f_2\colon M_2 \to L_2\)
  is injective if and only if~\(f_2\)
  is injective and \(M_0 = f^{-1}(L_0)\);
  it is surjective if and only if \(f_2\colon M_2 \to L_2\)
  is surjective and \(L_0 = f(M_0)\).
\end{theorem}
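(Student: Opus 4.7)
The plan is to follow the construction sketched in the paragraphs immediately preceding the theorem, verifying that it yields the asserted equivalence of categories in close analogy with Theorems~\ref{the:a12_vanishes}--\ref{the:a_02_exact}. Given an exact \(\Kring\)-module \(M\) with \(\Ga{0}{1}^M = 0\), the exactness of the two sequences of Definition~\ref{def:exact_R-module} forces \(\Ga{2}{0}^M\) to be injective and \(\Ga{1}{2}^M\) to be surjective. First I would identify \(M_0\) with a subgroup of \(M_2\) via \(\Ga{2}{0}^M\) and \(M_1\) with \(\Sigma(M_2/M_0)\) via \(\Ga{1}{2}^M\), so that both of these generators become the canonical inclusion and quotient maps. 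Relations~\eqref{eq:alpha_202} and~\eqref{eq:alpha_212} then pin down \(\Ga{0}{2}^M\) and \(\Ga{2}{1}^M\) as the maps induced by \(1-t^M\) and \(1-s^M\); being well-defined is equivalent to the chain of inclusions \(\im(1-t^M) \subseteq M_0 \subseteq \ker(1-s^M)\), which is the same datum as a \(\Z/2\)-graded subgroup \(\dot{M}_0 \subseteq H(M_2)\).

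The key structural point is that \(H(M_2) = \ker(1-s^M)/\im(1-t^M)\) is annihilated by \(p\), so \(\dot{M}_0\) is automatically a \(\Z/p\)-vector subspace. To see this I would rewrite the relation \(N(s)+N(t)=p\) of~\eqref{eq:alpha_2_N} as \(p = (p-N(t)) + (p-N(s))\): the factor \(p-N(t)\) is divisible by \(1-t\) and so maps \(M_2\) into \(\im(1-t^M)\), while \(p-N(s)\) is divisible by \(1-s\) and so vanishes on \(\ker(1-s^M)\). Hence for \(x \in \ker(1-s^M)\) we have \(p\cdot x = (p-N(t))(x) \in \im(1-t^M)\), which is zero in \(H(M_2)\).

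The remaining generator \(\Ga{1}{0}^M\) is determined by the requirements that \(\Ga{1}{0}^M \otimes \Ga{0}{2}^M = 0\) and \(\Ga{1}{2}^M \otimes \Ga{1}{0}^M = 0\) (so it factors as \(M_0 \prto \dot{M}_0 \to H(M_2)/\dot{M}_0 \hookrightarrow M_2/M_0\)) and by exactness at \(M_0\) and \(M_1\) (so the middle factor must be an isomorphism \(\dot\alpha\)); the parity of \(\dot\alpha\) is then dictated by the parities of the surrounding generators. Exactness at \(M_2\) in both sequences collapses to the single condition \(\ker(1-t^M) = \im(1-s^M)\) of~\eqref{eq:a01_condition_exact}, using that \(\ker(\Ga{0}{2}^M)\) equals \(\im(\Ga{2}{1}^M)\) only after quotienting. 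Conversely, starting from a triple \((M_2,\dot{M}_0,\dot\alpha)\) satisfying the hypotheses, I would define \(M_0\) as the preimage of \(\dot{M}_0\) in \(\ker(1-s^M)\), set \(M_1 \defeq \Sigma(M_2/M_0)\), and define the six maps \(\Ga{j}{k}^M\) by reversing the recipe above; verifying the relations of Theorem~\ref{the:generators_and_relations} and the exactness of the two sequences is then a mechanical computation inside the ring \(\Sring_2\).

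For the morphism part, a \(\Kring\)-module homomorphism \(f \colon M \to L\) is determined by its component \(f_2 \colon M_2 \to L_2\) because \(M_0\) is a subobject and \(M_1\) a suspended quotient of \(M_2\); the conditions \(f_2(M_0) \subseteq L_0\) and compatibility of the induced map on \(H\) with \(\dot\alpha\) and \(\dot\beta\) encode precisely that \(f\) intertwines all six generators. The injectivity and surjectivity criteria then follow by the Snake Lemma applied to the morphism of short exact sequences \(M_0 \into M_2 \prto \Sigma M_1\) and \(L_0 \into L_2 \prto \Sigma L_1\), exactly as in Theorem~\ref{the:a12_vanishes}. The main obstacle is bookkeeping rather than a new idea: unlike in Theorems~\ref{the:a12_vanishes}--\ref{the:a_02_exact}, where the central module is over the single-generator ring~\(\Sring\), here \(M_2\) is a module over the more complicated ring~\(\Sring_2\) in which \(s\) and \(t\) interact through the relation \(N(s)+N(t)=p\); this relation must be invoked repeatedly to pass between the various kernels, images and subquotients and to check that the hypotheses on the triple are equivalent to exactness of~\(M\).
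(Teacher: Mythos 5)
Your proposal is correct and follows the same route as the paper, which does not give Theorem~\ref{the:a01_exact_module} a separate proof but instead presents exactly the construction you describe in the paragraph immediately preceding the statement: identify \(M_0\hookrightarrow M_2\) via \(\Ga{2}{0}^M\), identify \(M_1\) with \(\Sigma(M_2/M_0)\) via \(\Ga{1}{2}^M\), read off \(\Ga{0}{2}^M\) and \(\Ga{2}{1}^M\) from \eqref{eq:alpha_202}--\eqref{eq:alpha_212}, observe that \(H(M_2)\) is killed by~\(p\) via \(p=(p-N(t))+(p-N(s))\), and encode the remaining generator \(\Ga{1}{0}^M\) as the isomorphism~\(\dot\alpha\). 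The only cosmetic slip is that the second vanishing relation forcing \(\Ga{1}{0}^M\) to land inside \(\ker\Ga{2}{1}^M\) should read \(\Ga{2}{1}\otimes\Ga{1}{0}=0\) rather than \(\Ga{1}{2}^M\otimes\Ga{1}{0}^M=0\); the parenthetical factorization you give is nonetheless the right one.
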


As above, the case where both \(\Ga{0}{1}^M=0\)
and \(\Ga{1}{0}^M=0\)
corresponds to cohomologically trivial \(\Sring_2\)\nb-modules,
and then \(\dot{M}_0\) and~\(\dot\alpha\) are redundant:

\begin{corollary}
  \label{cor:a_1001_exact}
  The category of exact \(\Kring\)\nb-modules
  with \(\Ga{0}{1}^M=0\)
  and \(\Ga{1}{0}^M=0\)
  is equivalent to the category of \(\Z/2\)\nb-graded
  cohomologically trivial \(\Sring_2\)\nb-modules.

  A homomorphism \(f_2\colon M_2\to L_2\)
  between two \(\Z/2\)\nb-graded
  cohomologically trivial \(\Sring_2\)\nb-modules
  induces an injective or surjective homomorphism of exact
  \(\Kring\)\nb-modules
  if and only if~\(f_2\) is injective or surjective, respectively.
\end{corollary}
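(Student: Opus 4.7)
The plan is to derive this corollary by specialising Theorem~\ref{the:a01_exact_module} to the sub-case $\Ga{1}{0}^M=0$. Under that equivalence, an exact $\Kring$-module with $\Ga{0}{1}^M=0$ corresponds to a triple $(M_2,\dot{M}_0,\dot\alpha)$ where $\dot\alpha\colon \dot{M}_0 \congto H(M_2)/\dot{M}_0$ is an isomorphism and $\Ga{1}{0}^M$ is constructed as the composite $M_0 \prto \dot{M}_0 \xrightarrow{\dot\alpha} H(M_2)/\dot{M}_0 \hookrightarrow M_1$. Hence the vanishing of $\Ga{1}{0}^M$ is equivalent to $\dot\alpha = 0$, which, since $\dot\alpha$ is asserted to be an isomorphism, forces $\dot{M}_0 = 0$ and $H(M_2) = 0$. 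The latter unwinds to the second cohomological triviality condition $\ker(1-s^M) = \im(1-t^M)$, which together with the condition $\ker(1-t^M) = \im(1-s^M)$ already built into Theorem~\ref{the:a01_exact_module} says exactly that $M_2$ is cohomologically trivial as an $\Sring_2$-module.

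With $\dot{M}_0 = 0$ the extra data $(\dot{M}_0,\dot\alpha)$ becomes redundant, so the triples reduce to $\Z/2$-graded cohomologically trivial $\Sring_2$-modules, and the compatibility conditions on morphisms in Theorem~\ref{the:a01_exact_module} become vacuous: every $\Z/2$-graded $\Sring_2$-module homomorphism $f_2\colon M_2 \to L_2$ is already a morphism of triples. Combined with the functoriality already established in Theorem~\ref{the:a01_exact_module}, this gives the claimed equivalence of categories.

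For the injectivity/surjectivity criterion, I would show that the side conditions $M_0 = f_2^{-1}(L_0)$ and $L_0 = f_2(M_0)$ from Theorem~\ref{the:a01_exact_module} are automatic in the cohomologically trivial case. Indeed, $M_0 = \ker(1-s^M) = \im(1-t^M)$ and likewise for $L_0$. If $f_2$ is injective, then $f_2(x) \in \ker(1-s^L)$ forces $f_2((1-s^M)x) = 0$ and hence $(1-s^M)x = 0$, so $f_2^{-1}(L_0) \subseteq M_0$; the reverse inclusion is automatic from $\Sring_2$-linearity. If $f_2$ is surjective, then $f_2(M_0) = f_2(\im(1-t^M)) = (1-t^L) f_2(M_2) = (1-t^L) L_2 = L_0$. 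So the criterion from Theorem~\ref{the:a01_exact_module} collapses to injectivity or surjectivity of $f_2$ alone.

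The only substantive step is the first one, identifying $\Ga{1}{0}^M = 0$ with $\dot\alpha = 0$ and drawing out that this produces the missing half of cohomological triviality; once that is done, everything else is routine bookkeeping inherited from Theorem~\ref{the:a01_exact_module}. I expect no genuine obstacle beyond correctly tracking the identifications $M_0 = \ker(1-s^M) = \im(1-t^M)$ and $M_1 = \Sigma\bigl(\ker(1-t^M)/M_0\bigr) \cong \Sigma \im(1-s^M)$ used implicitly in the construction.
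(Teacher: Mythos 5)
Your proof is correct and is exactly the argument the paper intends (the paper itself gives no written proof here, saying only ``as above'' in reference to the analogous Corollary~\ref{cor:a_2112_exact}): you specialise Theorem~\ref{the:a01_exact_module}, observe that \(\Ga{1}{0}^M=0\) forces \(\dot\alpha=0\), hence \(\dot{M}_0=0\) and \(H(M_2)=0\), which supplies the missing half of cohomological triviality, and you verify that the side conditions on morphisms become automatic. The only slip is the parenthetical identification in your final sentence: \(M_1\) is \(\Sigma(M_2/M_0)\), which in the cohomologically trivial case is isomorphic to \(\Sigma\ker(1-t^M) \cong \Sigma\im(1-s^M)\), not \(\Sigma\bigl(\ker(1-t^M)/M_0\bigr)\); this remark is not used in your argument, so nothing is affected.
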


\section{Examples of exact modules}
\label{sec:representative_examples}

We now describe some exact \(\Kring\)\nb-modules
where~\(M_1\)
is a cyclic group.  These correspond to actions of \(G\defeq \Z/p\)
on Cuntz algebras.  The examples in this section are representative in
the following sense:

\begin{theorem}
  \label{the:actions_on_Cuntz}
  Let~\(M\)
  be an exact \(\Kring\)\nb-module
  where~\(M_1\)
  is a cyclic group.  Then there is an extension
  \(M' \into M \prto M''\)
  of exact \(\Kring\)\nb-modules
  where~\(M'\)
  is one of the exact \(\Kring\)\nb-modules
  described in Examples
  \textup{\ref{exa:actions_on_Cuntz_1}},
  \textup{\ref{exa:actions_on_Cuntz_2}},
  \textup{\ref{exa:actions_on_Cuntz_3}},
  \textup{\ref{exa:actions_on_Cuntz_4}},
  \textup{\ref{exa:actions_on_Cuntz_6}},
  \textup{\ref{exa:actions_on_Cuntz_8}},
  \textup{\ref{exa:actions_on_Cuntz_5}},
  \textup{\ref{exa:actions_on_Cuntz_7}},
  \textup{\ref{exa:actions_on_Cuntz_9}}
  and~\textup{\ref{exa:actions_on_Cuntz_10}}
  and~\(M''\)
  is uniquely \(p\)\nb-divisible.
  So~\(M''\) is as in Example~\textup{\ref{exa:divisible}}.
  The extension above need not split.
\end{theorem}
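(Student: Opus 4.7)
The strategy is a case analysis on the structure of the cyclic $\Sring$\nb-module $(M_1,\Gs{1}^M)$. Since $M_1$ is a cyclic abelian group, it is isomorphic to $\Z$ or to $\Z/n$ for some $n\ge 1$, and the action of $\Gs{1}^M$ is multiplication by an element~$c$ in the endomorphism ring satisfying $c^p=1$. For $M_1=\Z$ this forces $c=1$ when $p$ is odd and $c=\pm 1$ when $p=2$; for $M_1=\Z/n$ the possibilities for~$c$ are restricted by how~$n$ factors relative to~$p$. The proof proceeds by checking in each such case what an exact $\Kring$\nb-submodule $M'\subseteq M$ with $M'_1=M_1$ can look like.

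The plan is to construct~$M'$ so that $M'_1=M_1$, which forces $M''_1=0$. By the short exact sequences of Table~\ref{tab:short_exact_sequences} and the relations $\Ga{1}{0}^M\otimes\Ga{0}{1}^M=N(\Gs{1}^M)$ and $\Ga{2}{1}^M\otimes\Ga{1}{2}^M=\Gu{2}^M-\Gs{2}^M$, the subgroups $\im(\Ga{0}{1}^M)\subseteq M_0$ and $\im(\Ga{2}{1}^M)\subseteq M_2$ are determined explicitly by~$M_1$ and the action~$c$. I would take $M'_0\subseteq M_0$ to be the smallest $\Gt{0}^M$\nb-invariant subgroup containing $\im(\Ga{0}{1}^M)$ such that $\Ga{2}{0}^M(M'_0)$ lies in the candidate $M'_2$, and simultaneously determine $M'_2\subseteq M_2$ by the analogous minimality requirement compatible with the $\Sring_2$\nb-module structure. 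Since $M_1$ is cyclic, these minimal pieces are generated by very few elements, and the resulting triples $(M'_0,M'_1,M'_2)$ with the inherited generators $\Ga{j}{k}^{M'}$ admit only the finitely many isomorphism types enumerated in Examples \ref{exa:actions_on_Cuntz_1}--\ref{exa:actions_on_Cuntz_10}.

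Once $M'\subseteq M$ is in place, its exactness is verified by checking the relations of Theorem~\ref{the:generators_and_relations} and the two Puppe-type sequences of Definition~\ref{def:exact_R-module}, and exactness of the quotient $M''\defeq M/M'$ then follows from the long exact sequence in homology. Because $M''_1=0$, Theorem~\ref{the:divisible_exact_module} applies and identifies $M''$ with a uniquely $p$\nb-divisible module as in Example~\ref{exa:divisible}, necessarily with $X=0$ and $Z=0$. The extension $M'\into M\prto M''$ need not split, since $\Ext^1_\Kring$ between a $p$\nb-torsion module and a uniquely $p$\nb-divisible module can be nonzero.

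The main obstacle is exhaustiveness: proving that the list of ten families really covers every possibility. This requires careful bookkeeping over each residue of~$n$ modulo~$p$, each choice of~$c$ with $c^p=1$, and each compatible choice of images of $\Ga{0}{1}^M$ and $\Ga{2}{1}^M$, while tracking the interaction between $\Gt{2}^M$ and $\Gs{2}^M$ dictated by relation~\eqref{eq:alpha_2}. For $p=2$, an additional family arises reflecting the extra $S_3$\nb-symmetry of $\Kring$ at the prime~$2$ described in Section~\ref{sec:prime_2}, which permits a modification construction producing further examples from the first nine.
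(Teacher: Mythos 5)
Your high-level plan is right: reduce to $M_1 = \Z$ or $M_1 = \Z/p^k$, build an exact $\Kring$\nb-submodule $M'\subseteq M$ with $M'_1 = M_1$, and conclude via Theorem~\ref{the:divisible_exact_module} that $M''=M/M'$ is uniquely $p$\nb-divisible. But the proposed construction of $M'$ as \emph{the smallest $\Gt{0}^M$\nb-invariant subgroup containing $\im(\Ga{0}{1}^M)$} (together with an analogous minimality condition on $M'_2$) does not work, and this is the entire content of the theorem. The $\Kring$\nb-submodule generated by $M_1$ is generically \emph{not} exact. Already in Example~\ref{exa:actions_on_Cuntz_1}, where $M'_0 = \Sring$, the submodule generated by $M_1 = \Z$ is $(N(t)\Z,\ \Z,\ 0)$, which fails exactness at $M'_1$ because $\Ga{1}{0}^{M'}(N(t)\Z)=p\Z\subsetneq\Z$ while $\Ga{2}{1}^{M'}=0$. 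To obtain an exact submodule the paper must deliberately enlarge the candidate by including a chosen lift $b\in M_0$ of a generator of $M_1$ along $\Ga{1}{0}^M$ (respectively an element $y$ with $N(t)y$ a generator of $\im N(t)$), and then prove that an explicit $\Sring$\nb- or $\Sring_2$\nb-module map $\psi$ from one of the model modules into $M$ is injective with uniquely $p$\nb-divisible cokernel. No minimality principle produces these.

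Two further points are missing. First, you gesture at ``how $n$ factors relative to $p$'' but the actual reduction to $M_1=\Z/p^k$ requires Theorem~\ref{the:remove_divisible} to extend the coprime-to-$p$ part of $M_1$ to an exact submodule, and then Lemma~\ref{lem:non-p-part_summand} to show this submodule is in fact a direct summand of $M$; neither follows from the general remark you make. Second, the exhaustiveness you identify as ``the main obstacle'' is not bookkeeping over residues of $n$ and units $c$: the paper's organizing device is the four-way case split according to whether $\ker\Ga{0}{1}^M$ and $\coker\Ga{1}{0}^M$ (each constrained to be $0$ or $\Z/p$ by \eqref{eq:im_ker_alpha_10_N1s1}) vanish, followed by a dichotomy inside each case governed by Lemma~\ref{lem:Ztheta_p_ker-coker_1}, which shows that a $\Z[\vartheta]$\nb-module with $\ker(1-\vartheta)\cong\coker(1-\vartheta)\cong\Z/p$ contains either $\Z[\vartheta]/(1-\vartheta)^\ell$ or $\Z[\vartheta]\oplus Q$ with uniquely $p$\nb-divisible quotient, depending on whether a recursive chain of $(1-\vartheta)$\nb-preimages terminates. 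That dichotomy is what produces several of the ten families and is invisible to any minimality heuristic. Your concluding remark on non-splitting is reasonable in spirit, but the paper settles it with the concrete Example~\ref{exa:actions_on_Cuntz_not_split} rather than an abstract $\Ext^1$ argument.
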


Theorem~\ref{the:actions_on_Cuntz} will be proven in
Section~\ref{sec:prove_theorem} by a case-by-case study.

For each example, we first define \(\Z/2\)\nb-graded
Abelian groups~\(M_j\)
for \(j=0,1,2\).
So \(M = M_0 \oplus M_1 \oplus M_2\)
and~\(\Gu{j}\)
for \(j=0,1,2\)
is the projection onto the direct summand~\(M_j\).
Then we define maps \(\Ga{j}{m}^M\colon M_m \to M_j\)
for \(j,m\in\{0,1,2\}\)
with \(j\neq m\).
We compute the resulting maps \(\Gs{j}^M\colon M_j \to M_j\)
for \(j=1,2\)
and \(\Gt{j}^M\colon M_j \to M_j\)
for \(j=0,2\)
as defined in~\eqref{eq:def_st}.  Then we check that the relations in
Theorem~\ref{the:generators_and_relations} hold for these maps -- so
our data defines a \(\Kring\)\nb-module
-- and that the exact sequences in Definition~\ref{def:exact_R-module}
are exact -- so our \(\Kring\)\nb-module
is exact.  Here the relations \eqref{eq:1_vs_1}--\eqref{eq:alpha_vs_1}
in Theorem~\ref{the:generators_and_relations} hold by construction,
and~\eqref{eq:alpha_exact} is part of exactness.  The remaining
relations are equivalent to
\eqref{eq:alpha_010_N}--\eqref{eq:alpha_2_N}, which is what we check.

We always define~\(M_0\)
so that it has an obvious module structure over the ring
\(\Sring\defeq \Z[t]/(t^p-1)\)
and write \(f(t)\)
for the map of multiplication by~\(f(t)\)
in this module for \(f\in \Sring\).
Our definitions are always arranged so that \(\Gt{0}^M = t\)
in this notation.  In many examples, \((M_2,\Gt{2}^M)\)
is also a module over~\(\Sring\)
and the same notation applies.  Sometimes, we need the full
\(\Sring_2\)\nb-module
structure on~\(M_2\)
with~\(\Sring_2\)
as in~\eqref{eq:S2}.

The notation \((f)\)
or \((f_1,f_2)\)
in a context such as \(\Sring/(f)\)
or \(\Sring/(f_1,f_2)\)
denotes the ideal in~\(\Sring\)
generated by \(f\)
or by \(f_1,f_2\),
respectively.  Recall that \(\Z[\vartheta] \cong \Sring/(N(t))\)
is the ring generated by a primitive \(p\)th root of unity.

We begin with two examples of exact \(\Kring\)\nb-modules
with \(M_1=\Z\)
and \(\Gs{1}^M=\Gu{1}^M\).

\begin{example}
  \label{exa:actions_on_Cuntz_1}
  Let
  \begin{align*}
    M_0 &\defeq \Sring = \Z[t]/(t^p-1),\\
    M_1 &\defeq \Z,\\
    M_2 &\defeq \Sring/(N(t)) \cong \Z[\vartheta].
  \end{align*}
  All these groups are in even degree.  Let \(\Ga{2}{1}^M \defeq 0\)
  and \(\Ga{1}{2}^M \defeq 0\).
  Let \(\Ga{0}{1}^M\colon M_1 \to M_0\)
  be multiplication with~\(N(t)\).
  Let \(\Ga{1}{0}^M \defeq \mathrm{ev}_1\)
  be evaluation at~\(1\).
  Let~\(\Ga{2}{0}^M\)
  be the quotient map, and let~\(\Ga{0}{2}^M\)
  be induced by multiplication with~\((1-t)\);
  this is well defined and injective because
  \(1-t^p \mid (1-t) \cdot f\)
  for \(f\in \Z[t]\)
  is equivalent to \(N(t) \mid f\).
  We compute \(\Gt{0}^M = t\),
  \(\Gt{2}^M = t\),
  \(\Gs{1}^M = 1\),
  \(\Gs{2}^M = 1\).
  So \(N(\Gt{0}^M) = N(t)\),
  \(N(\Gt{2}^M) = 0\),
  \(N(\Gs{1}^M) = p\),
  \(N(\Gs{2}^M) = p\).
  Now the relations \eqref{eq:alpha_010_N}--\eqref{eq:alpha_2_N}
  become obvious.  We have already checked that~\(\Ga{0}{2}^M\)
  is injective.  And~\(\Ga{0}{1}^M\)
  is clearly injective and \(\Ga{1}{0}^M\)
  and~\(\Ga{2}{0}^M\)
  are surjective.  The kernel of~\(\Ga{2}{0}^M\)
  is the ideal in~\(\Sring\)
  generated by~\(N(t)\),
  which is isomorphic to~\(\Z\)
  because \((1-t)\cdot N(t) = 1-t^p = 0\)
  in~\(\Sring\).
  So the maps \(\Ga{0}{1}^M\)
  and~\(\Ga{2}{0}^M\)
  form a short exact sequence.  So do the maps \(\Ga{0}{2}^M\)
  and~\(\Ga{1}{0}^M\);
  the image of \(\Ga{0}{2}^M\)
  is the kernel of~\(\Ga{1}{0}^M\)
  because any \(f\in\Z[t]\)
  decomposes uniquely as \(f = f(1) + (t-1) f_2\)
  for some \(f_2 \in \Z[t]\).
  Together with \(\Ga{1}{2}^M = 0\)
  and \(\Ga{2}{1}^M = 0\),
  this says that the remaining relation~\eqref{eq:alpha_exact} holds
  and that the sequences in Definition~\ref{def:exact_R-module} are
  exact.  So the data above defines an exact \(\Kring\)\nb-module.
\end{example}

Let \(\mathrm{tv}\)
denote the trivial action of~\(G\)
on the Cuntz algebra~\(\mathcal{O}_\infty\).
Then \(F_*(\mathcal{O}_\infty,\mathrm{tv})\)
is easily seen to be isomorphic to the \(\Kring\)\nb-module
in Example~\ref{exa:actions_on_Cuntz_1}.  This gives another proof
that Example~\ref{exa:actions_on_Cuntz_1} defines an exact
\(\Kring\)\nb-module.

\begin{example}
  \label{exa:actions_on_Cuntz_not_split}
  We prove by an example that the sequence in
  Theorem~\ref{the:actions_on_Cuntz} need not split.  This example
  is a variant of Example~\ref{exa:actions_on_Cuntz_1}.  Let
  \[
    \Z_{(p)} \defeq
    \setgiven{a/b\in\Q}{(p,b)=1}.
  \]
  Let~\(M\) be the exact \(\Kring\)\nb-module in
  Example~\ref{exa:actions_on_Cuntz_1}.  Then \(\Z_{(p)}\otimes M\)
  is another exact \(\Kring\)\nb-module.  Define
  \begin{align*}
    M_0' &\defeq \setgiven{f\in \Z_{(p)}\otimes M_0}{f(1)\in\Z},\\
    M_1' &\defeq \Z \subseteq \Z_{(p)} \cong \Z_{(p)}\otimes M_1,\\
    M_2' &\defeq \Z_{(p)}\otimes M_2.
  \end{align*}
  Inspection shows that this is a \(\Kring\)\nb-submodule of
  \(\Z_{(p)}\otimes M\).  It contains \(M\cong \Z \otimes M\).  The
  quotient \(M'' \defeq M'/M\) is the uniquely \(p\)\nb-divisible
  exact \(\Kring\)\nb-module with \(M_1''=0\) and
  \(M_0'' \cong M_2'' \cong \Z_{(p)}[\vartheta]/\Z[\vartheta]\).
  Since \(M\) and~\(M''\) are exact, so is~\(M'\).  The extension
  \(M \into M' \prto M''\) does not split because
  \(M_2 \into M'_2 \prto M''_2\) does not split.
\end{example}

\begin{example}
  \label{exa:actions_on_Cuntz_2}
  Let~\(Q\) be the quotient \(\Z[\vartheta]\)\nb-module
  \begin{equation}
    \label{eq:def_Q}
    Q \defeq \frac{\Z[\vartheta,1/p]}{(1-\vartheta)\Z[\vartheta]}.
  \end{equation}
  Let
  \[
  M_0 \defeq \Z \oplus \Sigma Q,\qquad
  M_1 \defeq \Z,\qquad
  M_2 \defeq \Sigma Q.
  \]
  Here~\(\Sigma\)
  means that~\(Q\)
  is in degree~\(1\), whereas~\(\Z\) is in degree~\(0\).  Define
  \[
  \Ga{0}{1}^M \defeq
  \begin{pmatrix}
    1\\0
  \end{pmatrix},\quad
  \Ga{2}{0}^M \defeq
  \begin{pmatrix}
    0&1
  \end{pmatrix},\quad
  \Ga{1}{2}^M \defeq 0,\quad
  \Ga{1}{0}^M \defeq
  \begin{pmatrix}
    p&0
  \end{pmatrix},\quad
  \Ga{0}{2}^M \defeq
  \begin{pmatrix}
    0\\1-\vartheta
  \end{pmatrix}.
  \]
  Let \(\Ga{2}{1}^M\colon \Z \to Q\)
  be the composite of the unital ring homomorphism
  \(\Z\hookrightarrow \Z[\vartheta,1/p]\)
  and the quotient map to~\(Q\).
  Notice that \(\Ga{1}{2}^M\)
  and~\(\Ga{2}{1}^M\)
  are grading-reversing, whereas the other~\(\Ga{j}{k}\)
  are grading-preserving.  This is why we put~\(Q\)
  into odd degree.  We compute \(\Gt{0}^M = 1\oplus \vartheta\),
  \(\Gt{2}^M = \vartheta\),
  \(\Gs{1}^M = 1\),
  \(\Gs{2}^M = 1\).
  So \(N(\Gt{0}^M) = p\oplus 0\),
  \(N(\Gt{2}^M) = 0\),
  \(N(\Gs{1}^M) = p\),
  \(N(\Gs{2}^M) = p\).
  The relations \eqref{eq:alpha_010_N}--\eqref{eq:alpha_2_N} are now
  clear.  The maps \(\Ga{0}{1}^M\)
  and \(\Ga{2}{0}^M\)
  clearly form a short exact sequence, which matches \(\Ga{1}{2}^M =0\).
  The image of \(\Ga{1}{0}^M\)
  is \(p\cdot \Z\subseteq \Z\).
  This is equal to the kernel of \(\Ga{2}{1}^M\) because
  \[
  \Z[\vartheta]/(1-\vartheta) \cong \Z[t]/(1-t,N(t))  \cong \Z[t]/(1-t,p) \cong \Z/p.
  \]
  The kernel of \(\Ga{1}{0}^M\)
  is the direct summand~\(\Sigma Q\).
  This is also the image of \(\Ga{0}{2}^M\)
  because multiplication by \(1-\vartheta\)
  on~\(Q\)
  is surjective by Lemma~\ref{lem:S-module_surjective}.  This lemma
  also shows that the kernel of multiplication by \(1-\vartheta\)
  on~\(Q\)
  is the subgroup isomorphic to~\(\Z/p\)
  generated by the class of~\(1\).
  So \(\ker (\Ga{0}{2}^M) = \im (\Ga{2}{1}^M)\).
  Hence the remaining relation~\eqref{eq:alpha_exact} holds and the
  \(\Kring\)\nb-module defined above is exact.
\end{example}

The remaining examples are exact \(\Kring\)\nb-modules
with \(M_1 = \Z/p^k\)
for some \(k\in\N_{\ge1}\).
The following lemma will be used in several of these examples:

\begin{lemma}
  \label{lem:norm_if_unipotent}
  Let~\(X\)
  be an \(\Sring\)\nb-module
  with \((1-t^X)^2=0\).
  Then
  \[
  N(t^X) = p- \frac{p(p-1)}{2}\cdot (1-t^X).
  \]
  If \(p\neq2\) and \(p\cdot (1-t^X)=0\), then \(N(t^X) = p\).
\end{lemma}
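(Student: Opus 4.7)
The plan is short: reduce the claim to a binomial expansion modulo $(1-t^X)^2$. Set $u \defeq 1 - t^X$, so the hypothesis is $u^2 = 0$ and $t^X = 1 - u$. Then for every $k \in \N$ the binomial theorem gives
\[
(t^X)^k = (1-u)^k = \sum_{j=0}^{k} \binom{k}{j}(-u)^j = 1 - k u,
\]
since all terms with $j\ge 2$ vanish because $u^2=0$.

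Summing from $k=0$ to $p-1$ and using $\sum_{k=0}^{p-1} k = p(p-1)/2$ then yields
\[
N(t^X) = \sum_{k=0}^{p-1}(t^X)^k = p \cdot 1 - \Bigl(\sum_{k=0}^{p-1}k\Bigr) u = p - \frac{p(p-1)}{2}(1-t^X),
\]
which is the first formula.

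For the second claim, assume $p$ is an odd prime and $p\cdot(1-t^X)=0$. Then $(p-1)/2$ is an integer, so
\[
\frac{p(p-1)}{2}(1-t^X) = \frac{p-1}{2}\cdot\bigl(p(1-t^X)\bigr) = 0,
\]
and the first formula collapses to $N(t^X)=p$. There is no real obstacle here; the only subtlety worth flagging is that when $p=2$ the factor $(p-1)/2$ is not an integer, which is exactly why the second statement needs to exclude this case.
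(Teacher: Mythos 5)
Your proof is correct and takes essentially the same approach as the paper: you expand $(t^X)^k = (1-u)^k$ binomially modulo $u^2$, while the paper uses the equivalent Taylor expansion $f(t^X) = f(1) + f'(1)(t^X-1)$ with $N'(1) = p(p-1)/2$; both reduce to the same computation.
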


\begin{proof}
  Any polynomial \(f\in\Z[t]\)
  may be written as
  \(f= f(1) + f'(1)\cdot(t-1) + f_2(t)\cdot (t-1)^2\).
  Then \(f(t^X) = f(1) + f'(1) (t^X-1)\)
  because \((1-t^X)^2=0\).
  Since \(N(1)=p\)
  and \(N'(1) = \sum_{j=1}^{p-1} j = p(p-1)/2\),
  this implies the first claim.  The second claim follows from the
  first one because \(2 \mid p-1\) if \(p\neq2\).
\end{proof}

We begin with the only example where the action of~\(\Z/p\)
on~\(M_1\) generated by~\(\Gs{1}^M\) is non-trivial:

\begin{example}
  \label{exa:actions_on_Cuntz_3}
  Assume \(k\ge2\)
  if \(p\neq2\)
  and assume \(k\ge3\)
  if \(p=2\).
  Let \(\tau\in \{1,\dotsc,p-1\}\).
  Let \(\nu \defeq p\)
  if \(p\neq2\)
  and \(\nu \defeq 2(1- 2^{k-2}\tau)\) if \(p=2\).  Define
  \[
  M_0 \defeq \Z/p^{k-1},\qquad M_1 \defeq \Z/p^k,\qquad M_2 \defeq \Sigma\Z/p,
  \]
  where~\(\Sigma\)
  denotes that~\(M_2\)
  is in degree~\(1\).
  For \(x\in\Z\),
  let \([x]\)
  denote its class in one of the quotients~\(M_j\),
  \(j=0,1,2\).  Define
  \begin{alignat*}{3}
    \Ga{0}{2}^M &\defeq 0,&\qquad
    \Ga{2}{1}^M[x] &\defeq [\tau\cdot x],&\qquad
    \Ga{1}{0}^M[x] &\defeq [\nu\cdot x],\\
    \Ga{2}{0}^M &\defeq 0,&\qquad
    \Ga{1}{2}^M[x] &\defeq [p^{k-1}\cdot x],&\qquad
    \Ga{0}{1}^M[x] &\defeq [x].
  \end{alignat*}
  Notice that \(\Ga{1}{2}^M\)
  and~\(\Ga{2}{1}^M\)
  are grading-reversing and the other generators grading-preserving.
  We compute \(\Gt{0}^M = 1\),
  \(\Gt{2}^M = 1\),
  \(\Gs{2}^M = 1\)
  because \(k\ge2\),
  \(\Gs{1}^M = 1-\tau p^{k-1}\).
  So \(N(\Gt{0}^M) = p\),
  \(N(\Gt{2}^M) = p\equiv 0\),
  \(N(\Gs{2}^M) = p\equiv 0\).
  We claim that \(N(\Gs{1}^M) = \nu\).
  If \(p=2\),
  then \(N(\Gs{1}^M) = \Gu{1}^M + \Gs{1}^M\)
  and the claim is trivial.  Let \(p\neq2\).
  We compute
  \begin{equation}
    \label{eq:s1_unipotent}
    (1-\Gs{1}^M)^2
    = \Ga{1}{2}^M \Ga{2}{1}^M \Ga{1}{2}^M \Ga{2}{1}^M
    = \Ga{1}{2}^M (1-\Gs{2}^M) \Ga{2}{1}^M
    = 0
  \end{equation}
  and \(p\cdot (1-\Gs{1}^M)=0\).
  Since \(p\neq2\),
  Lemma~\ref{lem:norm_if_unipotent} implies \(N(\Gs{1}^M) = p = \nu\).
  Hence~\eqref{eq:alpha_101_N} holds.  Equation~\eqref{eq:alpha_010_N}
  is trivial for \(p\neq2\)
  and holds for \(p=2\)
  because \(\nu\equiv 2 \bmod 2^{k-1}\).
  And~\eqref{eq:alpha_2_N} is trivial.  The map~\(\Ga{1}{0}^M\)
  is injective with image~\(p\Z/p^k\),
  and~\(\Ga{2}{1}^M\)
  is surjective with kernel~\(p\Z/p^k\).
  The map~\(\Ga{0}{1}^M\)
  is surjective with kernel~\(p^{k-1}\Z/p^k\),
  and~\(\Ga{1}{2}^M\)
  is injective with image~\(p^{k-1}\Z/p^k\).
  Hence~\eqref{eq:alpha_exact} holds and the sequences in
  Definition~\ref{def:exact_R-module} are exact.  So we have defined
  an exact \(\Kring\)\nb-module.
\end{example}

\begin{example}
  \label{exa:actions_on_Cuntz_4}
  Assume \(p\neq2\)
  or \(k\ge2\).
  Let \(\ell \ge2\) and \(\tau\in \{1,\dotsc,p-1\}\).  Define
  \begin{align*}
    M_0 &\defeq \Z/p^{k-1} \oplus \Sigma\Z[\vartheta]/(1-\vartheta)^{\ell-1},\\
    M_1 &\defeq \Z/p^k,\\
    M_2 &\defeq \Sigma\Z[\vartheta]/(1-\vartheta)^\ell,
  \end{align*}
  \begin{alignat*}{3}
    \Ga{0}{1}^M([x])
    &= \begin{pmatrix} {}[x]\\0 \end{pmatrix},&\quad
    \Ga{2}{0}^M \begin{pmatrix} {}[x]\\{}[f] \end{pmatrix}
    &= {}[(1-\vartheta)f],&\quad
    \Ga{1}{2}^M [f]
    &= [p^{k-1} f(1)],\\
    \Ga{1}{0}^M \begin{pmatrix} {}[x]\\ {}[f] \end{pmatrix}
    &= [p\cdot x],&\quad
    \Ga{0}{2}^M \begin{pmatrix} {}[f] \end{pmatrix}
    &= \begin{pmatrix} 0\\ {}[f] \end{pmatrix},&\quad
    \Ga{2}{1}^M([x]) &= [\tau \cdot (1-\vartheta)^{\ell-1} x].
  \end{alignat*}
  Recall that the inclusion \(\Z \to \Z[\vartheta]\) induces an
  isomorphism \(\Z/p \cong \Z[\vartheta]/(1-\vartheta)\) because
  \((1-t,N(t)) = (1-t,p)\).  The map~\(\Ga{1}{2}^M\) is well defined
  as the composite of this isomorphism with the map
  \(\Z/p \to \Z/p^k\), \(x\mapsto p^{k-1} x\).  The
  map~\(\Ga{1}{2}^M\) is well defined as the composite of the
  inverse of this isomorphism with the map
  \(\Z[\vartheta]/(1-\vartheta) \to \Z[\vartheta]/(1-\vartheta)^l\),
  \(f\mapsto f\cdot (1-\vartheta)^{l-1}\).

  If \(k=1\),
  then the summand~\(\Z/p^{k-1}\)
  in~\(M_0\)
  disappears; this simplifies the formulas for the
  maps~\(\Ga{j}{m}^M\) in this case.  We compute
  \[
  \Gt{0}^M = 1 \oplus\vartheta,\qquad
  \Gs{1}^M = 1,\qquad
  \Gt{2}^M = \vartheta,\qquad
  \Gs{2}^M =
  \begin{cases}
    1&\text{if }k\ge 2,\\
    1-\tau\cdot(1-\vartheta)^{\ell-1}&\text{if }k=1.
  \end{cases}
  \]
  Here \(\Gs{1}^M= \Gu{1}^M\) follows because
  \(\im \Ga{2}{1}^M= (1-\vartheta)^{\ell-1}/(1-\vartheta)^\ell\) is
  contained in
  \(\ker \Ga{1}{2}^M= (1-\vartheta)/(1-\vartheta)^\ell\) because
  \(\ell\ge2\).  We claim that \(N(\Gs{2}^M)=p\).  This is clear if
  \(k\ge2\).  Let \(k=1\).  Since \(\Gs{1}^M=1\), a computation
  analogous to~\eqref{eq:s1_unipotent} shows that
  \((1-\Gs{2}^M)^2= 0\).  And \(p\cdot(1-\Gs{2}^M)= 0\) because
  \((1-\vartheta)^\ell\) divides \(p(1-\vartheta)^{\ell-1}\).  Since
  \(p\neq2\) if \(k=1\), Lemma~\ref{lem:norm_if_unipotent} implies
  \(N(\Gs{2}^M)=p\).  Since \(N(\Gt{2}^M)=N(\vartheta)=0\), the
  condition \(N(\Gs{2}^M) + N(\Gs{2}^M) = p\cdot \Gu{2}^M\)
  in~\eqref{eq:alpha_2_N} holds in all allowed cases.  Equations
  \eqref{eq:alpha_101_N} and~\eqref{eq:alpha_010_N} are easy to
  check.

  To check exactness, we will use that multiplication by
  \((1-\vartheta)^{\ell-1}\) and~\(1-\vartheta\) induce isomorphisms
  \[
  \frac{\Z[\vartheta]}{(1-\vartheta)\Z[\vartheta]}
  \cong \frac{(1-\vartheta)^{\ell-1}\Z[\vartheta]}{(1-\vartheta)^\ell\Z[\vartheta]},
  \qquad
  \frac{\Z[\vartheta]}{(1-\vartheta)^{\ell-1}\Z[\vartheta]}
  \cong \frac{(1-\vartheta)\Z[\vartheta]}{(1-\vartheta)^\ell\Z[\vartheta]},
  \]
  respectively.  Then we compute
  \begin{align*}
    \ker (\Ga{0}{1}^M)
    &= p^{k-1} \Z/p^k
      = \im (\Ga{1}{2}^M),\\
    \ker (\Ga{2}{1}^M)
    &= p \Z/p^k
      = \im (\Ga{1}{0}^M),\\
    \ker (\Ga{1}{2}^M)
    &= (1-\vartheta)/(1-\vartheta)^\ell
      = \im (\Ga{2}{0}^M),\\
    \ker (\Ga{0}{2}^M)
    &= (1-\vartheta)^{\ell-1}/(1-\vartheta)^\ell
      = \im (\Ga{2}{1}^M),\\
    \ker (\Ga{1}{0}^M)
    &= 0\oplus\Sigma\Z[\vartheta]/(1-\vartheta)^{\ell-1}
      = \im (\Ga{0}{2}^M),\\
    \ker (\Ga{2}{0}^M)
    &= \Z/p^{k-1} \oplus 0
      = \im (\Ga{0}{1}^M).
  \end{align*}
  This finishes the proof that~\(M\) is an exact
  \(\Kring\)\nb-module.

  The little invariant contained in~\(M\) does not depend on the
  parameter~\(\tau\).  The following lemma shows, however, that the
  isomorphism class of~\(M\) depends on~\(\tau\).  Since there are
  actions of~\(G\) that realise the exact \(\Kring\)\nb-modules in
  this example, we get counterexamples of objects
  in~\(\mathfrak{B}^G\) that are not \(\KK^G\)\nb-equivalent, but
  have isomorphic little invariants.
\end{example}

\begin{lemma}
  \label{lem:actions_on_Cuntz_4}
  The \(\Kring\)\nb-modules~\(M\) in
  Example~\textup{\ref{exa:actions_on_Cuntz_4}} for different values
  of~\(\tau\) are not isomorphic.
\end{lemma}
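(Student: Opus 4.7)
The parameter~\(\tau\) enters the module structure in only two places: the map~\(\Ga{2}{1}^M\), and, when \(k=1\), the operator~\(\Gs{2}^M\).  The plan is to suppose that \(\varphi=(\varphi_0,\varphi_1,\varphi_2)\) is an isomorphism from the module \(M(\tau)\) to the module \(M(\tau')\), and to show directly that the compatibility of~\(\varphi\) with the generators forces \(\tau\equiv\tau'\pmod p\), so that \(\tau=\tau'\) since both lie in \(\{1,\dotsc,p-1\}\).

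First I would analyse \(\varphi_1\) and~\(\varphi_2\).  Since \(M_1=\Z/p^k\) is cyclic and \(\Gs{1}^M=\Gu{1}^M\), any \(\Sring\)\nb-module automorphism of~\(M_1\) is multiplication by a unit \(u\in(\Z/p^k)^\times\).  In the case \(k\ge2\) we have \(\Gs{2}^M=\Gu{2}^M\) and \(\Gt{2}^M=\vartheta\), so the \(\Sring_2\)\nb-module structure on~\(M_2\) reduces to its structure as a cyclic \(\Z[\vartheta]\)\nb-module, and \(\varphi_2\) is multiplication by an element \(v\in\Z[\vartheta]/(1-\vartheta)^\ell\) whose invertibility is equivalent to \(v(1)\not\equiv0\pmod p\).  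Feeding the generator \([1]\in M_2\) through \(\varphi_1\circ\Ga{1}{2}^{M(\tau)}=\Ga{1}{2}^{M(\tau')}\circ\varphi_2\) and using \(\Ga{1}{2}^M[f]=[p^{k-1}f(1)]\) yields \(u\equiv v(1)\pmod p\).  Feeding \([1]\in M_1\) through \(\varphi_2\circ\Ga{2}{1}^{M(\tau)}=\Ga{2}{1}^{M(\tau')}\circ\varphi_1\) gives \(v(1)\,\tau\equiv u\,\tau'\pmod p\), after using that multiplication by~\((1-\vartheta)^{\ell-1}\) induces the isomorphism \(\Z[\vartheta]/(1-\vartheta)\cong(1-\vartheta)^{\ell-1}/(1-\vartheta)^\ell\).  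Combining the two congruences gives \(u\tau\equiv u\tau'\pmod p\), and invertibility of~\(u\) modulo~\(p\) finishes this case.

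For the remaining case \(k=1\), where we are given \(p\ne2\), the operator \(\Gs{2}^M=\Gu{2}^M-\tau(1-\vartheta)^{\ell-1}\) depends on~\(\tau\), and I would use the compatibility of~\(\varphi_2\) with \(\Gs{2}^M\) in place of the compatibility with~\(\Ga{1}{2}^M\) and~\(\Ga{2}{1}^M\).  The module~\(M_2\) is still a cyclic \(\Sring_2\)\nb-module on which~\(t\) acts as~\(\vartheta\), so any \(\Sring_2\)\nb-module map is again multiplication by some \(v\in\Z[\vartheta]/(1-\vartheta)^\ell\).  The identity \(\varphi_2\circ\Gs{2}^{M(\tau)}=\Gs{2}^{M(\tau')}\circ\varphi_2\) evaluated on~\([1]\) reduces to \((\tau-\tau')(1-\vartheta)^{\ell-1}v\equiv0\pmod{(1-\vartheta)^\ell}\), which forces \((\tau-\tau')v(1)\equiv0\pmod p\) and hence \(\tau=\tau'\).

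The main obstacle is bookkeeping rather than conceptual: one must verify in the case \(k\ge2\) that both congruences \(u\equiv v(1)\) and \(v(1)\tau\equiv u\tau'\) modulo~\(p\) really fall out of the compatibilities exactly as stated, without being spoiled by the mixing of the two summands in~\(M_0\) under the isomorphism~\(\varphi_0\).  Fortunately \(\varphi_0\) plays no role in either congruence, so this reduces to the elementary computations with~\(\Ga{1}{2}^M\) and~\(\Ga{2}{1}^M\) indicated above.  The \(k=1\) case is genuinely different because the invariant that sees~\(\tau\) is \(\Gs{2}^M\) itself; it is worth singling it out so the reader sees that the hypothesis \(p\ne2\) (used to force \(\Gs{1}^M=\Gu{1}^M\) via Lemma~\ref{lem:norm_if_unipotent}) is exactly what keeps the argument uniform.
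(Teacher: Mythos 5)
Your proof is correct, but it travels a slightly different road from the paper's. The paper extracts a single isomorphism invariant directly from the module structure: for \((x,y)\in M_1\oplus M_1\), the condition ``there is \(b\in M_2\) with \((\Gu{2}^M-\Gt{2}^M)^{\ell-1}(b)=\Ga{2}{1}^M(x)\) and \(\Ga{1}{2}^M(b)=y\)'' holds iff \(y=\tau p^{k-1}\cdot x\); since this is a condition phrased entirely in the module operations, the map \(x\mapsto\tau p^{k-1} x\) on \(M_1\) is an invariant of the isomorphism class, and it determines \(\tau\bmod p\). Your proof instead assumes an isomorphism \(\varphi\colon M(\tau)\to M(\tau')\) and analyses its components: the observation that \(M_1\) and \((M_2,\Gt{2}^M)\) are cyclic over \(\Z/p^k\) and \(\Z[\vartheta]/(1-\vartheta)^\ell\) forces \(\varphi_1\) and \(\varphi_2\) to be multiplication by units, and then the two intertwining relations for \(\Ga{1}{2}\) and \(\Ga{2}{1}\) evaluated on generators give \(u\equiv v(1)\) and \(v(1)\tau\equiv u\tau'\bmod p\). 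The paper's route is more compact and names the distinguishing invariant explicitly; yours is more pedestrian but also more transparent about which parts of the structure an isomorphism can and cannot mix. Both are sound.

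Two small remarks. First, your case split \(k\ge2\) versus \(k=1\) is unnecessary: the computation you run for \(k\ge2\) (using \(\Ga{1}{2}\) and \(\Ga{2}{1}\)) works verbatim for \(k=1\) as well, since one still obtains \(u\equiv v(1)\) and \(v(1)\tau\equiv\tau'u\bmod p\); the alternative route through \(\Gs{2}^M\) in the \(k=1\) case is equivalent (because \(\Gs{2}=\Gu{2}-\Ga{2}{1}\otimes\Ga{1}{2}\) carries the same information) but not required. Second, your closing parenthetical misattributes the role of the hypothesis \(p\neq2\): in Example~\ref{exa:actions_on_Cuntz_4}, the identity \(\Gs{1}^M=\Gu{1}^M\) is forced by \(\ell\ge2\) (which makes \(\im\Ga{2}{1}^M\subseteq\ker\Ga{1}{2}^M\)), not by \(p\neq2\); the hypothesis \(p\neq2\) when \(k=1\) is what ensures \(N(\Gs{2}^M)=p\) via Lemma~\ref{lem:norm_if_unipotent}, which is needed for \(M\) to be a well-defined exact \(\Kring\)-module in the first place, but plays no role in your non-isomorphism argument itself.
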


\begin{proof}
  Let \((x,y) \in M_1\oplus M_1\).  There is \(b\in M_2\) with
  \((\Gu{2}^M-\Gt{2}^M)^{\ell-1}(b) = \Ga{2}{1}^M(x)\) and
  \(\Ga{1}{2}^M(b) = y\) if and only if \(y = \tau p^{k-1}\cdot x\).
  Hence the map of multiplication by~\(\tau p^{k-1}\) on~\(\Z/p^k\)
  is an isomorphism invariant of~\(M\).  The information in this map
  is exactly the class of~\(\tau\) in~\(\Z/p\).
\end{proof}

Roughly speaking, Example~\ref{exa:actions_on_Cuntz_3} is the case
\(\ell=1\) of Example~\ref{exa:actions_on_Cuntz_4}, which we have
not allowed there.  If we took \(\ell=1\) in
Example~\ref{exa:actions_on_Cuntz_4}, we could simplify
\(\Z[\vartheta]/(1-\vartheta)^{\ell-1}= 0\) and
\(\Z[\vartheta]/(1-\vartheta)^\ell \cong \Z/p\).  Then the formulas
for~\(\Ga{j}{m}^M\) for \((j,m)\neq (0,1)\) are exactly those in
Example~\ref{exa:actions_on_Cuntz_3}.  The formulas
for~\(\Ga{1}{0}^M\) are the same as well if \(p\neq 2\), but not if
\(p=2\).  The subtle choice of~\(\Ga{1}{0}^M\) in
Example~\ref{exa:actions_on_Cuntz_3} ensures
that~\eqref{eq:alpha_101_N} also holds if \(p=2\).

\begin{example}
  \label{exa:actions_on_Cuntz_6}
  Define
  \(Q\defeq \Z[\vartheta,1/p] \bigm/ (1-\vartheta) \Z[\vartheta]\)
  as in~\eqref{eq:def_Q}.  Define
  \begin{align*}
    M_0 &\defeq \frac{\Z/p^k\oplus Q \oplus \Sigma Q}
          {\setgiven{([p^{k-1} x],[x],0)}{[x]\in\Z/p}},\\
    M_1 &\defeq \Z/p^k,\\
    M_2 &\defeq Q \oplus \Sigma Q.
  \end{align*}
  We describe elements of~\(M_0\)
  as equivalence classes of triples \([x,y,z]\)
  with \(x\in \Z/p^k\), \(y,z\in Q\).  Define
  \begin{alignat*}{3}
    \Ga{0}{1}^M([x])
    &\defeq \begin{bmatrix} x\\0\\0 \end{bmatrix},&\quad
    \Ga{2}{0}^M \begin{bmatrix} {}[x]\\ {}[f_1]\\ {}[f_2] \end{bmatrix}
    &\defeq \begin{pmatrix} {}[(1-\vartheta) \cdot f_1]\\ {}[f_2] \end{pmatrix},&\quad
    \Ga{1}{2}^M \begin{pmatrix} {}[f_1]\\ {}[f_2] \end{pmatrix}
    &\defeq 0,\\
    \Ga{1}{0}^M \begin{bmatrix} {}[x]\\{}[f_1]\\ {}[f_2] \end{bmatrix}
    &\defeq [p\cdot x],&\quad
    \Ga{0}{2}^M \begin{pmatrix} {}[f_1]\\ {}[f_2] \end{pmatrix}
    &\defeq \begin{bmatrix} 0\\ {}[f_1]\\ {}[(1-\vartheta)\cdot f_2] \end{bmatrix},&\quad
    \Ga{2}{1}^M([x]) &\defeq \begin{pmatrix} 0\\ {}[x] \end{pmatrix}.
  \end{alignat*}
  We have implicitly used the isomorphism
  \(\Z/p\cong\Z[\vartheta]/ (1-\vartheta)\) induced by the inclusion
  \(\Z \to \Z[\vartheta]\) and the inclusion
  \(\Z[\vartheta]/ (1-\vartheta) \subseteq Q\).  Notice that the
  maps \(\Ga{1}{0}^M\) and \(\Ga{2}{0}^M\) are well defined.  And
  the maps \(\Ga{1}{2}^M\) and~\(\Ga{2}{1}^M\) reverse the grading,
  while the other~\(\Ga{j}{m}^M\) preserve the grading.  We compute
  \[
  \Gt{0}^M = (1\oplus \vartheta \oplus\vartheta)_*,\qquad
  \Gs{1}^M = 1,\qquad
  \Gt{2}^M = \vartheta \oplus\vartheta,\qquad
  \Gs{2}^M = 1 \oplus 1.
  \]
  Now the relations \eqref{eq:alpha_101_N}, \eqref{eq:alpha_010_N}
  and~\eqref{eq:alpha_2_N} are clear.  And
  \begin{align*}
    \ker (\Ga{0}{1}^M)
    &= 0
      = \im (\Ga{1}{2}^M),\\
    \ker (\Ga{2}{1}^M)
    &= p \Z/p^k
      = \im (\Ga{1}{0}^M),\\
    \ker (\Ga{1}{2}^M)
    &= Q \oplus \Sigma Q
      = \im (\Ga{2}{0}^M),\\
    \ker (\Ga{0}{2}^M)
    &= 0 \oplus \Z[\vartheta]/(1-\vartheta)
      = \im (\Ga{2}{1}^M).
  \end{align*}
  Here we have used that multiplication by \(1-\vartheta\)
  on~\(Q\)
  is surjective and has the kernel \(\Z[\vartheta]/(1-\vartheta)\)
  (see Lemma~\ref{lem:S-module_surjective}).  The equivalence class of
  \(([x], [f_1], [f_2])\)
  belongs to the kernel of \(\Ga{1}{0}^M\)
  if and only if there is \(y\in \Z/p\) with \(x = p^{k-1} y\); then
  \[
  \begin{bmatrix} {}[x]\\ {}[f_1]\\ {}[f_2] \end{bmatrix}
  = \begin{bmatrix} 0\\ {}[f_1-y]\\ {}[f_2] \end{bmatrix}
  \in \im (\Ga{0}{2}^M).
  \]
  And the equivalence class of \(([x], [f_1], [f_2])\)
  belongs to the kernel of \(\Ga{2}{0}^M\)
  if and only if \([f_2]=0\)
  and \([f_1] \in \Z[\vartheta]/(1-\vartheta) \cong \Z/p\); then
  \[
  \begin{bmatrix} {}[x]\\ {}[f_1]\\ {}[f_2] \end{bmatrix}
  = \begin{bmatrix} {}[x-p^{k-1} f_1(1)]\\ 0\\ 0 \end{bmatrix}
  \in \im (\Ga{0}{1}^M).
  \]
  Thus~\(M\) is an exact \(\Kring\)\nb-module.
\end{example}

\begin{example}
  \label{exa:actions_on_Cuntz_8}
  Let \(k\ge1\),
  \(\ell\in\N_{\ge1}\) and \(c\in \{1,\dotsc,p-1\}\).  Define
  \begin{align*}
    M_0 &\defeq \Sring\bigm/ \bigl((1-t)^\ell - c p^{k-1} N(t), p^k N(t)\bigr),\\
    M_1 &\defeq \Z/p^k,\\
    M_2 &\defeq \Sring\bigm/ \bigl((1-t)^\ell, N(t)\bigr).
  \end{align*}
  Let~\(I\)
  be the ideal \(\bigl((1-t)^\ell - c p^{k-1} N(t), p^k N(t)\bigr)\)
  in~\(\Sring\).
  Let \(\Ga{2}{1}^M = 0\)
  and \(\Ga{1}{2}^M = 0\).
  Let \(\Ga{2}{0}^M \colon M_0 \to M_2\)
  be the quotient map; this exists because
  \(I \subseteq \bigl((1-t)^\ell, N(t)\bigr)\).
  Let \(\Ga{0}{2}^M \colon M_2 \to M_0\)
  be induced by multiplication with \(1-t\);
  this is well defined because
  \((1-t)\cdot \bigl((1-t)^\ell - c p^{k-1} N(t)\bigr) \equiv
  (1-t)^{\ell+1} \bmod t^p-1\),
  so that \((1-t)\cdot ((1-t)^\ell,N(t)) \subseteq I\)
  as ideals in~\(\Sring\).
  Let \(\Ga{1}{0}^M\colon M_0 \to M_1\)
  be induced by \(\mathrm{ev}_1\colon \Sring \to \Z\);
  this is well defined because \(N(1) = p\)
  and hence \(\mathrm{ev}_1(I) \subseteq (p^k)\).
  Let \(\Ga{0}{1}^M\colon M_1 \to M_0\)
  be induced by \(\Z \to \Sring\),
  \(j\mapsto N(t)\cdot j\);
  this is well defined because \(p^k N(t)\in I\).

  We compute that \(\Gt{0}^M = t\)
  and \(\Gt{2}^M = t\)
  are multiplication with~\(t\)
  on \(M_0\)
  and~\(M_2\),
  respectively, whereas \(\Gs{j}^M = 1\)
  for \(j=1,2\).
  The composite maps
  \(\Ga{0}{1}^M\circ \Ga{1}{0}^M\colon M_0 \to M_0\)
  and \(\Ga{1}{0}^M\circ \Ga{0}{1}^M\colon M_1 \to M_1\)
  are multiplication with \(N(t)\)
  and with~\(p\),
  respectively.  Hence the relations \eqref{eq:alpha_010_N},
  \eqref{eq:alpha_101_N} and~\eqref{eq:alpha_2_N} hold.

  The maps \(\Ga{2}{0}^M\)
  and~\(\Ga{1}{0}^M\)
  are surjective by construction.  And
  \(\im (\Ga{0}{1}^M) = \ker (\Ga{2}{0}^M)\)
  holds if and only if the map
  \(\coker (\Ga{0}{1}^M) \to M_2\)
  induced by~\(\Ga{2}{0}^M\)
  is injective.  Now
  \(M_2 = \Sring/((1-t)^\ell,N(t))\)
  and
  \(\coker (\Ga{0}{1}^M) = \Sring/(I + \Z\cdot N(t)) = \Sring/(N(t),
  (1-t)^\ell)\)
  as needed.  Similarly, we compute
  \[
  (1-t) + I
  = (1-t, c p^{k-1} N(t), p^k N(t))
  = (1-t, c p^k, p^{k+1})
  = (1-t,p^k)
  \]
  because \((c,p)=1\).  Hence the map from
  \(\coker (\Ga{0}{2}^M) = \Sring/((1-t) + I)\) to
  \(M_1 \cong \Sring/(1-t,p^k)\) induced by~\(\Ga{1}{0}^M\) is
  injective, so that \(\im (\Ga{0}{2}^M) = \ker (\Ga{1}{0}^M)\).  To
  check that~\(M\) is an exact \(\Kring\)\nb-module, it remains to
  show that \(\Ga{0}{2}^M\) and~\(\Ga{0}{1}^M\) are injective.
  These two claims translate to the following more elementary
  statements.  First, if \(f\in \Sring\) satisfies
  \((1-t) f \in I\), then \(f \in ((1-t)^\ell,N(t))\).  Secondly, if
  \(j\in \Z\) satisfies \(j N(t) \in I\), then \(p^k \mid j\).  We
  check these elementary claims.  Let \(f\in\Z[t]\).  First, if
  \((1-t) f \in I\), then there are \(f_1,f_2,f_3\in \Z[t]\) with
  \[
    (1-t) f = f_1\cdot (1-t^p) + f_2\cdot ((1-t)^\ell - c p^{k-1}
    N(t)) + f_3\cdot p^k N(t).
  \]
  It follows that \(p f_3(1) -c f_2(1)=0\).  Hence
  \(p \mid f_2(1)\).  Then \(f_2 \in (1-t,p)\) and \((1-t) f\)
  belongs to the ideal generated by
  \(1-t^p, (1-t)^{\ell+1}, p(1-t)^\ell, p^k N(t)\).  We may drop the
  generator \(p(1-t)^\ell\) because \(p \equiv N(t) \bmod (1-t)\)
  and \(1-t^p \mid N(t) \cdot (1-t)^\ell\); here we need
  \(\ell\ge1\).  So \(f \in ((1-t)^\ell,N(t))\) as asserted.  For
  the second claim, assume \(j N(t) \in I\).  Then there are
  \(f_1,f_2,f_3\in \Z[t]\) with
  \begin{equation}
    \label{eq:jN_equation}
    j N(t) = f_1 \cdot (1-t^p) + f_2\cdot ((1-t)^\ell - c p^{k-1}
    N(t)) + f_3\cdot p^k N(t).
  \end{equation}
  Hence \(N(t) \mid f_2\cdot (1-t)^\ell\).  Since multiplication
  by~\(1-t\) is injective on \(\Sring/(N(t)) \cong \Z[\vartheta]\),
  it follows that \(N \mid f_2\).  Hence \(p\mid f_2(1)\).
  Then~\eqref{eq:jN_equation} implies \(p^k \mid j\).
\end{example}

\begin{example}
  \label{exa:actions_on_Cuntz_5}
  Assume \(k\ge2\) or \(p\neq2\).  Define
  \begin{align*}
    M_0 &\defeq \Z/p^{k-1} \oplus \Sigma\Z[\vartheta] \oplus \Sigma Q,\\
    M_1 &\defeq \Z/p^k,\\
    M_2 &\defeq \Sigma\Z[\vartheta] \oplus \Sigma Q,
  \end{align*}
  \begin{alignat*}{3}
    \Ga{0}{1}^M([x])
    &= \begin{pmatrix} {}[x]\\0\\0 \end{pmatrix},&\quad
    \Ga{2}{0}^M \begin{pmatrix} {}[x]\\f_1\\ {}[f_2] \end{pmatrix}
    &= \begin{pmatrix} (1-\vartheta) f_1\\ {}[f_2] \end{pmatrix},&\quad
    \Ga{1}{2}^M \begin{pmatrix} f_1\\ {}[f_2] \end{pmatrix}
    &= [p^{k-1} f_1(1)],\\
    \Ga{1}{0}^M \begin{pmatrix} {}[x]\\f_1\\ {}[f_2] \end{pmatrix}
    &= [p\cdot x],&\quad
    \Ga{0}{2}^M \begin{pmatrix} f_1\\ {}[f_2] \end{pmatrix}
    &= \begin{pmatrix} 0\\ f_1\\ {}[(1-\vartheta)f_2] \end{pmatrix},&\quad
    \Ga{2}{1}^M([x]) &= \begin{pmatrix} 0\\ {}[x] \end{pmatrix}.
  \end{alignat*}
  The map~\(\Ga{1}{2}^M\)
  is well defined because
  \(\Z[t]/(N(t),t-1) = \Z[t]/(t-1,p) \cong \Z/p\)
  or, equivalently, \(\Z[\vartheta]/(1-\vartheta) \cong \Z/p\).
  (If \(k=1\),
  then the summand \(\Z/p^{k-1}=0\)
  disappears and the formulas for the maps~\(\Ga{j}{m}^M\)
  simplify accordingly.  In particular, \(\Ga{0}{1}^M\)
  and \(\Ga{1}{0}^M\) become~\(0\) in this case.)  We compute
  \[
  \Gt{0}^M = 1\oplus \vartheta \oplus\vartheta,\qquad
  \Gs{1}^M = 1,\qquad
  \Gt{2}^M = \vartheta \oplus\vartheta.
  \]
  The relations \eqref{eq:alpha_010_N} and~\eqref{eq:alpha_101_N} are
  clear, and~\eqref{eq:alpha_2_N} is equivalent to \(N(\Gs{2}^M)=p\)
  because \(N(\Gt{2}^M)=0\).
  If \(k\ge2\),
  then \(\Ga{2}{1}^M \circ \Ga{1}{2}^M = 0\).
  Hence \(\Gs{2}^M = 1 \oplus 1\)
  and \(N(\Gs{2}^M)=p\).
  If \(k=1\),
  then \(\Gu{2}^M - \Gs{2}^M = \Ga{2}{1}^M \circ \Ga{1}{2}^M\)
  maps \((f_1,[f_2]) \mapsto (0,[f_1(1)])\),
  where we use the canonical inclusion
  \(\Z/p \cong \Z[\vartheta]/(1-\vartheta) \subseteq Q\).
  Hence \((\Gu{2}^M - \Gs{2}^M)^2=0\).
  Since \(p\cdot (\Gu{2}^M - \Gs{2}^M)=0\)
  as well and \(p\neq2\),
  Lemma~\ref{lem:norm_if_unipotent} implies \(N(\Gs{2}^M) =p\)
  as needed.  The definitions imply
  \[
  \ker (\Ga{0}{1}^M) = p^{k-1} \Z/p^k = \im (\Ga{1}{2}^M),\qquad
  \ker (\Ga{2}{1}^M) = p \Z/p^k = \im (\Ga{1}{0}^M).
  \]
  Multiplication by~\(1-\vartheta\)
  is injective on \(\Sigma\Z[\vartheta]\)
  with cokernel~\(\Z/p\)
  and surjective on \(\Sigma Q\)
  with kernel~\(\Z/p\)
  by Lemmas \ref{lem:S-module_injective}
  and~\ref{lem:S-module_surjective}.  This implies
  \begin{align*}
    \ker (\Ga{1}{2}^M)
    &= (1-\vartheta) \Z[\vartheta] \oplus \Sigma Q
      = \im (\Ga{2}{0}^M),\\
    \ker (\Ga{0}{2}^M)
    &= 0\oplus \Z[\vartheta]/(1-\vartheta) \Z[\vartheta]
      = \im (\Ga{2}{1}^M),\\
    \ker (\Ga{1}{0}^M)
    &= 0 \oplus \Sigma\Z[\vartheta] \oplus \Sigma Q
      = \im (\Ga{0}{2}^M),\\
    \ker (\Ga{2}{0}^M)
    &= \Z/p^{k-1} \oplus 0 \oplus 0
      = \im (\Ga{0}{1}^M).
  \end{align*}
  It follows that~\(M\) is an exact \(\Kring\)\nb-module.
\end{example}

\begin{example}
  \label{exa:actions_on_Cuntz_7}
  Define
  \begin{align*}
    M_0 &\defeq \Sring/(p^{k-1} N(t)) \oplus \Sigma \Sring/(N(t)),\\
    M_1 &\defeq \Z/p^k,\\
    M_2 &\defeq \Sring/(N(t)) \oplus \Sigma \Sring/(N(t))
          = \Z[\vartheta] \oplus \Sigma\Z[\vartheta],
  \end{align*}
  \begin{alignat*}{3}
    \Ga{1}{0}^M \begin{pmatrix} {}[f_1]\\ {}[f_2] \end{pmatrix}
    &\defeq [f_1(1)],&\qquad
    \Ga{0}{2}^M \begin{pmatrix} {}[f_1]\\{}[f_2] \end{pmatrix}
    &\defeq \begin{pmatrix} {}[(1-t)f_1]\\{}[f_2]  \end{pmatrix},\\
    \Ga{0}{1}^M([x])
    &\defeq \begin{pmatrix} {}[x\cdot N(t)]\\0 \end{pmatrix},&\qquad
    \Ga{2}{0}^M \begin{pmatrix} {}[f_1]\\{}[f_2] \end{pmatrix}
    &\defeq \begin{pmatrix} {}[f_1]\\{}[(1-t)f_2] \end{pmatrix},\\
    \Ga{1}{2}^M \begin{pmatrix} {}[f_1]\\{}[f_2] \end{pmatrix}
    &\defeq [p^{k-1} f_2(1)],&\qquad
    \Ga{2}{1}^M([x]) &\defeq0.
  \end{alignat*}
  The map~\(\Ga{1}{0}^M\)
  is well defined because \(p^{k-1} N(1) = p^k\),
  and~\(\Ga{1}{2}^M\)
  is well defined because
  \(\Sring/(N(t),1-t) \cong \Z[\vartheta]/(1-\vartheta) \cong \Z/p\).
  We compute
  \[
  \Gt{0}^M = t \oplus t,\qquad
  \Gt{2}^M = t \oplus t,\qquad
  \Gs{1}^M = 1,\qquad
  \Gs{2}^M = 1 \oplus 1.
  \]
  So \(N(\Gt{0}^M) = N(t) \oplus N(t) = N(t) \oplus 0\)
  and \(N(\Gt{2}^M) = 0\).
  Now \(\Ga{0}{1}^M\circ \Ga{1}{0}^M = N(\Gt{0}^M)\)
  follows from \(N(t) \cdot f = N(t) \cdot f(1)\)
  for all \(f\in \Sring\),
  giving~\eqref{eq:alpha_010_N}.  The relations
  \eqref{eq:alpha_101_N} and~\eqref{eq:alpha_2_N} are clear.  The
  map~\(\Ga{1}{0}^M\)
  is clearly surjective and~\(\Ga{0}{2}^M\)
  is injective (see Lemma~\ref{lem:S-module_injective}).  Since the
  kernel of \(\Sring\to\Z\),
  \(f\mapsto f(1)\),
  is the ideal generated by~\(1-t\),
  it follows that \(\ker(\Ga{1}{0}^M) = \im(\Ga{0}{2}^M)\).
  Together with \(\Ga{2}{1}^M=0\),
  this is one of the exact sequences in
  Definition~\ref{def:exact_R-module}.  The isomorphism
  \(\Z[\vartheta]/(1-\vartheta) \cong \Z/p\) implies that
  \[
  \ker (\Ga{1}{2}^M)
  = \Z[\vartheta] \oplus \Sigma (1-\vartheta) \Z[\vartheta]
  = \im (\Ga{2}{0}^M).
  \]
  And \(\ker (\Ga{2}{0}^M) = (N(t))/(p^{k-1} N(t)) \oplus 0\)
  because multiplication by~\(1-t\)
  on \(\Sring/(N(t))\)
  is injective by Lemma~\ref{lem:S-module_injective}.  So
  \(\ker (\Ga{2}{0}^M) = \im (\Ga{0}{1}^M)\).
  And \(\ker (\Ga{0}{1}^M) = p^{k-1} \Z/p^k = \im (\Ga{1}{2}^M)\).
  So~\(M\) is an exact \(\Kring\)\nb-module.
\end{example}

\begin{example}
  \label{exa:actions_on_Cuntz_9}
  Let \(k\ge1\).
  Let \(\Z[\vartheta] \defeq \Sring/(N(t))\)
  and \(Q\defeq \Z[\vartheta,1/p] \bigm/ (1-\vartheta) \Z[\vartheta]\)
  as in~\eqref{eq:def_Q}.  Define
  \begin{align*}
    M_0 &\defeq \frac{\Sring/(p^k N(t)) \oplus Q}{\setgiven{(p^{k-1} N(t) j,j)}{j\in \Z/p}},\\
    M_1 &\defeq \Z/p^k \cong \Sring/(1-t,p^k) = \Sring/(1-t,p^{k-1} N(t)),\\
    M_2 &\defeq \Sring/(N(t)) \oplus Q = \Z[\vartheta] \oplus Q.
  \end{align*}
  Define \(\Ga{1}{2}^M \defeq 0\), \(\Ga{2}{1}^M \defeq 0\), and
  \begin{alignat*}{2}
    \Ga{0}{1}^M([x])
    &= \begin{bmatrix} {}[x\cdot N(t)]\\0 \end{bmatrix},&\qquad
    \Ga{2}{0}^M \begin{bmatrix} {}[f_1]\\{}[f_2] \end{bmatrix}
    &= \begin{pmatrix} {}[f_1]\\ {}[(1-t)\cdot f_2] \end{pmatrix},\\
    \Ga{1}{0}^M \begin{bmatrix} {}[f_1]\\ {}[f_2] \end{bmatrix}
    &= [f_1(1)],&\qquad
    \Ga{0}{2}^M \begin{pmatrix} {}[f_1]\\ {}[f_2] \end{pmatrix}
    &= \begin{bmatrix} {}[(1-t)\cdot f_1]\\ {}[f_2] \end{bmatrix}.
  \end{alignat*}
  The maps \(\Ga{1}{0}^M\)
  and~\(\Ga{2}{0}^M\)
  are well defined -- that is, they vanish on \((p^k N(t),0)\)
  and \((p^{k-1} N(t) j,j)\)
  for \(j\in\Z/p\)
  -- because \(p^{k-1} N(1) = p^k \equiv 0\)
  in~\(M_1\)
  and \((1-t) j = 0\)
  in~\(Q\)
  for all \(j\in\Z/p\).
  And the map~\(\Ga{0}{2}^M\)
  is well defined -- that is, it vanishes on \((N(t),0)\)
  -- because \((1-t) N(t) = 0\) in~\(\Sring\).  We compute
  \[
  \Gt{0}^M = (t \oplus\vartheta)_*,\qquad
  \Gs{1}^M = 1,\qquad
  \Gt{2}^M = t\oplus\vartheta,\qquad
  \Gs{2}^M = 1\oplus 1.
  \]
  This implies \(N(\Gt{2}^M)=N(\vartheta)=0\)
  and \(N(\Gs{2}^M) = p\),
  giving~\eqref{eq:alpha_2_N}.  And
  \(\Ga{1}{0}^M\circ \Ga{0}{1}^M[x] = [p\cdot x]\)
  and \(\Ga{0}{1}^M\circ \Ga{1}{0}^M[f_1,f_2] = [N(t)\cdot f_1,0]\);
  so \eqref{eq:alpha_101_N} and~\eqref{eq:alpha_010_N} also hold.

  The map~\(\Ga{0}{1}^M\)
  is injective.  Its cokernel is the quotient of \(\Sring\oplus Q\)
  by \((N(t)) \oplus \Z/p\)
  because elements of the form \((x\cdot N(t),0)\)
  are killed in it, and \([0,j] = [-p^{k-1} j N(t),0]\)
  holds in~\(M_0\).
  The map~\(\Ga{2}{0}^M\)
  induces an isomorphism from this to \(\Z[\vartheta]\oplus Q\)
  because of Lemmas \ref{lem:S-module_injective}
  and~\ref{lem:S-module_surjective}.  So the maps \(\Ga{0}{1}^M\)
  and~\(\Ga{2}{0}^M\)
  form a short exact sequence.  The map~\(\Ga{0}{2}^M\)
  is injective because \((1-t) \cdot \Sring \cap N(t) \cdot \Sring = 0\).
  Its cokernel is the quotient of \(\Sring\oplus Q\)
  by the subgroup generated by \((1-t) \Sring \oplus Q\),
  \((p^k N(t)) \oplus 0\)
  and \(\setgiven{(p^{k-1} N(t) j,j)}{j\in \Z/p}\).
  All of~\(Q\)
  is killed in this cokernel, and evaluation at~\(1\)
  is an isomorphism \(\Sring/(1-t) \cong \Z\),
  which maps \(N(t)\mapsto p\).
  So the cokernel of~\(\Ga{0}{2}^M\)
  is isomorphic to~\(\Z/p^k\)
  through the map~\(\Ga{1}{0}^M\).
  Thus both sequences in Definition~\ref{def:exact_R-module} are
  exact.  This verifies the relation~\eqref{eq:alpha_exact} and shows
  that the data above defines an exact \(\Kring\)\nb-module.
\end{example}

\begin{example}
  \label{exa:actions_on_Cuntz_10}
  Let \(p=2\).
  Then the ring~\(\Kring\)
  has an automorphism that maps
  \(\Ga{j}{m}\mapsto \Ga{\sigma(j)}{\sigma(m)}\)
  for the permutation \(\sigma = (02)\).
  It also switches the \(\Z/2\)\nb-grading
  on~\(M_2\).
  And it maps \(\Gs{1}\)
  to~\(-\Gs{1}\).
  Hence each example above produces another example where the sign
  of~\(\Gs{1}\)
  is switched, by permuting the summands~\(M_j\)
  and the maps \(\Ga{j}{m}^M\)
  according to the permutation~\(\sigma\).
  We denote it by~\(M^\sigma\).

  We claim that~\(M^\sigma\)
  is truly a new example except in Examples
  \ref{exa:actions_on_Cuntz_6} and~\ref{exa:actions_on_Cuntz_7} in the
  special case \(k=1\).
  The only example above where \(\Gs{1}^M\neq\Gu{1}^M\)
  is Example~\ref{exa:actions_on_Cuntz_3}, where
  \(\Gs{1}^M = 1-\tau p^{k-1}\).
  Since \(k\ge3\)
  is assumed if \(p=2\),
  it follows that \(1-\tau p^{k-1} \equiv 1 \bmod 4\)
  and hence \(-(1-\tau p^{k-1}) \equiv 3 \bmod 4\).
  So the automorphism applied to Example~\ref{exa:actions_on_Cuntz_3}
  gives new examples.  If \(\Gs{1}^M=\Gu{1}^M\),
  then \(-\Gs{1}^M=\Gs{1}^M\)
  if and only if multiplication by~\(2\)
  is the identity map on~\(M_1\).
  This only happens if \(M_1=\Z/2\).
  This is not possible in Examples \ref{exa:actions_on_Cuntz_1},
  \ref{exa:actions_on_Cuntz_2}, \ref{exa:actions_on_Cuntz_4},
  \ref{exa:actions_on_Cuntz_5}, so they always give new examples.  The
  other examples Examples \ref{exa:actions_on_Cuntz_6},
  \ref{exa:actions_on_Cuntz_8}, \ref{exa:actions_on_Cuntz_7}
  and~\ref{exa:actions_on_Cuntz_9} also give new examples if \(p=2\)
  and \(k\ge2\).
  We examine what happens in the latter examples if \(p=2\)
  and \(k=1\).
  It can be seen that \(M \cong M^\sigma\)
  in the situation of Examples \ref{exa:actions_on_Cuntz_6}
  and~\ref{exa:actions_on_Cuntz_7} if \(k=1\)
  and \(p=2\).
  In contrast, we claim that Examples \ref{exa:actions_on_Cuntz_8}
  and~\ref{exa:actions_on_Cuntz_9} give new examples also for \(k=1\)
  and \(p=2\).
  Since \(\Ga{2}{1}^M = 0\)
  and \(\Ga{1}{2}^M = 0\)
  in these two examples, \(\Ga{0}{1}^{M^\sigma} = 0\)
  and \(\Ga{1}{0}^{M^\sigma} = 0\).
  No other example with \(k=1\)
  and \(p=2\)
  has this property.  As a result, \(M^\sigma\)
  is indeed a new example except for Examples
  \ref{exa:actions_on_Cuntz_6} and~\ref{exa:actions_on_Cuntz_7} with
  \(k=1\).
\end{example}

\section{Proof of Theorem~\ref{the:actions_on_Cuntz}}
\label{sec:prove_theorem}

Let~\(M\)
be an arbitrary exact \(\Kring\)\nb-module
where~\(M_1\)
is a cyclic group.  First, we study the possibilities for the action
of~\(\Z/p\) on~\(M_1\).

We begin with the case \(M_1=\Z\).
The automorphism group of~\(M_1\)
has two elements, and the non-trivial one acts by \(x\mapsto -x\).
This automorphism of order~\(2\)
only occurs if \(p=2\).
In the case \(p=2\),
there is an automorphism of the ring~\(\Kring\)
that maps~\(\Gs{1}\)
to~\(-\Gs{1}\).
Since Example~\ref{exa:actions_on_Cuntz_10} takes this automorphism
into account, Theorem~\ref{the:actions_on_Cuntz} holds in the case
\(M_1=\Z\)
once it holds in the case where both \(M_1=\Z\)
and \(\Gs{1}^M = \Gu{1}^M\).
Then \(N(\Gs{1}^M) = p\).  So
\begin{equation}
  \label{eq:ker_im_Ns1_infinite}
  \ker N(\Gs{1}^\Z) = 0,\qquad
  \im N(\Gs{1}^\Z) = p\cdot \Z,\qquad
  \coker N(\Gs{1}^\Z) \cong \Z/p.
\end{equation}

Next, let~\(M_1\)
be a finite cyclic group.  Then \(M_1 \cong \Z/p^k \oplus M_1'\)
with a cyclic group~\(M_1'\)
of order coprime to~\(p\).
So~\(M_1'\)
is uniquely \(p\)\nb-divisible.
Theorem~\ref{the:remove_divisible} extends~\(M'_1\)
to an exact \(\Kring\)\nb-submodule~\(M'\)
of~\(M\),
which has the form of Example~\ref{exa:divisible} with \(Y=0\).
The quotient \(M'' \defeq M/M'\)
is an exact \(\Kring\)\nb-module with \(M_1'' = \Z/p^k\).

\begin{lemma}
  \label{lem:non-p-part_summand}
  The submodule~\(M'\)
  is a direct summand, so that \(M = M'' \oplus M'\).
\end{lemma}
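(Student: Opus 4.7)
The plan is to define an explicit complementary $\Kring$-submodule $M'' \subseteq M$ and verify $M = M' \oplus M''$ component by component. Since $M_1$ is cyclic, it has a canonical internal decomposition $M_1 = M'_1 \oplus M''_1$, with $M'_1 = M_1'$ the prime-to-$p$-torsion and $M''_1 \cong \Z/p^k$ the $p$-primary torsion; both summands are automatically $\Gs{1}^M$-invariant.  I then set
\[
  M''_0 \defeq \{x \in M_0 : \Ga{1}{0}^M(x) \in M''_1\},
  \qquad M''_2 \defeq \{z \in M_2 : \Ga{1}{2}^M(z) \in M''_1\}.
\]

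The closure of $M'' \defeq (M''_0, M''_1, M''_2)$ under the $\Kring$-action is a routine check using Theorem~\ref{the:generators_and_relations} and Proposition~\ref{pro:further_relations}: invariance of $M''_0, M''_2$ under $\Gt{0}^M$, $\Gt{2}^M$, $\Gs{2}^M$ follows from $\Ga{1}{0}\Gt{0} = \Ga{1}{0}$, $\Ga{1}{2}\Gt{2} = \Ga{1}{2}$, and $\Gs{1}\Ga{1}{2} = \Ga{1}{2}\Gs{2}$; closure under $\Ga{0}{2}^M$ and $\Ga{2}{0}^M$ is automatic since $\Ga{1}{0}\Ga{0}{2} = \Ga{1}{2}\Ga{2}{0} = 0$ by \eqref{eq:alpha_exact}; and closure under $\Ga{0}{1}^M, \Ga{2}{1}^M$ applied to $M''_1$ follows from $\Ga{1}{0}\Ga{0}{1} = N(\Gs{1})$ and $\Ga{1}{2}\Ga{2}{1} = \Gu{1} - \Gs{1}$, both of which preserve $M''_1$.

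The heart of the proof is showing $M_j = M'_j \oplus M''_j$ for $j = 0, 2$.  Theorem~\ref{the:remove_divisible} gives $M'$ the form of Example~\ref{exa:divisible} with $Y = 0$, so $M'_0 = X$, $M'_1 = X \oplus Z$, $M'_2 = \Sigma Z$, and $\Ga{1}{0}^M|_{M'_0}$ is multiplication by $p$ landing in the $X$-summand of $M'_1$---an isomorphism since $p$ is invertible on the uniquely $p$-divisible $X$.  Symmetrically, $\Ga{1}{2}^M|_{M'_2}$ is multiplication by $1-\vartheta$ landing in the $Z$-summand of $M'_1$, invertible by Lemma~\ref{lem:unit_in_Ztheta}.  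These force $M'_j \cap M''_j = 0$.  For the sum, given $x \in M_0$ I decompose $\Ga{1}{0}^M(x) = y' + y''$ in $M'_1 \oplus M''_1$; relation~\eqref{eq:alpha_exact} gives $\Ga{2}{1}^M(y') + \Ga{2}{1}^M(y'') = 0$ in $M_2$, with $\Ga{2}{1}^M(y')$ of order coprime to $p$ (inside $M'_2$) and $\Ga{2}{1}^M(y'')$ of $p$-power order, so both summands vanish.  Since $\Ga{2}{1}^M|_{M'_1}$ is the projection onto $Z$, the vanishing of $\Ga{2}{1}^M(y')$ forces $y' \in X$, which lifts via the inverse of $\Ga{1}{0}^M|_{M'_0}$ to some $x' \in M'_0$; then $x - x' \in M''_0$ by the definition of $M''_0$.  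The case $j=2$ is entirely symmetric, with $\Ga{0}{1}^M$ playing the role of $\Ga{2}{1}^M$.  The main delicate point---and essentially the only non-formal step---is this separation-by-torsion-type argument, which is where exactness of $M$ combines crucially with the specific Example~\ref{exa:divisible} structure of $M'$ to pin down the $M'_1$-component of $\Ga{1}{0}^M(x)$.
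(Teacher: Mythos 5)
Your proof is correct, and it takes a genuinely different route from the paper's. The paper constructs a $\Kring$\nb-module idempotent projecting onto~$M'$: it splits $M_1' = N(\Gs{1}^M)(M_1') \oplus (\Gu{1}^M - \Gs{1}^M)(M_1')$ via Proposition~\ref{pro:divisible_S-module}, lets $e_{1N}$, $e_{1-}$ be the resulting projections of $M_1$, and sets $e_0 \defeq \Ga{0}{1}^M \circ p^{-1} e_{1N}\circ \Ga{1}{0}^M$ and $e_2 \defeq \Ga{2}{1}^M \circ (1-t)^{-1} e_{1-} \circ \Ga{1}{2}^M$; the relations \eqref{eq:alpha_101} and \eqref{eq:alpha_121} then give $e_j^2 = e_j$, the images are $M'_j$, and the direct sum is automatic once one checks that $(e_0,e_1,e_2)$ is a $\Kring$\nb-module endomorphism. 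You instead define the complement $M''$ directly as the preimage of $M_1''$ under $\Ga{1}{0}^M$ and $\Ga{1}{2}^M$, check module closure, and establish $M_j = M'_j + M''_j$ by a torsion-separation argument: the relations $\Ga{2}{1}\otimes\Ga{1}{0} = 0$ and $\Ga{0}{1}\otimes\Ga{1}{2} = 0$ force a single element to have both $p$\nb-power order and order coprime to~$p$, so it vanishes. Both proofs lean on the explicit Example~\ref{exa:divisible} form of~$M'$ supplied by Theorem~\ref{the:remove_divisible}, but in different ways: the paper needs it to invert $p$ and $1-t$ on the relevant summands so that the $e_j$ are well defined and idempotent, while you need it to identify $\Ga{2}{1}^M|_{M_1'}$ and $\Ga{0}{1}^M|_{M_1'}$ as the projections onto the $Z$\nb- and $X$\nb-summands. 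One small trade-off: the paper's idempotent construction works for any $\Gs{1}^M$\nb-invariant uniquely $p$\nb-divisible summand of~$M_1$, whereas your torsion argument is specifically adapted to the finite-cyclic setting of the lemma, where $M_1'$ has order coprime to~$p$ and $M_1''$ is $p$\nb-primary. Since that is exactly the setting at hand, this is no loss here, and your approach has the advantage of avoiding the bookkeeping of the fractional operators $p^{-1}$ and $(1-t)^{-1}$.
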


\begin{proof}
  The projection~\(e_1\)
  onto~\(M_1'\)
  that kills~\(\Z/p^k\)
  commutes with~\(\Gs{1}^M\).
  Since~\(M'_1\)
  is uniquely \(p\)\nb-divisible,
  it splits as
  \(M_1' = N(\Gs{1}^M) (M_1') \oplus (1-\Gs{1}^M)(M_1')\),
  and \(1-\Gs{1}^M\)
  is invertible on \((1-\Gs{1}^M)(M_1')\)
  (see Proposition~\ref{pro:divisible_S-module}).  Let \(e_{1N}\)
  and~\(e_{1-}\)
  be the projections onto \(N(\Gs{1}^M) (M_1')\)
  and \((1-\Gs{1}^M)(M_1')\), respectively.  Now the maps
  \begin{alignat*}{2}
    e_0 &\colon M_0 \to M_0,&\qquad
    e_0 &\defeq \Ga{0}{1}^M \circ p^{-1} e_{1 N}\circ \Ga{1}{0}^M,\\
    e_2 &\colon M_2 \to M_2,&\qquad
    e_2 &\defeq \Ga{2}{1}^M \circ (1-t)^{-1} e_{1-} \circ \Ga{1}{2}^M
  \end{alignat*}
  are well defined.  Equations \eqref{eq:alpha_101_N} and
  \eqref{eq:alpha_121} imply \(e_0^2 = e_0\),
  \(e_2^2 = e_2\),
  and their images are easily seen to be \(M'_0\)
  and~\(M'_2\),
  respectively.  The idempotent maps~\(e_j\)
  for \(j=0,1,2\)
  form a \(\Kring\)\nb-module
  map.  So they give the asserted direct sum decomposition.
\end{proof}

As a result, if Theorem~\ref{the:actions_on_Cuntz} holds in the case
where \(M_1 = \Z/p^k\)
for some \(k\in\N_{\ge1}\),
then it holds whenever \(M_1 = \Z/n\)
with \(n\in\N_{\ge1}\).
So in the case of finite~\(M_1\),
we may assume without loss of generality that \(M_1 = \Z/p^k\).

Now we consider the possible representations of~\(\Z/p\)
on \(M_1 = \Z/p^k\).
Any automorphism of~\(\Z/p^k\)
is of the form \(x\mapsto x\cdot m\)
for a unit~\(m\)
in the ring~\(\Z/p^k\).
To get an action of~\(\Z/p\),
we need \(m^p \equiv 1 \bmod p\).
This forces \(m\equiv 1\bmod p\).
So only the trivial automorphism is possible if \(k=1\).
Assume now that \(k\ge2\).
If \(p\neq 2\),
then the multiplicative group of elements in \(1+ p\Z \bmod p^k\)
is isomorphic to the additive group \(p\Z/p^k\cong \Z/p^{k-1}\)
because the power series defining the exponential function and the
logarithm converge.  Hence \(m^p=1 \bmod p\)
holds if and only if \(m = 1 - p^{k-1}\tau\)
for some \(\tau\in\Z/p\).
If \(p=2\),
then the argument above only works if \(m\equiv 1\bmod 4\).
It shows that the possible solutions of \(m^p=1\bmod 2^k\)
with \(m\equiv 1\bmod 4\)
are \(m = 1 - 2^{k-1}\tau\)
for \(\tau\in\Z/2\),
and \(\tau\neq0\)
only if \(k\ge3\).
So all solutions in~\(\Z/2^k\)
are \(\pm (1 - 2^{k-1}\tau)\)
for the two signs and \(\tau\in\Z/2\),
and \(\tau\neq0\)
is only allowed if \(k\ge3\).
An automorphism of~\(\Kring\)
that switches the sign of~\(\Gs{1}\)
is described in Section~\ref{sec:prime_2}.  Hence the cases
\(m = - (1 - 2^{k-1}\tau)\)
are covered by Example~\ref{exa:actions_on_Cuntz_10}.
Therefore, if \(M_1=\Z/p^k\),
then we may assume without loss of generality that~\(\Gs{1}^M\)
is multiplication with \(m = 1 - p^{k-1} \tau\)
for some \(\tau\in\Z/p\),
where \(\tau=0\)
if \(k=1\)
or if \(k=2\)
and \(p=2\).
Then we compute \((\Gs{1}^M)^j(x) = m^j\cdot x = (1-p^{k-1}j\tau)x\)
for \(j\in\N\).
If \(p\neq 2\),
then \(N(\Gs{1}^M)(x) = p\cdot x\)
because \(p \mid \sum_{j=1}^{p-1} j\).
If \(p=2\),
then \(N(\Gs{1}^M)(x) = 2\cdot (1- 2^{k-2}\tau)\cdot x\).
Since \(1- 2^{k-2}\tau\)
for \(k\ge 3\) is a unit in~\(\Z/2^k\), this implies
\begin{equation}
  \label{eq:ker_im_Ns1_finite}
  \ker N(\Gs{1}^{\Z/p^k}) = p^{k-1}\Z/p^k \cong \Z/p,\qquad
  \im N(\Gs{1}^{\Z/p^k}) = p\Z/p^k.
\end{equation}
Hence \(\coker N(\Gs{1}^{\Z/p^k}) \cong \Z/p\).
Lemma~\ref{lem:short_exact_sequences_consequences} implies
\begin{equation}
  \label{eq:im_ker_alpha_10_N1s1}
  \ker \Ga{0}{1}^M \subseteq \ker N(\Gs{1}^M),\quad
  \im (\Ga{1}{0}^M) \supseteq \im N(\Gs{1}^M),\quad
  \coker (\Ga{1}{0}^M) \subseteq \coker N(\Gs{1}^M);
\end{equation}
the third claim in~\eqref{eq:im_ker_alpha_10_N1s1} is equivalent to
the second.  Since~\(M\)
is exact, the map~\(\Ga{2}{1}^M\)
induces an isomorphism \(\coker \Ga{1}{0}^M \cong \im \Ga{2}{1}^M\).
Equations \eqref{eq:ker_im_Ns1_infinite}
and~\eqref{eq:ker_im_Ns1_finite} show that \(\ker N(\Gs{1}^M)\)
and \(\coker N(\Gs{1}^M)\)
are both~\(\Z/p\).
Since the only subgroups of~\(\Z/p\)
are \(\{0\}\)
and~\(\Z/p\), our classification problem splits into four cases:
\begin{enumerate}
\item \(\ker \Ga{0}{1}^M = \{0\}\)
  and \(\coker \Ga{1}{0}^M \cong \im \Ga{2}{1}^M= \{0\}\);
\item \(\ker \Ga{0}{1}^M = \{0\}\)
  and \(\coker \Ga{1}{0}^M \cong \im \Ga{2}{1}^M \cong \Z/p\);
\item \(\ker \Ga{0}{1}^M \cong \Z/p\)
  and \(\coker \Ga{1}{0}^M \cong \im \Ga{2}{1}^M = \{0\}\);
\item \(\ker \Ga{0}{1}^M \cong \Z/p\)
  and \(\coker \Ga{1}{0}^M \cong \im \Ga{2}{1}^M \cong \Z/p\);
\end{enumerate}

We split the proof of Theorem~\ref{the:actions_on_Cuntz} into several
propositions.  The first one handles exact \(\Kring\)\nb-modules
with \(M_1=\Z\)
(Proposition~\ref{pro:O-infty}).  Then we treat exact
\(\Kring\)\nb-modules
with \(M_1=\Z/p^k\)
in the first three cases above in Propositions \ref{pro:case1},
\ref{pro:case2} and~\ref{pro:case3}, respectively.  The fourth case is
split further into Propositions \ref{pro:case4a}
and~\ref{pro:case4b}.  These propositions together will prove
Theorem~\ref{the:actions_on_Cuntz}.

\begin{proposition}
  \label{pro:O-infty}
  Let~\(M\)
  be an exact \(\Kring\)\nb-module
  with \(M_1 = \Z\)
  and \(\Gs{1}^M = \Gu{1}^M\).
  Then \(\coker \Ga{1}{0}^M = \{0\}\)
  or \(\coker \Ga{1}{0}^M = \Z/p\).
  In the first case, let~\(M'\)
  be the exact \(\Kring\)\nb-module
  in Example~\textup{\ref{exa:actions_on_Cuntz_1}}.  In the second
  case, let~\(M'\)
  be the exact \(\Kring\)\nb-module
  in Example~\textup{\ref{exa:actions_on_Cuntz_2}}.  There is a
  \(\Kring\)\nb-module
  extension \(M' \into M \prto M''\)
  where~\(M''\)
  is a uniquely \(p\)\nb-divisible
  exact \(\Kring\)\nb-module
  as in Example~\textup{\ref{exa:divisible}} with \(X = 0\)
  and \(Z=0\).
\end{proposition}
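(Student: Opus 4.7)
Plan. First I would establish the dichotomy on $\coker \Ga{1}{0}^M$. Since $\Gs{1}^M = \Gu{1}^M$ acts trivially on $M_1 = \Z$, one has $N(\Gs{1}^M) = p$, with kernel $0$, image $p\Z$, and cokernel $\Z/p$. Table~\ref{tab:short_exact_sequences} then forces $\Ga{0}{1}^M$ to be injective and $\coker \Ga{1}{0}^M$ to be a subquotient of $\Z/p$, so $\coker \Ga{1}{0}^M$ is $0$ or $\Z/p$. Moreover, injectivity of $\Ga{0}{1}^M$ combined with exactness of $M_2 \xrightarrow{\Ga{1}{2}^M} M_1 \xrightarrow{\Ga{0}{1}^M} M_0$ forces $\Ga{1}{2}^M = 0$ in both cases, placing us in the setting of Theorem~\ref{the:a21_vanishes}.

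In Case~1 ($\coker \Ga{1}{0}^M = 0$), exactness further forces $\Ga{2}{1}^M = 0$, so by Corollary~\ref{cor:a_2112_exact} the module $M$ is equivalent to a cohomologically trivial $\Z/2$-graded $\Sring$-module $M_0$ with $M_1 = \ker(1-t) = \im N(t) \cong \Z$ and $M_2 = \ker N(t) = \im(1-t)$. Since $\Ga{1}{0}^M$ is surjective, I pick $z \in M_0$ with $\Ga{1}{0}^M(z) = 1$ and define $f_0 \colon \Sring \to M_0$ by $g \mapsto g \cdot z$. By Corollary~\ref{cor:a_2112_exact} this extends to a $\Kring$-module morphism $M' \to M$ from Example~\ref{exa:actions_on_Cuntz_1}. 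For injectivity, if $gz = 0$ then applying $\Ga{1}{0}^M$ gives $g(1)=0$, so $g = (1-t) h$; the relation $(1-t) hz = 0$ combined with $\ker(1-t) = \im N(t) = N(t)\Sring z$ lets one write $h = (1-t) h_1 + c N(t)$, yielding $g \in (1-t)^2 \Sring$; iterating and using $(1-t) N(t) = 0$ gives $g \in \bigcap_k (1-t)^k \Sring = 0$ (the last equality from the embedding $\Sring \hookrightarrow \Z[1/p] \times \Z[\vartheta,1/p]$ and Krull's intersection theorem in $\Z[\vartheta]$ localised at $(1-\vartheta)$). For the cokernel $M'' := M/f(M')$: $M''_1 = 0$ holds by construction, and a direct computation shows that $1-t$ acts invertibly on $M''_0 = M_0 / \Sring z$ using $\ker(1-t) = \im N(t) \subseteq \Sring z$; then Lemma~\ref{lem:unit_in_Ztheta} yields that $p$ acts invertibly as well, so $M''_0$ (and symmetrically $M''_2$) is a uniquely $p$-divisible $\Z[\vartheta, 1/p]$-module and $M''$ has the form of Example~\ref{exa:divisible} with $X = Z = 0$ by Theorem~\ref{the:divisible_exact_module}.

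In Case~2 ($\coker \Ga{1}{0}^M = \Z/p$), set $y_1 := \Ga{2}{1}^M(1) \in M_2$; this is nonzero of order $p$, fixed by $\Gt{2}^M$ and $\Gs{2}^M$, and annihilated by $\Ga{0}{2}^M$ and $\Ga{1}{2}^M$. The triple description gives $\dot M_1 \cong \Z/p \subseteq H^0(M_0) \cong (\Z/p)^2$, with $\dot\alpha$ specifying a complementary $\Z/p$-subspace. I construct a $\Z[\vartheta]$-linear map $g_2 \colon Q \to M_2$ with $g_2([1]) = y_1$ by producing an infinite $p$-divisible tower $y_1 = y_1^{(0)}, y_1^{(1)}, \ldots$ in $M_2$ with $p \cdot y_1^{(n+1)} = y_1^{(n)}$ and each $y_1^{(n)}$ in $\ker(1-\Gt{2}^M) \cap \ker(1-\Gs{2}^M)$; inductively, each lift is obtained by pulling $y_1^{(n)}$ back through $\Ga{2}{0}^M$ to an element of $M_0$ (using exactness), dividing by $p$ in the direction provided by the complement from $\dot\alpha$, and pushing forward via $\Ga{2}{0}^M$. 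Given $g_2$, set $f_2 := g_2$, $f_1 := \id$, and define $f_0 \colon \Z \oplus \Sigma Q \to M_0$ as $\Ga{0}{1}^M$ on the first summand and $\Ga{0}{2}^M \circ g_2 \circ (1-\vartheta)^{-1}$ on the second, using Lemma~\ref{lem:S-module_surjective} that $1-\vartheta$ acts invertibly on $Q$; all structural compatibilities then follow from the relations in $\Kring$ and the defining properties of $g_2$.

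The main obstacle is the construction of the $p$-divisible tower in Case~2: at each step the lift $y_1^{(n+1)}$ must satisfy $p \cdot y_1^{(n+1)} = y_1^{(n)}$ while remaining in $\ker(1-\Gt{2}^M) \cap \ker(1-\Gs{2}^M)$, and the essential input is the isomorphism $\dot\alpha$, which furnishes a second copy of $\Z/p$ in $H^0(M_0)$ beyond the one coming from $M_1$; translating this freedom into an explicit tower requires careful bookkeeping of the $\Sring_2$-module structure on $M_2$ under the short exact sequences of Table~\ref{tab:short_exact_sequences}. Verifying injectivity of $f_0$ and the unique $p$-divisibility of the cokernel in Case~2 then proceeds by analogues of the Case~1 arguments, and Theorem~\ref{the:divisible_exact_module} identifies $M''$ with an object of Example~\ref{exa:divisible} with $X = Z = 0$ as claimed.
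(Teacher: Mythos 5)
Your dichotomy argument and the treatment of Case~1 ($\coker\Ga{1}{0}^M = 0$) are essentially sound, and take a somewhat different route from the paper: where the paper appeals to Lemma~\ref{lem:S-module_injective} for the injectivity of $\psi\colon\Sring\to M_0$, you iterate the annihilator computation down to $\bigcap_k(1-t)^k\Sring = 0$ (via the embedding $\Sring\hookrightarrow\Z\times\Z[\vartheta]$ and Krull), and you show unique $p$-divisibility of the quotient by checking that $1-t$ is invertible on $M_0/\Sring z$ and invoking Lemma~\ref{lem:unit_in_Ztheta}. These steps work, though they are compressed (e.g.\ the passage from $g\in(1-t)^2\Sring$ to the next step needs an intermediate computation killing the multiple of $N(t)$).

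Case~2 contains a genuine error. You require a tower $y_1 = y_1^{(0)}, y_1^{(1)},\dotsc$ in $M_2$ with $p\cdot y_1^{(n+1)} = y_1^{(n)}$ \emph{and} with each $y_1^{(n)}$ in $\ker(1-\Gt{2}^M)\cap\ker(1-\Gs{2}^M)$. These two requirements are incompatible for $y_1 \neq 0$. Indeed, if $y\in M_2$ satisfies $(1-\Gt{2}^M)y = 0$ and $(1-\Gs{2}^M)y = 0$, then $N(\Gt{2}^M)y = p\cdot y$ and $N(\Gs{2}^M)y = p\cdot y$ by Lemma~\ref{lem:S-module_inclusions}, and the relation~\eqref{eq:alpha_2_N}, $N(\Gt{2}) + N(\Gs{2}) = p\cdot\Gu{2}$, forces $p\cdot y = p\cdot y + p\cdot y$, that is, $p\cdot y = 0$. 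Thus $p\cdot y_1^{(n+1)} = 0$, so $y_1^{(n)} = 0$, and the tower collapses. You can also see the trouble directly in Example~\ref{exa:actions_on_Cuntz_2}: there $\Gt{2}^M = \vartheta$ and $\Gs{2}^M = 1$ on $\Sigma Q$, so $\ker(1-\Gt{2}^M)\cap\ker(1-\Gs{2}^M)$ is just the $\Z/p$-span of $[1]$, which has no nonzero $p$-divisible elements. The tower you need is a $(1-\vartheta)$-tower living in $\ker(1-\Gs{2}^M)$ only, with just $y_1^{(0)}$ in $\ker(1-\Gt{2}^M)$; alternatively (and this is what the paper does), first use that $\ker\Ga{0}{1}^M = \ker N(\Gs{1}^M)$ and $\im\Ga{1}{0}^M = \im N(\Gs{1}^M)$ to invoke Lemma~\ref{lem:decomposable_case_criterion_0}, giving the splitting $M_0 = M_1\oplus\im\Ga{0}{2}^M$, and then apply Lemma~\ref{lem:S-module_surjective} to the surjective map $1-t$ on $\ker(1-t)\subseteq M_2$, choosing the generator to be $\dot\alpha([1])$. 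That route removes the need for any ad hoc tower construction and directly supplies the injection from $\Sigma Q$ with uniquely $p$-divisible cokernel.
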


\begin{proof}
  The assumption \(\Gs{1}^M=\Gu{1}^M\)
  implies \(N(\Gs{1}^M) = p\).
  So \(\ker N(\Gs{1}^M) = 0\)
  and \(\coker N(\Gs{1}^M) \cong \Z/p\).
  Hence \(\ker \Ga{0}{1}^M = \{0\}\)
  and either \(\coker \Ga{1}{0}^M = \{0\}\)
  or \(\coker \Ga{1}{0}^M = \Z/p\).
  Thus~\(\Ga{0}{1}^M\)
  is injective, \(\Ga{1}{2}^M=0\)
  and~\(\Ga{2}{0}^M\)
  is surjective because~\(M\)
  is exact.  And the image of~\(\Ga{2}{1}^M\)
  is isomorphic to either \(0\)
  or~\(\Sigma\Z/p\);
  the suspension comes in because~\(\Ga{2}{1}^M\)
  reverses the grading.

  Exact \(\Kring\)\nb-modules
  with \(\Ga{1}{2}^M=0\)
  are studied in Section~\ref{sec:a21_zero}.
  Theorem~\ref{the:a21_vanishes} shows that their isomorphism classes
  are in bijection with isomorphism classes of triples
  \((M_0,\dot{M}_1,\dot\alpha)\),
  where~\(M_0\)
  is an \(\Sring\)\nb-module
  with \(\ker N(t) = \im (1-t)\),
  \(\dot{M}_1 \subseteq H^0(M_0) \defeq \ker (1-t)/\im N(t)\)
  is a \(\Z/p\)\nb-vector
  subspace and~\(\dot\alpha\)
  is a grading-preserving isomorphism
  \(\dot\alpha\colon \dot{M}_1 \congto H^0(M_0)/\dot{M}_1\).
  Equation~\eqref{eq:a12_vanishes_image_a21} shows that
  \(\dot{M}_1 \cong \im(\Ga{2}{1}^M)\).
  The two cases for \(\coker \Ga{1}{0}^M\)
  correspond to triples with
  \begin{enumerate}
  \item \label{en:M1_Z_case_1}%
    \(\dot{M}_1=0\);
  \item \label{en:M1_Z_case_2}%
    \(\dot{M}_1\cong\Sigma\Z/p\).
  \end{enumerate}
  The existence of a grading-reversing isomorphism
  \(\dot\alpha\colon \dot{M}_1 \congto H^0(M_0)/\dot{M}_1\)
  implies \(\ker (1-t) = \im N(t)\)
  in case~\ref{en:M1_Z_case_1} and
  \(\ker (1-t)/ \im N(t) \cong \Z/p \oplus \Sigma \Z/p\)
  in case~\ref{en:M1_Z_case_2}.  By construction,
  \(M_1 \subseteq M_0\)
  is an extension of \(\im N(t)\)
  by~\(\dot{M}_1\).  So \(\im N(t) \cong \Z\) holds in both cases.

  Let~\(M'\)
  be the exact \(\Kring\)\nb-module
  in Example~\ref{exa:actions_on_Cuntz_1} if \(\dot{M}_1=0\)
  and the exact \(\Kring\)\nb-module
  in Example~\ref{exa:actions_on_Cuntz_2} if
  \(\dot{M}_1\cong\Sigma\Z/p\).
  Notice that \(M'_1=\Z\),
  \(\Gs{1}^{M'}=\Gu{1}^{M'}\)
  in both examples, so that~\(M'\)
  also corresponds to a triple \((M_0',\dot{M}_1', \dot\alpha')\).
  And \(\dot{M}_1' \cong \dot{M}_1\)
  holds in both cases.  We are going to define a morphism of triples
  \(\psi\colon M_0' \to M_0\)
  that restricts to an isomorphism \(M_1' \to M_1\)
  and is injective as a map \(M_0' \to M_0\).
  The criterion in Theorem~\ref{the:a21_vanishes} shows that this
  map~\(\psi\)
  induces an injective \(\Kring\)\nb-module
  homomorphism \(\psi_*\colon M' \hookrightarrow M\).
  Let \(M''\defeq M/M'\).
  This is an exact \(\Kring\)\nb-module
  as well because both \(M'\)
  and~\(M\)
  are exact.  And \(M''_1=0\).
  Hence Theorem~\ref{the:divisible_exact_module} shows that~\(M''\)
  is uniquely \(p\)\nb-divisible
  and has the form of Example~\ref{exa:divisible} with \(X=0\)
  and \(Z=0\).
  It remains to build the homomorphism of triples~\(\psi\) above.

  We first do this in the case \(\dot{M}_1=0\).
  Then \(\ker (1-t) = \im N(t)\)
  and \(\ker N(t) = \im (1-t)\).
  That is, \(M_0\)
  is cohomologically trivial, and we are in the situation of
  Corollary~\ref{cor:a_2112_exact}.  And
  \begin{align*}
    M_1 &= \ker (1-t) = \im N(t) \cong \Z,\\
    M_2 &= M_0/M_1 = M_0/\ker(1-t) \cong \im (1-t) = \ker N(t),
  \end{align*}
  where the isomorphism \(M_0/\ker(1-t) \cong \im (1-t)\)
  is induced by~\(1-t\).

  \begin{lemma}
    \label{lem:O-infty_coh_trivial_M2}
    Let \(a\in \im N(t) \cong \Z\)
    be a generator and choose \(y\in M_0\)
    with \(N(t)y = a\).
    Then \((1-t)y\)
    is a generator of\/ \(\im(1-t)/\im (1-t)^2\),
    which is isomorphic to \(\Z/p\).
    And \(\ker(1-t) \cap \im(1-t)=0\).
  \end{lemma}

  \begin{proof}
    The map~\(N(t)\)
    acts by multiplication by~\(p\)
    on \(\im N(t)\)
    by Lemma~\ref{lem:S-module_inclusions}.  This is injective because
    \(\im N(t) \cong \Z\).
    So \(\ker N(t) \cap \im N(t) = 0\).
    This is equivalent to \(\ker (1-t) \cap \im (1-t) = 0\)
    by cohomological triviality.  Equivalently, \(1-t\)
    restricts to an injective map on \(\im(1-t)\).
    Let \(a\in \im N(t) \cong \Z\)
    be a generator.  Let \(x\in \im (1-t)\).
    This means that there is \(z\in M_0\)
    with \(x=(1-t)z\).
    This element is unique up to adding an element of
    \(\ker (1-t) = \im N(t) \cong \Z\).
    We have \(x\in \im(1-t)^2\)
    if and only if~\(z\)
    may be chosen in \(\im (1-t) = \ker N(t)\).
    Equivalently, there is \(m\in\Z\)
    so that \(N(t)(z+m\cdot a)=0\).
    Now \(N(t)(m\cdot a) = p\cdot m\cdot a\)
    because \(t\cdot a=a\).
    So there is \(m\in\Z\)
    with \(N(t)(z+m\cdot a)=0\)
    if and only if \(N(t)(z)\)
    belongs to the subgroup generated by \(p\cdot a\).
    Hence there is a well defined injective map from
    \(\im (1-t)/\im (1-t)^2\)
    to~\(\Z/p\)
    that maps the class of~\(x\)
    to the class of~\(N(t)z\)
    in \(a\Z/pa\Z\).
    Since \(a\in \im N(t)\),
    there is \(y\in M_0\)
    with \(N(t)y = a\).
    Then the map \(\im (1-t)/\im (1-t)^2\to\Z/p\)
    maps \((1-t)y\in \im(1-t)\)
    to a non-trivial element.  Hence this map is also surjective.
  \end{proof}

  \begin{lemma}
    \label{lem:O-infty_coh_trivial_final}
    The map \(\psi\colon \Sring \to M_0\),
    \(f\mapsto f(t)y\),
    is an injective \(\Sring\)\nb-module
    homomorphism.  Its image contains~\(M_1\)
    and its cokernel is uniquely \(p\)\nb-divisible.
  \end{lemma}

  \begin{proof}
    The image of~\(\psi\)
    contains \(\Z\cdot a\),
    which is~\(M_1\)
    by the choice of~\(a\).
    So~\(\psi\)
    has the same cokernel as the map \(\Z[\vartheta] \to \im(1-t)\),
    \(f\mapsto f(t) (1-t)y\).
    This map is injective with a uniquely \(p\)\nb-divisible
    cokernel by Lemmas \ref{lem:S-module_injective}
    and~\ref{lem:O-infty_coh_trivial_M2}.  It remains to show
    that~\(\psi\)
    is injective as well.  The principal ideal in
    \(\Sring\defeq \Z[t]/(t^p-1)\)
    generated by~\(t-1\)
    is isomorphic to \(\Z[\vartheta]\)
    because \((t^p-1) \mid f\cdot (t-1)\)
    if and only if \(N(t) \mid f\).
    The map~\(\psi\)
    restricts to an injective map from \((t-1)\cdot \Sring\)
    to \(\im (1-t) \subseteq M_0\)
    by Lemma~\ref{lem:S-module_injective}.  Let \(\psi(f)=0\).
    Then \(\psi((t-1)\cdot f)=0\)
    as well.  This implies \(N(t) \mid f(t)\)
    because~\(\psi\)
    is injective on \((t-1)\cdot \Sring \cong \Sring/(N(t))\).
    Then \(f\equiv m\cdot N(t) \bmod (t^p-1)\)
    for some \(m\in\Z\).
    Now \(\psi(f) = m\cdot N(t) y = m\cdot a\),
    and this is only~\(0\)
    if \(m=0\).
    Then \(t^p-1 \mid f\),
    that is, \(f=0\) in~\(\Sring\).  So~\(\psi\) is injective.
  \end{proof}

  The exact \(\Kring\)\nb-module~\(M'\)
  in Example~\ref{exa:actions_on_Cuntz_1} also satisfies
  \(\Ga{2}{1}^{M'} = 0\)
  and \(\Ga{1}{2}^{M'} = 0\).
  So Corollary~\ref{cor:a_2112_exact} shows that it is associated to a
  cohomologically trivial \(\Sring\)\nb-module,
  namely, \(M_0' = \Sring\).
  So the map~\(\psi\)
  above induces a \(\Kring\)\nb-module
  homomorphism \(M' \to M\).
  It is injective by the criterion in
  Corollary~\ref{cor:a_2112_exact}.  This finishes the proof in the
  case of exact \(\Kring\)\nb-modules
  with \(M_1=\Z\), \(\Ga{2}{1}^{M'} = 0\) and \(\Ga{1}{2}^{M'} = 0\).

  Now we treat the case where \(\dot{M}_1 \cong \Sigma\Z/p\).
  Since \(N(t)|_{M_1}\)
  is multiplication by~\(p\),
  the maps \(\Ga{0}{1}^M\)
  and~\(N(\Gs{1}^M)\)
  are both injective and have the same image, namely, the subgroup
  \(p\cdot \Z\subseteq \Z\).
  Hence the statement in
  Lemma~\ref{lem:decomposable_case_criterion_0}.%
  \ref{lem:decomposable_case_criterion2} holds.  So the equivalent
  statements \ref{lem:decomposable_case_criterion1}
  and~\ref{lem:decomposable_case_criterion3} in
  Lemma~\ref{lem:decomposable_case_criterion_0} hold as well.  Thus
  \begin{align*}
    \ker (\Gu{2}^M - \Gt{2}^M)
    &= \ker (\Ga{0}{2}^M)
      = \im (\Ga{2}{1}^M)
      \cong \Z/p,\\
    \im (\Gu{2}^M - \Gt{2}^M)
    &= \im (\Ga{2}{0}^M)
      = \ker (\Ga{1}{2}^M)
      = M_2,\\
    \shortintertext{and}
    M_0
    &= M_1 \oplus \im (\Ga{0}{2}^M)
      = M_1 \oplus \im (1-t)
  \end{align*}
  as an \(\Sring\)\nb-module;
  here we have used~\eqref{eq:alpha_020} and that~\(\Ga{2}{0}^M\)
  is surjective.  Lemma~\ref{lem:S-module_surjective} applies and
  gives an injective \(\Sring\)\nb-module
  homomorphism \(M_2' \hookrightarrow M_2\)
  with \(M_2'=\Sigma Q\)
  as in Example~\ref{exa:actions_on_Cuntz_2}, such that \(M_2/M_2'\)
  is uniquely \(p\)\nb-divisible.
  The map \(\Ga{0}{2}^M\colon \Sigma Q \to M_0\)
  has kernel~\(\Sigma\Z/p\)
  and \(Q/(\Z/p) \cong Q\)
  by multiplication with \(1-t\).
  Hence \(M_0 \cong M_1 \oplus \Sigma Q\)
  as an \(\Sring\)\nb-module,
  and~\(\dot{M}_1\)
  is the image of~\(M_1\)
  in \(H^0(M_0)\).
  We may choose the generator \(e\in \Z/p\)
  in the proof of Lemma~\ref{lem:S-module_surjective} to
  be~\(\dot\alpha([1])\).
  Then the injective homomorphism
  \(M_0' \defeq \Z\oplus \Sigma Q\hookrightarrow M_0\)
  is a morphism of triples as needed for
  Theorem~\ref{the:a21_vanishes}.  It is injective on~\(M_0'\)
  and induces an isomorphism \(M_1' \congto M_1\).
  Hence the induced homomorphism of exact \(\Kring\)\nb-modules
  is injective by Theorem~\ref{the:a21_vanishes}.  Its
  cokernel~\(M''\)
  is an exact \(\Kring\)\nb-module
  with \(M_1'' = 0\).
  This finishes the proof in the case of exact \(\Kring\)\nb-modules
  with \(M_1=\Z\),
  \(\Ga{2}{1}^{M'} = 0\)
  and \(\im (\Ga{2}{1}^M) \cong \Z/p\).
  So Proposition~\ref{pro:O-infty} is proven.
\end{proof}

We have proven Theorem~\ref{the:actions_on_Cuntz} in the case
\(M_1=\Z\).
So from now on, we may assume that \(M_1 = \Z/p^k\)
for some \(k\ge1\).
If \(\Gs{1}^M \neq 1\),
then \(\Ga{1}{2}^M\neq0\)
and \(\Ga{2}{1}^M\neq0\)
by~\eqref{eq:def_st}.  We shall first treat the three cases where at
least one of the maps \(\Ga{1}{2}^M\)
and \(\Ga{2}{1}^M\)
is zero.  So we shall only meet exact \(\Kring\)\nb-modules
with \(\Gs{1}^M = 1\) for a while.

The first case in our list has \(\ker \Ga{0}{1}^M = \{0\}\)
and \(\coker \Ga{1}{0}^M = \{0\}\).
The study of this case is parallel to the case \(\dot{M}_1=0\)
in the proof of Proposition~\ref{pro:O-infty}.  Again,
\(\Ga{1}{2}^M=0\)
and \(\Ga{2}{1}^M=0\)
hold, so that we are in the situation of
Corollary~\ref{cor:a_2112_exact}.  Exact \(\Kring\)\nb-modules
in this case are equivalent to cohomologically trivial
\(\Sring\)\nb-modules with
\begin{align*}
  M_1 &= \ker(1-t) = \im N(t) = \Z/p^k,\\
  M_2 &= M_0/M_1 = M_0/\ker(1-t) \cong \im (1-t) = \ker N(t).
\end{align*}

\begin{proposition}
  \label{pro:case1}
  Let~\(M\)
  be an exact \(\Kring\)\nb-module
  with \(M_1=\Z/p^k\)
  and \(\ker \Ga{0}{1}^M = \{0\}\)
  and \(\coker \Ga{1}{0}^M = \{0\}\).
  Then there is a \(\Kring\)\nb-module
  extension \(M' \into M \prto M''\),
  where~\(M'\)
  is one of the exact \(\Kring\)\nb-modules
  in Example \textup{\ref{exa:actions_on_Cuntz_8}}
  or~\textup{\ref{exa:actions_on_Cuntz_9}} and~\(M''\)
  is a uniquely \(p\)\nb-divisible
  exact \(\Kring\)\nb-module
  as in Example~\textup{\ref{exa:divisible}} with \(X = 0\)
  and \(Z=0\).
\end{proposition}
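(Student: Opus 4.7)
The plan is to apply Corollary~\ref{cor:a_2112_exact} to translate $M$ into a cohomologically trivial $\Sring$-module $M_0$, and then realize one of the modules from Example~\ref{exa:actions_on_Cuntz_8} or Example~\ref{exa:actions_on_Cuntz_9} as an $\Sring$-submodule $M_0' \subseteq M_0$ with uniquely $p$-divisible quotient. First, the hypotheses $\ker \Ga{0}{1}^M = 0$ and $\coker \Ga{1}{0}^M = 0$, together with the exactness of the two Puppe sequences in Definition~\ref{def:exact_R-module}, force $\Ga{1}{2}^M = 0 = \Ga{2}{1}^M$. Corollary~\ref{cor:a_2112_exact} then identifies $M$ with a $\Z/2$-graded cohomologically trivial $\Sring$-module $M_0$ in which $\ker(1-t)^{M_0} = \im N(t)^{M_0} \cong \Z/p^k$. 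I fix a generator $a$ of this subgroup and choose $y \in M_0$ with $N(t) y = a$.

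I then distinguish two cases based on whether $(1-t)^\ell y \in \Z \cdot a$ for some $\ell \geq 1$. In Case~A, let $\ell \geq 1$ be minimal with $(1-t)^\ell y = c' a$ for some $c' \in \Z$. Applying $N(t)$ and using $N(t) a = p a$ from Lemma~\ref{lem:S-module_inclusions} forces $p^{k-1} \mid c'$, so write $c' = c p^{k-1}$. Minimality of $\ell$ rules out $c \equiv 0 \bmod p$: for $\ell \geq 2$, $(1-t)^\ell y = 0$ would put $(1-t)^{\ell-1} y$ into $\ker(1-t)^{M_0} = \Z \cdot a$, contradicting minimality of $\ell$; for $\ell = 1$, $(1-t) y = 0$ would give $y = m a$ and hence $a = N(t) y = m p a$, which is impossible in $\Z/p^k$. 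Hence $c \in \{1, \dotsc, p-1\}$, and the assignment $[1] \mapsto y$ extends to an $\Sring$-module map from the $M_0'$ of Example~\ref{exa:actions_on_Cuntz_8} with parameters $(\ell, c)$ to $M_0$, because all the defining relations of Example~\ref{exa:actions_on_Cuntz_8}'s $M_0'$ are satisfied by $y$.

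In Case~B, $(1-t)^\ell y \notin \Z \cdot a$ for every $\ell \geq 1$. Since $N(t) \cdot p^{k-1} a = p^k a = 0$, we have $p^{k-1} a \in \ker N(t) = \im(1-t)^{M_0}$, so there exists $w_1 \in M_0$ with $(1-t) w_1 = -p^{k-1} a$. By adjusting $w_1$ by a suitable element of $\ker(1-t) = \Z \cdot a$, I would arrange $N(t) w_1 = 0$, placing $w_1 \in \im(1-t)$; then iteratively choose $w_2, w_3, \dotsc$ with $(1-t) w_{j+1} = w_j$ and $N(t) w_{j+1} = 0$, in the spirit of Lemma~\ref{lem:S-module_surjective}. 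Together with $[1] \mapsto y$ on the $\Sring/(p^k N(t))$-summand, these $w_j$ define a homomorphism from the $Q$-summand of Example~\ref{exa:actions_on_Cuntz_9}'s $M_0'$ to $M_0$, compatible with the identification $(p^{k-1} N(t) j, j) \sim 0$, assembling into the desired $\Sring$-module map $M_0' \to M_0$.

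In both cases, I would verify that the map $M_0' \to M_0$ is injective with uniquely $p$-divisible cokernel $M_0''$. Proposition~\ref{pro:divisible_S-module} then makes $M_0''$ cohomologically trivial, and Corollary~\ref{cor:a_2112_exact} converts the short exact sequence $M_0' \into M_0 \prto M_0''$ into the required extension $M' \into M \prto M''$. Since $M_1'' = 0$, Theorem~\ref{the:divisible_exact_module} identifies $M''$ with a module of Example~\ref{exa:divisible} with $X = 0 = Z$. The main obstacle lies in Case~B: one must verify that the adjustment ensuring $N(t) w_j = 0$ is always possible at each level — the obstruction is a class modulo $p$ whose vanishing requires leveraging the hypothesis $(1-t)^\ell y \notin \Z \cdot a$ — and that the resulting map from Example~\ref{exa:actions_on_Cuntz_9}'s $M_0'$ is injective, so no spurious identifications arise in the tower or in the gluing to the $\Sring/(p^k N(t))$-summand.
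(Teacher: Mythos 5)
Your overall strategy is the paper's: reduce via Corollary~\ref{cor:a_2112_exact} to a cohomologically trivial $\Sring$\nobreakdash-module $M_0$ with $\ker(1-t)=\im N(t)\cong\Z/p^k$, and exhibit the model of Example~\ref{exa:actions_on_Cuntz_8} or~\ref{exa:actions_on_Cuntz_9} as a submodule with uniquely $p$\nobreakdash-divisible cokernel; your Case~A is essentially the paper's Lemma~\ref{lem:case_11}. The gap is the case dichotomy. You attach it to one fixed lift $y$ of $a$, but $y$ is only determined up to adding an element of $\ker N(t)=\im(1-t)$, and the condition ``$(1-t)^\ell y\in\Z\cdot a$ for some $\ell$'' is not independent of that choice. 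The correct dichotomy is intrinsic to the $\Z[\vartheta]$\nobreakdash-module $M_2\defeq\ker N(t)=\im(1-t)$ (which has $\ker(1-\vartheta)\cong\coker(1-\vartheta)\cong\Z/p$): either the generator $p^{k-1}a$ of $\ker(1-\vartheta)|_{M_2}$ is divisible by every power of $1-\vartheta$ \emph{inside $M_2$}, or not. This is the paper's Lemma~\ref{lem:Ztheta_p_ker-coker_1}; one must then \emph{adjust} $y$ within $y+M_2$ so that $(1-t)y$ lands in the distinguished submodule $M_2'$ before reading off $\ell$ and $c$.

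With your fixed $y$ the dichotomy misfires, and then Case~B genuinely fails. Take $M_0=\Sring\bigm/\bigl((1-t)^\ell-cp^{k-1}N(t),\,p^kN(t)\bigr)\oplus\Z[\vartheta,1/p]$, $a=(N(t),0)$ and $y=(1,1)$. This satisfies all hypotheses and its correct classification is Example~\ref{exa:actions_on_Cuntz_8}. But $(1-t)^ny=((1-t)^n,(1-\vartheta)^n)$ has nonzero second component for all $n$ (as $\Z[\vartheta,1/p]$ is a domain), so $(1-t)^ny\notin\Z\cdot a$ for every $n$ and you land in Case~B. There your tower cannot be built: any $w_1$ with $(1-t)w_1=-p^{k-1}a$ has vanishing second component, since $1-\vartheta$ is invertible on $\Z[\vartheta,1/p]$, so the whole tower lives in the first summand, where $(1-t)^{\ell+1}=0$ forces it to terminate after at most $\ell-1$ steps under the constraint $N(t)w_j=0$ (for $\ell=1$ one checks directly that $N(t)w_1\in -c'a+p\Z a$ can never be made zero). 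In particular the mod-$p$ obstruction you flag does \emph{not} vanish under the hypothesis ``$(1-t)^ny\notin\Z\cdot a$ for all $n$''; what makes it vanish is the intrinsic non-termination of the divisibility tower in $M_2$, which your hypothesis does not imply. (Nor can this $M$ be rescued by an Example~\ref{exa:actions_on_Cuntz_9} submodule: its $M_2$ has finite torsion subgroup, so it contains no copy of $Q$.) Running the tower inside $M_2$ first, and only afterwards normalising $y$, repairs the argument and turns it into the paper's proof.
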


\begin{proof}
  We begin by studying~\(M_2\),
  which we identify with \(\ker N(t) = \im (1-t)\)
  as above.  We compute the kernel and cokernel of \(1-\vartheta\)
  acting on~\(M_2\).
  We use the Snake Lemma for the endomorphism \(1-t\)
  of the extension \(M_2 \into M_0 \prto \im N(t)\),
  where the quotient map is~\(N(t)\).
  Using that~\(M_0\)
  is cohomologically trivial, this gives a long exact sequence
  \begin{multline*}
    0\to \ker (1-t)|_{M_2} \to \im N(t) \xrightarrow{N(t)} \im N(t)
    \\\to \coker (1-t)|_{M_2} \xrightarrow{N(t)} \im N(t) \xrightarrow{\id} \im N(t) \to 0.
  \end{multline*}
  Since \(N(t)\colon \im N(t) \to \im N(t)\)
  is multiplication by~\(p\) on~\(\Z/p^k\), this implies
  \begin{equation}
    \label{eq:case1_ker_coker_theta}
    \ker (1-t)|_{M_2}\cong \Z/p,\qquad
    \coker (1-t)|_{M_2}\cong \Z/p.
  \end{equation}
  More precisely, \(\ker (1-t)|_{M_2} \cong \Z/p\)
  is generated by the class of~\(p^{k-1}\)
  in \(\Z/p^k = M_1 \cong \im N(t)\).
  The boundary map in the Snake Lemma maps the generator~\(1\)
  of~\(\Z/p^k\)
  to a generator of \(\coker (1-t)|_{M_2}\).
  Choose \(b\in M_0\)
  with \(N(t)b = [1]\)
  in~\(\Z/p^k\).
  Then \((1-t)b \in \im (1-t) = M_2\)
  represents the generator of \(\coker (1-t)|_{M_2}\).
  Next we classify the resulting modules over~\(\Z[\vartheta]\):

  \begin{lemma}
    \label{lem:Ztheta_p_ker-coker_1}
    Let~\(M_2\)
    be a \(\Z[\vartheta]\)-module
    with~\eqref{eq:case1_ker_coker_theta}.  Then there is a submodule
    \(M_2'\subseteq M_2\)
    such that \(M_2/M_2'\)
    is uniquely \(p\)\nb-divisible,
    and which has the following form.  Either
    \(M_2' = \Z[\vartheta]/(1-\vartheta)^\ell\)
    for some \(\ell\in\N_{\ge1}\)
    or
    \(M_2' = \Z[\vartheta] \oplus Q\).
  \end{lemma}

  \begin{proof}
    Let \(a_0\in M_2\)
    generate \(\ker (1-\vartheta)\).
    We recursively construct \(a_n\in M_2\)
    with \((1-\vartheta)a_n = a_{n-1}\)
    as long as \(a_{n-1} \in \im (1-\vartheta)\).
    Assume first that this process stops at some \(\ell\in\N\),
    that is, \(a_0,\dotsc,a_{\ell-1}\)
    are defined and \(a_{\ell-1} \notin \im (1-\vartheta)\).
    Define \(\psi\colon \Z[\vartheta] \to M_2\),
    \(f\mapsto f \cdot a_{\ell-1}\).
    So \(\psi\bigl((1-\vartheta)^j\bigr) = a_{\ell-j-1}\)
    for \(j=0,\dotsc,\ell-1\)
    and \(\psi\bigl((1-\vartheta)^\ell\bigr) = 0\).
    An argument as in the proof of Lemma~\ref{lem:S-module_surjective}
    shows that the map \(\Z[\vartheta]/(1-\vartheta)^\ell \to M_2\)
    induced by~\(\psi\)
    is injective.  And~\(\psi\)
    induces an isomorphism between the kernel and cokernel of
    multiplication by~\(1-\vartheta\)
    on \(\Z[\vartheta]/(1-\vartheta)^\ell\)
    and~\(M_2\).
    By the Snake Lemma, multiplication by~\(1-\vartheta\)
    is invertible on the cokernel of~\(\psi\).
    Then the cokernel of~\(\psi\)
    is uniquely \(p\)\nb-divisible
    by Proposition~\ref{pro:divisible_S-module}.  So~\(M_2\)
    has the asserted form with
    \(M_2' = \Z[\vartheta]/(1-\vartheta)^\ell\).

    Now assume that~\(a_\ell\)
    is defined for all \(\ell\in\N\).
    Let \(X_0\subseteq M_2\)
    be the subgroup generated by the~\(a_\ell\).
    This is a submodule because \((1-\vartheta)a_\ell = a_{\ell-1}\)
    for \(\ell\ge1\)
    and \((1-\vartheta)a_0 = 0\).
    By construction, multiplication by~\(1-\vartheta\)
    on~\(X_0\)
    is surjective and its kernel is \((\Z/p)\cdot a_0 \cong \Z/p\).
    So the map \(\psi_0\colon Q \to X_0\)
    defined by \((1-\vartheta)^{-n} f \mapsto f(\vartheta) a_n\)
    for all \(n\in\N\),
    \(f\in\Z[\vartheta]\)
    is an isomorphism by Lemma~\ref{lem:S-module_surjective}.  The
    Snake Lemma implies that multiplication by~\(1-\vartheta\)
    is injective on~\(M_2/X_0\)
    and still has the same cokernel~\(\Z/p\).
    So Lemma~\ref{lem:S-module_injective} gives an injective
    \(\Z[\vartheta]\)-module
    homomorphism \(\dot\psi_1\colon \Z[\vartheta] \to M_2/X_0\)
    with uniquely \(p\)\nb-divisible
    cokernel.  We may lift~\(\dot\psi_1\)
    to a \(\Z[\vartheta]\)-module
    homomorphism \(\psi_1\colon \Z[\vartheta] \to M_2\)
    because~\(\Z[\vartheta]\)
    is free.  Since~\(\dot\psi_1\) is injective, the resulting map
    \[
    (\psi_0,\psi_1)\colon Q \oplus \Z[\vartheta] \to M_2
    \]
    is still injective.  Its cokernel is isomorphic to the cokernel
    of~\(\dot\psi_1\),
    which is uniquely \(p\)\nb-divisible.
    So~\(M_2'\) has the asserted form.
  \end{proof}

  Let \(M_2'\subseteq M_2\)
  be the submodule given by Lemma~\ref{lem:Ztheta_p_ker-coker_1}.  We
  study the two possibilities for~\(M_2'\) separately.

  \begin{lemma}
    \label{lem:case_11}
    Let \(M_2' = \Z[\vartheta]/(1-\vartheta)^\ell\)
    in Lemma~\textup{\ref{lem:Ztheta_p_ker-coker_1}}.  There are
    \(c\in \{1,\dotsc,p-1\}\)
    and a submodule \(M_0'\subseteq M_0\)
    that contains \(M_1= \im N(t)\), such that
    \[
    M_0' \cong \Z[t]\bigm/ \bigl(t^p-1,(1-t)^\ell - c p^{k-1} N(t)\bigr)
    \]
    and \(M_0/M_0' \cong M_2/M_2'\)
    is uniquely \(p\)\nb-divisible.
  \end{lemma}

  \begin{proof}
    Above Lemma~\ref{lem:Ztheta_p_ker-coker_1}, we have chosen
    \(b\in M_0\)
    with \(N(t)b = [1]\)
    in~\(\Z/p^k\).
    We have seen that~\((1-t)b\)
    generates \(\coker (1-t)|_{M_2}\).
    We may add any element of \(\ker N(t) = \im (1-t) = M_2\)
    to~\(b\)
    without changing \(N(t)b = [1]\).
    This way, we may add an arbitrary element of \((1-t)M_2\)
    to~\((1-t)b\)
    and arrange that \((1-t)b \in M_2'\).
    The group~\(M_2'\)
    is also a ring, and~\((1-t)b\)
    must be invertible in its ring structure because it generates
    \(\coker (1-t)|_{M_2'} \cong \Z/p\).
    There is a group automorphism of~\(M_2'\)
    that maps~\((1-t)b\)
    to the unit element in~\(M_2'\).
    Then we may assume without loss of generality that \((1-t)b = 1\).
    Then \((1-t)^{\ell}b = (1-\vartheta)^{\ell-1} \in M_2'\)
    is a generator of \(\ker (1-t)|_{M_2}\).
    Above Lemma~\ref{lem:Ztheta_p_ker-coker_1}, we have seen that
    \([p^{k-1}]\in M_1 \cap M_2\)
    is also a generator for \(\ker (1-t)|_{M_2}\).
    So there is \(c\in \{1,\dotsc,p-1\}\) such that
    \begin{equation}
      \label{eq:annihilator_b_non-trivial}
      (1-t)^\ell b = c[p^{k-1}] = c p^{k-1} N(t) b.
    \end{equation}
    That is, \((1-t)^\ell - c p^{k-1} N(t)\)
    belongs to the annihilator of~\(b\).
    So we get a well defined map
    \[
    \psi\colon \Sring\bigm/ \bigl((1-t)^\ell - c p^{k-1} N(t), p^k N(t)\bigr) \to M_0.
    \]
    We have built an exact \(\Kring\)\nb-module~\(M'\)
    with
    \(M_0' = \Sring\bigm/ \bigl((1-t)^\ell - c p^{k-1} N(t), p^k
    N(t)\bigr)\)
    in Example~\ref{exa:actions_on_Cuntz_8}.  This example has
    \(\Ga{2}{1}^M = 0\)
    and \(\Ga{1}{2}^M = 0\),
    and so it falls under the classification in
    Corollary~\ref{cor:a_2112_exact}.  So~\(M_0'\)
    is cohomologically trivial and~\(M'\)
    is the exact \(\Kring\)\nb-module
    associated to~\(M_0'\)
    by the construction in Corollary~\ref{cor:a_2112_exact}.  By
    Corollary~\ref{cor:a_2112_exact}, the \(\Sring\)\nb-module
    homomorphism~\(\psi\)
    above induces a homomorphism of exact \(\Kring\)\nb-modules
    \(\psi_*\colon M'\to M\),
    which is injective if and only if~\(\psi\)
    is injective.  Actually, \(\psi\)
    clearly induces an isomorphism \(M_1' \congto M_1\).
    So injectivity on~\(M_0'\)
    follows if the restriction of~\(\psi\)
    to \(M_2' = \ker N(t^{M'}) = \im N(t^{M'}) \subseteq M_0'\)
    is injective.  And this follows from
    Lemma~\ref{lem:Ztheta_p_ker-coker_1} and the description
    of~\(M_2'\) in Example~\ref{exa:actions_on_Cuntz_8}.
  \end{proof}

  Next we consider the case with \(M_2' = \Z[\vartheta] \oplus Q\)
  in Lemma~\ref{lem:Ztheta_p_ker-coker_1}.  As above, choose
  \(b\in M_0\)
  with \(N(t) b = [1] \in \Z/p^k\).
  Then \((1-t)b\)
  generates \(\coker (1-t)|_{M_2}\).
  We use this element to generate the direct summand
  \(\Z[\vartheta]\)
  in~\(M_2'\).
  Recall that \(M_2 = \ker N(t) = \im (1-t)\)
  and that
  \(\ker (1-t) \cap \im (1-t) = \ker N(t) \cap \im N(t) \cong \Z/p\)
  corresponds to the subgroup \(p^{k-1} \Z/p^k\)
  in \(M_1 = \Z/p^k\).
  So \(j \in \Z/p^k\)
  belongs to~\(M_2\)
  if and only if \(p^{k-1} \mid j\).
  And \([p^{k-1}] = p^{k-1} N(t) b\)
  generates \(\ker (1-t)|_{M_2} \cong \Z/p\).
  We use this generator to build the embedding
  \(\psi_Q\colon Q \hookrightarrow M_2\)
  in the proof of Lemma~\ref{lem:Ztheta_p_ker-coker_1}.  Then we
  define a map \(\psi\colon \Sring\oplus Q \to M_0\)
  by \(\psi(f) \defeq f(t)\cdot b\)
  for \(f\in \Sring\) and \(\psi(x) \defeq \psi_Q(x)\) for \(x\in Q\).

  \begin{lemma}
    \label{lem:ann_b_case_12}
    The kernel of the map \(\Sring\oplus Q \to M_0\)
    above is the subgroup generated by
    \((p^{k-1} N(t) ,1) \in \Sring\oplus Q\).
  \end{lemma}

  \begin{proof}
    We have built the embedding~\(\psi_Q\)
    so that \(\psi(p^{k-1} N(t) ,1)=0\).
    Now let \((f_1,f_2)\in \Sring\oplus Q\)
    satisfy \(\psi(f_1,f_2)=0\).
    We must show that \((f_1,f_2)\)
    is a multiple of \((p^{k-1} N(t) ,1)\).
    Since the image of \(\psi(f_1,f_2)\)
    in~\(M_0/M_2\)
    vanishes, \(p^k\)
    divides \(f_1(1)\).
    So we may write \(f_1 = j\cdot p^{k-1} N(t) + (1-t) f_3\)
    for some \(j\in\Z\),
    \(f_3\in \Sring\).
    Then \((f_1,f_2) = j\cdot (p^{k-1} N(t) ,1) + ((1-t)f_3,f_2-j)\).
    Lemma~\ref{lem:Ztheta_p_ker-coker_1} shows that~\(\psi\)
    restricts to an injective map on
    \((1-t) \Sring\oplus Q \cong \Z[\vartheta] \oplus Q\).
    Hence \(((1-t)f_3,f_2-j)=0\).
  \end{proof}

  Let~\(M'\)
  be the exact \(\Kring\)\nb-module
  in Example~\ref{exa:actions_on_Cuntz_9}.
  Lemma~\ref{lem:ann_b_case_12} shows that the map
  \(\psi\colon \Sring\oplus Q \to M_0\)
  descends to an injective map \(M_0' \to M_0\).
  Now the same argument as above shows that the map \(M' \to M\)
  associated to~\(\psi\)
  is injective and has uniquely \(p\)\nb-divisible
  cokernel.  This finishes the proof of Proposition~\ref{pro:case1}.
\end{proof}

The following proposition deals with the case
\(\ker \Ga{0}{1}^M = \{0\}\)
and \(\coker \Ga{1}{0}^M \cong \im \Ga{2}{1}^M \cong \Z/p\).

\begin{proposition}
  \label{pro:case2}
  Let~\(M\)
  be an exact \(\Kring\)\nb-module.
  Assume that \(M_1 = \Z/p^k\)
  as an Abelian group, that \(\ker \Ga{0}{1}^M = \{0\}\)
  and \(\im \Ga{2}{1}^M \cong \Z/p\).
  Then there is a \(\Kring\)\nb-module
  extension \(M' \into M \prto M''\),
  where~\(M'\)
  is the exact \(\Kring\)\nb-module
  in Example \textup{\ref{exa:actions_on_Cuntz_6}} and~\(M''\)
  is a uniquely \(p\)\nb-divisible
  exact \(\Kring\)\nb-module
  as in Example~\textup{\ref{exa:divisible}} with \(X = 0\)
  and \(Z=0\).
\end{proposition}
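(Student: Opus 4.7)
The plan is to apply Theorem~\ref{the:a21_vanishes} and then adapt the strategy of Proposition~\ref{pro:case1}.  Since $\ker\Ga{0}{1}^M=0$, exactness at~$M_1$ forces $\Ga{1}{2}^M=0$, and hence~$M$ is described by a triple $(M_0,\dot{M}_1,\dot\alpha)$ where~$M_0$ is an $\Sring$\nb-module with $\ker N(t)=\im(1-t)$, $\dot{M}_1=M_1/\im N(t)\subseteq H^0(M_0)$, and $\dot\alpha$ is a grading-reversing isomorphism $\dot{M}_1\congto H^0(M_0)/\dot{M}_1$.  By~\eqref{eq:a12_vanishes_image_a21}, $\dot{M}_1\cong\im\Ga{2}{1}^M\cong\Z/p$ in even degree, so $H^0(M_0)$ has even part $\dot{M}_1\cong\Z/p$ and odd part $\Sigma\Z/p$.

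I first analyse~$M_2$.  The relation~\eqref{eq:alpha_212} together with $\Ga{1}{2}^M=0$ gives $\Gs{2}^M=\Gu{2}^M$, and then~\eqref{eq:alpha_2_N} yields $N(\Gt{2}^M)=0$, so~$M_2$ is a $\Z[\vartheta]$\nb-module.  The map $1-\vartheta$ is surjective on~$M_2$: its cokernel equals $M_0/(M_1+\im(1-t))$, and the isomorphism $M_0/\im(1-t)\congto \im N(t)=pM_1$ induced by~$N(t)$ sends the image of $M_1$ onto~$pM_1$, making this quotient trivial.  The kernel of $1-\vartheta$ on~$M_2$ fits into a short exact sequence
\[
0 \to \ker(1-t)/M_1 \to \ker(1-\vartheta)|_{M_2} \to M_1\cap \im(1-t) \to 0
\]
with boundary $[x]\mapsto(1-t)x$; both outer terms equal~$\Z/p$, the left in odd degree (identified with $H^0(M_0)/\dot{M}_1$) and the right in even degree (equal to $p^{k-1}\Z/p^k$).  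Thus $\ker(1-\vartheta)|_{M_2}\cong\Z/p\oplus\Sigma\Z/p$, and Lemma~\ref{lem:S-module_surjective} supplies an injective $\Z[\vartheta]$\nb-module homomorphism $Q\oplus\Sigma Q\hookrightarrow M_2$ with uniquely $p$\nb-divisible cokernel.

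To promote this to an embedding $M_0'\hookrightarrow M_0$ of the module from Example~\ref{exa:actions_on_Cuntz_6}, set $a\defeq\Ga{0}{1}^M(1)\in M_1$ and recursively construct chains $x_{1,n}\in M_0^{\mathrm{even}}$ with $x_{1,0}=p^{k-1}a$, and $x_{2,n}\in M_0^{\mathrm{odd}}$ with $x_{2,0}\in\ker(1-t)$ lifting $\dot\alpha([a])$, all satisfying $(1-t)x_{j,n}=x_{j,n-1}$.  The odd chain needs no correction because $N(t)x_{2,n}\in\Ga{0}{1}^M(M_1)$ has odd degree but lies in the even group~$M_1$, hence vanishes.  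For the even chain, a tentative lift~$\tilde{x}_{1,n}$ satisfies $N(t)\tilde{x}_{1,n}\in\im N(t)=pM_1$, and we correct by $y\in M_1=\ker(1-t)^{\mathrm{even}}$: the modification sends $N(t)\tilde{x}_{1,n}\mapsto N(t)\tilde{x}_{1,n}+py$, so $y$ can be chosen to cancel the defect.  These chains assemble to a morphism of triples $\phi\colon M_0'\to M_0$ respecting the defining relation $[(p^{k-1}x,x,0)]=0$ because $\phi((0,[1],0))=x_{1,0}=p^{k-1}a=\phi((p^{k-1},0,0))$.  By Theorem~\ref{the:a21_vanishes}, injectivity of $M_2'\hookrightarrow M_2$ together with the isomorphism $M_1'\cong M_1$ yields an injective homomorphism of exact $\Kring$\nb-modules $M'\hookrightarrow M$; the quotient $M''\defeq M/M'$ is exact with $M_1''=0$, so Theorem~\ref{the:divisible_exact_module} identifies~$M''$ with the module of Example~\ref{exa:divisible} for $X=Z=0$.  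The main subtlety is the recursive lifting step, whose success hinges on the coincidence $p\cdot\ker(1-t)=\im N(t)=pM_1$, which makes the available corrections exactly sufficient to kill the possible defects.
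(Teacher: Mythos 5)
Your proof is correct and follows essentially the same overall strategy as the paper: reduce to triples via Theorem~\ref{the:a21_vanishes}, locate a copy of $Q\oplus\Sigma Q$ with uniquely $p$\nb-divisible cokernel, assemble the embedding $M_0'\hookrightarrow M_0$, and invoke the injectivity criterion.

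The one genuine difference worth noting is \emph{where} you do the embedding. The paper proves a separate lemma showing that $\psi'\colon Q\oplus\Sigma Q\hookrightarrow\ker N(t)\subseteq M_0$ exists with uniquely $p$\nb-divisible cokernel; since $1-t$ is already surjective on $\ker N(t)$ (because $M_0 = M_1 + \ker N(t)$), every stage of the chain stays inside $\ker N(t)$ automatically and no $N(t)$-correction is ever needed. You instead analyse $\ker(1-\vartheta)\vert_{M_2}$ and then rebuild the chains in $M_0$ by hand, correcting the even chain by a term $y\in M_1 = \ker(1-t)^{\mathrm{even}}$ at each step. Your approach works — the parity argument for the odd chain and the observation $N(t)\tilde x_{1,n}\in\im N(t) = pM_1$ make the correction available — but it trades a clean structural fact for a step-by-step repair. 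A small concrete slip: for the defining relation $[(p^{k-1}x,[x],0)] = 0$ of Example~\ref{exa:actions_on_Cuntz_6} to be respected, you need $\phi(p^{k-1},0,0) + \phi(0,[1],0) = 0$, so the starting point should be $x_{1,0} = -p^{k-1}a$, not $p^{k-1}a$; since $-1$ is a unit mod $p$ this is harmless, but as written the relation is not satisfied. You should also say a word more carefully about why the induced map $M_2'\to M_2$ coincides (after the $1-\vartheta$ shift built into $\Ga{2}{0}^{M'}$) with the embedding you built by Lemma~\ref{lem:S-module_surjective}, since $[x_{1,0}] = 0$ in $M_2$ while the basis element of $\ker(1-\vartheta)\vert_{M_2}$ in even degree is $[x_{1,1}]$.
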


\begin{proof}
  The assumption \(\ker \Ga{0}{1}^M = \{0\}\)
  is equivalent to \(\Ga{1}{2}^M=0\)
  by exactness.  So we are in the situation of
  Theorem~\ref{the:a21_vanishes}.  Thus exact \(\Kring\)\nb-modules
  of this type are equivalent to certain triples
  \((M_0,\dot{M}_1,\dot\alpha)\).
  Here \(M_0\)
  is an \(\Sring\)\nb-module
  with \(\ker N(t) = \im (1-t)\);
  \(\dot{M}_1\)
  is a \(\Z/p\)\nb-subvector
  space of \(H^0(M_0) \defeq \ker (1-t) \bigm/ \im N(t)\)
  and~\(\dot\alpha\)
  is an isomorphism
  \(\dot\alpha\colon \dot{M}_1 \congto H^0(M_0)/\dot{M}_1\).
  The assumption \(\im \Ga{2}{1}^M\cong\Z/p\)
  implies \(\im\dot\alpha\cong\Z/p\)
  or, equivalently, \(\dot{M}_1\cong\Z/p\).
  Hence \(\ker (1-t)/M_1 \cong H^0(M_0)/\dot{M}_1 \cong \Sigma\Z/p\).
  And \(\im (1-t) \cong \Z/p^{k-1}\).

  \begin{lemma}
    \label{lem:case2_surjective_t-1}
    There is an injective \(\Sring\)\nb-module
    homomorphism
    \(\psi'\colon Q \oplus \Sigma Q\hookrightarrow\ker N(t)\)
    such that \(\ker N(t)/\im(\psi')\) is uniquely \(p\)\nb-divisible.
  \end{lemma}

  \begin{proof}
    Since \(p M_1 = N(t) (M_1)\)
    and \(\im N(t) = N(t) (M_0)\)
    are two subgroups in \(M_1 \cong \Z/p^k\)
    of index~\(p\),
    they are equal.  Therefore, any \(x\in M_0\)
    may be written as \(x=x_1 + x_2\)
    with \(x_1\in M_1\)
    and \(x_2\in \ker N(t)\):
    choose \(x_1\in M_1\)
    with \(N(t) x_1 = N(t)x\)
    and put \(x_2 \defeq x-x_1\).
    Since \((1-t)|_{M_1}=0\),
    it follows that \((1-t)(M_0) = (1-t)(\ker N(t))\).
    Since \(\ker N(t) = \im (1-t)\),
    the restriction \(1-t\colon \ker N(t) \to \ker N(t)\)
    is surjective.

    Next we claim that
    \(\ker (1-t) \cap \ker N(t) \cong \Z/p \oplus \Sigma \Z/p\).
    Recall that \(\ker (1-t)/M_1 \cong \Sigma \Z/p\)
    in the case under study.  Let \(x\in \ker (1-t)\)
    represent the (odd) generator of \(\ker (1-t)/M_1\).
    We have seen above that there is \(x_1\in M_1\)
    with \(N(t)x = N(t)x_1\).
    Then \(x-x_1\)
    is another representative of the generator of \(\ker (1-t)/M_1\)
    that also belongs to \(\ker N(t)\).
    So any element of \(\ker (1-t)/M_1 \cong \Sigma\Z/p\)
    lifts to an element of \(\ker (1-t) \cap \ker N(t)\).
    This lifting is unique modulo \(\ker N(t) \cap M_1\).
    And \(M_1=\Z/p^k\)
    implies that \(\ker N(t) \cap M_1 \cong \Z/p\).
    This finishes the proof that
    \(\ker (1-t) \cap \ker N(t) \cong \Z/p \oplus \Sigma \Z/p\).
    Since the restriction of~\(1-t\)
    to \(\ker(1-t)\)
    is surjective, Lemma~\ref{lem:S-module_surjective} gives an
    embedding \(\psi'\colon Q \oplus \Sigma Q \to \ker (1-t)\)
    with uniquely \(p\)\nb-divisible cokernel.
  \end{proof}

  Let~\(M'\)
  be the exact \(\Kring\)\nb-module
  built in Example~\ref{exa:actions_on_Cuntz_6}.  Since
  \(\Ga{1}{2}^{M'} = 0\),
  it is also associated to a suitable triple as in
  Theorem~\ref{the:a21_vanishes}.  Let
  \(X\defeq \im(\psi') \subseteq \ker(1-t) \subseteq M_0\).
  Since~\(X\)
  contains \(\ker (1-t) \cap \ker N(t)\),
  it follows from the proof of Lemma~\ref{lem:case2_surjective_t-1}
  that \(M_1 \cap X = M_1 \cap \ker N(t) = \Z/p\).
  We may choose the map~\(\psi'\)
  so that the canonical generator \([p^{k-1}]\in M_1=\Z/p^k\)
  of \(M_1 \cap X\)
  goes to~\((1,0)\).
  Then \(M_1 + X \subseteq M_0\)
  is canonically isomorphic to~\(M_0'\)
  as an \(\Sring\)\nb-module.
  We choose the generator of~\(\Sigma Q\)
  in~\(X\)
  to be \(\Ga{2}{1}^M([1])\).
  Then the injective \(\Sring\)\nb-module
  homomorphism \(\psi\colon M_0' \to M_0\)
  is a homomorphism of triples as in Theorem~\ref{the:a21_vanishes}.
  The criterion in Theorem~\ref{the:a21_vanishes} shows that the
  induced homomorphism of exact \(\Kring\)\nb-modules
  \(M' \to M\)
  is injective.  As above, it follows that the quotient
  \(M'' \defeq M/M'\)
  is an exact \(\Kring\)\nb-module
  with \(M''_1=0\).
  Then it is isomorphic to an exact \(\Kring\)\nb-module
  as in Example~\ref{exa:divisible} with \(X=0\)
  and \(Z=0\).
  This finishes the proof of Proposition~\ref{pro:case2}.
\end{proof}

Now we turn to the case \(\ker \Ga{0}{1}^M = p^{k-1}\Z/p^k\)
and \(\coker \Ga{1}{0}^M = \{0\}\).

\begin{proposition}
  \label{pro:case3}
  Let~\(M\)
  be an exact \(\Kring\)\nb-module.
  Assume that \(M_1 = \Z/p^k\)
  as an Abelian group, that \(\ker \Ga{0}{1}^M = p^{k-1}\Z/p^k\)
  and \(\coker \Ga{1}{0}^M = \{0\}\).
  Then there is a \(\Kring\)\nb-module
  extension \(M' \into M \prto M''\),
  where~\(M'\)
  is the exact \(\Kring\)\nb-module
  in Example \textup{\ref{exa:actions_on_Cuntz_7}} and~\(M''\)
  is a uniquely \(p\)\nb-divisible
  exact \(\Kring\)\nb-module
  as in Example~\textup{\ref{exa:divisible}} with \(X = 0\)
  and \(Z=0\).
\end{proposition}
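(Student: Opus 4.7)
My plan is to mimic the structure of Propositions~\ref{pro:case1} and~\ref{pro:case2}: apply Theorem~\ref{the:a12_vanishes} (which applies because $\coker\Ga{1}{0}^M=0$ is equivalent by exactness to $\Ga{2}{1}^M=0$) to present $M$ as a triple $(M_0,\dot M_2,\dot\alpha)$ with $\dot M_2\cong\im\Ga{1}{2}^M\cong\Sigma\Z/p$ by Equation~\eqref{eq:a21_vanishes_image_a12}, then construct an injective morphism of exact $\Kring$-modules $\psi_*\colon M'\hookrightarrow M$, where $M'$ is the module of Example~\ref{exa:actions_on_Cuntz_7}, and finally apply Theorem~\ref{the:divisible_exact_module} to the exact quotient $M'':=M/\psi_*(M')$, which by construction has $M''_1=0$. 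Concretely, I would first choose an even $b\in M_0$ with $\Ga{1}{0}^M(b)=[1]$, which exists by surjectivity of $\Ga{1}{0}^M$; then $N(t)b=\Ga{0}{1}^M[1]$ has order exactly~$p^{k-1}$, and using the polynomial identity $p-N(t)=(1-t)h_0(t)$ in $\Z[t]$ one computes $p^kb=(1-t)p^{k-1}h_0(t)b\in\im(1-t)$. Next I would pick an odd $a\in M_2$ whose class in $\dot M_2$ is a generator, rescaled so that $\Ga{1}{2}^M(a)=[p^{k-1}]\in\Z/p^k$; since $a\in\ker N(t)$, this gives a well-defined $\Sring$-linear map
\[
\psi\colon M_0':=\Sring/(p^{k-1}N(t))\oplus\Sigma\Sring/(N(t))\longrightarrow M_0,\qquad([f],[g])\mapsto f(t)b+g(t)a.
\]
A direct check shows $\psi$ is a morphism of triples in the sense of Theorem~\ref{the:a12_vanishes}: $H_0(\psi)$ sends the generator of $\dot M_2'$ to $[a]\in\dot M_2$, the normalisation of $a$ intertwines $\dot\alpha'$ with $\dot\alpha$, and $\psi^{-1}(M_2)=M_2'$ since $\Ga{1}{0}^M\circ\psi=\Ga{1}{0}^{M'}$.

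The hard part is the injectivity of $\psi$, which splits by grading into two subproblems. The key observation is that $M_2\cap\im N(t)=\Ga{0}{1}^M\bigl(\ker(\Ga{1}{0}^M\Ga{0}{1}^M)\bigr)=\Ga{0}{1}^M(p^{k-1}\Z/p^k)=0$, so $\ker(1-t)|_{M_2}=0$ and Lemma~\ref{lem:S-module_injective} applies to $M_2$, embedding a free $\Z[\vartheta]$-module $N\hookrightarrow M_2$ with uniquely $p$-divisible cokernel. By Lemma~\ref{lem:unit_in_Ztheta}, $(1-\vartheta)$ is invertible on the uniquely $p$-divisible quotient $M_2/N$, so $M_2=N+(1-\vartheta)M_2$. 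Writing $(1-t)b=n+(1-\vartheta)y$ with $n\in N^{\text{even}}$ and $y\in M_2^{\text{even}}$, I would replace $b$ by $b-y$ (which leaves $\Ga{1}{0}^M(b)$ unchanged) to arrange $(1-t)b\in N^{\text{even}}$; moreover $(1-t)b\neq0$ in $N$, because its class in $M_2/(1-\vartheta)M_2$ is the extension boundary $\partial[1]$, and if this vanished then $b\in M_2+\im N(t)$, forcing $\Ga{1}{0}^M(b)\in p\Z/p^k$ and contradicting $\Ga{1}{0}^M(b)=[1]$. A parallel modification arranges $a\in N^{\text{odd}}$ with $a\neq0$ in $N$. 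Since non-zero elements of a free $\Z[\vartheta]$-module are torsion-free, both $(1-t)b$ and $a$ are torsion-free in the $\Z[\vartheta]$-module $M_2$.

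With this in hand, injectivity of $\psi$ on the odd summand is immediate: $g(t)a=0$ gives $g(\vartheta)a=0$ in $N$, so $g(\vartheta)=0$ and $N(t)\mid g$. On the even summand, suppose $f(t)b=0$; writing $f=p^km+(1-t)h$ with $p^k\mid f(1)$ forced by $\Ga{1}{0}^M$, and substituting $p^kb=(1-t)p^{k-1}h_0(t)b$, the equation becomes $(1-t)H(t)b=0$ with $H=h+p^{k-1}mh_0$, i.e.\ $H(\vartheta)\cdot(1-t)b=0$ in $M_2$; torsion-freeness of $(1-t)b$ forces $H(\vartheta)=0$, whence $H=N(t)q$ for some $q\in\Sring$, and the polynomial identity $(1-t)h_0=p-N(t)$ together with $(1-t)N(t)=0$ yields $f=p^{k-1}mN(t)\in(p^{k-1}N(t))$. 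Once $\psi$ is injective, the criterion in Theorem~\ref{the:a12_vanishes} produces the desired injective morphism $\psi_*\colon M'\hookrightarrow M$, and since $\psi_*$ restricts to an isomorphism on the $M_1$-pieces, $M''_1=0$; Theorem~\ref{the:divisible_exact_module} then identifies $M''$ as a uniquely $p$-divisible module of the form of Example~\ref{exa:divisible} with $X=Z=0$, completing the proof.
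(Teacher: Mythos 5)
Your proof is correct and follows essentially the same strategy as the paper: reduce to the triple description of Theorem~\ref{the:a21_vanishes} (note you cite~\ref{the:a12_vanishes}, which is the $\Ga{2}{1}^M=0$ case and is indeed the right one here, since $\coker\Ga{1}{0}^M=0$ is equivalent to $\Ga{2}{1}^M=0$), construct an injective morphism from the module of Example~\ref{exa:actions_on_Cuntz_7} using a lift $b$ of the generator of $M_1$ and an odd element of $M_2$ mapping to $[p^{k-1}]$ under $\Ga{1}{2}^M$, and finish with Theorem~\ref{the:divisible_exact_module}.  The only real difference is that the paper pins down $M_2/(1-t)M_2\cong\Z/p\oplus\Sigma\Z/p$ first and feeds the two specific elements $(1-t)b$ and $e_1$ into Lemma~\ref{lem:S-module_injective} as the chosen basis, whereas you invoke that lemma with an abstract $N$ and then use $M_2=N+(1-\vartheta)M_2$ to adjust $b$ and the odd generator into $N$; both routes reduce injectivity to torsion-freeness over $\Z[\vartheta]$, and your polynomial bookkeeping with $p-N(t)=(1-t)h_0(t)$ is a correct substitute for the paper's direct computation of $\ker\psi$.
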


\begin{proof}
  The assumptions \(\ker \Ga{0}{1}^M = p^{k-1}\Z/p^k\)
  and \(\coker \Ga{1}{0}^M = \{0\}\)
  are equivalent to \(\im (\Ga{1}{2}^M) \cong \Z/p\)
  and \(\Ga{2}{1}^M=0\)
  by exactness.  So we are in the situation of
  Theorem~\ref{the:a12_vanishes}.  Exact \(\Kring\)\nb-modules
  with \(\Ga{2}{1}^M=0\)
  are equivalent to certain triples \((M_0,\dot{M}_2,\dot\alpha)\),
  where~\(M_0\)
  is an \(\Sring\)\nb-module
  with \(\ker (1-t) = \im N(t)\),
  \(\dot{M}_2\)
  is a \(\Z/p\)\nb-subvector
  space of \(H_0(M_0) \defeq \frac{\ker N(t)}{\im (1-t)}\),
  and~\(\dot\alpha\)
  is a grading-reversing isomorphism
  \(\dot\alpha\colon \dot{M}_2 \congto H_0(M_0)/\dot{M}_2\).
  We have \(\dot{M}_2\cong \im(\Ga{1}{2}^M) \cong \Z/p\).
  We also need
  \[
  M_1 = M_0/ M_2 \cong M_0\bigm/ (\im (1-t) + \dot{M}_2)
  \]
  to be isomorphic to~\(\Z/p^k\).
  And \(\Ga{2}{1}^M=0\)
  implies \(\Gu{1}^M - \Gs{1}^M=0\).
  So Lemma~\ref{lem:short_exact_sequences_consequences} implies
  \(\ker (\Gu{0}^M - \Gt{0}^M) = \ker (\Ga{2}{0}^M)\).
  This is equal to \(\im(\Ga{0}{1}^M)\)
  because~\(M\)
  is exact, and this is isomorphic to \(\Z/p^{k-1}\)
  because \(\ker(\Ga{0}{1}^M) = \im(\Ga{1}{2}^M) = p^{k-1}\Z/p^k\).
  In terms of our triples, this means that
  \[
  \ker (1-t) = N(t)(M_0) \cong \Z/p^{k-1}.
  \]
  One short exact sequence in Table~\ref{tab:short_exact_sequences}
  implies \(\ker (\Ga{1}{0}^M) \cap \im(\Ga{0}{1}^M)=0\)
  because \(\Ga{0}{1}^M\)
  and \(N(\Gs{1}^M)\)
  have the same kernel, namely, \(\Z/p\).
  Hence \(\im(\Ga{0}{2}^M) \cap \ker (\Gu{0}^M - \Gt{0}^M)= 0\).
  In terms of our triples above, this means that
  \(\ker(1-t)\cap M_2=0\),
  that is, \(1-t\colon M_2 \to M_2\)
  is injective.  And \(\coker (\Ga{1}{0}^M)=0\)
  and \(\coker N(\Gs{1}^M)\cong \Z/p\) imply
  \[
  M_0/(\im (\Ga{0}{1}^M) + \ker (\Ga{1}{0}^M)) \cong \Z/p.
  \]

  The map~\(\Ga{1}{2}^M\)
  is a grading-reversing isomorphism from
  \(\coker (\Ga{2}{0}^M)\) onto \(\im (\Ga{1}{2}^M) \cong \Z/p\).
  So \(\coker (\Ga{2}{0}^M) \cong \Sigma\Z/p\).
  Now one of the short exact sequences in
  Table~\ref{tab:short_exact_sequences} gives an extension
  \(\Z/p \into \coker (\Gu{2}^M- \Gt{2}^M) \prto \Sigma \Z/p\).  So
  \[
  M_2/(1-t)M_2 \cong \Z/p \oplus \Sigma \Z/p.
  \]
  The proof also allows us to choose representatives
  \(e_0,e_1 \in M_2\)
  for the generators of \(M_2/(1-t)M_2\).
  First, let \(a\in M_0\)
  be an even-degree element that lifts the generator of
  \(M_0/M_2 = M_1 = \Z/p^k\).
  The image of~\(a\)
  in~\(\Z/p^k\)
  does not belong to \(\im N(t) = \ker (1-t)\).
  Thus the element \(e_0\defeq (1-t)a\)
  of \(\im (1-t) \subseteq M_2\)
  cannot be of the form \((1-t)x\)
  with \(x\in M_2\)
  because \(a-x_2\)
  is another representative of the generator of~\(\Z/p^k\)
  and so \(N(t) (a-x_2)\neq0\).
  Secondly, there is an odd-degree element \(e_1\in M_2\)
  with \(\Ga{1}{2}^M(e_1) = [p^{k-1}] \in \Z/p^k\).
  Then \(e_1 \notin (1-t)M_2\).

  Now Lemma~\ref{lem:S-module_injective} shows that the
  grading-preserving map
  \[
  \varphi\colon \Z[\vartheta] \oplus \Sigma \Z[\vartheta] \to M_2,\qquad
  (f_0,f_1)\mapsto f_0\cdot e_0+ f_1\cdot e_1,
  \]
  is injective with uniquely \(p\)\nb-divisible
  cokernel.  To include~\(M_1\), we use the grading-preserving map
  \[
  \psi\colon \Sring \oplus \Sigma \Z[\vartheta] \to M_0,\qquad
  (f_0,f_1)\mapsto f_0\cdot a+ f_1\cdot e_1.
  \]
  Its image surjects onto~\(M_1\)
  because~\(a\) lifts a generator of~\(M_1\).  We claim that
  \[
  \ker(\psi) = (p^{k-1} N(t)) \oplus 0.
  \]
  First, \(\psi(f_0,f_1) \in M_2\)
  for \(f_0\in \Sring\),
  \(f_1\in \Sigma\Z[\vartheta]\)
  is equivalent to \(f_0 \in (t-1,p^k) = (t-1,p^{k-1} N(t))\)
  because \(M_1 \cong \Z/p^k \cong \Sring/(t-1,p^k)\)
  as an \(\Sring\)\nb-module.
  The element \(p^{k-1} N(t) a\)
  becomes~\(0\)
  in \(M_1=M_0/M_2\),
  so that it belongs to~\(M_2\).
  And it also belongs to \(\ker(1-t)\).
  Since \(M_2 \cap \ker(1-t)=0\),
  it follows that \(p^{k-1} N(t)\cdot a=0\).
  That is, \((p^{k-1} N(t)) \oplus 0\)
  is contained in the kernel of~\(\psi\).
  Conversely, let \(\psi(f_0,f_1)=0\).
  Then \(\psi(f_0,f_1)\in M_2\),
  so that \(f_0 \in (t-1,p^{k-1} N(t))\).
  That is, we may write \(f_0 = (1-t) f_2 + p^{k-1} N(t) f_3\).
  The second summand is contained in the kernel of~\(\psi\).
  So \(\psi((1-t) f_2,f_1) = f_2\cdot e_0 + f_1 \cdot e_1 =0\).
  The injectivity of~\(\varphi\)
  above shows that \(f_2,f_1\in (N(t))\).
  Hence \((1-t)f_2=0\)
  in~\(\Sring\)
  and \(f_1=0\)
  in \(\Z[\vartheta]\).
  So~\(p^{k-1} N(t)\)
  generates the kernel of~\(\psi\)
  as asserted.  We have found an injective map from the
  \(\Sring\)\nb-module
  at~\(0\)
  in Example~\ref{exa:actions_on_Cuntz_7} to~\(M_0\).
  This induces a homomorphism of exact \(\Kring\)\nb-modules
  by Theorem~\ref{the:a12_vanishes}.  Since it restricts to an
  isomorphism \(M'_1 \congto M_1\),
  the criterion in Theorem~\ref{the:a12_vanishes} shows that the
  map \(M_2' \to M_2\)
  is injective as well.  The quotient \(M'' \defeq M/M'\)
  is an exact \(\Kring\)\nb-module
  with \(M''_1=0\).
  Thus it is as in Example~\ref{exa:divisible} with \(X=0\)
  and \(Z=0\).
  This finishes the proof of Proposition~\ref{pro:case3}.
\end{proof}

Now we turn to the fourth case, \(\ker (\Ga{0}{1}^M) \cong \Z/p\)
and \(\coker (\Ga{1}{0}^M) \cong \Z/p\).
This is the only case where exact \(\Kring\)\nb-modules
with a non-trivial \(\Z/p\)\nb-action
on~\(M_1\)
may occur.  We have already shown at the beginning of
Section~\ref{sec:prove_theorem} that, without loss of generality,
\(\Gs{1}\)
is multiplication with some element of~\(\Z/p^k\)
of the form \(1-p^{k-1}\tau\)
for some \(\tau\in \Z/p\),
where \(\tau=0\)
if \(k=1\)
or if \(k=2\)
and \(p=2\).
This leaves out about half of the cases for \(p=2\),
which are covered by Example~\ref{exa:actions_on_Cuntz_10} because it
allows to change the sign of~\(\Gs{1}^M\)
if \(p=2\)
and \(k>1\).
The following proposition describes the exact \(\Kring\)\nb-modules
with \(M_1 = \Z/p^k\)
and \(\Gs{1}^M\neq 1\).
It gives Theorem~\ref{the:actions_on_Cuntz} for all exact
\(\Kring\)\nb-modules
with \(M_1=\Z/p^k\) and a non-trivial \(\Z/p\)\nb-action on~\(M_1\).

\begin{proposition}
  \label{pro:case4a}
  Let \(\tau\in\{1,\dotsc,p-1\}\).
  Assume \(p\neq 2\)
  and \(k\ge2\),
  or \(k\ge 3\).
  Let~\(M\)
  be an exact \(\Kring\)\nb-module
  with \(M_1 = \Z/p^k\)
  and \(\Gs{1}^M(x) = (1-p^{k-1}\tau)\cdot x\).
  Then there is an injective homomorphism from the exact
  \(\Kring\)\nb-module~\(M'\)
  in Example~\textup{\ref{exa:actions_on_Cuntz_3}} into~\(M\)
  whose cokernel is a uniquely \(p\)\nb-divisible
  exact \(\Kring\)\nb-module
  as in Example~\textup{\ref{exa:divisible}} with \(X=0\) and \(Z=0\).
\end{proposition}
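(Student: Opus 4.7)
The plan is to imitate the strategy of Theorem~\ref{the:remove_divisible}, but with the entire group $M_1$ playing the role of the generating piece, even though $M_1 = \Z/p^k$ is not uniquely $p$-divisible. Specifically, I would set
\[
  M'_0 \defeq \Ga{0}{1}^M(M_1), \qquad M'_1 \defeq M_1, \qquad M'_2 \defeq \Ga{2}{1}^M(M_1).
\]
Using relations \eqref{eq:ta_01}, \eqref{eq:ta_21} and \eqref{eq:sa_21} together with the vanishing identities \eqref{eq:alpha_exact} $\Ga{0}{2}^M \Ga{2}{1}^M = 0$ and $\Ga{2}{0}^M \Ga{0}{1}^M = 0$, one checks that $M'$ is a $\Kring$-submodule of~$M$, with $\Ga{0}{2}^{M'} = \Ga{2}{0}^{M'} = 0$ and with $\Gt{0}^{M'}$ and $\Gt{2}^{M'}$ acting as the identity, which already matches Example~\ref{exa:actions_on_Cuntz_3}.

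Next I would identify the underlying groups of $M'$. Since $(\Gu{1}^M - \Gs{1}^M) = p^{k-1}\tau$, the hypotheses $k \geq 2$ (or $k \geq 3$ for $p=2$) give $(\Gu{1}^M - \Gs{1}^M)^2 = 0$ and $p \cdot (\Gu{1}^M - \Gs{1}^M) = 0$, so Lemma~\ref{lem:norm_if_unipotent} yields $N(\Gs{1}^M) = \nu$, the value from Example~\ref{exa:actions_on_Cuntz_3}. The case~4 hypothesis together with Table~\ref{tab:short_exact_sequences} then forces $\ker \Ga{0}{1}^M = \ker N(\Gs{1}^M) = p^{k-1}\Z/p^k$, so $M'_0 \cong \Z/p^{k-1}$; exactness gives $\ker \Ga{2}{1}^M = \im \Ga{1}{0}^M = p\Z/p^k$, so $M'_2 \cong \Z/p$. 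I would then pick the isomorphism $M'_0 \cong \Z/p^{k-1}$ so that $\Ga{0}{1}^M$ becomes the quotient map $\Z/p^k \twoheadrightarrow \Z/p^{k-1}$, and the isomorphism $M'_2 \cong \Z/p$ via $\Ga{2}{1}^M(x) \mapsto [\tau x]$ (bijective since $\gcd(\tau,p)=1$). A direct computation using \eqref{eq:alpha_121} (so that $\Ga{1}{2}^M \Ga{2}{1}^M$ is multiplication by $p^{k-1}\tau$) then shows $\Ga{1}{0}^{M'}[y] = [\nu y]$, $\Ga{2}{1}^{M'}[x] = [\tau x]$, and $\Ga{1}{2}^{M'}[y] = [p^{k-1} y]$, so $M'$ becomes precisely the module of Example~\ref{exa:actions_on_Cuntz_3}. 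The hypothesis $k \geq 2$ reappears in checking $\Gs{2}^{M'} = \Gu{2}^{M'}$ via \eqref{eq:sa_21}, since one needs $p^{k-1}\tau \in \ker \Ga{2}{1}^M = p\Z/p^k$.

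Finally, the quotient $M'' \defeq M/M'$ is an exact $\Kring$-module (exactness descends to quotients by exact submodules via the long exact sequence in homology) with $M''_1 = M_1/M_1 = 0$, so Theorem~\ref{the:divisible_exact_module} forces $M''$ to be uniquely $p$-divisible and of the form in Example~\ref{exa:divisible} with $X = 0$ and $Z = 0$. The main technical obstacle I expect is the bookkeeping of the identifications in the middle step: arranging for the factor of $\tau$ to land in $\Ga{2}{1}^{M'}$ rather than elsewhere, and carefully tracking how the formula for $\nu$ depends on $\tau$ in the case $p = 2$, where $\Gs{1}^M$ itself is nontrivial.
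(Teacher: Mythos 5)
Your proof is correct, and it takes a genuinely different route from the paper. The paper proves Propositions~\ref{pro:case4a} and~\ref{pro:case4b} together: it first deduces the direct-sum decomposition $M_0 \cong \im\Ga{0}{1}^M \oplus \im\Ga{0}{2}^M$ from Lemma~\ref{lem:decomposable_case_criterion_0}, then analyses $(M_2,\Gt{2}^M)$ as a $\Z[\vartheta]$-module with one-dimensional kernel and cokernel of $1-\vartheta$, applies Lemma~\ref{lem:Ztheta_p_ker-coker_1} to extract a submodule $M_2'\subseteq M_2$, and finally assembles $M'$ from $M_1$ and $M_2'$, checking at the end that $\Gs{1}^M\neq \Gu{1}^M$ forces $\ell=1$ and hence Example~\ref{exa:actions_on_Cuntz_3}. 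Your approach instead defines $M'$ directly as the $\Kring$-submodule generated by $M_1$ (so $M'_j = \Ga{j}{1}^M(M_1)$), and this works precisely because $\tau\neq 0$ makes $\Ga{1}{2}^M\Ga{2}{1}^M = \Gu{1}^M-\Gs{1}^M = p^{k-1}\tau$ nonzero, which forces $\ker\Ga{0}{1}^M$ and $\coker\Ga{1}{0}^M$ to both equal $\Z/p$ and makes every piece of $M'$ the image of a cyclic group. This bypasses both Lemma~\ref{lem:decomposable_case_criterion_0} and Lemma~\ref{lem:Ztheta_p_ker-coker_1} entirely; the trade-off is that your method has no obvious analogue for Proposition~\ref{pro:case4b}, where $\Gs{1}^M = \Gu{1}^M$ and the submodule $M'_2$ must contain a larger piece of $M_2$ not lying in $\im\Ga{2}{1}^M$.

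One small expository gap: you invoke ``the case~4 hypothesis'' ($\ker\Ga{0}{1}^M\cong\Z/p$ and $\coker\Ga{1}{0}^M\cong\Z/p$) as if given, but Proposition~\ref{pro:case4a} does not assume it. You need the short deduction that $\tau\neq0$ gives $\Gs{1}^M\neq\Gu{1}^M$, hence $\Ga{1}{2}^M\neq0$ and $\Ga{2}{1}^M\neq0$, hence by exactness $\Ga{0}{1}^M$ is not injective and $\Ga{1}{0}^M$ is not surjective, and then the containments $\ker\Ga{0}{1}^M\subseteq\ker N(\Gs{1}^M)\cong\Z/p$ and $\coker\Ga{1}{0}^M\subseteq\coker N(\Gs{1}^M)\cong\Z/p$ force equality. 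Once that sentence is supplied, the rest of your argument — the submodule checks via~\eqref{eq:alpha_exact}, the identification of $M'_0\cong\Z/p^{k-1}$ and $M'_2\cong\Sigma\Z/p$ with the $\tau$ built into the isomorphism $M'_2\cong\Z/p$, the computation $\Ga{1}{0}^{M'}[y]=[\nu y]$ via $N(\Gs{1}^M)=\nu$, and the appeal to Theorem~\ref{the:divisible_exact_module} for the quotient — is sound.
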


The proof of Proposition~\ref{pro:case4a} is parallel to the proof of
the next proposition:

\begin{proposition}
  \label{pro:case4b}
  Let~\(M\)
  be an exact \(\Kring\)\nb-module
  with \(M_1 = \Z/p^k\),
  \(\Gs{1}^M = \Gu{1}^M\),
  \(\ker (\Ga{0}{1}^M) \cong \Z/p\),
  and \(\coker (\Ga{1}{0}^M) \cong \Z/p\).
  Assume \(k\ge2\)
  or \(p\neq2\).
  Then there is an extension of exact \(\Kring\)\nb-modules
  \(M' \into M \prto M''\),
  where~\(M'\)
  is isomorphic to one of the exact \(\Kring\)\nb-modules
  described in Examples \textup{\ref{exa:actions_on_Cuntz_4}}
  and~\textup{\ref{exa:actions_on_Cuntz_5}} and~\(M''\)
  is uniquely \(p\)\nb-divisible
  and has the form described in Example~\textup{\ref{exa:divisible}}
  with \(X=0\) and \(Z=0\).
\end{proposition}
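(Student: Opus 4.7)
The plan is to follow the template of Propositions~\ref{pro:case1}--\ref{pro:case3}: construct a submodule $M' \subseteq M$ (equivalently, an injective $\Kring$-module homomorphism from an instance of Example~\ref{exa:actions_on_Cuntz_4} or~\ref{exa:actions_on_Cuntz_5} into $M$) with uniquely $p$-divisible quotient $M'' = M/M'$ having $M''_1 = 0$; Theorem~\ref{the:divisible_exact_module} then forces $M''$ into the shape of Example~\ref{exa:divisible} with $X = Z = 0$. First I would use the hypothesis $\Gs{1}^M = \Gu{1}^M$ (so $N(\Gs{1}^M) = p$) together with the order assumptions on $\ker(\Ga{0}{1}^M)$ and $\coker(\Ga{1}{0}^M)$ to force the equalities $\ker(\Ga{0}{1}^M) = \ker N(\Gs{1}^M) = p^{k-1}\Z/p^k$ and $\im(\Ga{1}{0}^M) = \im N(\Gs{1}^M) = p\Z/p^k$ from the inclusions in Table~\ref{tab:short_exact_sequences}. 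Lemma~\ref{lem:decomposable_case_criterion_0} then gives $M_0 \cong \im(\Ga{0}{1}^M) \oplus \im(\Ga{0}{2}^M) \cong \Z/p^{k-1} \oplus \im(\Ga{0}{2}^M)$, together with $\ker(\Ga{0}{2}^M) = \ker(\Gu{2}^M - \Gt{2}^M) = \im(\Ga{2}{1}^M) \cong \Z/p$ and $\im(\Ga{2}{0}^M) = \im(\Gu{2}^M - \Gt{2}^M)$. In particular, $\ker(1-\Gt{2}^M)$ is cyclic of order~$p$, generated by $a_0 \defeq \Ga{2}{1}^M([1])$.

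Next I would analyse $(M_2, \Gt{2}^M)$ as in Lemma~\ref{lem:Ztheta_p_ker-coker_1}: recursively choose lifts $a_n \in M_2$ with $(1-\Gt{2}^M) a_n = a_{n-1}$ for as long as possible. Either the process terminates at some smallest $\ell \geq 1$, yielding an injection $\Sigma\Z[\vartheta]/(1-\vartheta)^\ell \hookrightarrow M_2$ with uniquely $p$-divisible cokernel; or it continues indefinitely, giving first an injective $\Sigma Q \hookrightarrow M_2$ and then (via a lift of the resulting cokernel contribution, using Lemma~\ref{lem:S-module_injective} and the Snake Lemma as in the proof of Lemma~\ref{lem:Ztheta_p_ker-coker_1}) an injection $\Sigma\Z[\vartheta] \oplus \Sigma Q \hookrightarrow M_2$ with uniquely $p$-divisible cokernel. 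Under the hypothesis $\Gs{1}^M = \Gu{1}^M$, the identity $\Ga{1}{2}^M(a_0) = (\Gu{1}^M - \Gs{1}^M)([1]) = 0$ forces $a_0 \in \ker(\Ga{1}{2}^M) = \im(1-\Gt{2}^M)$, so $\ell \geq 2$ in the finite case, consistent with the assumption of Example~\ref{exa:actions_on_Cuntz_4}.

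Then I would lift the chosen submodule $M'_2 \subseteq M_2$ to a submodule $M'_0 \subseteq M_0$ by combining it (via $\Ga{0}{2}^M$) with the summand $\im(\Ga{0}{1}^M) \cong \Z/p^{k-1}$ from the first step, and set $M'_1 \defeq M_1$. In the finite case, I would extract the parameter $\tau \in \{1,\dotsc,p-1\}$ of Example~\ref{exa:actions_on_Cuntz_4} from the ratio between the two natural generators of $\ker(1-\Gt{2}^M) \cong \Z/p$: namely, the canonical element $a_0 = \Ga{2}{1}^M([1])$ and the algebraic element $(1-\vartheta)^{\ell-1} a_{\ell-1}$ produced by the tower. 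In the infinite case, I would match the generators of the $\Sigma\Z[\vartheta] \oplus \Sigma Q$ summand of Example~\ref{exa:actions_on_Cuntz_5} to appropriately chosen elements of $M_2$, arranging that the $\Z[\vartheta]$-generator evaluates under $\Ga{1}{2}^M$ to $[p^{k-1}] \in M_1$ while the $Q$-generator tower is annihilated by $\Ga{1}{2}^M$. The resulting homomorphism of exact $\Kring$-modules is injective on each of $M'_0$, $M'_1$, $M'_2$ by construction, and the quotient $M/M'$ has trivial $M_1$-part.

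The main obstacle is the normalisation in the third step: the tower $a_0, a_1, \dotsc$ from Lemma~\ref{lem:Ztheta_p_ker-coker_1} is only determined up to adding elements of $\ker(1-\Gt{2}^M)$ at each stage, and in the infinite case the choice of the $\Z[\vartheta]$-direct-summand lift has further ambiguity. Both the parameter $\tau$ (in the finite case) and the correct generator identifying the $\Z[\vartheta]$-summand with an element mapping to $[p^{k-1}]$ (in the infinite case) must be extracted after absorbing these ambiguities. This is formally parallel to the scalar $c$ appearing in Lemma~\ref{lem:case_11}, but is more delicate here because the maps $\Ga{1}{2}^M$ and $\Ga{2}{1}^M$ are nonzero and must be compatible with the chosen generators; the hypothesis that $k \geq 2$ or $p \neq 2$ is expected to enter precisely through the need to keep the resulting $\Gs{2}^M$-action on $M'_2$ within the forms allowed by Example~\ref{exa:actions_on_Cuntz_4} (which imposes exactly this numerical constraint).
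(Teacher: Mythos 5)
Your plan follows the paper's proof quite closely (the paper proves Propositions~\ref{pro:case4a} and~\ref{pro:case4b} together): the decomposition $M_0 \cong \im(\Ga{0}{1}^M) \oplus \im(\Ga{0}{2}^M)$ via Lemma~\ref{lem:decomposable_case_criterion_0}, the analysis of $(M_2,\Gt{2}^M)$ via Lemma~\ref{lem:Ztheta_p_ker-coker_1}, and the lifting to a $\Kring$-submodule with $\tau$ extracted after normalising $\Ga{1}{2}^{M'}$. Your observation that $\Gs{1}^M=\Gu{1}^M$ forces $a_0=\Ga{2}{1}^M([1])\in\im(1-\Gt{2}^M)$, so $\ell\ge2$ in the terminating case, is a correct explanation of why Example~\ref{exa:actions_on_Cuntz_3} (which is where Proposition~\ref{pro:case4a} lands) is excluded here.

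There is, however, a logical gap you have placed in the wrong position: before you can ``analyse $(M_2,\Gt{2}^M)$ as in Lemma~\ref{lem:Ztheta_p_ker-coker_1}'', you must know that $(M_2,\Gt{2}^M)$ is actually a $\Z[\vartheta]$-module, that is, $N(\Gt{2}^M)=0$. By~\eqref{eq:alpha_2_N} this is equivalent to $N(\Gs{2}^M)=p$, and \emph{this} is where the hypothesis ``$k\ge2$ or $p\neq2$'' is genuinely consumed, not (as you conjecture) later when matching the $\Gs{2}^M$-action on $M_2'$ to the template of Example~\ref{exa:actions_on_Cuntz_4}. Concretely: if $k\ge2$ then $\im(\Ga{1}{2}^M)=p^{k-1}\Z/p^k\subseteq p\Z/p^k=\ker(\Ga{2}{1}^M)$, so $\Gu{2}^M-\Gs{2}^M=\Ga{2}{1}^M\circ\Ga{1}{2}^M=0$ and $N(\Gs{2}^M)=p$ is immediate; if $k=1$ and $p\neq2$, then $(1-\Gs{2}^M)^2=\Ga{2}{1}^M(\Gu{1}^M-\Gs{1}^M)\Ga{1}{2}^M=0$ and $p(1-\Gs{2}^M)=0$, so Lemma~\ref{lem:norm_if_unipotent} yields $N(\Gs{2}^M)=p$. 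For $k=1$, $p=2$ this breaks, which is exactly the excluded case. You also need to record that $\coker(1-\Gt{2}^M)\cong\Z/p$ (not just the kernel) to invoke Lemma~\ref{lem:Ztheta_p_ker-coker_1}; this follows from the long exact sequence~\eqref{eq:decomposable_case_long_exact} once the decomposition of $M_0$ is in place, since $\coker(\Gu{2}^M-\Gt{2}^M)\cong\im(\Ga{1}{2}^M)\cong\Z/p$. With these two verifications inserted, your argument matches the paper's.
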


\begin{proof}[Proof of Propositions \textup{\ref{pro:case4a}}
  and~\textup{\ref{pro:case4b}}]
  In the situation of Proposition~\ref{pro:case4a},
  \(\Gs{1}^M \neq \Gu{1}^M\),
  forcing \(\Ga{1}{2}^M\neq0\)
  and \(\Ga{2}{1}^M\neq 0\).
  Then~\(\Ga{0}{1}^M\)
  cannot be injective and~\(\Ga{1}{0}^M\)
  cannot be surjective.  So we must be in the case where
  \(\ker (\Ga{0}{1}^M) \cong \Z/p\),
  and \(\coker (\Ga{1}{0}^M) \cong \Z/p\).
  This is already assumed in Proposition~\ref{pro:case4b}, so it
  holds in the situation of both propositions.  It implies
  \begin{align}
    \label{eq:case_pp1_ker_21}
    \im (\Ga{1}{2}^M) &=
    \ker (\Ga{0}{1}^M) = p^{k-1}\Z/p^k = \ker N(\Gs{1}),\\
    \label{eq:case_pp1_im_12}
    \ker (\Ga{2}{1}^M) &=
    \im (\Ga{1}{0}^M) = p\Z/p^k = \im N(\Gs{1}).
  \end{align}
  Equations \eqref{eq:case_pp1_ker_21} and~\eqref{eq:case_pp1_im_12}
  give statement~\ref{lem:decomposable_case_criterion2} in
  Lemma~\ref{lem:decomposable_case_criterion_0}.  That lemma shows that
  \begin{equation}
    \label{eq:case_pp_sum}
    M_0 \cong \im (\Ga{0}{1}^M) \oplus \im(\Ga{0}{2}^M).
  \end{equation}
  The discussion after Lemma~\ref{lem:decomposable_case_criterion_0}
  culminates in a long exact
  sequence~\eqref{eq:decomposable_case_long_exact}
  involving the maps
  \(N(\Gs{1}^M)\),
  \(\Ga{2}{1}^M\),
  \(\Gu{2}^M - \Gt{2}^M\),
  \(\Ga{1}{2}^M\).
  Our next goal is to understand~\(M_2\)
  with the maps \(\Gs{2}^M\)
  and~\(\Gt{2}^M\).
  We claim that \(N(\Gs{2}^M) = p\)
  and hence \(N(\Gt{2}^M) = 0\)
  by~\eqref{eq:alpha_2_N};
  so \((M_2,\Gt{2}^M)\)
  is a module over \(\Z[\vartheta] = \Z[t]/(N(t))\).
  If \(k\ge2\),
  then \eqref{eq:case_pp1_ker_21} and~\eqref{eq:case_pp1_im_12} imply
  \(\im (\Ga{1}{2}^M) \subseteq \ker (\Ga{2}{1}^M)\),
  so that
  \[
  \Gu{2}^M - \Gs{2}^M
  = \Ga{2}{1}^M \circ \Ga{1}{2}^M
  = 0.
  \]
  This implies \(N(\Gs{2}^M) = p\).
  Now let \(k=1\)
  and \(p\neq2\).
  Then \(\Gs{1}^M=1\),
  and this implies \((1-\Gs{2}^M)^2=0\)
  by a computation as in~\eqref{eq:s1_unipotent}.
  The assumption
  \(\im (\Ga{2}{1}^M) \cong \coker (\Ga{1}{0}^M) \cong \Z/p\)
  implies \(p\cdot\Ga{2}{1}^M=0\)
  and hence \(p\cdot (1-\Gs{2}^M)=0\).
  Now Lemma~\ref{lem:norm_if_unipotent} shows that
  \(N(\Gs{2}^M) = p\).
  So this holds in the situation of both propositions.
  
  Equation~\eqref{eq:case_pp1_ker_21} implies
  \(\im (\Ga{1}{2}^M) \cong \Z/p\).  The map~\(\Ga{2}{1}^M\) induces
  a grading-reversing isomorphism from \(M_1/ \ker (\Ga{2}{1}^M)\)
  to \(\im (\Ga{2}{1}^M)\).  So~\eqref{eq:case_pp1_im_12} implies
  \(\im (\Ga{2}{1}^M) \cong \Sigma\Z/p\).  Then the long exact
  sequence in~\eqref{eq:decomposable_case_long_exact} shows that
  \[
  \ker (\Gu{2}^M - \Gt{2}^M) = \im(\Ga{2}{1}^M) \cong \Sigma\Z/p,\qquad
  \coker (\Gu{2}^M - \Gt{2}^M) \cong \Sigma\Z/p.
  \]
  We have classified \(\Z[\vartheta]\)-modules
  with this property in Lemma~\ref{lem:Ztheta_p_ker-coker_1} and found
  that there are two basic examples, such that all others are
  extensions of them by uniquely \(p\)\nb-divisible
  \(\Z[\vartheta]\)\nb-modules.
  So the proof now splits into two subcases.  Namely, \(M_2\)
  contains either \(M_2' = \Sigma\Z[\vartheta] \oplus \Sigma Q\)
  or \(M_2' = \Sigma\Z[\vartheta]/(1-\vartheta)^\ell\)
  for some \(\ell\in\N_{\ge1}\),
  such that the quotient \(M_2/M_2'\)
  is uniquely \(p\)\nb-divisible.
  Here the~\(\Sigma\)
  reminds us that these \(\Z[\vartheta]\)-submodules
  occur in odd parity.

  Assume first that~\(M_2\)
  contains \(M_2'\defeq \Sigma\Z[\vartheta] \oplus \Sigma Q\).
  By our description of the image and kernel of~\(\Ga{2}{1}^M\),
  it must factor through the quotient map \(\Z/p^k \prto \Z/p\),
  an isomorphism
  \(\Gd{2}{1}^{M'}\colon \Z/p \congto \ker (\Gu{2}^M - \Gt{2}^M)\),
  and the inclusion
  \(\ker (\Gu{2}^M - \Gt{2}^M) \hookrightarrow M_2' \subseteq M_2\).
  Similarly, \(\Ga{1}{2}^{M'}\)
  factors through the quotient map
  \(M_2 \prto \coker (\Gu{2}^M - \Gt{2}^M) = \coker (\Gu{2}^{M_2'} -
  \Gt{2}^{M_2'})\),
  an isomorphism
  \(\Gd{1}{2}^{M'}\colon \coker (\Gu{2}^{M_2'} - \Gt{2}^{M_2'})
  \congto \Z/p\),
  and the injective map \(\Z/p \to\Z/p^k\),
  \(c\mapsto c\cdot p^{k-1}\).
  The proof of Lemma~\ref{lem:Ztheta_p_ker-coker_1} starts with
  arbitrary generators of \(\ker (\Gu{2}^M - \Gt{2}^M)\)
  and \(\coker (\Gu{2}^M - \Gt{2}^M)\).
  We may choose them so that~\(\Gd{2}{1}^{M'}[1]\)
  is the image of~\(1\)
  in
  \(\Sigma Q =
  \Sigma\Z[\vartheta,1/p]\bigm/(1-\vartheta)\Z[\vartheta]\)
  and \(\Gd{1}{2}^{M'}\)
  maps the unit element in \(\Sigma\Z[\vartheta]\)
  to the unit in~\(\Z/p\).
  Let \(M_1' = M_1\)
  and
  \(M_0' = \Ga{0}{1}^{M'}(M_1) \oplus \Ga{0}{2}^{M'}(M_2') \subseteq
  M_0\);
  the decomposition of~\(M_0'\)
  is direct by~\eqref{eq:case_pp_sum}.  We may identify
  \begin{align*}
    \Ga{0}{1}^{M'}(M_1)
    &\cong M_1/\ker (\Ga{0}{1}^{M'}) \cong \Z/p^{k-1},\\
    \Ga{0}{2}^{M'}(M_2')
    &\cong M_2'/\ker (\Ga{0}{2}^{M'}\cap M_2')
      \cong M_2'/\ker (\Gu{2}^{M_2'} - \Gt{2}^{M_2'})
      \cong \Sigma\Z[\vartheta,1/p]/\Z[\vartheta]
      \cong \Sigma Q,
  \end{align*}
  where the last isomorphism is multiplication by~\(1-\vartheta\).
  Now inspection shows that~\(M'\)
  is isomorphic to the exact \(\Kring\)\nb-module
  in Example~\ref{exa:actions_on_Cuntz_5}.  The quotient
  \(\Kring\)\nb-module
  \(M'' \defeq M/M'\)
  is again exact and has \(M_1''=0\).
  So Theorem~\ref{the:divisible_exact_module} implies that~\(M''\)
  has the form of Example~\ref{exa:divisible} with \(X=0\)
  and \(Z=0\).
  This deals with the subcase where~\(M_2\)
  contains
  \(\Sigma\Z[\vartheta] \oplus \Sigma Q\).

  Next assume that~\(M_2\)
  contains \(M_2' = \Sigma\Z[\vartheta]/(1-\vartheta)^\ell\)
  for some \(\ell\ge1\).
  As above, the map~\(\Ga{2}{1}^{M'}\)
  must factor through the quotient map \(\Z/p^k \prto \Z/p\),
  an isomorphism
  \(\Gd{2}{1}^{M'}\colon \Z/p \congto \ker (\Gu{2}^M - \Gt{2}^M)\),
  and the inclusion
  \(\ker (\Gu{2}^M - \Gt{2}^M) \hookrightarrow M_2'\subseteq M_2\);
  and \(\Ga{1}{2}^{M'}\)
  factors through the quotient map
  \(M_2 \prto \coker (\Gu{2}^M - \Gt{2}^M)\),
  an isomorphism
  \(\Gd{1}{2}^{M'}\colon \coker (\Gu{2}^M - \Gt{2}^M) \congto \Z/p\),
  and the injective map \(\Z/p \to\Z/p^k\),
  \(c\mapsto c\cdot p^{k-1}\).
  Here the unit element of \(\Z[\vartheta]/(1-\vartheta)^\ell\)
  represents a generator of
  \(\coker (\Gu{2}^M - \Gt{2}^M) \cong \Z/p\),
  and \((1-\vartheta)^{\ell-1}\)
  generates \(\ker (\Gu{2}^M - \Gt{2}^M)\).
  By an automorphism of \(\Z[\vartheta]/(1-\vartheta)^\ell\),
  we may arrange that~\(\Gd{1}{2}^{M'}\)
  is the canonical map.  So~\(\Ga{1}{2}^{M'}\)
  restricts to \(p^{k-1} \mathrm{ev}_1\)
  as in Examples \ref{exa:actions_on_Cuntz_3}
  and~\ref{exa:actions_on_Cuntz_4}.  The other map~\(\Gd{2}{1}^{M'}\)
  involves an isomorphism of~\(\Z/p\),
  that is, multiplication with some \(\tau\in \{1,\dotsc,p-1\}\).
  As above, we put \(M_1' = M_1\)
  and \(M_0' = \Ga{0}{1}^{M'}(M_1) + \Ga{0}{2}^{M'}(M_2')\)
  and then show that
  \(M_0' \cong \Z/p^{k-1} \oplus
  \Sigma\Z[\vartheta]/(1-\vartheta)^{\ell-1}\).
  If \(\ell=1\),
  then the second summand goes away and the generator of
  \(\ker (1-\vartheta)\)
  in~\(M_2\)
  also represents a generator of \(\coker (1-\vartheta)\).
  Therefore, \(M'\)
  is isomorphic to the exact \(\Kring\)\nb-module
  in Example~\ref{exa:actions_on_Cuntz_3} with the given~\(\tau\).
  If \(\ell>1\),
  then we get the exact \(\Kring\)\nb-module
  in Example~\ref{exa:actions_on_Cuntz_4} with the given~\(\tau\).
  As in the case above, the quotient \(M'' \defeq M/M'\)
  is an exact \(\Kring\)\nb-module
  with \(M_1''=0\).
  Hence it is described by Example~\ref{exa:divisible}.
\end{proof}

Finally, we are left with the case where
\(\ker (\Ga{0}{1}^M) = \Z/p\),
\(\coker (\Ga{1}{0}^M) = \Z/p\),
\(k=1\)
and \(p=2\).
Here the equation \(N(\Gs{2}^M)=p\)
breaks down, so that the structure of~\(M_2\)
may be rather different.  We may treat this case using the
automorphisms of Köhler's ring~\(\Kring\)
for \(p=2\)
in Section~\ref{sec:prime_2}.  Since \(k=1\),
our assumptions say that \(\Ga{0}{1}^M = 0\)
and \(\Ga{1}{0}^M = 0\).
The involutive automorphism of~\(\Kring\)
that exchanges \(0\)
and~\(2\)
replaces~\(M\)
by an exact \(\Kring\)\nb-module~\(M^\sigma\)
with \(\Ga{2}{1}^M = 0\)
and \(\Ga{1}{2}^M = 0\)
or, equivalently, \(\ker \Ga{0}{1}^M = \{0\}\)
and \(\coker \Ga{1}{0}^M = \{0\}\).
So we are in the situation of Proposition~\ref{pro:case1}.  It gives
an extension \(M' \into M^\sigma \prto M''\),
where~\(M'\)
is as in Example \ref{exa:actions_on_Cuntz_8}
or~\ref{exa:actions_on_Cuntz_9} and~\(M''\)
is a uniquely \(p\)\nb-divisible
exact \(\Kring\)\nb-module
as in Example~\ref{exa:divisible} with \(X = 0\)
and \(Z=0\).
So~\(M\)
itself contains~\((M')^\sigma\)
with a uniquely \(p\)\nb-divisible
quotient.  And this is covered by
Example~\ref{exa:actions_on_Cuntz_10}.  We have treated all the
possibilities for exact \(\Kring\)\nb-modules
with cyclic~\(M_1\).
So the proof of Theorem~\ref{the:actions_on_Cuntz} is finished.

\section{Equivalence at the prime~\texorpdfstring{$p$}{p}}
\label{sec:p-equivalence}

The examples of exact \(\Kring\)\nb-modules
in Section~\ref{sec:representative_examples} only give all examples up
to uniquely \(p\)\nb-divisible
quotients.  We now study what this gives on the level of~\(\KK^G\).
We introduce a weaker notion of equivalence in~\(\KK^G\)
which, roughly speaking, looks only at the \(p\)\nb-torsion
information in an object.  When we classify up to this notion of
equivalence, then the uniquely \(p\)\nb-divisible
quotients become irrelevant, and also some of the examples in
Section~\ref{sec:representative_examples} are identified.

Let
\begin{equation}
  \label{eq:R-module_extension}
  \begin{tikzcd}
    M' \arrow[r, >->, "i"] &
    M \arrow[r, ->>, "q"] &
    M''
  \end{tikzcd}
\end{equation}
be an extension of countable, exact \(\Kring\)\nb-modules.
By Theorems \ref{the:Koehler_invariant_UCT_classifies}
and~\ref{the:range_F}, there are objects \(A',A\)
of~\(\mathfrak{B}^G\)
with \(F_*(A') \cong M'\)
and \(F_*(A) \cong M\),
and there is \(j\in \KK^G_0(A',A)\)
with \(F_*(j) = i\)
(up to the chosen isomorphisms \(F_*(A') \cong M'\)
and \(F_*(A) \cong M\)).
Since~\(\mathfrak{B}^G\)
is a triangulated category, the map~\(j\) is part of an exact triangle
\[
\begin{tikzcd}
  A' \arrow[r, "j"] &
  A \arrow[r, "r"] &
  A'' \arrow[r, "\delta"] &
  \Sigma A'.
\end{tikzcd}
\]
Here~\(\Sigma A''\)
is a ``cone'' of~\(j\).
Since~\(F_*\)
is a homological functor, it maps this exact triangle to a long exact
sequence of \(\Kring\)\nb-modules.
Since \(F_*(j)\)
is injective, \(F_*(\delta)=0\)
and \(F_*(r)\)
is a cokernel for \(F_*(j) = i\).
So there is an isomorphism \(F_*(A'') \cong M''\)
such that \(F_*(r) = q\)
(up to the chosen isomorphisms \(F_*(A) \cong M\)
and \(F_*(A'') \cong M''\)).
In other words, we have lifted the \(\Kring\)\nb-module
extension~\eqref{eq:R-module_extension} to an \(F_*\)\nb-exact,
exact triangle in~\(\mathfrak{B}^G\).
Theorem~\ref{the:actions_on_Cuntz} gives such
extensions~\eqref{eq:R-module_extension} where \(M''_1=0\).
So~\(M''\)
describes an action on an object that is non-equivariantly
\(\KK\)-equivalent
to~\(0\).
By Theorem~\ref{the:action_O2}, this corresponds to an action
on~\(\mathcal{O}_2\).
So the exact triangle above involves the objects of~\(\mathfrak{B}^G\)
that realise the exact \(\Kring\)\nb-modules
\(M'\)
and~\(M\)
and an action of~\(G\)
on~\(\mathcal{O}_2\).
In this sense, Theorem~\ref{the:actions_on_Cuntz} must allow
extensions as in~\eqref{eq:R-module_extension} merely to accommodate
exact triangles involving actions on~\(\mathcal{O}_2\).

In the situation of Theorem~\ref{the:actions_on_Cuntz}, we also know
that~\(M''\)
is uniquely \(p\)\nb-divisible.
So~\(j\) is a \(p\)\nb-equivalence in the following sense:

\begin{definition}
  \label{def:p-equivalence}
  Let \(A,B\)
  be objects of~\(\mathfrak{B}^G\).
  We call \(f\in \KK^G_0(A,B)\)
  a \emph{\(p\)\nb-equivalence}
  if \(F_*(\cone(f))\)
  is uniquely \(p\)\nb-divisible.
\end{definition}

We define \(p\)\nb-equivalence
for objects of~\(\mathfrak{B}^G\)
to be the equivalence relation that is generated by \(A\sim B\)
if there is a \(p\)\nb-equivalence
in \(\KK^G_0(A,B)\).
We make this more transparent using localisation techniques.  Let
\(\mathrm{UHF}(p^\infty)\)
be the UHF-algebra of type~\(p^\infty\).
Its \(\K\)\nb-theory
is \(\Z[1/p]\).
Let \(u\colon \C \to \mathrm{UHF}(p^\infty)\)
be the unit map.  On \(\K\)\nb-theory,
this is the inclusion \(\Z \hookrightarrow \Z[1/p]\).
Let \(L_p\defeq \cone(u)\).
Then \(\K_0(L_p)=0\)
and \(\K_1(L_p) = \Z[1/p]/\Z\).
We equip \(\C\),
\(\mathrm{UHF}(p^\infty)\)
and~\(L_p\)
with the trivial action of~\(G\).
Given any \(A\) in~\(\mathfrak{B}^G\), there is an exact triangle
\[
A \otimes  L_p \to
A \otimes \C \to A\otimes \mathrm{UHF}(p^\infty) \to A \otimes \Sigma L_p,
\]
and we may identify \(A\otimes \C\cong A\).
Since \(\K\)\nb-theory
commutes with inductive limits, so does our invariant~\(F_*\).  So
\[
F_*(A\otimes \mathrm{UHF}(p^\infty)) = F_*(A) \otimes_\Z \Z[1/p].
\]
This is uniquely \(p\)\nb-divisible.
So the boundary map \(A\otimes L_p \to A\)
in the exact triangle above is a \(p\)\nb-equivalence.

\begin{lemma}
  \label{lem:p-equivalent}
  Two objects \(A,B\)
  of~\(\mathfrak{B}^G\)
  are \(p\)\nb-equivalent
  if and only if \(A\otimes L_p\)
  and \(B\otimes L_p\) are \(\KK^G\)-equivalent.
\end{lemma}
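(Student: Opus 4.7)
The plan is to deduce both directions of the lemma from a single key fact: if $f\in \KK^G_0(A,B)$ is a $p$-equivalence between objects of $\mathfrak{B}^G$, then $f\otimes \id_{L_p}$ is a $\KK^G$-equivalence. Granting this, the ``only if'' direction follows because $p$-equivalence of objects is the equivalence relation generated by the existence of a $p$-equivalence arrow, so there is a finite zig-zag $A = A_0, A_1, \dotsc, A_n = B$ connected by $p$-equivalences in one direction or the other; tensoring each link with $L_p$ produces a zig-zag of $\KK^G$-equivalences whose composition gives $A\otimes L_p \sim_{\KK^G} B\otimes L_p$. The ``if'' direction uses the observation, recorded just before the lemma, that the boundary maps $A\otimes L_p \to A$ and $B\otimes L_p \to B$ coming from tensoring the defining triangle of $L_p$ with $A$ and with $B$ are themselves $p$-equivalences: combining them with the hypothesised $\KK^G$-equivalence $A\otimes L_p \cong B\otimes L_p$ gives a two-step zig-zag $A \leftarrow A\otimes L_p \sim B\otimes L_p \to B$ exhibiting $A$ and $B$ as $p$-equivalent.

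To prove the key fact, I would first establish the auxiliary statement that any $C\in\mathfrak{B}^G$ with uniquely $p$-divisible $F_*(C)$ satisfies $C\otimes L_p \cong 0$ in $\KK^G$. The identification $F_*(C\otimes \mathrm{UHF}(p^\infty)) \cong F_*(C) \otimes_\Z \Z[1/p]$ shows that, under this hypothesis, the map $\id_C\otimes u\colon C\to C\otimes\mathrm{UHF}(p^\infty)$ induced by the unit $u\colon\C\to\mathrm{UHF}(p^\infty)$ is an isomorphism on $F_*$. Since $\mathrm{UHF}(p^\infty)$ lies in $\mathfrak{B}^G$ (as an inductive limit of matrix algebras), so does $C\otimes \mathrm{UHF}(p^\infty)$, and Theorem~\ref{the:Koehler_invariant_UCT_classifies} lifts the $F_*$-isomorphism to a $\KK^G$-equivalence. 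In the exact triangle $C\otimes L_p \to C \to C\otimes\mathrm{UHF}(p^\infty) \to \Sigma C\otimes L_p$ obtained by tensoring the defining triangle of $L_p$ with $C$, the second arrow is then an equivalence, forcing the first term $C\otimes L_p$ to vanish in $\KK^G$.

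The key fact now drops out by applying the auxiliary statement to $C=\cone(f)$, which lies in $\mathfrak{B}^G$ since the bootstrap class is triangulated, and whose $F_*$ is uniquely $p$-divisible precisely by the definition of $p$-equivalence. Tensoring the exact triangle $A\xrightarrow{f} B \to \cone(f) \to \Sigma A$ with $L_p$ yields an exact triangle whose third entry vanishes, so $f\otimes \id_{L_p}$ is a $\KK^G$-equivalence. The main obstacle is book-keeping around the triangulated tensor structure on $\KK^G$: I need that $-\otimes L_p$ is a triangulated functor and that $\mathfrak{B}^G$ is closed under tensoring with its own objects — both standard, but they must be cited carefully. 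The single non-formal step is the UCT upgrade in the auxiliary statement, and its hypothesis $C\in\mathfrak{B}^G$ is precisely what cannot be relaxed.
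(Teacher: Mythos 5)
Your proposal is correct and follows essentially the same route as the paper: the heart in both cases is that for a $p$\nb-equivalence $f$ the cone $C=\cone(f)$ has $F_*(C\otimes L_p)=0$ (because $F_*(C)\to F_*(C)\otimes_\Z\Z[1/p]$ is an isomorphism when $F_*(C)$ is uniquely $p$\nb-divisible), whence $C\otimes L_p\cong 0$ in $\mathfrak{B}^G$ and $f\otimes\id_{L_p}$ is invertible, while the converse direction uses that the boundary maps $A\otimes L_p\to A$ are themselves $p$\nb-equivalences. Your only deviation is cosmetic — you upgrade $\id_C\otimes u$ to a $\KK^G$-equivalence via the UCT and then read off the vanishing of the cone, whereas the paper concludes $F_*(C\otimes L_p)=0$ directly from the long exact sequence — and your explicit treatment of the zig-zag generating the equivalence relation is a point the paper leaves implicit.
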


\begin{proof}
  Since the boundary maps \(A\otimes L_p \to A\)
  and \(B\otimes L_p \to B\)
  are \(p\)\nb-equivalences,
  a \(\KK^G\)-equivalence
  between \(A\otimes L_p\)
  and \(B\otimes L_p\)
  implies that \(A\)
  and~\(B\)
  are \(p\)\nb-equivalent.
  Let \(f\in \KK^G_0(A,B)\)
  be a \(p\)\nb-equivalence.
  We claim that \(f\otimes 1\in \KK^G(A\otimes L_p, B\otimes L_p)\)
  is a \(\KK^G\)-equivalence.
  It follows from this claim that \(A\otimes L_p\)
  and \(B\otimes L_p\)
  are \(\KK^G\)-equivalent
  if \(A\)
  and~\(B\)
  are \(p\)\nb-equivalent.
  Let \(C\)
  be the cone of~\(f\).
  Then \(C\otimes L_p\)
  is a cone of \(f\otimes \id_{L_p}\).
  By assumption, \(F_*(C)\)
  is uniquely \(p\)\nb-divisible.
  Hence the canonical map \(F_*(C) \to F_*(C) \otimes_\Z \Z[1/p]\)
  is an isomorphism.  This is the map induced by the inclusion map
  \(C \to C\otimes \mathrm{UHF}(p^\infty)\).
  So \(F_*(C \otimes L_p)=0\).
  Since \(C\otimes L_p\)
  belongs to~\(\mathfrak{B}^G\),
  it follows that \(C\otimes L_p\)
  is \(\KK^G\)-equivalent
  to~\(0\).
  And this is equivalent to \(f\otimes \id_{L_p}\)
  being a \(\KK^G\)-equivalence.
\end{proof}

So the objects of~\(\mathfrak{B}^G\)
of the form~\(A\otimes L_p\)
are representatives for all \(p\)\nb-equivalence
classes in~\(\mathfrak{B}^G\).
The special property of these representatives is that
\[
F_*(A\otimes L_p) \otimes_\Z \Z[1/p] = 0.
\]
So \(p\)\nb-equivalence
classes of objects of~\(\mathfrak{B}^G\)
are in bijection with isomorphism classes of countable exact
\(\Kring\)\nb-modules~\(M\)
with \(M \otimes_\Z \Z[1/p] = 0\)
by Lemma~\ref{lem:p-equivalent}.  Inspecting our examples, we see that
the \(\Kring\)\nb-modules
in Examples \ref{exa:actions_on_Cuntz_3},
\ref{exa:actions_on_Cuntz_4}, \ref{exa:actions_on_Cuntz_6} have the
property that \(M \otimes_\Z \Z[1/p] = 0\).
So have the examples for \(p=2\)
built as in Example~\ref{exa:actions_on_Cuntz_10} from these examples.
It is also easy to see that the actions on~\(\mathcal{O}_\infty\)
for Examples \ref{exa:actions_on_Cuntz_1}
and~\ref{exa:actions_on_Cuntz_2} are \(p\)\nb-equivalent.
The criterion above does not directly apply here because \(M_1 = \Z\)
forces \(M\otimes_\Z \Z[1/p]\) to be non-trivial.

So when we classify actions of~\(\Z/p\)
only up to \(p\)\nb-equivalence,
then the number of cases to consider is reduced considerably.

\begin{bibdiv}
  \begin{biblist}
    \bibselect{references}
  \end{biblist}
\end{bibdiv}
\end{document}